\documentclass[letterpaper,10pt]{amsart}
\usepackage{tikz, tikz-cd, stmaryrd}
\usepackage[colorlinks,backref=page]{hyperref}
\hypersetup{citecolor={blue}}
\usetikzlibrary{arrows}
\usepackage{subcaption} 
\usepackage[font=footnotesize,labelfont=bf]{caption}
\usepackage{blkarray}
\usepackage{enumitem}
\usepackage{cleveref}
\usepackage{ bbold }
\textwidth=16.00cm 
\textheight=22.00cm 
\topmargin=0.00cm
\oddsidemargin=0.00cm 
\evensidemargin=0.00cm 
\headheight=0cm 
\headsep=0.5cm
\textheight=630pt
\usepackage{latexsym,array,delarray,epsfig,setspace,cleveref, mathtools,amssymb,mathrsfs}



\definecolor{light}{gray}{.75}
\definecolor{med}{gray}{.5}
\definecolor{dark}{gray}{.25}



\newtheorem{theorem}{Theorem}
\numberwithin{theorem}{section}
\newtheorem{proposition}[theorem]{Proposition}

\newtheorem{corollary}[theorem]{Corollary}
\newtheorem{lemma}[theorem]{Lemma}
\newtheorem{conjecture}[theorem]{Conjecture}
\newtheorem{question}[theorem]{Question}
\theoremstyle{definition}
\newtheorem{definition}[theorem]{Definition}
\newtheorem{remark}[theorem]{Remark}

\newtheorem{example}[theorem]{Example}

\newtheorem{notation}[theorem]{Notation}


\newcommand{\C}{{\mathbb C}}

\newcommand{\M}{\mathbb{M}}
\newcommand{\Q}{{\mathbb Q}}
\newcommand{\R}{{\mathbb R}}
\newcommand{\Z}{{\mathbb Z}}
\renewcommand{\P}{\textup{P}}

\newcommand{\Aa}{\mathcal{A}}
\newcommand{\K}{{\mathbb K}}
\newcommand{\B}{\mathbb{B}}
\newcommand{\bb}{\mathbb{b}}

\newcommand{\HH}{\mathcal{H}}
\newcommand{\PP}{\mathcal{P}}
\renewcommand{\O}{\mathcal{O}}

\renewcommand{\S}{\textup{S}}
\newcommand{\Sp}{\textup{Sp}}

\newcommand{\ba}{\mathbf{a}}
\newcommand{\bc}{\mathbf{c}}

\newcommand{\bo}{\mathbf{1}}
\newcommand{\bt}{\mathbf{t}}
\newcommand{\bu}{\mathbf{u}}
\newcommand{\bv}{\mathbf{v}}
\newcommand{\bw}{\mathbf{w}}
\newcommand{\bx}{\mathbf{x}}

\newcommand{\bze}{\mathbf{0}}

\newcommand{\w}{{\sf w}}
\renewcommand{\v}{{\sf v}}

\newcommand{\fv}{\mathfrak{v}}
\newcommand{\fw}{\mathfrak{w}}

\newcommand{\supp}{\textup{supp}}
\newcommand{\ptconv}{\textup{p-conv}}

\newcommand{\Spec}{\textup{Spec}}
\newcommand{\Proj}{\textup{Proj}}

\newcommand{\Hom}{\textup{Hom}}

\newcommand{\Cl}{\textup{Cl}}

\newcommand{\ord}{\textup{ord}}

\newcommand{\NN}{\mathcal{N}}
\newcommand{\MM}{\mathcal{M}}

\newcommand{\gr}{\textup{gr}}

\renewcommand{\b}{{\bf b}}

\newcommand{\exampleqed}{%
{   
\renewcommand{\qedsymbol}{$\lozenge$}%
\qed%
}
}

\setcounter{tocdepth}{1}

\title{Geometric families of degenerations from mutations of polytopes}

\author{Laura Escobar}
\address{Dept.\ of Mathematics, One Brookings Drive, 
Washington University St.\ Louis, St.\ Louis, Missouri, 63130-4899, USA} 
\email{laurae@wustl.edu}

\author{Megumi Harada}
\address{Dept.\ of Mathematics and Statistics, McMaster University, 
1280 Main Street West, 
Hamilton, Ontario L8S 4K1, Canada}
\email{haradam@mcmaster.ca}

\author{Christopher Manon} 
\address{Department of Mathematics, 719 Patterson Office Tower, University of Kentucky, Lexington, Kentucky, 40506-0027, USA} 
\email{chris.manon@gmail.com}

\date{\today}

\keywords{Toric varieties, cluster algebras, mutation, Newton-Okounkov bodies, tropicalization, toric degenerations, piecewise linearity, polytopes, Gorenstein-Fano polytopes, Cox rings, Mori dream space}
\subjclass[2020]{Primary: 14M15, 52B20;  Secondary: 13F60, 14T15, 14T20 }

\begin{document}

\begin{abstract} 
We introduce the notion of a \textbf{polyptych lattice}, which encodes a collection of lattices related by piecewise linear bijections. We initiate a study of the new theory of convex geometry and polytopes associated to polyptych lattices. In certain situations, such a polytope associated to a polyptych lattice encodes a compactification of an affine variety whose coordinate ring can be equipped with a valuation into a certain semialgebra associated to the polyptych lattice. We show that aspects of the geometry of the compactification can be understood combinatorially; for instance, under some hypotheses, the resulting compactifications are arithmetically Cohen-Macaulay, and have finitely generated class group and finitely generated Cox rings.   
\end{abstract}

\maketitle

{
  \hypersetup{linkcolor=black}
  \tableofcontents
}

\section{Introduction}

The theory of toric geometry famously provides a dictionary between combinatorics and geometry, leading to an interplay that enriches both areas. Many areas of research within Combinatorial Algebraic Geometry seek to generalize this combinatorics-geometry dictionary of toric geometry in one way or another. Examples include, but are not limited to, toric degenerations, $T$-varieties, Newton-Okounkov bodies, cluster varieties, and integrable systems. 

A common phenomenon observed in several instances of these generalizations is the presence of a collection of polytopes which are related by piecewise linear maps. (These piecewise linear maps are frequently called \textit{mutations}.) For instance, in the theory of Newton-Okounkov bodies, Kaveh and the third author observed in \cite{KavehManon-Siaga} that the Newton-Okounkov bodies of a variety $X$ with coordinate ring $\Aa$ arising from valuations on $\Aa$ with a common Khovanskii basis $\mathcal{B}$ can be organized by the maximal prime cones of the tropical variety corresponding to $X$ and $\mathcal{B}$. The first two authors then took this observation in \cite{KavehManon-Siaga} one step further, and showed in \cite{EscobarHarada} that when two maximal prime cones share a codimension-$1$ face, then the two corresponding Newton-Okounkov bodies are related by a piecewise linear transformation of a particularly simple form. In the context of cluster varieties and cluster algebras, Rietsch and Williams \cite{RW} showed that Newton-Okounkov bodies of $Gr_k(\C^n)$ are related by cluster mutations. In related work, Bossinger, Cheung, Magee, and N\'{a}jera-Chavez \cite{BCMNC} recently showed that Newton-Okounkov bodies of compactifications of certain cluster varieties are also related by cluster mutations. 

Motivated by the above, we introduce in this paper a new combinatorial notion called a \textbf{polyptych lattice}, and initiate a study of its associated theory of convex geometry. This theory builds mutations directly into the definitions. We also begin to develop bridges between the combinatorics of polyptych lattices and algebraic geometry.

To summarize very roughly, in this paper we begin to build a theory which 
\begin{itemize} 
\item replaces the classical lattice $M \cong \Z^r$ with a collection of lattices which are related by piecewise-linear bijections (``mutations''), and 
\item replaces the Laurent polynomial ring $\K[x_1^{\pm}, \cdots, x_r^{\pm}]$, together with its classical valuation to the semialgebra of integral polytopes\footnote{Integral polytopes are also called lattice polytopes.}, with a more general $\K$-algebra equipped with a valuation whose codomain is an appropriately defined idempotent semialgebra. 
\end{itemize} 
By doing the above, we gain multiple benefits; we now list a select few. 
\begin{itemize} 
\item Our constructions simultaneously generalize analogous objects in the theories of toric varieties, tropical varieties, and cluster varieties, among others. 

\item In toric geometry, there is a mirror duality phenomenon in which there is an ``$M$ side'' and an ``$N$ side'', where $M$ and $N$ are dual lattices. Our theory also includes a notion of duality which we also call the ``$M$ side'' and ``$N$ side''.  
With this said, on the ``$M$ side'', we have the following.  
\begin{itemize} 
\item Our theory systematizes and generalizes the phenomenon, already observed in e.g.\ \cite{EscobarHarada}, that Newton-Okounkov bodies associated to adjacent prime cones in a tropical variety are related by piecewise-linear maps. 
\item Moreover, our theory systematizes a geometric picture underlying these ``mutations'' in the sense that we exhibit a family $\{\mathcal{X}_\alpha\}$ of degenerations of a single variety $X$, where each member $\mathcal{X}_\alpha$ of the family has as its central fiber the toric variety associated to a polytope which is mutation-related to the others in the family. 
\item In particular, we can achieve such families which relate different Gorenstein-Fano polytopes, thus giving a ``geometric'' explanation for the existence of such families of Gorenstein-Fano polytopes. 
\end{itemize}

On the ``$N$ side'', our theory appears to closely mirror a phenomenon seen in the theory of log Calabi-Yau varieties. More precisely, let $U$ be a log-Calabi-Yau variety containing an open torus $T \subset U$.  Then the integral points of the Kontsevich-Soibelman tropical skeleton $\mathrm{sk}(U)$ (see \cite{Keel-Yu}) is equipped with a piecewise-linear bijection with the cocharacter lattice of $T$. We suspect that multiple open tori then give rise to a polyptych lattice whose elements are in bijection with the integral points of $\mathrm{sk}(U)$, see also \Cref{conjecture: Keel Yu} below.

\item On a purely combinatorial note, we develop a generalization of the classical theory of convex geometry, and in particular, of polytopes. Moreover, we begin to develop a combinatorics-geometry dictionary between our PL generalizations and the accompanying geometry. We note that our theory appears to be closely related to the ``broken line convex geometry'' presented in a recent paper of Frias-Medina and Magee \cite{FriasMedina-Magee}.

\end{itemize}

With the above rough narrative in mind, we now describe the contents of this paper in some more detail. We first define in \Cref{sec: PL} the central combinatorial object of this paper, a \textbf{polyptych lattice}, which is a collection $\MM$ of rank-$r$ lattices for some positive integer $r$, related to each other by mutations, cf.\ \Cref{definition: polyptych lattice}. We may think of a polyptych lattice (PL) as the appropriate combinatorial setup which can serve as the receptacle for the data of a family of mutation-related Newton-Okounkov bodies.

In classical toric geometry, one often works with both the lattice $M$ and its dual lattice $N$. When we replace a lattice with a collection of lattices as above, it is not immediately clear what we ought to mean by its ``dual'' object. As a first step, we define in \Cref{sec: space of points} the \textbf{space of points $\Sp(\MM)$} of a polyptych lattice $\MM$, which is a certain subset of the functions on $\MM$, cf.\ \Cref{definition: space of points}. The definition is motivated in part from the study of convex functions on vector spaces obtained as minima of a finite collection of linear functions. With this definition in hand, we can then take some steps toward developing a theory of polytopes in the polyptych lattice setting. We first define two polyptych lattice analogues of the classical notion of convexity, one of which involves ``broken lines'' (an analogue of a notion appearing in cluster theory, and developed in the paper of Frias-Medina and Magee \cite{FriasMedina-Magee}); we also define a \textbf{PL half-space}. We are also able to define a semialgebra $S_\MM$ which is canonically associated to any polyptych lattice; our concept generalizes the classical semialgebra of integral polytopes (with respect to the operations ``convex hull of union'' and ``Minkowski sum'').

With the definition of $\Sp(\MM)$ in hand, in \Cref{sec: duals} we begin in earnest to develop a theory of duality for polyptych lattices. Imposing more structure on the pairings between two polyptych lattices allows us to connect more tightly the piecewise-linear structure on both sides, and the notion that will be the most used in this paper is that of a \textbf{strict dual pair}. We then define both the notions of a \textbf{PL polytope} $\PP$, obtained as intersections of PL half-spaces, and (in the presence of, and with respect to, a strict dual) its \textbf{dual PL polytope} $\PP^\vee$. We may also define a polyptych lattice analogue to classical Gorenstein-Fano polytopes, cf.\ \Cref{definition: PL Gorenstein Fano}. 

We record here some questions which arise naturally from this setup.

\begin{question}
    When is a polyptych lattice strictly dualizable? 
\end{question}

\begin{question}
Suppose $\mathcal{F}$ is a family of classical Gorenstein-Fano polytopes which are related by mutation. When does there exist a polyptych lattice $\MM$ and a PL polytope $\PP$ encoding $\mathcal{F}$?     
\end{question}

In \Cref{sec_detrop}, we finally begin to connect the combinatorics to algebraic geometry. In the classical case of the torus $T \cong (\C^*)^r$, its coordinate ring is the Laurent polynomial ring $\C[x_1^{\pm},\cdots,x_r^{\pm}]$, and there is a map $\fv$ assigning to each $f \in \C[x_1^{\pm}, \cdots, x_r^{\pm}]$ its Newton polytope, i.e. $\fv(f) := \mathrm{Newt}(f)$, an integral polytope in $\Z^r \otimes \R \cong \R^r$. The set of integral polytopes forms an idempotent semialgebra $\mathcal{S}$ (cf.\ \Cref{subsec: semialgebra}) and the map $\fv$ above can be interpreted as a valuation with values in $S$. In \Cref{def_valuation} we define (quasi)valuations $\fv$ with values in idempotent semialgebras in more general situations; an important case is when the codomain is the canonical semialgebra $S_\MM$ associated to a polyptych lattice $\MM$, which was mentioned above. In the presence of a strict dual, we are able to use the semialgebra-valued valuations to produce full-rank valuations which generalize the weight (quasi)valuations (arising from maximal prime cones in a tropical variety) introduced by Kaveh and the third author in \cite{KavehManon-Siaga}. In part motivated by this, we call certain pairs $(\Aa_\MM, \fv: \Aa_\MM \to \mathcal{S}_\MM)$ a \textbf{detropicalization} of $\MM$ (cf.\ \Cref{def_detrop} for the precise statement). In addition, our constructions generalize and systematize the wall-crossing phenomena of Newton-Okounkov bodies discussed in \cite{EscobarHarada}. 
We intend to address the following in future work. 

\begin{question}\label{question: detropicalizability}
Suppose $\MM$ is a polyptych lattice. When does $\MM$ admit a detropicalization $(\Aa_\MM, \fv: \Aa_\MM \to \mathcal{S}_\MM)$? 
\end{question}

We mentioned above that our theory appears to be connected with log-Calabi-Yau varieties through the work of \cite{Keel-Yu}, and in this setting, we have a guess for an answer to \Cref{question: detropicalizability}. Let $U$ be an affine log-Calabi-Yau variety containing an open torus. In \cite{Keel-Yu} the authors construct a mirror algebra for $U$ using counts of non-Archimedean curves. In this setting, we suspect that the following is true.

\begin{conjecture}\label{conjecture: Keel Yu}
 (1) In the presence of multiple open tori, the integral points of the Kontsevich-Soibelman tropical skeleton $\mathrm{sk}(U)$ can be equipped with the structure of a polyptych lattice $\MM$.  Moreover, the Keel-Yu mirror algebra can be realized as a detropicalization of $\MM$. (2) In particular, if $\MM$ and $\NN$ are a strict dual pair of polyptych lattices, and $\NN$ admits a detropicalization and a chart-Gorenstein-Fano PL polytope $\PP$, then the (coordinate ring of the) Keel-Yu mirror to $\Spec(\Aa_\NN)$ is a detropicalization of $\MM$.
 \end{conjecture}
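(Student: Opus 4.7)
The plan is to leverage the Gross-Hacking-Keel-Kontsevich theta function machinery as developed in the non-Archimedean setting by Keel and Yu. For part (1), given an affine log Calabi-Yau $U$ with a family of open tori $T_\alpha \subset U$, each inclusion induces a bijection $\iota_\alpha: \mathrm{sk}(U)(\Z) \to N_{T_\alpha}$ onto the cocharacter lattice of $T_\alpha$. The first step is to verify that the collection $\{N_{T_\alpha}\}$ together with the transfer maps $\iota_\beta \circ \iota_\alpha^{-1}: N_{T_\alpha} \to N_{T_\beta}$ satisfies the axioms of a polyptych lattice as codified in \Cref{definition: polyptych lattice}. The needed ingredients are standard in principle: these transfer maps are piecewise linear (they arise from the tropicalization of the birational identifications $T_\alpha \dashrightarrow T_\beta$ over $U$), they are genuine bijections on integral points (by the definition of $\mathrm{sk}(U)$ together with a change-of-chart comparison of theta function supports), and they satisfy cocycle compatibilities on triple overlaps.

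The next step is to construct the detropicalization. Let $\mathrm{Mir}(U)$ be the Keel-Yu mirror algebra with its theta basis $\{\vartheta_p : p \in \mathrm{sk}(U)(\Z)\}$, and define $\fv: \mathrm{Mir}(U) \to S_\MM$ by sending $\sum c_p \vartheta_p$ to the class in $S_\MM$ determined by the support $\{p : c_p \neq 0\}$, viewed through the broken-line-convex-hull operation developed in \Cref{sec: space of points}. The additivity and multiplicativity of $\fv$ reduce to the positivity and integrality of the structure constants $\vartheta_p \cdot \vartheta_q = \sum N_{p,q}^r \vartheta_r$, together with the statement that the resulting counts of broken lines transform compatibly under the transfer maps between any two charts. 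Restricted to a single chart $T_\alpha$ and then composed with a pairing against a strict dual point, this should recover a weight quasivaluation in the sense of \cite{KavehManon-Siaga}, which is the correct object to verify the detropicalization conditions of \Cref{def_detrop}. Part (2) then follows from part (1): if $\NN$ is strictly dual to $\MM$ and carries a chart-Gorenstein-Fano PL polytope $\PP$, then $\Spec(\Aa_\NN)$ is an affine log Calabi-Yau variety containing open tori indexed by the charts of $\NN$, so the Keel-Yu construction applies to it and the conclusion is immediate from part (1).

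The main obstacle is expected to be the fine compatibility of the transfer maps with the semialgebra $S_\MM$, rather than the purely set-theoretic PL bijection statement. Piecewise linearity and bijectivity of the chart transitions at the level of integral points follow from scattering-diagram technology, but the stronger statement that broken-line convex hulls in one chart pull back to broken-line convex hulls in another — which is what is needed for $\fv$ to land in $S_\MM$ rather than in some larger object — seems to require new input. Controlling this likely needs either a restriction to $U$ whose scattering diagrams are sufficiently tame, or an extraction of additional finiteness and positivity statements from the non-Archimedean theory of \cite{Keel-Yu}. A reasonable first test case, where all of the required positivity and cocycle data are already in the literature, is $U$ a cluster variety; there the polyptych lattice structure on $\mathrm{sk}(U)(\Z)$ should coincide on the nose with the $\mathcal{A}$- or $\mathcal{X}$-tropical charts, and $\fv$ with the theta-function support map, providing a proof of the conjecture in a robust special case.
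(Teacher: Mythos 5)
This statement is labeled a \emph{conjecture} in the paper, and the authors offer no proof of it; there is therefore no ``paper proof'' to compare your proposal against. What you have written is a research program sketch rather than a proof, which you acknowledge honestly, so the right question is whether the sketch is plausible and where its genuine gaps lie.

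For part (1), your outline is sensible and matches the narrative the paper itself suggests: transfer maps between cocharacter lattices of nested open tori give the candidate polyptych lattice, and the theta-basis support map gives the candidate detropicalization. You also correctly locate the hard point: establishing that the mutations are merely piecewise-linear bijections on integral points is comparatively soft, but showing that broken-line convex hulls (equivalently, point-convex hulls) transform coherently across charts -- so that the support map genuinely lands in $S_\MM$ with the relations of Definition~\ref{definition: canonical semialgebra MM} -- requires new finiteness and positivity input from the non-Archimedean theory. This is the correct thing to be worried about; the Frias-Medina--Magee reference that the paper cites is directly relevant here, and your suggestion to first run the argument for cluster varieties (where scattering diagrams are finite and the requisite positivity is available) is a reasonable way to extract a theorem from the conjecture.

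Part (2), however, does \emph{not} ``follow immediately from part (1),'' and this is the real gap in your proposal. Two separate things need justification. First, you assert without argument that $\Spec(\Aa_\NN)$ is an affine log Calabi--Yau variety containing open tori indexed by $\pi(\NN)$. Neither of these is obvious from the definitions in Sections~\ref{sec_detrop}--\ref{section: compactifications}: a detropicalization $\Aa_\NN$ is just a Noetherian domain of the right dimension admitting a suitable valuation, and nothing a priori forces the subalgebras $\Aa_\NN^{\mathcal{C}}$ (or anything else) to induce embeddings of open tori, let alone a log CY structure. The chart-Gorenstein-Fano hypothesis on $\PP$ is presumably what supplies the Calabi--Yau condition -- via the compactification $X_{\Aa_\NN}(\PP)$ being Gorenstein Fano in each chart and $\Spec(\Aa_\NN)$ being its torus-free interior -- but you never invoke the polytope $\PP$ at all, which makes the hypothesis in part~(2) dangle. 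Second, even granting that $\Spec(\Aa_\NN)$ is log CY with open tori, applying part~(1) produces \emph{some} polyptych lattice structure on $\mathrm{sk}(\Spec(\Aa_\NN))(\Z)$; you need this to be $\MM$ specifically. That requires an identification of the paper's combinatorial strict dual (via $\v, \w$ and $\Sp(\NN)$) with the Keel--Yu tropical skeleton construction, and this compatibility is not a formal consequence of part (1). In short, part (1) provides a candidate $\MM'$ detropicalized by the Keel--Yu mirror; part (2) additionally asserts $\MM' \cong \MM$, and your sketch supplies no mechanism for that isomorphism.
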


Next, we turn to compactifications. Recall that in the classical theory of toric varieties, a choice of integral polytope gives rise to a corresponding compactification of the torus $T \cong (\C^*)^n$. In \Cref{section: compactifications} we show, analogously, that a choice of PL polytope $\PP$ gives rise to a compactification of $\Spec(\Aa_\MM)$ for a choice of detropicalization $\Aa_\MM$ of $\MM$. We also prove several geometric consequences of our constructions. For instance, if the original detropicalization $\Aa_\MM$ is normal, then the compactification is also normal (\Cref{prop: normality of compactification}). In \Cref{theorem: Cohen Macaulay} we show that, under certain hypotheses, the compactification is arithmetically Cohen-Macaulay. We may also ask whether any of our compactifications are Mori dream spaces. As a first step in this direction, we prove that some of our compactifications have finitely generazed class group, and finitely generated Cox rings (\Cref{theorem: finitely generated Cox rings}). 

There are also significant connections between our polyptych lattices and the theory of cluster varieties and cluster algebras. Suppose $\MM, \NN$ are a strict dual pair of detropicalizable polyptych lattices with dual PL polytopes $\PP \subset \MM_\Q, \PP^\vee\subset \NN_\Q$. Then the functions $\Psi_\PP := \oplus_{m \in \PP \cap \MM} \v(m): \NN \to \Z$, $\Psi_{\PP^\vee}:= \oplus_{n \in \PP^\vee \cap \NN} \w(n): \MM \to \Z$, defined via the lattice elements in $\PP,\PP^\vee$ respectively, can be thought of as a PL analogue of \textit{mirror superpotentials} for their associated compactifications $X_{\Aa_\MM}(\PP), X_{\Aa_\NN}(\PP^\vee)$. In particular, the set $\PP \cap \MM$ is precisely the set of elements $m \in \MM$ satisfying $\Psi_{\PP^\vee}(m) \geq -1$. We expect this to be a generalization of the form of mirror symmetry found by Rietsch and Williams in \cite{RW}.

We should also mention here that we expect our polyptych lattices (and their detropicalizations) to be in analogy to, and a generalization of, finite-type cluster algebras. In particular, we expect a finite-type cluster algebra to give rise to a detropicalizable finite polyptych lattice which is self-dual. It should also be emphasized that we have focused on the case of \emph{finite} polyptych lattices in this manuscript for the sake of simplicity, but we expect a version of our theory to make sense in the infinite case. 
 
Moreover, Ilten \cite{Ilten2012, Ilten2011} has constructed projective $T$-varieties of complexity $1$ which simultaneously degenerate to the toric varieties associated to two mutation-equivalent lattice polytopes. This leads us to pose the following. 

\begin{question} 
Can Ilten's complexity-$1$ $T$-variety be realized as a compactification $X_{\Aa_\MM}(\PP)$ of $Spec(\Aa_\MM)$ of a detropicalization $\Aa_\MM$ of a polyptych lattice $\MM$, with respect to a suitable choice of PL polytope $\PP$?  
\end{question}

A new theory needs concrete examples to illustrate its worth. In a companion note to this paper \cite{CookEscobarHaradaManon2024} we give detailed computations for a family of rank-$2$ examples. In this paper, we restrict ourselves to a discussion, in \Cref{sec-Example}, of one family of higher-rank examples, denoted $\MM_{d,r}$ and parametrized by two integers $d \geq 2, r \geq 2$. (Though not necessary for its definition or any of the associated constructions, the reader may benefit from knowing that $\MM_{d,r}$ can also be realized as a distinguished subcomplex of the tropical variety of the ideal defining the detropicalization $\Aa_{d,r}$ described in \Cref{subsec: detrop MMdr}.) For $\MM_{d,r}$ we explicitly compute many of the associated constructions given in this paper, and in particular show that $\MM_{d,r}$ and $\MM_{r,d}$ (note the switch) are a strict dual pair. Moreover, we give an explicit example of a chart-Gorenstein-Fano PL polytope $\PP$ in $\MM_{d,r}$, which therefore implies that their associated classical polytopes $P_\alpha$ in each coordinate chart are Gorenstein-Fano in the classical sense. Moreover, the theory developed in this paper imply that these classical Gorenstein-Fano polytopes $P_\alpha$ are not only combinatorially related by mutation, but also geometrically related in the sense that the compactification $\mathrm{Proj}(\Aa^\PP_{\MM_{d,r}})$ degenerates in a flat family to each of the toric varieties $X(P_\alpha)$ associated to these polytopes.  We note that deformations of toric Fano log-Calabi-Yau pairs have been studied previously by Petracci \cite[Theorem 1.3]{Petracci2021}. Finally, because our detropicalization $\Aa_{\MM_{d,r}}$ is a UFD, our compactification $\mathrm{Proj}(\Aa_{\MM_{d,r}}^\PP)$ has finitely generated class group and finitely generated Cox ring.

We close this introduction with a brief sample of further questions inspired by the constructions in this paper.  First and foremost, in a forthcoming paper we intend to give multiple constructions which yield many new examples of detropicalizable polyptych lattices. Ilten has also suggested to us that it would be of interest to find a polyptych-lattice interpretation of Mukai varieties and their toric degenerations, as developed in e.g.\ \cite{ChristophersenIlten2014, ChristophersenIlten2016}.  We also note that our constructions can exhibit different toric Fano varieties as arising as degenerations of a single compactification $\mathrm{Proj}(\Aa_\MM^\PP)$; thus, we may ask what this implies about the moduli space of (toric) Fano varieties. In another direction, we also ask whether, or to what extent, there is a ``Laurent phenomenon'' for polyptych lattice. Finally, it is also natural to ask if a form of Batyraev-Borisov mirror symmetry holds for complete intersections in $X_{\Aa_\MM}(\PP)$ when $\PP$ is a chart-Gorenstein-Fano PL polytope (cf.\ \Cref{definition: PL Gorenstein Fano}). We intend to pursue these questions in future work.

\bigskip

\noindent \textbf{Acknowledgements.} 
We thank Alessio Corti, Bosco Frias-Medina, and Timothy Magee for interest in our project and for useful conversations. We are grateful to Silas Vriend for pointing out errors in our definitions in previous versions of this manuscript and to Federico Castillo for helping us with the theory of polytopes. We thank Eugene Gorsky for explaining braid varieties (and their cluster structures) to us, and we thank Nathan Ilten for suggesting the question about Mukai varieties. LE was supported in part by a Fields Research Fellowship from the Fields Institute for Research in the Mathematical Sciences. LE is also supported by an NSF CAREER grant DMS-2142656. MH was supported by a Canada Research Chair Award (Tier 2) and NSERC Discovery Grant 2019-06567. CM is supported by NSF DMS grant 2101911.

\section{Polyptych lattices}\label{sec: PL}

We begin by introducing the central object in this manuscript. For this purpose, we use (and generalize) many notions from classical convex geometry; our notation and definitions mainly follow that of \cite{Cox_Little_Schenck}.

First, we set some terminology. Let $V$ be a finite-dimensional real vector space, with a fixed isomorphism $V \cong \R^n$. In this manuscript, we say that a \textbf{fan} $\Sigma$ in $V$ is a finite collection of non-empty rational polyhedral cones $\Sigma =\{C\}$ such that every nonempty face of a cone is also a cone in $\Sigma$, and, the intersection of any two cones in $\Sigma$ is a face of both.\footnote{Some authors do not require that the cones in a fan are rational and polyhedral. Note that our usage differs from that of \cite{Cox_Little_Schenck}, wherein it is additionally assumed that the cones are strongly convex.} 
The \textbf{support} of a fan $\Sigma$ is $\lvert \Sigma \rvert := \cup_{C \in \Sigma} C$. A fan $\Sigma$ is \textbf{complete} if its support is all of $V$. 
Next we define our notion of piecewise-linearity. Let $V,W$ be real vector spaces. For the purposes of this manuscript\footnote{In the literature, there are more general definitions of piecewise-linearity, but we wish to restrict to a special case.}, a map $\psi: V \to W$ is said to be \textbf{piecewise linear} if $\psi$ is continuous, and, there exists a complete fan $\Sigma$ in $V$ such that for each cone $C$ in $\Sigma$, the restriction $\psi \vert_C$ of $\psi$ to $C$ is $\R$-linear in $C$. Now suppose $F$ is a subring of $\R$ and $M$ and $N$ are finite-rank free $F$-modules. We will say a cone in $M \otimes_F \R$ is \textbf{$F$-rational} if it can be defined by inequalities with coefficients which are $\Q$-linear combinations of elements of $F$, and an $F$-rational fan is then a finite collection of non-empty $F$-rational polyhedral cones satisfying the same conditions as a fan $\Sigma$ in $V$ above. (When $F=\Z$, $F$-rationality reduces to the standard notion of rationality.) We also say that a function $f: M \to N$ is \textbf{piecewise $F$-linear} if there exists a complete $F$-rational fan $\Sigma$ in $M \otimes_F \R$ such that for each cone $C \in \Sigma$, the restriction $\psi \vert_{C \cap M}$ is $F$-linear. We sometimes drop the reference to the base field $F$ when it is understood by context.

With the above terminology in place, we can make the following definition.

\begin{definition}\label{definition: polyptych lattice}
Let $r$ be a positive integer and let $\Z \subseteq F \subseteq \R$ be a subring of $\R$. A \textbf{polyptych lattice (PL) of rank $r$ over $F$} is a pair $\mathcal{M} := (\{M_\alpha\}_{\alpha \in \mathcal{I}}, \{\mu_{\alpha,\beta}: M_\alpha \to M_\beta\}_{\alpha,\beta \in \mathcal{I}})$ consisting of a collection $\{M_\alpha\}_{\alpha \in \mathcal{I}}$ of free $F$-modules, each of rank $r$ and indexed by a set $\mathcal{I}$, and a collection of piecewise-linear maps $\mu_{\alpha,\beta}: M_\alpha \to M_\beta$ for every pair $(\alpha,\beta)$ of indices, satisfying the following conditions:
\begin{enumerate} 
\item $\mu_{\alpha,\alpha} = \mathrm{Id}_{M_\alpha}$ is the identity map for all $\alpha \in \mathcal{I}$, 
\item $\mu_{\alpha,\beta} = \mu_{\beta,\alpha}^{-1}$ for all pairs $\alpha,\beta \in \mathcal{I}$, and 
\item $\mu_{\beta,\gamma} \circ \mu_{\alpha,\beta} = \mu_{\alpha,\gamma}$ for all triples $\alpha,\beta,\gamma \in \mathcal{I}$. 
\end{enumerate} 
Note in particular that the requirement (2) above implies that all the maps $\mu_{\alpha,\beta}$ are invertible. We call the maps $\mu_{\alpha,\beta}$ \textbf{mutations}, and we call  $M_\alpha$ a \textbf{chart} (or the \textbf{$\alpha$-th chart}) of $\mathcal{M}$. When $\mathcal{I}$ is finite, we say $\mathcal{M}$ is a \textbf{finite} polyptych lattice. 
\exampleqed
\end{definition} 

 \begin{remark} 
In this manuscript, we focus our attention on \textbf{finite} polyptych lattices. 
\exampleqed
\end{remark}

Given a polyptych lattice $\MM$ of rank $r$ over $F \subset \R$ and $F'$ with $F \subseteq F' \subseteq \R$, we may define a polyptych lattice $\MM \otimes_F F'$ of rank $r$ over $F'$ by tensoring all charts with $F'$ and using the same mutation maps (or more precisely, their natural extensions to $M_\alpha \otimes_F F'$). We let $\MM_{F'}$ denote the resulting polyptych lattice over $F'$. We will most often be concerned with the cases where the rings in question are $\Z,\Q$, or $\R$.

In analogy with the definition of manifolds in differential topology, given the data of a polyptych lattice $\MM = (\{M_\alpha\}_{\alpha \in \mathcal{I}}, \{\mu_{\alpha,\beta}: M_\alpha \to M_\beta\})$ of rank $r$ over $F$, we now build an associated topological space. First, for each $\alpha \in \mathcal{I}$ we fix an identification of $F$-modules $M_\alpha \cong F^r \subset \R^r$ and equip $M_\alpha$ with the subspace topology with respect to the standard Euclidean topology on $\R^r$. (It is straightforward to check that this is independent of the choice of isomorphism $M_\alpha \cong F^r$.) By slight abuse of notation we denote also by $\MM$ the quotient space 
\begin{equation}\label{eq: def elements of MM} 
\MM := \bigsqcup_{\alpha \in \mathcal{I}} M_\alpha \bigg/ \sim
\end{equation} 
where the equivalence relation is defined by $m_\alpha \sim m_\beta$, for $m_\alpha \in M_\alpha, m_\beta \in M_\beta$, precisely when $\mu_{\alpha,\beta}(m_\alpha)=m_\beta$. This construction then equips $\MM$ with the associated quotient topology. Note also that any equivalence class $m \in \MM$ has a unique representative in each chart $M_\alpha$. This motivates the following definitions.

\begin{definition}\label{def: element of MM} 
Let $\MM$ be a polyptych lattice of rank $r$ over $F$ as in Definition~\ref{definition: polyptych lattice}. An \textbf{element} of $\MM$ is an equivalence class in the quotient space in \Cref{eq: def elements of MM}. The \textbf{$\alpha$-th chart map,} or \textbf{$\alpha$-th coordinate (map),} is the association 
\begin{equation*}\label{eq: def alpha coordinate} 
\pi_\alpha: \MM \to M_\alpha, \quad m \mapsto m_\alpha,
\end{equation*}
taking an equivalence class $m \in \MM$ to its unique representative $m_\alpha$ in $M_\alpha$. We call $\pi_\alpha(m)$ the \textbf{$\alpha$-th chart representative}, or \textbf{$\alpha$-th coordinate}, of $m \in \MM$. 
\exampleqed
\end{definition} 

\begin{notation} 
Motivated by the above definition, we sometimes denote the indexing set $\mathcal{I}$ associated to a polyptych lattice $\MM = (\{M_\alpha\}_{\alpha \in \mathcal{I}}, \{\mu_{\alpha,\beta}\}_{\alpha,\beta \in \mathcal{I}})$ by $\pi(\MM) := \mathcal{I}$. 
\end{notation} 

\begin{remark}\label{remark: zero in MM}
Since all mutation maps are by definition piecewise-linear, and in particular sends $0$ to $0$, this implies that there is a unique element of $\MM$ with $\alpha$-th chart representative equal to $0 \in M_\alpha$, for all $\alpha$. We will denote this element as $0_\MM$. 
\exampleqed
\end{remark}

\begin{example}\label{ex: running example}
We give an example of a rank-$2$ polyptych lattice over $\Z$, defined with $2$ charts. More precisely, let $\MM=(\{M_1,M_2\},\{\mu_{1,2}\})$, where $M_1=M_2=\Z^2$ (equipped with the standard basis $\{\varepsilon_1,\varepsilon_2\}$) and $\mu_{12}:M_1\to M_2$ is given in coordinates with respect to the standard basis by 
\begin{equation*}
\mu_{12}(x,y)=(\min\{0,y\}-x,y)
.\end{equation*}
So an element of $\MM$ is an equivalence class $m$ in $M_1 \sqcup M_2/\sim$ where $\pi_1(m)=(x,y) \in \Z^2 \cong M_1$ and $\pi_2(m)=(\min\{0,y\}-x,y) \in \Z^2$. A concrete example is $m \in \MM$ with $\pi_1(m)=(1,-1)$ and $\pi_2(m)=(-2,-1)$. 
\exampleqed
\end{example}

  Unlike the situation of a classical lattice, there does not exist in general a well-defined operation of addition on (the set of elements of) $\MM$, since the mutation maps $\mu_{\alpha,\beta}$ are only piecewise-linear and not necessarily linear. More precisely, given two elements $m , m' \in \MM$, the sum $\pi_\alpha(m)+\pi_\alpha(m')$ may not be identified via $\mu_{\alpha,\beta}$ to the sum $\pi_\beta(m)+\pi_\beta(m')$. In light of this, we set the following notation. For $m,m' \in \MM$, and $\alpha \in \pi(\MM) = \mathcal{I}$, we define 
    \begin{equation}\label{eq: addition in chart}
m +_\alpha m' := \pi_\alpha^{-1}(\pi_\alpha(m) + \pi_\alpha(m'))
    \end{equation}
   which we think of as ``addition in the chart $M_\alpha$''.

Next we define a PL analogue of the notion of a convex rational polyhedral cone. 

\begin{definition}\label{definition: PL cone} 
Let $\MM$ be a polyptych lattice over $F$. 
A \textbf{PL cone over $F$} is a subset $\mathcal{C} $ of $\MM_{\R}$ such that $\pi_\alpha(\mathcal{C}) \subseteq M_\alpha \otimes_{F} \R$ is an $F$-rational polyhedral cone for each $\alpha \in \pi(\MM)$ (cf.\ \cite[Definition 1.2.1, Definition 1.2.14]{Cox_Little_Schenck}).  
\exampleqed
\end{definition}

The \textbf{dimension} of a PL cone $\mathcal{C}$ is the dimension of any chart image $\pi_\alpha(\mathcal{C})$; it is easily seen that this is independent of the choice of $\alpha$. Given a PL cone $\mathcal{C}$, a \textbf{face} $\mathcal{C}'$ of $\mathcal{C}$ is a subset of $\mathcal{C}$ such that $\pi_\alpha(\mathcal{C}')$ is a face of $\mathcal{C}$ for all $\alpha \in \pi(\MM)$. A \textbf{facet} of a PL cone $\mathcal{C}$ is a face of dimension $\dim(\mathcal{C})-1$. (See \cite[\textsection 1.2]{Cox_Little_Schenck} for the corresponding classical notions.) Any face of $\mathcal{C}$ is itself a PL cone, as can easily be checked. The following is a  natural analogue of the classical notion of a fan \cite[\textsection 3.1]{Cox_Little_Schenck}. (We note that we do not require our PL cones to be strongly convex.) 

\begin{definition}\label{definition: PL fan}
Let $\MM$ be a polyptych lattice over $F$. A \textbf{PL fan over $F$ in $\MM_{\R}$} is a finite collection $\Sigma $ of PL cones in $\MM_{\R}$ such that: 
\begin{enumerate} 
\item for every $\mathcal{C} \in \Sigma$ and every $\alpha \in \pi(\MM)$, the chart image $\pi_\alpha(\mathcal{C})$ is an $F$-rational polyhedral cone, 
\item for every $\mathcal{C} \in \Sigma$, each face of $\mathcal{C}$ is also in $\Sigma$, 
\item for all $\mathcal{C},\mathcal{C}' \in \Sigma$, the intersection $\mathcal{C} \cap \mathcal{C}'$ is a face of each, (and hence also in $\Sigma$). 
\end{enumerate}
The \textbf{support} of a PL fan is $\lvert \Sigma \rvert := \cup_{\mathcal{C} \in \Sigma} \mathcal{C}$. A PL fan in $\MM_{\R}$ is \textbf{complete} if $\lvert \Sigma \rvert = \MM_{\R}$. A PL fan $\Sigma'$ \textbf{refines} a PL fan $\Sigma$ if every $\mathcal{C}' \in \Sigma'$ is contained in a PL cone of $\Sigma$ and $\lvert \Sigma' \rvert = \lvert \Sigma \rvert$.  
\exampleqed
\end{definition}

The following is immediate from the definitions. We record it for reference. 

\begin{lemma}\label{lemma: PL fan projection}
Let $\MM$ be a finite polyptych lattice over $F$ and let $\Sigma $ be a PL fan over $F$ in $\MM_\R$. Then for any $\alpha \in \mathcal{I}$, the images $\{\pi_\alpha(\mathcal{C})\mid \mathcal{C}\in\Sigma\}$ form a fan, denoted $\pi_\alpha(\Sigma)$, in $M_\alpha \otimes_{F} \R$. Moreover, the fans $\pi_\alpha(\Sigma)$ for varying $\alpha$ are related by the mutation maps of $\MM$.  
\end{lemma}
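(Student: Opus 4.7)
The plan is to verify in turn the three defining axioms of a classical fan (cf.\ the paragraph on fans at the start of \Cref{sec: PL}) for the collection $\pi_\alpha(\Sigma) = \{\pi_\alpha(\mathcal{C}) \mid \mathcal{C} \in \Sigma\}$, and then observe that compatibility with mutations is essentially automatic from the construction of $\MM$ as a quotient. First, since $\Sigma$ is finite, so is $\pi_\alpha(\Sigma)$; and for each $\mathcal{C} \in \Sigma$, the image $\pi_\alpha(\mathcal{C})$ is an $F$-rational polyhedral cone in $M_\alpha \otimes_F \R$ directly from \Cref{definition: PL cone} and axiom (1) of \Cref{definition: PL fan}. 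For the intersection axiom, I would exploit the fact that $\pi_\alpha$ is a bijection from the set of elements of $\MM_\R$ onto $M_\alpha \otimes_F \R$: given $\mathcal{C}_1,\mathcal{C}_2 \in \Sigma$, this bijectivity gives the set-theoretic identity $\pi_\alpha(\mathcal{C}_1) \cap \pi_\alpha(\mathcal{C}_2) = \pi_\alpha(\mathcal{C}_1 \cap \mathcal{C}_2)$, and then axiom (3) of \Cref{definition: PL fan} says $\mathcal{C}_1 \cap \mathcal{C}_2$ is a PL face of each $\mathcal{C}_i$, so by the definition of a PL face its $\alpha$-chart projection is a (classical) face of both $\pi_\alpha(\mathcal{C}_1)$ and $\pi_\alpha(\mathcal{C}_2)$, as required.

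For the face axiom, I need that every classical face of $\pi_\alpha(\mathcal{C})$ is again in $\pi_\alpha(\Sigma)$. The approach is to start with such a face $F$, set $\mathcal{C}' := \pi_\alpha^{-1}(F) \cap \mathcal{C}$ using the element-level bijectivity of $\pi_\alpha$, and then argue that $\mathcal{C}'$ is a PL face of $\mathcal{C}$. Once this is established, axiom (2) of \Cref{definition: PL fan} places $\mathcal{C}' \in \Sigma$, so $F = \pi_\alpha(\mathcal{C}') \in \pi_\alpha(\Sigma)$. I expect this step to be the main obstacle: verifying that $\mathcal{C}'$ is a PL face amounts to showing, for every other chart $\beta \in \pi(\MM)$, that $\pi_\beta(\mathcal{C}') = \mu_{\alpha,\beta}(F)$ is a classical face of $\pi_\beta(\mathcal{C})$. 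My proposed route is to show first that, because $\mu_{\alpha,\beta}$ is a piecewise $F$-linear bijection carrying the single rational polyhedral cone $\pi_\alpha(\mathcal{C})$ onto the single rational polyhedral cone $\pi_\beta(\mathcal{C})$, its restriction to $\pi_\alpha(\mathcal{C})$ must actually be $F$-linear; concretely, one would intersect $\pi_\alpha(\mathcal{C})$ with the maximal cones of the linearity fan of $\mu_{\alpha,\beta}$ and use that the resulting images tile the convex cone $\pi_\beta(\mathcal{C})$ by images of a PL bijection, forcing the subdivision to be trivial. Once linearity on $\pi_\alpha(\mathcal{C})$ is in hand, an $F$-linear bijection between cones sends faces to faces, and the face axiom follows.

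Finally, the compatibility with mutations is immediate from the setup: for any $\mathcal{C} \in \Sigma$, the definition of an element of $\MM$ (see \Cref{def: element of MM} and the paragraph preceding it) together with the fact that $\mathcal{C}$ is closed under the equivalence relation gives $\mu_{\alpha,\beta}(\pi_\alpha(\mathcal{C})) = \pi_\beta(\mathcal{C})$. Thus $\mu_{\alpha,\beta}$ sends each cone of $\pi_\alpha(\Sigma)$ bijectively onto the corresponding cone of $\pi_\beta(\Sigma)$, finishing the proof.
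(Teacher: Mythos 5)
Your finiteness, rationality, intersection, and mutation-compatibility steps are all fine, and you correctly flag the face axiom as the real obstacle. The gap is the linearity claim you invoke to close it: a piecewise $F$-linear bijection that carries a rational polyhedral cone onto a rational polyhedral cone need \emph{not} be linear on that cone, and ``the subdivision must be trivial'' is simply false. Already in the paper's running example $\MM$ of \Cref{ex: running example} this fails: take $\mathcal{C}$ with $\pi_1(\mathcal{C}) = \{x\geq 0\}$. Then $\pi_2(\mathcal{C}) = \mu_{12}(\{x\geq 0\}) = \{(u,v) : u \leq \min\{0,v\}\} = \{u\leq 0\}\cap\{u\leq v\}$ is a rational polyhedral cone, so $\mathcal{C}$ is a PL cone; yet $\mu_{12}$ is not linear on $\{x\geq 0\}$ (it is $(-x,y)$ on the closed first quadrant and $(y-x,y)$ on the closed fourth), and the images of those two linearity pieces genuinely subdivide $\pi_2(\mathcal{C})$ into the second quadrant and the wedge $\{u\leq v\leq 0\}$.

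The same example also shows the pullback step you rely on can fail outright. The classical face $F = \{x=0\}$ of $\pi_1(\mathcal{C})$ satisfies $\mu_{12}(F) = \{(0,v):v\geq 0\}\cup\{(v,v):v\leq 0\}$, the union of two \emph{distinct} rays of $\pi_2(\mathcal{C})$, which is not itself a face of $\pi_2(\mathcal{C})$. Hence $\pi_1^{-1}(F)$ is not a PL face of $\mathcal{C}$, so the axioms of \Cref{definition: PL fan} do not force any $\mathcal{C}''\in\Sigma$ with $\pi_1(\mathcal{C}'')=F$; in fact $\Sigma=\{\mathcal{C}\}$ already satisfies all three PL-fan axioms (its only PL face is itself), while $\pi_1(\Sigma)=\{\{x\geq 0\}\}$ omits the face $\{x=0\}$ and is therefore not a fan. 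Your argument would close under the additional hypothesis that $\Sigma$ refines $\Sigma(\MM)$: then \Cref{lemma: fan of MM}(c) gives genuine linearity of each $\mu_{\alpha,\beta}$ on each cone, a linear isomorphism of cones sends faces to faces, and the face axiom follows. Without some hypothesis of that sort, the linearity step cannot be repaired as proposed.
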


 When $\MM$ is a finite polyptych lattice, it turns out that the piecewise linearity of the mutations $\mu_{\alpha,\beta}$ defining $\MM$ allow us to define a PL fan naturally associated to $\MM$, as we now explain. Indeed, by definition of piecewise linearity, for any pair $(\alpha,\beta) \in \mathcal{I}^2$, there exists a minimal fan $\Sigma(\MM, \alpha,\beta)$ in $M_\alpha \otimes_{F} \mathbb{R}$ such that, for each cone $C \in \Sigma(\MM, \alpha, \beta)$, the restriction $\mu_{\alpha,\beta} \vert_C: C \to \mathbb{R}$ is $\mathbb{R}$-linear. Now let $\alpha$ be fixed. Let $\Sigma(\MM, \alpha)$ denote the common refinement of all $\Sigma(\MM,\alpha,\beta)$ as $\beta$ ranges over the finite set $\mathcal{I}=\pi(\MM)$. This is a fan in $M_\alpha \otimes_{F} \R$ which, by construction, has the property that for any cone $C$ of $\Sigma(\MM, \alpha)$ and any $\beta \in \mathcal{I}$, the  mutation $\mu_{\alpha,\beta}$ restricts to $C$ to be linear. 
Now let $\MM_\R = \bigcup_{C \in \Sigma(\MM,\alpha)} \pi_\alpha^{-1}(C)$ be the decomposition of $\MM_\R$ into preimages of the cones in $\Sigma(\MM,\alpha)$. We call this decomposition \textbf{the PL fan of $\MM$}, and denote it by $\Sigma(\MM)$. 
This terminology is justified by the following lemma.

\begin{lemma}\label{lemma: fan of MM}
Let $\MM$ be a finite polyptych lattice over $F$, let $\alpha \in \mathcal{I}$, and let $\Sigma(\MM,\alpha)$ be defined as above. Then: 
\begin{enumerate}
\item[(a)] for $C \in \Sigma(\MM,\alpha)$, the preimage $\mathcal{C} := \pi_\alpha^{-1}(C) \subset \MM_\R$ is a PL cone over $F$, 
\item[(b)] the decomposition $\MM_\R = \bigcup_{C \in \Sigma(\MM,\alpha)} (\mathcal{C} := \pi_\alpha^{-1}(C))$ is a complete PL fan over $F$, denoted $\Sigma(\MM)$,
\item[(c)] for any $\beta, \gamma \in \mathcal{I}$, the mutation $\mu_{\beta,\gamma}: M_\beta \to M_\gamma$ is linear on $\pi_\beta(\mathcal{C}) = \pi_\beta(\pi_\alpha^{-1}(C))$ for any $\mathcal{C} \in \Sigma(\MM)$, and $\Sigma(\MM)$ is the minimal PL fan with this property, and 
\item[(d)] $\Sigma(\MM)$ is independent of the choice of $\alpha$. 
\end{enumerate} 
\end{lemma}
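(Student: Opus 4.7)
The plan is to prove (a), (b), (c), and (d) in sequence. Parts (a) and (b) follow almost directly from the construction of $\Sigma(\MM, \alpha)$ as the coarsest common refinement of the fans $\Sigma(\MM, \alpha, \beta)$, on each of whose cones every mutation $\mu_{\alpha,\beta}$ is simultaneously $F$-linear. Part (c) is the heart of the lemma, since it asserts linearity of \emph{every} mutation $\mu_{\beta,\gamma}$ (not only those with source $\alpha$) on each chart image of a cone of $\Sigma(\MM)$. I plan to deduce it from the cocycle identity $\mu_{\beta,\gamma}\circ \mu_{\alpha,\beta} = \mu_{\alpha,\gamma}$, together with the $F$-linearity of $\mu_{\alpha,\beta}$ and $\mu_{\alpha,\gamma}$ on the given cone. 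Part (d) then follows formally from the intrinsic, chart-independent characterization provided by (c).

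For (a), fix $C \in \Sigma(\MM, \alpha)$ and any $\beta \in \pi(\MM)$. Since $\pi_\alpha \colon \MM_\R \to M_\alpha \otimes_F \R$ is a bijection, one has $\pi_\beta(\mathcal{C}) = \mu_{\alpha,\beta}(C)$; because $\Sigma(\MM, \alpha)$ refines $\Sigma(\MM, \alpha, \beta)$, the restriction $\mu_{\alpha,\beta}|_C$ is $F$-linear, so $\mu_{\alpha,\beta}(C)$ is the $F$-linear image of an $F$-rational polyhedral cone, hence again such a cone. For (b), completeness of $\Sigma(\MM)$ is immediate from completeness of $\Sigma(\MM,\alpha)$ and bijectivity of $\pi_\alpha$. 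The fan axioms reduce via $\pi_\alpha^{-1}$ to corresponding statements for $\Sigma(\MM, \alpha)$, using that $\pi_\alpha^{-1}$ commutes with intersection and that each $\mu_{\alpha,\beta}|_C$ is an $F$-linear \emph{isomorphism} onto its image, so it carries the face lattice of $C$ bijectively onto that of $\mu_{\alpha,\beta}(C)$. This is what matches the chartwise face structure of Definition~\ref{definition: PL cone} for $\pi_\alpha^{-1}(C')$ with $C'$ a face of $C$ in $\Sigma(\MM,\alpha)$.

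For the linearity assertion in (c), fix $\mathcal{C} = \pi_\alpha^{-1}(C) \in \Sigma(\MM)$ and $\beta,\gamma \in \pi(\MM)$. Both $\mu_{\alpha,\beta}|_C$ and $\mu_{\alpha,\gamma}|_C$ are $F$-linear, and $\mu_{\alpha,\beta}|_C$ is an $F$-linear isomorphism onto $\pi_\beta(\mathcal{C})$. The cocycle relation then yields
\[
\mu_{\beta,\gamma}\big|_{\pi_\beta(\mathcal{C})} \;=\; \mu_{\alpha,\gamma}|_C \circ \bigl(\mu_{\alpha,\beta}|_C\bigr)^{-1},
\]
realizing $\mu_{\beta,\gamma}$ on $\pi_\beta(\mathcal{C})$ as a composition of $F$-linear maps. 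For the minimality claim, let $\Sigma'$ be any complete PL fan over $F$ in $\MM_\R$ on whose cones every mutation restricts linearly in every chart. For $\mathcal{C}' \in \Sigma'$, linearity of each $\mu_{\alpha,\gamma}$ on $\pi_\alpha(\mathcal{C}')$ forces $\pi_\alpha(\mathcal{C}')$ into a cone of $\Sigma(\MM,\alpha,\gamma)$ for every $\gamma$, hence into a cone of the common refinement $\Sigma(\MM,\alpha)$. Applying $\pi_\alpha^{-1}$ shows $\mathcal{C}' \subseteq \pi_\alpha^{-1}(C)$ for some $C \in \Sigma(\MM,\alpha)$, so $\Sigma'$ refines $\Sigma(\MM)$. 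Since the property characterizing $\Sigma(\MM)$ in (c) makes no reference to $\alpha$, and minimality determines a unique fan, part (d) is immediate.

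The step I expect to require the most care is keeping the chartwise face structure of Definition~\ref{definition: PL cone} in lockstep with the ordinary face lattice of $C$ in $\Sigma(\MM, \alpha)$. It is essential here that $\mu_{\alpha,\beta}|_C$ is a linear \emph{isomorphism} onto its image rather than merely a linear map, as this is what guarantees that faces transport bijectively between charts; I would want to isolate this as an explicit lemma (or remark) before verifying the fan axioms in (b) and the cocycle manipulation in (c), to keep the bookkeeping clean.
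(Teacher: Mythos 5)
Your proposal is correct and follows essentially the same approach as the paper: part (a) via $\pi_\beta \circ \pi_\alpha^{-1} = \mu_{\alpha,\beta}$ being linear and invertible on $C$, part (c) via the cocycle relation $\mu_{\beta,\gamma} = \mu_{\alpha,\gamma} \circ \mu_{\alpha,\beta}^{-1}$, minimality by intersecting the cones of linearity $C_{\alpha,\beta}$ over all $\beta$, and (d) from the chart-independence of the characterization in (c). You spell out the face-lattice transport in (b) somewhat more explicitly than the paper (which dispatches it with ``similar arguments''), but the underlying reasoning is the same.
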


\begin{proof} 
For (a), it suffices to show that for any $\beta \in \mathcal{I}$, the image $\pi_\beta \pi_\alpha^{-1}(C)$ is a $F$-rational polyhedral cone in $M_\beta \otimes_{\Z} \R$. If $\beta=\alpha$, the claim is immediate. For any $\beta \neq \alpha$, we know $\pi_\beta \circ \pi_{\alpha}^{-1} = \mu_{\alpha, \beta}$, and by the definition of $\Sigma(\MM,\alpha)$, the mutation is linear (and invertible) on $C$. Moreover, by definition of a polyptych lattice, the mutation is a map of $F$-lattices. It follows that the image $\mu_{\alpha,\beta}(C)$ must also be $F$-rational and polyhedral. This proves (a). By similar arguments, and using that $\Sigma(\MM,\alpha)$ is a (classical) fan in $M_\alpha \otimes_{\Z}\R$ (note $\Sigma(\MM,\alpha)$ is a finite collection of cones because $\MM$ is assumed finite), it also follows that the decomposition $\MM = \cup_{C \in \Sigma(\MM,\alpha)} \pi_\alpha^{-1}(C)$ is a PL fan. It is complete because $\Sigma(\MM,\alpha)$ is complete by construction. This proves (b). 
Next we prove (c), for which we need to show that for any PL cone $C \in \Sigma(\MM,\alpha)$ and any $\beta,\gamma \in \mathcal{I}$, the mutation $\mu_{\beta,\gamma}: M_\beta \to M_\gamma$ is linear on $\pi_\beta(\pi_{\alpha}^{-1}(C)) = \mu_{\alpha,\beta}(C)$. To see this, we observe that by conditions (2) and (3) of \Cref{definition: polyptych lattice} we have $\mu_{\beta,\gamma} = \mu_{\alpha,\gamma} \circ \mu_{\beta,\alpha} = \mu_{\alpha,\gamma} \circ \mu_{\alpha,\beta}^{-1}$. Thus it suffices to show that $\mu_{\alpha,\beta}^{-1}$ is linear on $\mu_{\alpha,\beta}(C)$, and that $\mu_{\alpha,\gamma}$ is linear on $C$. The first statement is true since $\mu_{\alpha,\beta}$ is both linear and invertible on $C$ by assumption, hence its inverse is also linear; the second claim is by assumption on $C$. Now we claim that $\Sigma(\MM)$ is minimal among PL fans with the stated property in (c). To see this, suppose $\Sigma'$ is a PL fan with this property. We want to show that any PL cone $\mathcal{C}' \in \Sigma'$ is contained in a PL cone of $\Sigma(\MM)$, for which it suffices to show $\pi_\alpha(\mathcal{C}')$ is contained in a cone of $\Sigma(\MM,\alpha)$. By assumption, for any $\beta \in \mathcal{I}$, $\mu_{\alpha,\beta}$ is linear on $\pi_\alpha(\mathcal{C}')$, i.e., $\pi_\alpha(\mathcal{C}')$ is contained in a cone of linearity of $\mu_{\alpha,\beta}$. We denote this cone by $C_{\alpha,\beta}$. Repeating this argument for all $\beta \in \mathcal{I}$, we obtain $\pi_\alpha(\mathcal{C}') \subset \cap_{\beta \in \mathcal{I}} C_{\alpha,\beta}$.  The cones of $\Sigma(\MM,\alpha)$ are precisely these intersections, so the claim is proved. Finally, the condition given in (c) is independent of $\alpha$, so claim (d) follows. 
\end{proof}

\begin{example}
    For the $\MM$ described in \Cref{ex: running example}, the PL fan $\Sigma(\MM)$ is given by the (preimage under $\pi_1$ of) the two half-spaces $\{y \geq 0\}$ and $\{y \leq 0\}$ in $M_1 \otimes \R$.
    \exampleqed
\end{example}

 As we noted above, we cannot in general give a well-defined notion of addition on $\MM$. However, by construction of $\Sigma(\MM)$, addition is uniquely determined on PL cones of $\Sigma(\MM)$. More precisely, we have the following.

 \begin{lemma}\label{lemma: addition well def on cones}
Let $\MM$ be a finite polyptych lattice over $F$ and let $\Sigma(\MM)$ be the associated PL fan as defined above. Let $\mathcal{C} \in \Sigma(\MM)$. Then the operations of addition and scalar multiplication by non-negative scalars are well-defined on $\mathcal{C}$, i.e., 
\begin{itemize} 
\item for $m,m' \in \mathcal{C}$, the addition $m +_\alpha m' \in \mathcal{C}$ in the chart $\alpha$ is independent of $\alpha \in \mathcal{I}$ and, 
\item for $\lambda \in F_{\geq 0}, m \in \mathcal{C}$, the scalar multiplication of $m$ by $\lambda$, defined by $\pi_\alpha^{-1}(\lambda \cdot \pi_\alpha(m))$, is independent of $\alpha \in \mathcal{I}$. 
\end{itemize}
 \end{lemma}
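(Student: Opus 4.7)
The plan is to reduce both statements directly to part (c) of \Cref{lemma: fan of MM}, which tells us that for any PL cone $\mathcal{C} \in \Sigma(\MM)$ and any pair $\beta,\gamma \in \mathcal{I}$, the mutation $\mu_{\beta,\gamma}$ restricts to an $F$-linear map on $\pi_\beta(\mathcal{C})$. This is the only non-trivial ingredient; the rest will be a matter of carefully unwinding the definitions of $+_\alpha$ and of the equivalence relation defining elements of $\MM$.

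First I would handle the addition statement. Fix $m, m' \in \mathcal{C}$ and two indices $\alpha, \beta \in \mathcal{I}$. By definition of $+_\alpha$ in \Cref{eq: addition in chart}, the element $m +_\alpha m'$ has $\alpha$-th chart representative equal to $\pi_\alpha(m) + \pi_\alpha(m')$, which lies in $\pi_\alpha(\mathcal{C})$ because $\pi_\alpha(\mathcal{C})$ is a convex cone (and hence closed under addition). To verify that $m +_\alpha m' = m +_\beta m'$ as elements of $\MM$, I would show that their $\beta$-th chart representatives agree. The $\beta$-th representative of $m +_\alpha m'$ is $\mu_{\alpha,\beta}\bigl(\pi_\alpha(m) + \pi_\alpha(m')\bigr)$, and by part (c) of \Cref{lemma: fan of MM} together with the fact that $\pi_\alpha(m), \pi_\alpha(m'), \pi_\alpha(m)+\pi_\alpha(m')$ all lie in the cone $\pi_\alpha(\mathcal{C})$ on which $\mu_{\alpha,\beta}$ is linear, this equals $\mu_{\alpha,\beta}(\pi_\alpha(m)) + \mu_{\alpha,\beta}(\pi_\alpha(m')) = \pi_\beta(m) + \pi_\beta(m')$, which is precisely the $\beta$-th representative of $m +_\beta m'$. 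Running the same identification across every chart $\gamma \in \mathcal{I}$ shows that all chart representatives coincide, so $m +_\alpha m' = m +_\beta m'$ in $\MM$. Moreover, the element lies in $\mathcal{C}$ because its $\gamma$-th representative sits in $\pi_\gamma(\mathcal{C})$ for every $\gamma$.

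The scalar multiplication statement is essentially the same argument. Fix $\lambda \in F_{\geq 0}$ and $m \in \mathcal{C}$. Since $\pi_\alpha(\mathcal{C})$ is a cone and $\lambda \geq 0$, the element $\lambda \cdot \pi_\alpha(m)$ lies in $\pi_\alpha(\mathcal{C})$, and linearity of $\mu_{\alpha,\beta}$ on this cone gives $\mu_{\alpha,\beta}(\lambda \cdot \pi_\alpha(m)) = \lambda \cdot \mu_{\alpha,\beta}(\pi_\alpha(m)) = \lambda \cdot \pi_\beta(m)$, which means that $\pi_\alpha^{-1}(\lambda \cdot \pi_\alpha(m)) = \pi_\beta^{-1}(\lambda \cdot \pi_\beta(m))$ as elements of $\MM$, proving independence from $\alpha$.

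I do not anticipate a genuine obstacle here: the content of the lemma is entirely encoded in part (c) of \Cref{lemma: fan of MM}, and the proof just consists of chasing this linearity through the definitions of $+_\alpha$ and scalar multiplication. The only minor technical point to be careful about is to verify, before invoking linearity, that the vectors being added or scaled indeed lie inside the cone $\pi_\alpha(\mathcal{C})$ on which $\mu_{\alpha,\beta}$ is linear — this is immediate because cones are closed under addition and non-negative scaling.
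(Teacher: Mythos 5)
Your proof is correct, and the addition part reproduces the paper's argument essentially verbatim: chase linearity of $\mu_{\alpha,\beta}$ on $\pi_\alpha(\mathcal{C})$ (Lemma~\ref{lemma: fan of MM}(c)) through the definition of $+_\alpha$, observing that $\pi_\alpha(\mathcal{C})$ is closed under addition. For the scalar-multiplication part you again invoke linearity of $\mu_{\alpha,\beta}$ on the specific cone $\pi_\alpha(\mathcal{C})$, whereas the paper appeals instead to the more general fact that any piecewise $F$-linear map is positively homogeneous, i.e., commutes with multiplication by non-negative scalars on the entire chart. Both routes are valid, but the paper's observation is a touch cleaner and, importantly, does not require $m$ to lie in a fixed cone of linearity — it gives well-definedness of non-negative scalar multiplication on all of $\MM$ in one step, which is exactly how Notation~\ref{notation: scalar mult on cones} subsequently uses the lemma. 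Your argument reaches the same global conclusion only after one additionally notes that $\Sigma(\MM)$ is a complete PL fan, so every element of $\MM$ lies in some cone; this is harmless but is a small extra step the paper's homogeneity argument avoids.
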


 \begin{proof} 
Let $m,m' \in \mathcal{C}$. We wish to show that $m+_\alpha m' = m +_\beta m'$ for all pairs $\alpha,\beta \in \mathcal{I}$. Note that by construction of $\Sigma(\MM)$ and \Cref{lemma: fan of MM}(c), the mutation $\mu_{\alpha,\beta}$ restricts to be linear on $\pi_\alpha(\mathcal{C})$. This means 
$$
\mu_{\alpha,\beta}(\pi_\alpha(m)+\pi_\alpha(m')) = \mu_{\alpha,\beta}(\pi_\alpha(m))+\mu_{\alpha,\beta}(\pi_\alpha(m')) = \pi_\beta(m) + \pi_\beta(m').$$
This in turn implies, since $\mu_{\alpha,\beta} = \pi_\beta \circ \pi_\alpha^{-1}$, that $m+_\alpha m' = m+_\beta m'$, as was to be shown. Moreover, since $\mathcal{C}$ is a PL cone, $\pi_\alpha(\mathcal{C})$ is a classical cone for any $\alpha$, from which it follows that $\pi_\alpha(m)+\pi_\alpha(m') \in \pi_\alpha(\mathcal{C})$, i.e., $m +_\alpha m' \in \mathcal{C}$. The well-definedness of scalar multiplication follows from the piecewise $F$-linearity of any mutation map, which implies that scalar multiplication by non-negative scalars commutes with mutation. 
 \end{proof}

 \begin{notation}\label{notation: scalar mult on cones}
By \Cref{lemma: addition well def on cones} there is a well-defined notion of scalar multiplication by non-negative scalars on any element of $\MM$. For $\lambda \in F_{\geq 0}$ and $m \in \MM$, we denote the result here and below by $\lambda \cdot m$ or more simply, $\lambda \, m$. The operation extends in a straightforward manner to $\MM_\R$ and multiplication by scalars in $\R_{\geq 0}$, and we use the same notation in this case. Moreover, for $m,m' \in \mathcal{C}$ for a cone $\mathcal{C}$ in $\Sigma(\MM)$, we also have a well-defined addition, and we sometimes denote this by $m+m' \in \mathcal{C}$. 
\exampleqed
 \end{notation}

 Our next step is to define functions on polyptych lattices which are piecewise linear. 

\begin{definition}\label{definition: piecewise linear on PL}
Let $\MM = (\{M_\alpha\}_{\alpha \in \mathcal{I}}, \{\mu_{\alpha,\beta}: M_\alpha \to M_\beta\}_{\alpha,\beta \in \mathcal{I}})$ be a polyptych lattice over $F$. A function $\psi: \MM \to F$ on the set of elements of $\MM$ is said to be \textbf{piecewise $F$-linear} if $\psi \circ \pi_\alpha^{-1}: M_\alpha \to F$ is piecewise $F$-linear for each $\alpha \in \mathcal{I}$.  We denote the natural extension of $\psi$ to $\MM_\R$ by $\psi_\R: \MM_\R \to \R$. 
\exampleqed
\end{definition} 

\begin{remark}\label{notation: functions as tuples of functions} 
Any function $\psi: \MM \to S$ for any set $S$ can be equivalently encoded by translating it to a function the $\alpha$-th chart, i.e., $\psi_\alpha := \psi \circ \pi_{\alpha}^{-1}$, for any $\alpha \in \pi(\MM)$. Note that these satisfy the compatibility relations $\psi_\beta \circ \mu_{\alpha,\beta} = \psi_\alpha$ for all $\alpha,\beta \in \mathcal{I}$. 
\exampleqed
\end{remark}

We will pay particular attention to those piecewise linear functions which are convex. 
In the classical context, recall that a function $\psi: V \to \R$ from a real vector space $V$ to $\R$ is \textbf{convex} if 
\begin{equation}\label{eq: definition convex}
\psi(t \, m + (1-t)m') \geq t \, \psi(m) + (1-t)\psi(m')
\end{equation}
for all $m, m' \in V$ and $t \in [0,1]$ \cite[Definition 6.1.4]{Cox_Little_Schenck}. Based on this, we make the following definition.  

\begin{definition}\label{definition: convex}
Let $\MM$ be a polyptych lattice over $F$. 
A piecewise linear function $\psi$ on $\MM_\R$ is \textbf{convex} if $\psi_\alpha := \psi \circ \pi_{\alpha}^{-1}: M_\alpha \otimes_{F}\R \to \R$ is convex in the sense of \Cref{eq: definition convex} for all $\alpha \in \pi(\MM)$.
\exampleqed
\end{definition}

The last definition of this section is a notion of a direct product of polyptych lattices. 

\begin{definition} 
Let $\Z \subset F \subset \R$ be a subring of $\R$. 
Let $\MM = (\{M_\alpha\}_{\alpha \in \mathcal{I}_1}, \{\mu_{\alpha,\beta}: M_{\alpha} \to M_\beta\}_{\alpha,\beta \in \mathcal{I}_1})$ and $ \MM' = (\{M'_{\alpha'}\}_{\alpha' \in \mathcal{I}'}, \{\mu'_{\alpha',\beta'}: M'_{\alpha'} \to M'_{\beta'}\})$ be two polyptych lattices of rank $r$ and $r'$ respectively, over $F$. We define the \textbf{product polyptych lattice $\MM \times \MM'$} to be  the polyptych lattice of rank $r+r'$ over $F$ given by charts $M_\alpha \times M'_{\alpha'}$ indexed by $\mathcal{I} \times \mathcal{I}'$ and mutation maps $\mu_{\alpha,\beta} \times \mu'_{\alpha',\beta'}: M_\alpha \times M'_{\alpha'} \to M_\beta \times M'_{\beta'}$ for pairs $(\alpha,\alpha'),(\beta,\beta')$ in $\mathcal{I} \otimes \mathcal{I'}$. It is straightforward to check that this satisfies the axioms of a polyptych lattice. Note that if both $\MM$ and $\MM'$ are finite, then $\MM \times \MM'$ is also a finite polyptych lattice. 
\exampleqed
\end{definition}

\section{The space of points of a polyptych lattice}\label{sec: space of points}

In this section we introduce a key concept which is the first step toward a theory of duality for polyptych lattices as well as the extension of a notion of polytopes to polyptych lattices.

\subsection{Definition and first properties}

 We begin with the main definition of this section. We use the notation $m+_\alpha m'$ and $\lambda m$ introduced in \Cref{eq: addition in chart} and \Cref{notation: scalar mult on cones} respectively. 

\begin{definition}\label{definition: space of points}
Let $\MM = (\{M_\alpha\}_{\alpha \in \mathcal{I}}, \{\mu_{\alpha,\beta}:M_\alpha \to M_\beta\}_{\alpha,\beta \in \mathcal{I}}\})$ be a polyptych lattice over $F$. A \textbf{point of $\MM$} is a  function $p: \MM \to F$ such that 
\begin{equation}\label{eq: def point min} 
p(m) + p(m') = \min\{p(m +_\alpha m') \,\mid\, \alpha \in \pi(\MM) \} \, \, \textup{ for all } \, m, m' \in \MM
\end{equation} 
and
\begin{equation}\label{eq: def point F homog}
p(\lambda m) = \lambda p(m) \, \, \textup{ for all } \, m \in \MM, \lambda \in F_{\geq 0}. 
\end{equation} 
The set of all such $p: \MM \to F$ is called \textbf{the space of points of $\MM$} and denoted $\Sp(\MM)$. For $F \subset F'$, an \textbf{$F'$-point of $\MM$} is a point of $\MM \otimes_F F'$. We denote the set of $F'$-points as $\Sp_{F'}(\MM)$. (For the base ring $F$, we often drop the subscript.) 
\exampleqed
\end{definition} 

We will primarily be concerned with the space of points $\Sp(\MM)$ for a polyptych lattice $\MM$ defined over $\Z$ and its $\R$-points $\Sp_{\R}(\MM)$. 

\begin{remark}\label{remark: motivation for points}
One motivation for the terminology of ``points'' comes from algebraic geometry. A (closed) $\C$-point on an affine scheme over $\C$ is a morphism from the coordinate ring to the ``ground field'' $\C$. In an analogous way, we will see later that a point of $\MM$ (over $F$) induces a morphism of semialgebras over $F_{\geq 0}$ from the canonical $F_{\geq 0}$-semialgebra $S_\MM$ associated to $\MM$ (cf.\ \Cref{definition: canonical semialgebra MM}) to the ``ground $F_{\geq 0}$-semialgebra'' $\overline{F}$ equipped with semialgebraic operations $+$ and $\min$, see \Cref{lemma-pointsaresemialgebramaps}. From the theory of schemes over semirings (see e.g.\ \cite{Giansiracusa}), a point in the sense of \Cref{definition: space of points} can be interpreted as an $F$-point in the scheme $\mathrm{Spec}(S_\MM)$. 
\exampleqed
\end{remark}

\begin{remark}\label{remark: Fgeq0 closed SpMM}
It follows easily from the definition that $\Sp(\MM)$ is closed under (pointwise) $F_{\geq 0}$-scalar multiplication.
\exampleqed
\end{remark} 

We begin with an illustration of \Cref{definition: space of points} with our simple running example. 

\begin{example}\label{ex_points} 
Let $\MM$ be the rank-$2$ polyptych lattice over $\Z$ given in \Cref{ex: running example}. For this example, for computational clarity we temporarily notate an element $m$ of $\MM$ by listing the pair $(\pi_1(m),\pi_2(m)) \in \Z^2 \times \Z^2$.  
Now consider the following elements of $\MM$, $\mathbb{e}_1:=(\varepsilon_1,-\varepsilon_1)$, $\mathbb{e}_2:=(\varepsilon_2,\varepsilon_2)$, and $\mathbb{e}'_2:=(-\varepsilon_2,-\varepsilon_1-\varepsilon_2)$. 
If we choose values for $p(\mathbb{e}_1)$, $p(\mathbb{e}_2)$, and $p(\mathbb{e}'_2)$ such that $p(\mathbb{e}_2)+p(\mathbb{e}'_2)=\min\{0,p(\mathbb{e}_1)\}$, we can extend $p$ to a function on $\MM$ as follows,
\begin{equation}\label{eq_ex_point}
    p((x,y),\mu_{12}(x,y))=
    \begin{cases}
        xp(\mathbb{e}_1)-yp(\mathbb{e}'_2), & y\le 0
        \\
        xp(\mathbb{e}_1)+yp(\mathbb{e}_2), & y\ge 0
    \end{cases}
.\end{equation}
It is not hard to check explicitly that the above definition of $p$ satisfies the conditions to be a point in $\Sp(\MM)$, and moreover, that all points of $\MM$ are of this form. We refer to \cite{CookEscobarHaradaManon2024} for more details. 
\exampleqed
\end{example}

Next, we show that a point $p \in \Sp(\MM)$ is piecewise $F$-linear with domains of linearity precisely the cones $\mathcal{C}$ of $\Sigma(\MM)$.

\begin{lemma}\label{lemma: points linear on faces of SigmaM}
 Let $\MM$ be a finite polyptych lattice over $F$. Let $p \in \Sp(\MM)$. Then $p$ is piecewise $F$-linear on $\MM$ with domains of linearity the PL cones $\mathcal{C}$ of $\Sigma(\MM)$. 
In particular, $p$ naturally extends to a piecewise $F'$-linear function on $\MM_{F'}$ with domains of linearity given by $\Sigma(\MM)$, and, this extension is in $\Sp_{F'}(\MM)$. 
\end{lemma}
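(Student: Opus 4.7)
The plan is to leverage \Cref{lemma: addition well def on cones} to collapse the defining min-relation into a simple additivity on each PL cone, and then to extend the resulting piecewise $F$-linear function to $\MM_{F'}$ cone-by-cone.

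First, for each PL cone $\mathcal{C} \in \Sigma(\MM)$, I would note that by \Cref{lemma: addition well def on cones}, for any $m, m' \in \mathcal{C}$ the sum $m +_\alpha m'$ is independent of $\alpha \in \pi(\MM)$ and lies in $\mathcal{C}$. Consequently the minimum in \eqref{eq: def point min} is achieved by every chart, and we obtain the additivity $p(m) + p(m') = p(m + m')$. Combined with the $F_{\geq 0}$-homogeneity \eqref{eq: def point F homog}, this shows that $p \circ \pi_\alpha^{-1}$ restricted to $\pi_\alpha(\mathcal{C}) \cap M_\alpha$ is the restriction of an $F$-linear functional on the $F$-span of $\pi_\alpha(\mathcal{C})$. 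Since the chart images $\pi_\alpha(\Sigma(\MM))$ form a complete fan in $M_\alpha \otimes_F \R$ by \Cref{lemma: PL fan projection} and \Cref{lemma: fan of MM}, this will establish that $p$ is piecewise $F$-linear with domains of linearity the PL cones of $\Sigma(\MM)$.

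Next, I would define the extension $\tilde{p}: \MM_{F'} \to F'$ by extending the $F$-linear function on each $\pi_\alpha(\mathcal{C}) \cap M_\alpha$ uniquely to an $F'$-linear function on $\pi_\alpha(\mathcal{C}) \otimes_F F'$. These extensions must match on intersections of PL cones, since two $F'$-linear functions on an $F$-rational cone that coincide on the cone's $F$-points are equal; this yields a well-defined, continuous piecewise $F'$-linear function on $\MM_{F'}$ whose domains of linearity are the PL cones of $\Sigma(\MM)$. The $F'_{\geq 0}$-homogeneity axiom for $\tilde{p}$ is then immediate from $F'$-linearity on each PL cone together with \Cref{lemma: addition well def on cones}.

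The main technical obstacle I anticipate will be verifying the min-axiom for $\tilde{p}$ on $\MM_{F'}$. Both sides of \eqref{eq: def point min} are continuous piecewise $F'$-linear functions of $(m, m') \in \MM_{F'}^2$, and they agree on $\MM \times \MM$ by the min-axiom for $p$. Passing to a common refinement of their piecewise $F'$-linear structures, each cell is $F$-rational (since all mutations are $F$-PL and $\tilde{p}$ itself is built from $F$-linear data), hence its $F$-points span the cell over $F$. On each maximal cell, the minimum is achieved by a single chart, so both sides are $F'$-linear functions agreeing on the cell's $F$-points, hence on the whole cell; continuity then propagates the equality to lower-dimensional cells. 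This will confirm $\tilde{p} \in \Sp_{F'}(\MM)$ and complete the argument.
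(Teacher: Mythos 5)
Your proposal is correct and follows essentially the same route as the paper: you invoke \Cref{lemma: addition well def on cones} to collapse the min in \eqref{eq: def point min} to a singleton on each cone of $\Sigma(\MM)$, which together with \eqref{eq: def point F homog} yields $F$-linearity on each chart image of each cone, and then you extend cone-by-cone and check agreement on cone intersections via the $F$-points spanning them. The only stylistic difference is in verifying the min-axiom for the extension: the paper first scales to pass from $\Z$-points to $\Q$-points (using homogeneity) and then uses continuity and density of $\Q$-points in $\R$, whereas you argue directly by passing to a common $F$-rational cell decomposition of $\MM_{F'}^2$ on which both sides are $F'$-linear and agree on the dense set of $F$-points — this is the same density idea, just packaged without the explicit intermediate $\Q$ step.
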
 

\begin{proof} 
Let $\mathcal{C} \in \Sigma(\MM)$. We need to show $p_\alpha := p \circ \pi_\alpha^{-1}$ is linear on $\pi_\alpha(\mathcal{C})$ for all $\alpha \in \pi(\MM)$, but we know from \Cref{lemma: addition well def on cones} that addition and $F_{\geq 0}$-multiplication is well-defined on $\mathcal{C}$. In fact, by \Cref{lemma: addition well def on cones} we have $m +_\alpha m' = m +_\beta m'$ for any $\alpha,\beta \in \mathcal{I}$, so the minimum on the RHS of \Cref{eq: def point min} is a minimum over a singleton set. From this and \Cref{eq: def point F homog} it follows that for any $m,m' \in \mathcal{C} \cap \MM$ and $\lambda, \mu \in F_{\geq 0}$, we have $p(\lambda m + \mu m') = \lambda p(m) + \mu p(m')$. Thus $p$ is linear on $\mathcal{C}$. For the second claim, it suffices to prove the case $F=\Z$ and $F'=\R$. 
 It is straightforward that $p$ can be naturally extended to $\MM_{\R}$, and this extension is piecewise $\R$-linear for each cone in $\Sigma(\MM)$. To prove continuity  it suffices to see that the linear maps $p_\alpha \vert_{\pi_{\alpha}(\mathcal{C})}$ agree on intersections of cones. But these intersections are spanned by elements of $\MM$, and the values on these elements are determined by the original function $p$. Hence they must agree, so the extension of $p$ to $\MM_\R$ is continuous. 
To see that the extension $p$ is in $\Sp_{\R}(\MM)$, we must show it satisfies \Cref{eq: def point min} for all $m,m' \in \MM_{\R}$. We argue in two steps. We first claim if \Cref{eq: def point min} holds over $\Z$ then it holds over $\Q$. This is because for any $m,m' \in \MM_{Q}$ there exists $\lambda \in \Z$ such that $\lambda m, \lambda m' \in \MM$, and then the equation reduces to that in $\MM$. Next, if \Cref{eq: def point min} holds over $\Q$ then it holds over $\R$ since we already know $p$ is continuous, so both LHS and RHS extend to continous functions and they are equal on a dense subset, hence equal everywhere. Since the original $p$ is in $\Sp_{\Z}(\MM)$ by assumption and hence satisfies \Cref{eq: def point min}, the result follows. 
\end{proof}

  Based on \Cref{lemma: points linear on faces of SigmaM}, we may make the following definition. 
 
\begin{definition}\label{definition: L_C}
Let $\MM$ be a polyptych lattice over $F$ and let $\mathcal{C} \subset \MM_\R$ be a PL cone in $\Sigma(\MM)$.   
We denote by $L_{\mathcal{C}}: \Sp(\MM) \to \Hom(\mathcal{C}\cap \MM, F)$ (respectively $L_{\mathcal{C}}: \Sp_\R(\MM) \to \Hom(\mathcal{C}, \R)$) the map sending $p: \MM \to \Z$ (resp. its natural extension to $p: \MM_\R \to \R$) to its restriction to $\mathcal{C}$, i.e. $p \vert_{\mathcal{C}\cap \MM}: \mathcal{C} \cap \MM \to F$ (resp. $p \vert_{\mathcal{C}}: \mathcal{C} \to \R$). This is well-defined by \Cref{lemma: points linear on faces of SigmaM}. 
\exampleqed
\end{definition}

 In some cases, the map $L_{\mathcal{C}}$ above is injective. 

\begin{lemma}\label{lem-convex-11}
Let $\MM$ be a finite polyptych lattice and let $\mathcal{C}$ be a maximal-dimensional PL cone in $\Sigma(\MM)$. 
The restriction map $L_{\mathcal{C}}: \Sp(\MM) \to \Hom(\mathcal{C} \cap \MM, \Z)$ (respectively $L_{\mathcal{C}}: \Sp_\R(\MM) \to \Hom(\mathcal{C}, \R)$) is injective, i.e., $p$ is determined uniquely by its values on $\mathcal{C} \cap \MM$ (resp. $\mathcal{C}$). 
\end{lemma}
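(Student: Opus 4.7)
The plan is to show that any $p \in \Sp(\MM)$ is determined by its restriction $p|_{\mathcal{C}\cap\MM}$ by exploiting the defining equation \Cref{eq: def point min} in a controlled way. Given two points $p, q \in \Sp(\MM)$ agreeing on $\mathcal{C} \cap \MM$ and an arbitrary $m \in \MM$ for which we wish to verify $p(m) = q(m)$, the strategy is to construct an auxiliary element $c \in \mathcal{C} \cap \MM$ such that \emph{every} chart-sum $m +_\beta c$ (as $\beta$ ranges over the finite set $\pi(\MM)$) also lies in $\mathcal{C} \cap \MM$. Once such a $c$ is in hand, applying \Cref{eq: def point min} to the pair $(m, c)$ gives
\begin{equation*}
p(m) + p(c) = \min_{\beta \in \pi(\MM)} p(m +_\beta c),
\end{equation*}
and analogously for $q$. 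Since $p$ and $q$ agree on $c$ and on each $m +_\beta c$, taking minima yields $p(m) + p(c) = q(m) + q(c)$, and hence $p(m) = q(m)$.

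The core work is therefore to produce $c$. I would fix any chart $\alpha$ and choose a lattice point $v_0 \in \mathrm{int}(\pi_\alpha(\mathcal{C})) \cap M_\alpha$; this interior is nonempty since $\mathcal{C}$ being maximal-dimensional forces $\pi_\alpha(\mathcal{C})$ to be a full-dimensional rational polyhedral cone in $M_\alpha \otimes_F \R$. Setting $c_0 := \pi_\alpha^{-1}(v_0) \in \mathcal{C} \cap \MM$, \Cref{lemma: fan of MM}(c) ensures each mutation $\mu_{\alpha,\beta}$ is linear and bijective on $\pi_\alpha(\mathcal{C})$ with image $\pi_\beta(\mathcal{C})$, so $\pi_\beta(c_0) = \mu_{\alpha,\beta}(v_0)$ lies in $\mathrm{int}(\pi_\beta(\mathcal{C}))$ for every $\beta$. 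The next step is to scale: taking $c := \lambda c_0$ with $\lambda \in \Z_{>0}$ (well-defined by \Cref{notation: scalar mult on cones}), the fact that $\pi_\beta(c_0)$ is interior to the cone $\pi_\beta(\mathcal{C})$ means that some open ball $B(\pi_\beta(c_0), \epsilon_\beta)$ is contained in $\pi_\beta(\mathcal{C})$, and hence by conic rescaling $B(\lambda \pi_\beta(c_0), \lambda \epsilon_\beta) \subset \pi_\beta(\mathcal{C})$. For $\lambda$ sufficiently large (finitely many constraints, one per $\beta$), this yields $\pi_\beta(m) + \lambda\pi_\beta(c_0) \in \pi_\beta(\mathcal{C})$ for every $\beta$. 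This is exactly $\pi_\beta(m +_\beta c) \in \pi_\beta(\mathcal{C})$, and membership of $m +_\beta c$ in $\mathcal{C}$ in the remaining coordinates $\gamma \neq \beta$ then follows from $\pi_\gamma(m +_\beta c) = \mu_{\beta,\gamma}(\pi_\beta(m +_\beta c))$ combined with $\mu_{\beta,\gamma}(\pi_\beta(\mathcal{C})) = \pi_\gamma(\mathcal{C})$.

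The main conceptual obstacle is that the minimum in \Cref{eq: def point min} ranges over \emph{all} charts, so $p(m)$ is a priori linked to the values of $p$ on an entire family $\{m +_\beta c\}_{\beta}$ of auxiliary elements, and knowing $p$ only on $\mathcal{C}$ would be insufficient unless one forces the full family into $\mathcal{C}$ simultaneously. Full-dimensionality of $\pi_\alpha(\mathcal{C})$ is precisely what permits a single interior lattice point, suitably scaled, to absorb the ``wiggle'' contributed by $m$ across every chart at once; the finiteness of $\pi(\MM)$ is what makes a single uniform $\lambda$ available. For the $\R$-point case $\Sp_\R(\MM)$, the same argument runs verbatim with $\lambda \in \R_{>0}$, or one may clear denominators and appeal to the continuity/extension statement of \Cref{lemma: points linear on faces of SigmaM} to reduce to the integral setting.
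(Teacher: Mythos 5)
Your proof is correct and follows essentially the same strategy as the paper's: find an auxiliary $w \in \mathcal{C} \cap \MM$ so that every chart-sum $m +_\beta w$ lands in $\mathcal{C}$, then apply the defining equation~\eqref{eq: def point min} to recover $p(m)$ from $p\vert_{\mathcal{C}\cap\MM}$. The one substantive difference is that you explicitly construct $w$ via an interior lattice point of $\pi_\alpha(\mathcal{C})$ scaled by a sufficiently large integer, whereas the paper merely asserts such a $w$ exists on grounds of finiteness.
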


\begin{proof}
The argument given here is for $\Sp(\MM) \to \Hom(\mathcal{C} \cap \MM,\Z)$ but the argument is similar for $\Sp_\R(\MM)$. 
   We wish to show that the value of $p$ on any element of $\MM$ is determined by its values on $\mathcal{C} \cap \MM$.   
   Fix any $v \in \MM$. If $v$ is in $\mathcal{C}$, there is nothing to show. Suppose $v \not \in \mathcal{C}$. Since $\MM$ is finite, there are only finitely many coordinate chart images of $v$, making it possible to pick $w \in \mathcal{C}\cap \MM$ such that $\pi_\alpha(v) + \pi_\alpha(w) \in \pi_\alpha(\mathcal{C})$ (i.e., $v +_\alpha w \in \mathcal{C}\cap \MM$), for all $\alpha \in \mathcal{I}$. Since $p$ is a point, we have that $p(v) + p(w) = \min\{p(v +_\alpha w) \, \mid \, \alpha \in \pi(\MM) = \mathcal{I}\}$. Since the RHS and $p(w)$ are both determined by $p \vert_{\mathcal{C}\cap \MM}$, so is $p(v)$, as was to be shown. 
\end{proof}

It should be remarked that the property of being determined entirely by its values on (maximal-dimensional) faces of $\Sigma(\MM)$ is very special to points in $\Sp(\MM)$; it is  certainly not true in general of arbitrary piecewise-linear functions on $\MM$. This illustrates the strength of the assumption~\eqref{eq: def point min} in \Cref{definition: space of points}.

We now relate the definition of points with the notion of convexity. 

\begin{lemma}\label{lemma: point PL concave}
Let $\MM$ be a finite polyptych lattice over $F$ and let $p \in \Sp(\MM)$. Then the natural extension $p: \MM_\R \to \R$ is convex in the sense of \Cref{definition: convex}. 
\end{lemma}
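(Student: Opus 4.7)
The plan is to fix any chart $\alpha \in \pi(\MM)$ and prove that $p_\alpha := p \circ \pi_\alpha^{-1}: M_\alpha \otimes_F \R \to \R$ satisfies the inequality \eqref{eq: definition convex}. The key observation is that the two defining conditions \eqref{eq: def point min} and \eqref{eq: def point F homog} of a point together force $p_\alpha$ to be simultaneously superadditive and positively homogeneous in each chart, and these two properties immediately imply the desired convexity in the paper's sense.

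First I would record, using \Cref{lemma: points linear on faces of SigmaM}, that the natural extension of $p$ to $\MM_\R$ lies in $\Sp_\R(\MM)$; this is what makes both point-axioms available for real (not just $F$-valued) elements and scalars. Granted this, the min-axiom \eqref{eq: def point min} gives, for any $\tilde m, \tilde m' \in \MM_\R$ and any single index $\alpha$,
\[
p(\tilde m) + p(\tilde m') \;=\; \min_{\beta \in \pi(\MM)} p(\tilde m +_\beta \tilde m') \;\leq\; p(\tilde m +_\alpha \tilde m').
\]
Since $\pi_\alpha(\tilde m +_\alpha \tilde m') = \pi_\alpha(\tilde m) + \pi_\alpha(\tilde m')$ by \eqref{eq: addition in chart}, this translates, with $u := \pi_\alpha(\tilde m)$ and $v := \pi_\alpha(\tilde m')$, into the superadditivity inequality $p_\alpha(u+v) \geq p_\alpha(u) + p_\alpha(v)$ valid for all $u, v \in M_\alpha \otimes_F \R$.

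Combining this with the positive homogeneity $p_\alpha(\lambda w) = \lambda \, p_\alpha(w)$ for $\lambda \in \R_{\geq 0}$ coming from \eqref{eq: def point F homog}, for any $t \in [0,1]$ and any $u,v \in M_\alpha \otimes_F \R$ I would compute
\[
p_\alpha\bigl(tu + (1-t)v\bigr) \;\geq\; p_\alpha(tu) + p_\alpha\bigl((1-t)v\bigr) \;=\; t\,p_\alpha(u) + (1-t)\,p_\alpha(v),
\]
which is precisely \eqref{eq: definition convex}. Since $\alpha$ was arbitrary, by \Cref{definition: convex} the function $p$ is convex, finishing the proof.

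The only mildly delicate point is justifying that the min- and homogeneity-axioms can be applied to arbitrary real-coefficient elements of $\MM_\R$ and to non-negative real scalars; this is exactly the content of the continuous-extension portion of \Cref{lemma: points linear on faces of SigmaM}. Once that is invoked the rest of the argument is a two-line manipulation and requires no further analysis.
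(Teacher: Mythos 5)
Your proof is correct and follows essentially the same route as the paper: invoke \Cref{lemma: points linear on faces of SigmaM} to extend $p$ to $\Sp_\R(\MM)$, then use the min-axiom \eqref{eq: def point min} for superadditivity in a fixed chart and \eqref{eq: def point F homog} for positive homogeneity, and combine. Packaging the two point-axioms as ``superadditivity plus positive homogeneity implies convexity'' is a mild cosmetic restructuring of the paper's single chain of (in)equalities, not a different argument.
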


\begin{proof}
We need to show that $p_\alpha := p \circ \pi_\alpha^{-1}: M_\alpha \otimes_F \R \to \R$ satifies \Cref{eq: definition convex} for all $\alpha \in \mathcal{I}$. 
Let $m, m' \in \MM_\R$ and $\alpha \in \mathcal{I}$, and $\pi_\alpha(m)=u, \pi_\alpha(m')=u' \in M_\alpha \otimes \R$. We have 
\begin{equation*}
    \begin{split} 
    p_\alpha(t \, u + (1-t) \, u') & = p_\alpha(t\, \pi_\alpha(m) + (1-t) \, \pi_\alpha(m')) \\
     & = p(\pi_\alpha^{-1}(t \, \pi_\alpha(m) + (1-t) \, \pi_\alpha(m'))) \\
     & = p(t \, m +_\alpha (1-t) \, m') \\
     & \geq p(t \, m) + p((1-t) \, m') \\ 
     & = t \, p(m) + (1-t) \, p(m') \\
     & = t \, p_\alpha(u) + (1-t) \, p_\alpha(u')
    \end{split} 
\end{equation*}
where we use the definition of $+_\alpha$, and for the inequality above, we use \Cref{lemma: points linear on faces of SigmaM} to conclude $p \in \Sp_\R(\MM)$, so $p$ satisfies \Cref{eq: def point min}. 
The argument is valid for any $\alpha$, so the claim follows. 
\noindent
\end{proof}

We saw in \Cref{lemma: points linear on faces of SigmaM} that a point $p: \MM \to F$ is piecewise $F$-linear when viewed on any chart $M_\alpha$. However, it may sometimes be the case that there exists some chart $M_\alpha$ on which $p_\alpha$ is actually linear, i.e., it has only one domain of linearity. We make the following definition.

\begin{definition}
Following notation as above, and for $\alpha \in \mathcal{I}$, 
we let $\Sp(\MM,\alpha) \subset \Sp(\MM)$ denote the subset of points of $\MM$ such that $p_\alpha: M_\alpha \to F$ is $F$-linear. 
\exampleqed
\end{definition}

These subsets $\Sp(\MM,\alpha)$ provide additional convex-geometric structure on the space of points. We begin to see this with the following proposition. 

\begin{proposition}\label{prop-conesinthepoints}
With notation as above, let $p: \MM \to F$ be a piecewise $F$-linear function on $\MM$. Then: 
\begin{enumerate} 
\item[(1)] $p$ is in $\Sp(\MM,\alpha)$ if and only if (the natural extension) $p: \MM_\R \to \R$ is convex, and, $p_\alpha: M_\alpha \to F$ is $F$-linear. 
\item[(2)] $p$ is in $\Sp(\MM,\alpha)$ if and only if $p$ is in $\Sp(\MM)$ and, for all $m, m' \in \MM$, the minimum in the RHS of \Cref{eq: def point min} 
is achieved at $\alpha \in \pi(\MM)$.  
\end{enumerate} 
Moreover, 
viewed as functions on $\MM$, $\Sp(\MM,\alpha)$ is closed under $F_{\geq 0}$-scalar multiplication and addition. 
\end{proposition}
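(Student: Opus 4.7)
My plan is to establish (2) first, then derive (1) from (2) together with \Cref{lemma: point PL concave}, and finally deduce the closure statement as a straightforward consequence of (2).

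For (2), in the forward direction I would use linearity of $p_\alpha$ directly: for any $m, m' \in \MM$, expanding $p(m +_\alpha m')$ as $p_\alpha(\pi_\alpha(m) + \pi_\alpha(m'))$ and invoking linearity gives $p(m) + p(m')$, which by the point condition must equal $\min_\beta p(m +_\beta m')$, so the minimum is attained at $\beta = \alpha$. Conversely, if the minimum in \eqref{eq: def point min} is always attained at $\alpha$, then the equality $p(m +_\alpha m') = p(m) + p(m')$ yields additivity of $p_\alpha$ on $M_\alpha$. The one subtlety is that the point definition only supplies $F_{\geq 0}$-homogeneity via \eqref{eq: def point F homog}; to upgrade additivity plus positive-scalar homogeneity to full $F$-linearity of $p_\alpha$, I would use $p_\alpha(0) = 0$ (from additivity) to conclude $p_\alpha(-u) = -p_\alpha(u)$, and then a short case split on the sign of $\lambda \in F$ handles negative scalars.

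For (1), the forward direction is immediate: $\Sp(\MM, \alpha) \subseteq \Sp(\MM)$, so $p$ extends to a convex function on $\MM_\R$ by \Cref{lemma: point PL concave}, and $p_\alpha$ is linear by definition. For the reverse direction, assume $p$ is piecewise $F$-linear and convex with $p_\alpha$ linear. Condition \eqref{eq: def point F homog} is automatic from piecewise $F$-linearity, since PL cones are closed under non-negative scaling. For \eqref{eq: def point min}, the central step is to extract a chartwise superadditivity inequality $p_\beta(u+v) \geq p_\beta(u) + p_\beta(v)$ from convexity, by applying the convexity inequality at $t = 1/2$ and then invoking $\R_{\geq 0}$-homogeneity of the natural extension $p_\R$. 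Translating to $\MM$ yields $p(m +_\beta m') \geq p(m) + p(m')$ for every $\beta$, with equality at $\beta = \alpha$ by linearity of $p_\alpha$. Hence the minimum is attained at $\alpha$ and equals $p(m) + p(m')$, so $p \in \Sp(\MM)$, and in fact $p \in \Sp(\MM, \alpha)$ via (2).

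For the closure statement I would appeal to characterization (2). Closure under $F_{\geq 0}$-scalar multiplication is essentially free, since $\lambda \geq 0$ commutes with both $\min$ and linearity. For addition, the key observation is that for $p, q \in \Sp(\MM, \alpha)$ the minima in the point condition are simultaneously attained at $\alpha$; summing $p(m) + p(m') = p(m +_\alpha m')$ with the analogous identity for $q$ gives $(p+q)(m) + (p+q)(m') = (p+q)(m +_\alpha m')$, while summing the chartwise superadditivity inequalities for $p$ and $q$ confirms that this is indeed $\min_\beta (p+q)(m +_\beta m')$, so $p+q \in \Sp(\MM, \alpha)$. The main conceptual wrinkle I anticipate is reconciling the paper's sign convention for convexity (which corresponds to standard concavity) with the minimum-taking in the point definition; once the chartwise superadditivity is in hand, the rest of the argument is routine bookkeeping.
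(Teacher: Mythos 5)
Your proposal is correct, and the underlying mechanics match the paper's: the crucial step in both is deriving chartwise superadditivity $p_\beta(u+v)\geq p_\beta(u)+p_\beta(v)$ from the paper's convexity convention together with $\R_{\geq 0}$-homogeneity, with equality at $\beta=\alpha$ forced by linearity of $p_\alpha$. The only differences are organizational: you prove (2) first and then feed it into (1), whereas the paper proves (1) directly and dismisses (2) with a ``similar argument''; and you argue the closure claim via characterization (2) (summing the identity $p(m)+p(m')=p(m+_\alpha m')$ and the superadditivity bounds), whereas the paper goes through (1) by observing that $F_{\geq 0}$-combinations of convex functions are convex and of linear functions are linear. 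The paper's route to closure is a bit more economical; your route makes explicit the content of (2) that the paper only gestures at, including the small but genuine step of upgrading additivity plus $F_{\geq 0}$-homogeneity of $p_\alpha$ to full $F$-linearity, which is worth spelling out as you do.
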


\begin{proof}
We first prove (1). To prove one direction, suppose $p \in \Sp(\MM,\alpha)$. Then $p_\alpha: M_\alpha \to F$ is linear by definition of $\Sp(\MM,\alpha)$ and \Cref{lemma: point PL concave} implies $p$ is convex. Conversely, suppose that $p: \MM \to F$ is piecewise $F$-linear, its natural extension $p: \MM_\R \to \R$ is convex, and $p_\alpha := p \circ \pi_\alpha^{-1}: M_\alpha \to F$ is $F$-linear. We need to show that $p$ is a point, i.e., $p$ satisfies \Cref{eq: def point min} and \Cref{eq: def point F homog}.  Let $m, m' \in \MM$ and $\beta \in \mathcal{I}$. Set the notation $u = \pi_\beta(m)$ and $u'=\pi_\beta(m')$. Since $p_\beta$ is convex by assumption, 
\begin{equation}\label{eq: p beta convex}
p(m) + p(m') = p_\beta(u) + p_{\beta}(u') \leq p_{\beta}(u+u') = p(m +_{\beta} m').
\end{equation} 
This argument applies to all $\beta \in \mathcal{I}$, so we conclude 
\begin{equation}\label{eq: min over all beta}
p(m) + p(m') \leq \min_{\beta \in \mathcal{I}} \{p(m +_{\beta} m') \}. 
\end{equation}
On the other hand, $p_\alpha$ is linear by assumption, so for $\alpha \in \mathcal{I}$ the computation in \Cref{eq: p beta convex} yields $p(m)+p(m')= p(m +_\alpha m')$ which implies that $p(m +_\alpha m')$ achieves the minimum in the RHS of \Cref{eq: min over all beta} and also that $p(m)+p(m') = \min\{p(m +_\beta m') \, \mid \, \beta \in \mathcal{I} \}$. Thus $p$ satisfies \Cref{eq: def point min}. Since $p$ is piecewise $F$-linear, it also satisfies \Cref{eq: def point F homog}. Thus $p \in \Sp(\MM)$, proving (1). 
A similar argument proves (2).

Now we prove the last claim. We need to show that for any $p,q \in \Sp(\MM,\alpha)$ and any $\lambda,\mu \in F_{\geq 0}$,  we have $\lambda p+ \mu q \in \Sp(\MM,\alpha)$. By (1) we only need to show that $\lambda p+ \mu q$ is convex, and, linear on $M_\alpha$. Any non-negative combination of convex functions is convex, and any linear combination of linear functions is linear, so the result follows. 
\end{proof}

Given a point $p \in \Sp(\MM)$, it is not necessarily the case that there exists a chart on which $p_\alpha$ is linear. If such a chart exists for each $p \in \Sp(\MM)$, this means that the subsets $\Sp(\MM,\alpha)$ cover $\Sp(\MM)$. We make the following definition. 

\begin{definition}\label{definition: full PL}
Let $\MM$ be a finite polyptych lattice over $F$, with charts $\{M_\alpha\}_{\alpha \in \mathcal{I}}$. 
If $\Sp(\MM) = \bigcup_{\alpha \in \mathcal{I}} \Sp(\MM,\alpha)$, then we say that $\MM$ is a \textbf{full} polyptych lattice. 
\exampleqed
\end{definition} 

\begin{example} 
The 2-vertex, rank-2 polyptych lattice $\MM$ in \Cref{ex_points} is full, since any point in $\Sp(\MM)$ is linear in either $M_1$ or $M_2$, as can easily be checked. 
\exampleqed
\end{example}

The last lemma of this section concerns the space of points of a product polyptych lattice. 

\begin{lemma}\label{lemma: points of product PL}
Let $\MM, \MM'$ be finite polyptych lattices over $F$ and let $\MM \times \MM'$ be the associated product polyptych lattice. Let $\mathrm{pr}: \MM \times \MM' \to \MM$ and $\mathrm{pr}': \MM \times \MM' \to \MM'$ denote the natural projection maps on the corresponding sets of elements. Then $\Sp(\MM \times \MM') \cong \Sp(\MM) \times \Sp(\MM')$ as sets, where the bijection $\Sp(\MM)\times \Sp(\MM') \to \Sp(\MM \times \MM')$ is given by $(p,p') \mapsto p \circ \mathrm{pr} + p' \circ \mathrm{pr}'$. Moreover, this bijection restricts to $\Sp(\MM,\alpha) \times \Sp(\MM',\alpha') \to \Sp(\MM \times \MM', (\alpha,\alpha'))$ for all $\alpha \in \pi(\MM), \alpha' \in \pi(\MM')$. 
\end{lemma}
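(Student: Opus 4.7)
The plan is to construct explicit maps in both directions and verify they are mutually inverse. The forward map $\Phi: \Sp(\MM) \times \Sp(\MM') \to \Sp(\MM \times \MM')$ is the one given in the statement. For the inverse $\Psi$, given a point $q \in \Sp(\MM \times \MM')$, I will define $p(m) := q(m, 0_{\MM'})$ and $p'(m') := q(0_\MM, m')$, and take $\Psi(q) = (p, p')$. (Here I use that there are canonical elements $(m, 0_{\MM'})$ and $(0_\MM, m')$ in $\MM \times \MM'$, whose $(\alpha, \alpha')$-chart representatives are $(\pi_\alpha(m), 0)$ and $(0, \pi_{\alpha'}(m'))$ respectively; well-definedness uses \Cref{remark: zero in MM} applied to $\MM'$ and $\MM$.)

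First I would check that $\Phi(p, p')$ is really a point. The chart addition in $\MM \times \MM'$ satisfies $(m_1, m_1') +_{(\alpha, \alpha')} (m_2, m_2') = (m_1 +_\alpha m_2, m_1' +_{\alpha'} m_2')$, and so a direct computation gives
\[
\min_{(\alpha,\alpha')} \bigl[ p(m_1 +_\alpha m_2) + p'(m_1' +_{\alpha'} m_2') \bigr] = \min_\alpha p(m_1 +_\alpha m_2) + \min_{\alpha'} p'(m_1' +_{\alpha'} m_2'),
\]
which by the point condition for $p, p'$ equals $[p(m_1) + p(m_2)] + [p'(m_1') + p'(m_2')]$, as required; the $F_{\geq 0}$-homogeneity condition is immediate. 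Similarly, for $\Psi(q) = (p, p')$, I verify $p \in \Sp(\MM)$ by noting that $(m_1, 0_{\MM'}) +_{(\alpha,\alpha')} (m_2, 0_{\MM'}) = (m_1 +_\alpha m_2, 0_{\MM'})$ for every $\alpha'$, so minimizing over pairs collapses to minimizing over $\alpha$.

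The main step, and the one requiring a small observation, is to show $\Phi \circ \Psi = \mathrm{Id}$, i.e. that $q(m, m') = q(m, 0_{\MM'}) + q(0_\MM, m')$ for all $(m, m') \in \MM \times \MM'$. The key identity is that, in the product polyptych lattice,
\[
(m, 0_{\MM'}) +_{(\alpha, \alpha')} (0_\MM, m') = (m, m') \quad \text{for every } (\alpha, \alpha'),
\]
because the $(\alpha,\alpha')$-chart representative of each side equals $(\pi_\alpha(m), \pi_{\alpha'}(m'))$. Applying the min condition for $q$ to the pair $(m, 0_{\MM'})$ and $(0_\MM, m')$ then gives $q(m, 0_{\MM'}) + q(0_\MM, m') = \min_{(\alpha,\alpha')} q(m, m') = q(m, m')$. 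For $\Psi \circ \Phi = \mathrm{Id}$, I use that any point vanishes at the zero element (apply $F_{\geq 0}$-homogeneity with $\lambda = 0$), so $\Phi(p, p')(m, 0_{\MM'}) = p(m) + p'(0_{\MM'}) = p(m)$, and analogously for $p'$.

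Finally, for the restriction statement I appeal to \Cref{prop-conesinthepoints}(2): $q \in \Sp(\MM \times \MM', (\alpha, \alpha'))$ iff the minimum in the point condition is always realized at $(\alpha, \alpha')$. For the forward direction, if $p \in \Sp(\MM, \alpha)$ and $p' \in \Sp(\MM', \alpha')$, both separate minima are achieved at $\alpha$ and $\alpha'$ respectively, so the combined minimum is achieved at $(\alpha, \alpha')$. Conversely, if $q \in \Sp(\MM \times \MM', (\alpha, \alpha'))$, then specializing the minimum condition for $q$ to pairs of the form $(m_1, 0_{\MM'}), (m_2, 0_{\MM'})$ (respectively $(0_\MM, m_1'), (0_\MM, m_2')$) shows the corresponding minima for $p$ and $p'$ are realized at $\alpha$ and $\alpha'$. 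I expect the main obstacle — though a mild one — to be correctly identifying addition in the product chart with the identity $(m, 0_{\MM'}) +_{(\alpha, \alpha')} (0_\MM, m') = (m, m')$; the rest then follows mechanically.
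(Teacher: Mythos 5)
Your proof is correct and takes essentially the same approach as the paper: both establish well-definedness of the forward map, and both use the identity $(m, 0_{\MM'}) +_{(\alpha,\alpha')} (0_\MM, m') = (m, m')$ together with the point condition to recover $q(m,m') = q(m,0_{\MM'}) + q(0_\MM, m')$. Your version is slightly more explicit in that it packages the surjectivity step as a two-sided inverse $\Psi$ and spells out the restriction statement (which the paper dismisses as straightforward) via \Cref{prop-conesinthepoints}(2), but the underlying argument is the same.
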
 

\begin{proof} 
We first show that the map $(p,p') \mapsto p \circ \mathrm{pr} + p' \circ \mathrm{pr}'$ is well-defined, i.e., the image is a point of $\MM \times \MM'$. Note that by definition of a product polyptych lattices, for $(m_1,m'_1), (m_2,m'_2) \in \MM \times \MM'$, and a choice of chart $(\alpha,\alpha') \in \mathcal{I} \times \mathcal{I}'$, the addition in the chart $M_\alpha \times M'_{\alpha'}$ is given by $(m_1,m'_1) +_{(\alpha,\alpha')} (m_2,m'_2) = (m_1+_\alpha m_2, m'_1 +_{\alpha'} m'_2)$.
Hence a point $q \in \Sp(\MM \times \MM')$ must satisfy 
\begin{equation}\label{eq: point of MM times MM'}
q(m_1,m'_1) + q(m_2,m'_2) = \min\{q(m_1+_\alpha m_2, m'_1 +_{\alpha'} m'_2) \, \mid \, \alpha\in \mathcal{I}, \alpha' \in \mathcal{I}'\}.
\end{equation}
For $q = p \circ \mathrm{pr} + p' \circ \mathrm{pr}'$, we have that the LHS of \Cref{eq: point of MM times MM'} is 
$p(m_1)+p'(m'_1)+p(m_2)+p'(m'_2)
$ whereas the RHS is $\min\{p(m_1+_\alpha m_2)+p'(m'_1+_{\alpha'} m'_2) \, \mid \, \alpha \in \mathcal{I}, \alpha' \in\mathcal{I}'\}$. From the definition of $\Sp(\MM)$ and $\Sp(\MM')$ it follows that LHS and RHS are equal. Moreover, since both $p$ and $p'$ are $F_{\geq 0}$-homogeneous, so is $q$. It is also straightforward to see that the map is injective. To complete the proof, it now suffices to show that any point of $\MM \times \MM'$ is of the form $p \circ \mathrm{pr} + p' \circ \mathrm{pr}'$. Since both $p,p'$ are piecewise $F$-linear by \Cref{lemma: points linear on faces of SigmaM} we know $p(0_\MM)=0=p'(0_{\MM'})$. Note also that $(m,m')=(m+_\alpha 0_\MM, 0+_{\alpha'} 0_{\MM'}) = (m,0_{\MM'}) +_{(\alpha,\alpha')} (0_\MM,m')$ for all $m\in \MM,m'\in\MM'$, and $(\alpha,\alpha') \in \mathcal{I} \times \mathcal{I}'$. By \Cref{eq: point of MM times MM'} it then follows that 
$$
q(m,0_{\MM'}) + q(0_\MM,m') = \min\{q(m+_\alpha 0_\MM, 0_{\MM'}+_{\alpha'} m') \mid \alpha \in \mathcal{I}, \alpha' \in \mathcal{I}'\} = q(m,m')
$$
for all $m \in \MM, m' \in \MM'$. In other words, $q$ is determined entirely by its values on $\MM \otimes \{0_{\MM'}\}$ and $\{0_\MM\} \times \MM'$, and, $q = \left( q \vert_{\MM \times \{0_{\MM'}\}} \right) \circ \mathrm{pr} + \left( q \vert_{\{0_\MM\} \times \MM'} \right) \circ \mathrm{pr}'$. From \Cref{eq: point of MM times MM'} it immediately follows that both of the restrictions $q \vert_{\MM \times \{0_{\MM'}\}}$ and $q \vert_{\{0_\MM\} \times \MM'}$ satisfy \Cref{eq: def point min} on $\MM$ and $\MM'$ respectively. Moreover, since $q$ is $F_{\geq 0}$-homogeneous, so are the restrictions. This proves the bijection, and the last claim of the lemma is straightforward.  
\end{proof}

\subsection{Convexity and half-spaces in polyptych lattices} \label{subsec_PL_halfspaces_basics}

Our next goal is to define analogues of the notion of convexity in the setting of polyptych lattices. (In \Cref{sec_PL_polytopes} we develop these ideas further to a theory of PL polytopes.) In fact, there are two natural definitions of convexity in the PL setting; both are useful. We begin with what we call chart-convexity. 

\begin{definition}\label{def_M_convex} 
Let $\MM$ be a finite polyptych lattice over $F$ and $\PP$ a subset of $\MM_\R$. 
We say that $\PP$ is \textbf{chart-convex} if each $\pi_\alpha(\PP)$ is a (classical) convex set in $M_\alpha \otimes \R \cong \R^r$ for all $\alpha \in \pi(\MM)$. 
\exampleqed
\end{definition} 

Sets that are chart-convex can be characterized by looking at subsets of $\MM$ that map to a (classical) straight line in some coordinate chart. To make this precise, we give the following definition. 

\begin{definition}\label{def: broken line}
We say that a subset $\ell \subset \MM_\R$ is a \textbf{broken line} if there exists $\alpha \in \pi(\MM)$ such that $\pi_\alpha(\ell) \subset M_\alpha \otimes \R$ is a straight line (possibly not including endpoints) in the classical sense. We say that a \textbf{broken line $\ell$ has endpoints $m, m' \in \MM_\R$} if $\pi_\alpha'(\ell)$ has endpoints $\pi_\alpha(m),\pi_\alpha(m')$. 
\exampleqed
\end{definition}

\begin{example} 
Continuing with \Cref{ex: running example}, the vertical line segment in $M_1 \otimes \R$ connecting the two points $(0,1)$ and $(0,-1)$ maps under $\mu_{1,2}$ to the union of two line segments in $M_2 \otimes \R$, the first connecting $(0,0)$ to $(0,1)$, and the second connecting $(0,0)$ to $(-1,-1)$. The subset $\ell \in \MM_\R$ which maps to these two sets under $\pi_1$ and $\pi_2$ respectively is a broken line in the sense of \Cref{def: broken line}, since its chart image in $M_1 \otimes \R$ is a straight line in the classical sense. 
\exampleqed
\end{example} 

In the classical setting, a convex set is, by definition, characterized by the fact that it contains the straight line connecting any two of its points. The following proposition is the analogue in our setting, and gives a characterization of chart-convexity in terms of broken lines.

\begin{proposition}\label{prop-convex}
Let $\MM$ be a polyptych lattice over $F$. 
A set $\PP \subseteq \MM_{\R}$ is chart-convex if and only if, for any $m,m' \in \PP$, and any broken line $\ell$ with endpoints $m, m'$,  the broken line $\ell$ is contained in $\PP$.
\end{proposition}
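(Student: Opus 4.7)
The plan is to leverage the fact that for each $\alpha \in \pi(\MM)$ the chart map $\pi_\alpha \colon \MM_\R \to M_\alpha \otimes_F \R$ is a bijection of underlying sets. This is immediate from the construction of $\MM$ in~\eqref{eq: def elements of MM}, since each equivalence class has a unique representative in every chart. In particular, for any $S \subseteq \MM_\R$ we have $\pi_\alpha^{-1}(\pi_\alpha(S)) = S$ and, for any $T \subseteq M_\alpha \otimes_F \R$, $\pi_\alpha(\pi_\alpha^{-1}(T)) = T$. This lets us transfer containments freely between $\MM_\R$ and any single chart.

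For the forward implication, assume $\PP$ is chart-convex. Let $m, m' \in \PP$ and let $\ell$ be a broken line with endpoints $m, m'$. By \Cref{def: broken line} there exists some $\alpha \in \pi(\MM)$ for which $\pi_\alpha(\ell)$ is a classical straight line with endpoints $\pi_\alpha(m), \pi_\alpha(m') \in \pi_\alpha(\PP)$. Since $\pi_\alpha(\PP)$ is a classical convex subset of $M_\alpha \otimes_F \R$ containing both endpoints, it contains the whole closed segment between them, and hence contains the subset $\pi_\alpha(\ell)$. Applying $\pi_\alpha^{-1}$ yields $\ell = \pi_\alpha^{-1}(\pi_\alpha(\ell)) \subseteq \pi_\alpha^{-1}(\pi_\alpha(\PP)) = \PP$.

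For the reverse implication, fix an arbitrary $\alpha \in \pi(\MM)$ and take $u, u' \in \pi_\alpha(\PP)$, writing $u = \pi_\alpha(m)$ and $u' = \pi_\alpha(m')$ for some $m, m' \in \PP$. Let $L \subseteq M_\alpha \otimes_F \R$ be the classical closed line segment from $u$ to $u'$, and set $\ell := \pi_\alpha^{-1}(L) \subseteq \MM_\R$. Then $\pi_\alpha(\ell) = L$ is by construction a classical straight line with endpoints $\pi_\alpha(m), \pi_\alpha(m')$, so $\ell$ is a broken line with endpoints $m, m'$ in the sense of \Cref{def: broken line}. By hypothesis, $\ell \subseteq \PP$, hence $L = \pi_\alpha(\ell) \subseteq \pi_\alpha(\PP)$. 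Since $u, u'$ were arbitrary, $\pi_\alpha(\PP)$ is classically convex; since $\alpha$ was arbitrary, $\PP$ is chart-convex.

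I do not anticipate any serious obstacle; the argument is essentially bookkeeping, once one has recorded that each $\pi_\alpha$ is a set-theoretic bijection. The only minor subtlety is that in the forward direction the broken line property of $\ell$ is only guaranteed in \emph{some} chart $\alpha$, not all, so we must work with exactly that $\alpha$ and use convexity of $\pi_\alpha(\PP)$ there; this is precisely what chart-convexity supplies.
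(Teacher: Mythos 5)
Your proof is correct and follows essentially the same two-step argument as the paper: for the forward direction, transfer to the chart $\alpha$ where $\ell$ is straight and use classical convexity of $\pi_\alpha(\PP)$; for the reverse, pull back a line segment in a chosen chart to a broken line and apply the hypothesis. The bookkeeping via bijectivity of $\pi_\alpha$ is handled cleanly and matches the paper's reasoning.
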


\begin{proof}
First suppose $\PP$ is chart-convex.  Suppose $\ell$ is a broken line in $\MM_\R$ with endpoints $m, m' \in \PP$. By definition of broken lines, there exists $\alpha \in \pi(\MM)$ with $\pi_\alpha(\ell)$ a straight line in $M_\alpha \otimes \R$, with endpoints $\pi_\alpha(m)$ and $\pi_\alpha(m')$.  By the chart-convexity of $\PP$, we know $\pi_\alpha(\PP)$ is convex in $M_\alpha \otimes \R$, and since $\pi_\alpha(m),\pi_\alpha(m') \in \pi_\alpha(\PP)$, we conclude $\pi_\alpha(\ell) \subset \pi_\alpha(\PP)$. Since $\pi_\alpha$ is a bijection, this implies $\ell \subset \PP$.

Conversely, suppose $\PP$ has the property that any broken line with endpoints $m, m' \in \PP$ must lie in $\PP$.  Let $\alpha \in \pi(\MM)$ and consider $\pi_\alpha(m), \pi_\alpha(m') \in \pi_\alpha(\PP)$.  Let $\ell$ be the preimage in $\MM$ under $\pi_\alpha$ of the straight line $\ell'$ in $M_\alpha \otimes \R$ with endpoints $\pi_\alpha(m), \pi_\alpha(m')$. Then $\ell$ is a broken line, hence by assumption is contained in $\PP$. Thus $\ell'=\pi_\alpha(\ell)$ is contained in $\pi_\alpha(\PP)$, and it follows that $\pi_\alpha(\PP)$ is convex in the classical sense. Since $\alpha \in \pi(\MM)$ was arbitrary, we conclude that $\PP \subseteq \MM_\R$ is chart-convex. 
\end{proof}

The following is immediate from \Cref{prop-convex}. 

\begin{corollary}\label{corollary: intersection of chart-convex}
Any intersection of chart-convex sets is chart-convex.
\end{corollary}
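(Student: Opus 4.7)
The plan is to deduce this corollary as a quick consequence of the broken line characterization of chart-convexity established in \Cref{prop-convex}. Let $\{\PP_i\}_{i \in I}$ be an arbitrary collection of chart-convex subsets of $\MM_\R$, and set $\PP := \bigcap_{i \in I} \PP_i$. I want to verify the broken line criterion for $\PP$.

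First I would take any two elements $m, m' \in \PP$ and any broken line $\ell$ in $\MM_\R$ with endpoints $m$ and $m'$. Since $m, m' \in \PP$ means $m, m' \in \PP_i$ for every $i \in I$, the chart-convexity of each $\PP_i$ together with \Cref{prop-convex} yields $\ell \subseteq \PP_i$ for every $i$. Intersecting over $i$, I obtain $\ell \subseteq \bigcap_{i \in I} \PP_i = \PP$. Applying \Cref{prop-convex} in the reverse direction then gives chart-convexity of $\PP$, completing the proof.

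There is essentially no obstacle here: the argument is a formal consequence of the broken line characterization combined with the trivial fact that the defining condition (containing a broken line whenever it contains the two endpoints) is preserved under arbitrary intersections. As a sanity check, one could alternatively argue directly from \Cref{def_M_convex} by noting that $\pi_\alpha$ is a bijection, so $\pi_\alpha(\PP) = \bigcap_{i \in I} \pi_\alpha(\PP_i)$, and each $\pi_\alpha(\PP_i)$ is classically convex in $M_\alpha \otimes \R$; since an arbitrary intersection of classically convex sets is convex, chart-convexity of $\PP$ follows. I would include the broken-line proof as the main argument since it aligns with the perspective just developed in \Cref{prop-convex}.
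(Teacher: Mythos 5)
Your proof is correct and takes exactly the approach the paper intends: the paper states this corollary as an immediate consequence of \Cref{prop-convex} (the broken line characterization), which is precisely what you spell out. Your alternative direct argument via $\pi_\alpha$ being a bijection and classical convexity being preserved under intersections is also valid and arguably even more elementary.
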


Now we proceed to define our second notion of convexity for polyptych lattices. We begin with a generalized notion of half-spaces.

\begin{definition}\label{definition: M half space} 
Let $\MM$ be a polyptych lattice over $F$ and let 
$p \in \Sp(\MM)$ be a point of $\MM$. Let $a \in F$. We define the \textbf{PL-half-space (over $F$) with threshold $a$ associated to $p$} as follows: 
\begin{equation*}\label{eq: def M half space} 
\HH_{p, a} := \{ m \in \MM_\R \, \mid \, p(m) \geq a\} \subset \MM_\R. 
\end{equation*} 
(When the base ring is clear from context, we sometimes drop the $F$.) 
\exampleqed
\end{definition}

\begin{example}\label{ex_half_space} 
We continue with \Cref{ex_points}.
Consider $p\in \Sp(\MM)$ given by $p(\mathbb{e}_1)=-1$, $p(\mathbb{e}_2)=0$, and $p(\mathbb{e}'_2)=-1$.
By \eqref{eq_ex_point} from \Cref{ex_points}, we have that $\pi_1(\HH_{p,-1})=\{(x,y)\in M_1\mid -x+\min\{0,y\}\ge -1\}$ and  $\pi_2(\HH_{p,-1})=\mu_{12}(\pi_1(\HH_{p,-1}))=\{(u,v)\in M_2\mid u\ge -1\}$. The latter is a classical half-space in $M_2 \otimes \R$, while the former is not. Both of the chart images of $\HH_{p,-1}$ are depicted in \Cref{fig_half_spaces}.
\begin{figure}[h]
    \centering
    \begin{tikzpicture}
    \filldraw[fill=gray!50,draw=none] (-.5*3.05,-.5*3.05)--(-.5*2,-.5*3.05)--(.5*1,.5*0)--(.5*1,.5*3.05)--(-.5*3.05,.5*3.05);
    \foreach \i in {-3,...,3}
      \foreach \j in {-3,...,3}{
        \filldraw[black] (.5*\i,.5*\j) circle(.5pt);
      };
    \draw[gray,<->] (-1.5,0)--(1.5,0);
    \draw[gray,<->] (0,-1.5)--(0,1.5);
    \draw (-.5*2,-.5*3)--(.5*1,.5*0)--(.5*1,.5*3);
    \end{tikzpicture}
    \hspace{5cm}
    \begin{tikzpicture}
    \filldraw[fill=gray!50,draw=none] (.5*3.05,-.5*3.05)--(-.5*1,-.5*3.05)--(-.5*1,.5*3.05)--(.5*3.05,.5*3.05);
    \foreach \i in {-3,...,3}
      \foreach \j in {-3,...,3}{
        \filldraw[black] (.5*\i,.5*\j) circle(.5pt);
      };
    \draw[gray,<->] (-1.5,0)--(1.5,0);
    \draw[gray,<->] (0,-1.5)--(0,1.5);
    \draw (-.5*1,-.5*3)--(-.5*1,.5*3);
    \end{tikzpicture}
    \caption{The two chart images of the $\MM$-half-space $\HH_{p,-1}$ from \Cref{ex_half_space}. On the left is $\pi_1(\HH_{p,-1})$ and on the right is $\pi_2(\HH_{p,-1})$. }
    \label{fig_half_spaces}
\end{figure}
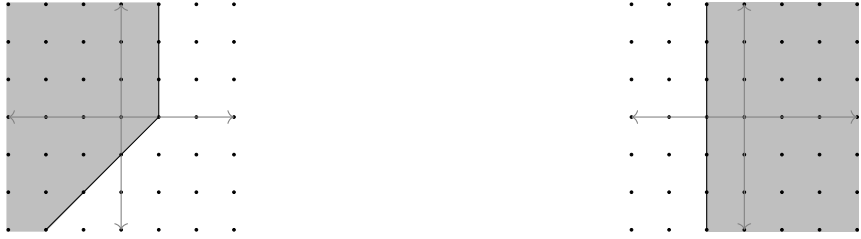
\exampleqed
\end{example}

We now show that PL-half-spaces are chart-convex.

\begin{proposition}\label{prop_halfconvex}
Let $\MM$ be a polyptych lattice over $F$, and let $a \in F$ and $p\in\Sp(\MM)$. Then the PL half-space $\HH_{p, a} \subset \MM_\R$ is chart-convex. 
\end{proposition}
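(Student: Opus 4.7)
The plan is to reduce chart-convexity of $\HH_{p,a}$ to a classical statement, namely that the super-level set of a concave-type function (what the paper calls convex in the sense of \Cref{definition: convex}) on $\R^r$ is classically convex. The key input is \Cref{lemma: point PL concave}, which tells us that the natural extension $p : \MM_\R \to \R$ of any point $p \in \Sp(\MM)$ is convex in the sense of \Cref{definition: convex}, i.e., for every $\alpha \in \pi(\MM)$, the function $p_\alpha := p \circ \pi_\alpha^{-1} : M_\alpha \otimes_F \R \to \R$ satisfies $p_\alpha(tu + (1-t)u') \geq t\, p_\alpha(u) + (1-t)\, p_\alpha(u')$ for all $u,u' \in M_\alpha \otimes_F \R$ and $t \in [0,1]$.

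First I would fix $\alpha \in \pi(\MM)$ and identify $\pi_\alpha(\HH_{p,a})$. Since $\pi_\alpha$ is a bijection between $\MM_\R$ and $M_\alpha \otimes_F \R$, we have
\begin{equation*}
\pi_\alpha(\HH_{p,a}) \;=\; \{ u \in M_\alpha \otimes_F \R \mid p_\alpha(u) \geq a \},
\end{equation*}
i.e., the super-level set of $p_\alpha$ at threshold $a$. Next, I would show this set is classically convex: given $u, u'$ in it and $t \in [0,1]$, the inequality from the previous paragraph gives
\begin{equation*}
p_\alpha(tu + (1-t)u') \;\geq\; t\, p_\alpha(u) + (1-t)\, p_\alpha(u') \;\geq\; ta + (1-t)a \;=\; a,
\end{equation*}
so $tu + (1-t)u' \in \pi_\alpha(\HH_{p,a})$. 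Since $\alpha$ was arbitrary, every chart image of $\HH_{p,a}$ is classically convex, hence $\HH_{p,a}$ is chart-convex by \Cref{def_M_convex}.

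There is essentially no obstacle here: the entire argument is a direct application of \Cref{lemma: point PL concave} combined with the elementary fact that super-level sets of concave-type functions are convex. Alternatively, one could phrase the proof in terms of broken lines using \Cref{prop-convex}, verifying that for any $m, m' \in \HH_{p,a}$ and any broken line $\ell$ with endpoints $m, m'$ realized as a straight segment in some chart $M_\alpha \otimes_F \R$, the convexity of $p_\alpha$ shows that $p$ remains $\geq a$ along $\ell$; but the direct chart-by-chart proof above is shorter and more transparent.
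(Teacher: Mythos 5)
Your proof is correct and relies on the same key input as the paper's, namely \Cref{lemma: point PL concave}. The only cosmetic difference is that the paper routes through the broken-line characterization of chart-convexity (\Cref{prop-convex}), verifying that any broken line with endpoints in $\HH_{p,a}$ stays in $\HH_{p,a}$, whereas you verify chart-convexity directly from \Cref{def_M_convex} by checking that each $\pi_\alpha(\HH_{p,a})$ is the super-level set of the convex function $p_\alpha$; since \Cref{prop-convex} is precisely the equivalence of those two conditions, the two arguments are essentially the same, and your direct version is slightly shorter.
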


\begin{proof}
By \Cref{prop-convex}, it suffices to show that any broken line $\ell$ with endpoints $m,m' \in \HH_{p,a}$ is contained in $\HH_{p,a}$. For such an $\ell$, let $\alpha \in \pi(\MM)$ be such that $\pi_\alpha(\ell)$ is a (classical) straight line segment in $M_\alpha \otimes \R$ with endpoints $\pi_\alpha(m), \pi_\alpha(m')$. 
For any $q \in \ell$, there is for some $s, t \in \R_{\geq 0}$ such that $s + t = 1$ and $s \, \pi_\alpha(m) + t \, \pi_\alpha(m') = \pi_\alpha(q)$. Since $p$ is a point in $\Sp(\MM)$, its extension to $\MM_\R$ is convex by \Cref{lemma: point PL concave}. Thus we have
\begin{equation*}
\begin{split}
a = (s + t)a = sa + ta & \leq s \, p(m) + t \, p(m') \quad \textup{ since $m,m' \in \HH_{p,a}$} \\
& = s \, p_\alpha(\pi_\alpha(m)) + t \, p_\alpha(\pi_\alpha(m')) \\
& \leq p_\alpha( s \pi_\alpha(m) + t \pi_\alpha(m')) \quad \textup{ by convexity of $p_\alpha$} \\
& = p_\alpha(\pi_\alpha(q)) = p(q)
\end{split}
\end{equation*}
so $q \in \HH_{p, a}$. Thus $\ell$ is entirely contained in $\HH_{p,a}$, and since $\ell$ was arbitrary, the claim follows. 
\end{proof}

The notion of PL-half-spaces provides a natural extension of the half-space description of a classical convex set, which we do now.
Also, we can now give our second notion of convexity in the PL setting. 

\begin{definition}\label{def_pt_convex hull}\label{def: point convex}
Let $\MM$ be a finite polyptych lattice over $F$ and let $S \subseteq \MM_{\R}$. 
 The \textbf{point-convex hull (over $F$)} $\ptconv_{F}(S)$ of $S$ is the intersection of all PL half-spaces $\HH_{p, a}$ over $F$ containing $S$, i.e.,
 \begin{equation*}\label{eq: def point convex hull}
 \ptconv_F(S) := \bigcap_{S \subset \HH_{p,a}} \HH_{p,a}.
 \end{equation*} 
 We say that $S$ is \textbf{point-convex (over $F$)} if $S=\ptconv_F(S)$. 
 \exampleqed
\end{definition}

The following lemma relates the two notions of convexity.

\begin{lemma}\label{lemma: point convex implies chart-convex}
Let $\MM$ be a finite polyptych lattice over $F$ and let $S \subset \MM_\R$. If $S$ is point-convex over $F$, then $S$ is chart-convex. 
\end{lemma}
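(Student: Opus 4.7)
The plan is to observe that this lemma is essentially an immediate consequence of the two preceding results, namely \Cref{prop_halfconvex} and \Cref{corollary: intersection of chart-convex}.

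More explicitly, first I would recall that by \Cref{prop_halfconvex}, every PL half-space $\HH_{p,a} \subset \MM_\R$ is chart-convex. Then, since $S$ is point-convex over $F$ by hypothesis, we have by \Cref{def_pt_convex hull} the expression
\begin{equation*}
S = \ptconv_F(S) = \bigcap_{S \subset \HH_{p,a}} \HH_{p,a},
\end{equation*}
expressing $S$ as an intersection of PL half-spaces. Applying \Cref{corollary: intersection of chart-convex} to this intersection of chart-convex sets, we conclude that $S$ itself is chart-convex, which completes the argument.

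There is no real obstacle here; the lemma is a formal consequence of the previously established facts that PL half-spaces are chart-convex and that chart-convexity is preserved under arbitrary intersection. The only thing to verify in passing is that the indexing set for the intersection defining $\ptconv_F(S)$ is nonempty (or, if empty, that the convention $\MM_\R$ for the empty intersection is itself chart-convex, which holds trivially since each $\pi_\alpha(\MM_\R) = M_\alpha \otimes_F \R$ is convex).
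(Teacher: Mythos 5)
Your proof is correct and matches the paper's own argument essentially verbatim: express $S = \ptconv_F(S)$ as an intersection of PL half-spaces, invoke \Cref{prop_halfconvex} for chart-convexity of each half-space, and invoke \Cref{corollary: intersection of chart-convex} to conclude. Your extra remark about the empty-intersection convention is a harmless bonus that the paper omits.
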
 

\begin{proof} 
If $S$ is point-convex then $S = \ptconv_F(S) = \cap_{S \subset \HH_{p,a}} \HH_{p,a}$ is an intersection of PL half-spaces. The claim follows then follows from \Cref{prop_halfconvex} and \Cref{corollary: intersection of chart-convex}. 
\end{proof}

We use the following in the sequel. The proof is a definition check. 

\begin{lemma}\label{lemma: S subset T point convex}
Let $\MM$ be a finite polyptych lattice over $F$ and let 
$T \subseteq S \subset \MM_\R$. Then $\ptconv_F(T) \subset \ptconv_F(S)$.  
In particular, if $S$ is point-convex, then $\ptconv_F(T) \subseteq S$. 
\end{lemma}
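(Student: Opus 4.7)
The plan is to unwind the definition of point-convex hull directly. Recall that $\ptconv_F(T)$ is defined as the intersection of all PL half-spaces $\HH_{p,a}$ over $F$ that contain $T$, and similarly for $\ptconv_F(S)$. The key observation is that the collection of half-spaces indexed in the intersection is monotone with respect to containment of the sets being hulled, but in the opposite direction: a larger set has fewer half-spaces containing it.

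More precisely, I would first note that if $T \subseteq S$ and $\HH_{p,a}$ is any PL half-space with $S \subseteq \HH_{p,a}$, then automatically $T \subseteq \HH_{p,a}$. Hence the indexing family
\begin{equation*}
\{(p,a) \mid S \subseteq \HH_{p,a}\} \subseteq \{(p,a) \mid T \subseteq \HH_{p,a}\}.
\end{equation*}
Taking intersections reverses inclusions, so
\begin{equation*}
\ptconv_F(T) \;=\; \bigcap_{T \subseteq \HH_{p,a}} \HH_{p,a} \;\subseteq\; \bigcap_{S \subseteq \HH_{p,a}} \HH_{p,a} \;=\; \ptconv_F(S),
\end{equation*}
which establishes the first claim.

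For the second claim, if $S$ is additionally point-convex, then by \Cref{def_pt_convex hull} we have $S = \ptconv_F(S)$, so combining with the first part yields $\ptconv_F(T) \subseteq \ptconv_F(S) = S$, as desired.

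There is essentially no obstacle here; the statement is a formal consequence of the definition of $\ptconv_F$ as an intersection over an indexing family that is antitone in the subset being hulled. The only thing to be careful about is to make sure the index families are interpreted correctly (noting that $\ptconv_F$ depends only on which PL half-spaces contain the set, not on the set itself beyond that).
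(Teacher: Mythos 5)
Your proof is correct and is exactly the definition check the paper has in mind; the monotonicity of the indexing family of half-spaces is the entire content, and you have stated it precisely.
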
 

We end the section with a useful characterization of point-convex hulls. 

\begin{lemma}\label{lem-pconvex-1}
Let $\MM$ be a finite polyptych lattice of rank $r$ over $F$. Let $S$ be a compact subset of $\MM_\R$. Then $\ptconv_{\R}(S)=\{m\in \MM_{\R} \mid \forall p \in \Sp_{\R}(\MM),\ p(m) \geq \min\{p(s) \mid s \in S\}\}$.
    \end{lemma}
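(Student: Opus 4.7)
The plan is to establish the claimed equality by double inclusion, using continuity of any $p \in \Sp_\R(\MM)$ together with the compactness of $S$ to guarantee that $\min\{p(s) \mid s \in S\}$ is attained. Denote the right-hand side of the claimed equality by $T$. The key preliminary observation is that, by \Cref{lemma: points linear on faces of SigmaM}, every $p \in \Sp_\R(\MM)$ is piecewise $\R$-linear on the (finitely many) PL cones of $\Sigma(\MM)$, and is in particular continuous on $\MM_\R$. Combined with the compactness of $S$, this guarantees that $a_p := \min\{p(s) \mid s \in S\}$ is a well-defined real number for every $p \in \Sp_\R(\MM)$.

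First, I would show $T \subseteq \ptconv_\R(S)$. Given $m \in T$, I verify that $m$ lies in every PL half-space $\HH_{p,a}$ over $\R$ that contains $S$. Indeed, if $S \subseteq \HH_{p,a}$, then $p(s) \geq a$ for all $s \in S$, so $a_p \geq a$; by the definition of $T$ we get $p(m) \geq a_p \geq a$, so $m \in \HH_{p,a}$. Taking the intersection over all such half-spaces yields $m \in \ptconv_\R(S)$ by \Cref{def_pt_convex hull}.

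For the reverse inclusion $\ptconv_\R(S) \subseteq T$, I would take $m \in \ptconv_\R(S)$ and run the previous argument in reverse: for any $p \in \Sp_\R(\MM)$, set $a := a_p$. Then $S \subseteq \HH_{p,a_p}$ by construction, so $\HH_{p,a_p}$ is one of the PL half-spaces appearing in the intersection defining $\ptconv_\R(S)$; hence $m \in \HH_{p,a_p}$, i.e., $p(m) \geq a_p = \min\{p(s) \mid s \in S\}$. Since $p \in \Sp_\R(\MM)$ was arbitrary, $m \in T$.

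The only real subtlety here is technical, namely that $a_p \in \R$ so that $\HH_{p,a_p}$ is legitimately a PL half-space over $\R$ in the sense of \Cref{definition: M half space}. This is immediate from the continuity of $p$ on $\MM_\R$ together with compactness of $S$, and no deeper structural property of $\MM$ or $\Sp_\R(\MM)$ beyond \Cref{lemma: points linear on faces of SigmaM} is needed. I do not anticipate further obstacles.
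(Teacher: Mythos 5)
Your proof is correct and takes essentially the same approach as the paper: a double-inclusion argument resting on the observation that compactness of $S$ together with continuity of any $p \in \Sp_\R(\MM)$ (via \Cref{lemma: points linear on faces of SigmaM}) guarantees $\min\{p(s) \mid s \in S\}$ is attained. The only cosmetic difference is that the paper phrases the inclusion $\ptconv_\R(S) \subseteq T$ contrapositively (if $p(m) < a_p$ for some $p$, choose $a$ strictly between and obtain a separating half-space), whereas you argue it directly.
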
 

\begin{proof} 
Note first that the minimum $\min\{p(s) \mid s \in S\}$ is achieved for any $p \in \Sp_{\R}(\MM)$ because $p$ is continuous by assumption and $S$ is closed and bounded in $\MM_\R$, hence compact. 
First suppose that $m \in \MM_\R$. If there exists $p \in \Sp_\R(\MM)$ with $p(m) < \min\{p(s) \mid s \in S\}$ then we can find $a \in \R$ with $m \notin \HH_{p, a}$ but $S \subset \HH_{p, a}$. Thus $m$ is not in the point-convex hull of $S$. 
Conversely, if for all $p \in \Sp_\R(\MM)$ we have $p(m) \geq \min\{p(s) \mid s \in S\}$, then if $S \subset \HH_{p, a}$ we must have $p(m) \geq a$, so $m \in \HH_{p, a}$, and $m$ is in the point-convex hull of $S$. 
\end{proof}

We will further study convexity in the PL setting in \Cref{sec_PL_polytopes}.

\subsection{The canonical semialgebra}\label{subsec: semialgebra}

We now take a moment to introduce terminology which we use frequently below. 
In this manuscript, a \textbf{commutative semiring}\footnote{The literature is inconsistent in the terminology for these objects, as the authors of e.g.\ \cite{Golan, Kazimierz} also testify.}  $\mathcal{S}$ is a set satisfying all the axioms of a commutative ring with a multiplicative identity, \emph{except} the axiom of existence of additive inverses. We denote addition by $\oplus$ and multiplication by $\odot$ respectively, with corresponding identities $\infty$ and $0$. Due to the lack of additive inverses, we must add an extra axiom, namely that $a \odot \infty = \infty \odot a = \infty$ for all $a \in \mathcal{S}$.   A semiring is said to be \textbf{idempotent} if $a \oplus a = a$ for all $a \in \mathcal{S}$. 

\begin{remark}\label{remark: partial order of idempotent semialgebra}
An idempotent commutative semiring can be equipped with a natural partial order, defined by $a\le b$ if and only if $a\oplus b=a$. Note that $\infty$ is the largest element under this partial order (this justifies the notation). This is not a total ordering in general. 
\exampleqed 
\end{remark}

The following examples are central to this manuscript. 

\begin{example}\label{example: OMM as semialgebra}
Let $X$ be any set. Then the $\R$-vector space of $\R$-valued functions $\mathrm{Fun}(X,\R)$ with domain $X$ can be equipped with the structure of an idempotent commutative semiring, with operations given by 
\begin{equation*}\label{eq: semialgebra operations on O_MM}
f \odot g := f + g, \quad \textup{ and } \quad 
f \oplus g := \min\{f,g\}
\end{equation*}
where the operations on the RHS are the natural operations on functions. We also formally add an element denoted $\infty$ which serves as the additive identity under $\oplus$, namely, $f\oplus \infty = \min\{f,\infty\} = f$ for all $f \in \mathrm{Fun}(X,\R)$. We define $f \odot \infty = \infty$. Note that $\mathrm{Fun}(X,\R)$ is indeed idempotent, since $f \oplus f = \min\{f,f\} = f$ for all $f \in \mathrm{Fun}(X,\R)$. Any subset of $\mathrm{Fun}(X,\R)$ which is closed under $+$ and $\min$ naturally inherits an idempotent commutative semiring structure. The case of main interest for us will be $X = \MM_\R$, and we consider the idempotent commutative sub-semiring  $\O_\MM \subset \mathrm{Fun}(\MM,\R)$, defined as the set of all $\R$-valued piecewise-linear functions on $\MM_\R$. 
\exampleqed
\end{example} 

\begin{example}\label{ex: Malpha as semialgebra}
Let $\Gamma$ be a totally ordered abelian group. Let $\overline{\Gamma} := \Gamma \cup \{\infty\}$ denote $\Gamma$ with an element $\infty$ formally added, where we define $a<\infty$ and $a+\infty=\infty$ for all $a \in \Gamma$. It is straightforward to see that $\overline{\Gamma}$ can be equipped with an idempotent commutative semiring structure with respect to the operations $m \odot m' := m+m'$, where the RHS is the abelian group operation on $\overline{\Gamma}$, and $m \oplus m' := \min\{m, m'\}$ where the minimum is taken with respect to the given total order on $\overline{\Gamma}$. As in \Cref{example: OMM as semialgebra} it is clear that $(\overline{\Gamma}, \odot, \oplus)$ is idempotent, since $m \oplus m = \min\{m,m\}=m$ for all $m \in \overline{\Gamma}$. There are three main examples of $\overline{\Gamma}$ which we consider in this paper. The first two are $\overline{\Z} := \Z \cup \{\infty\}$ with the usual order $<$, and $\overline{\Z^r}:=\Z^r \cup \{\infty\}$ with the standard lex order $<_{lex}$ on $\Z^r$.\footnote{We take standard lex order to mean that $(a_1,\cdots,a_r) >_{lex} (b_1,\cdots,b_r)$ if there exists $\ell \in [r]$ such that $a_i=b_i$ for all $i<\ell$ and $a_\ell > b_\ell$ with respect to the standard order on $\Z$.} The third is constructed by an identification with $(\overline{\Z^r}, <_{lex})$ or $(\overline{F^r}, <_{lex})$. Specifically, let $\MM$ be a finite polyptych lattice of rank $r$ over $F$, let $\alpha \in \pi(\MM)$, and $M_\alpha$ its coordinate chart. We know $M_\alpha$ is isomorphic to $F^r$ by definition of a polyptych lattice; fix such an isomorphism $\psi: M_\alpha \to F^r$. (For instance, $\psi$ may be defined using a choice of ordered basis of $M_\alpha$.) We may then equip $M_\alpha$ with the total order induced from $\psi$ and the lex order on $F^r$. Then $\overline{M_\alpha}:=M_\alpha \cup \{\infty\}$ is an idempotent commutative semiring. We will return to this last example in \Cref{sec_detrop}. 
\exampleqed
\end{example}

\begin{example}\label{example: lattice polytope semialg}
Fix a positive integer $r>0$ and consider the set of integral polytopes in $\Z^r$. We also include the empty set $\emptyset$. 
This set can be equipped with the structure of an idempotent commutative semiring, where the operation $\oplus$ is ``convex hull of the union'' and $\odot$ is ``Minkowski sum''. The empty set $\emptyset$ takes on the role of the additive identity $\infty$. 
\exampleqed
\end{example} 

Let $\Z \subset F \subset \R$ be a subring of $\R$ containing $\Z$. It is straightforward to see that, with respect to the usual operations $+$ and $\times$ in $\R$, the subset $F_{\geq 0} := \{\lambda \in F, \lambda \geq 0\}$ is a commutative semiring, with additive identity $0$ and multiplicative identity $1$. For what follows, we use some terminology from \cite{PareigisRohrl}. It can be checked that $\bar{F} := (F \cup \{\infty\}, +)$ has the structure of a semimodule over the semiring $F_{\geq 0}$, with semimodule operation $(\lambda, a) \mapsto \lambda a = \lambda \times a$ for $\lambda \in F_{\geq 0}, a \in F$ where $\times$ denotes the usual multiplication in $F \subset \R$, and $(\lambda, \infty) \mapsto \infty$ for any $\lambda \in F_{>0}$, and $(0,\infty) \mapsto 0$. This structure extends the canonical $\Z_{\geq 0}$-semimodule structure that exists on any commutative monoid $(M,\odot)$ (given by $(n,m) \mapsto m\odot \cdots \odot m$ for any $n \in\Z_{\geq 0}$ and $m\in M$). Motivated by this, in this manuscript we say that an idempotent commutative semiring $(S, \oplus, \odot)$ is a \textbf{semialgebra over $F_{\geq 0}$}, or a \textbf{$F_{\geq 0}$-semialgebra}, if it is equipped with an operation $F_{\geq 0} \times S \to S, (\lambda, a) \mapsto \lambda a$, which makes $(S, \odot)$ into an $F_{\geq 0}$-semimodule (extending the canonical $\Z_{\geq 0}$-semimodule structure), and which is compatible with the operation $\oplus$ (i.e., $(\lambda, a\oplus b)=(\lambda,a)\oplus(\lambda,b)$ for all $\lambda \in F_{\geq 0}, a,b\in S$). With this terminology, it can be checked that $(\bar{F}, \mathrm{min}, +)$ has a canonical $F_{\geq 0}$-semialgebra structure.

\begin{example}\label{example: Fgeq0 semialgebra structures on basic examples} 
The idempotent commutative semirings given in Examples~\ref{example: OMM as semialgebra} and~\ref{ex: Malpha as semialgebra} are $F_{\geq 0}$-semialgebras. Indeed, for both $\mathrm{Fun}(X,\R)$ and $\mathcal{O}_\MM$ it is straightforward to check that scalar multiplication by $F_{\geq 0}$, given by pointwise multiplication of functions, defines a $F_{\geq 0}$-semialgebra structure. For $\overline{\Z^r}$ and $\overline{M_\alpha}$ discussed in Example~\ref{ex: Malpha as semialgebra}, the natural scalar multiplication $(\lambda,m)\mapsto \lambda m$ gives them the structure of a $\Z_{\geq 0}$-semialgebra and $F_{\geq 0}$-semialgebra respectively. \exampleqed 
\end{example}

Now we define the notion of the canonical semialgebra over $F_{\geq 0}$, denoted $S_\MM$, associated to a polyptych lattice $\MM$.  We first introduce the underlying additive semigroup.\footnote{By a semigroup, we mean a set $S$ equipped with an associative binary operation and an identity element. This follows the terminology of \cite{Cox_Little_Schenck}.} Specifically, we let $\langle \MM \rangle$ denote the semigroup obtained by first taking the free abelian idempotent semigroup (with binary operation denoted $\oplus$) generated by the elements of $\MM$, and then imposing the relation that
    for finite subsets $\mathcal{S}, \mathcal{S}'\subseteq\MM$, we have 
    $$
    \bigoplus_{m \in \mathcal{S}} m = \bigoplus_{m' \in \mathcal{S}'} m' \quad \textup{ if } \quad \ptconv_F(\mathcal{S}) = \ptconv_F(\mathcal{S}').
    $$
We formally add an element to $\langle \MM \rangle$, denoted $\infty$, which serves as the additive identity. It follows from (1) that $\langle \MM \rangle$ can be considered a semimodule in the sense of \cite[Definition 1.3]{PareigisRohrl}, over the Boolean semifield $\mathcal{B} := \{0,1\}$.\footnote{The operations on the two-element set $\mathcal{B}$ are defined so that $0$ is the additive identity and $1\oplus 1=1$ and the product operation is the usual one, with $0\cdot0=0=0\cdot 1, 1 \cdot 1=1$.} 

Next, we define a product operation, which we will denote by $\star$, on $\langle \MM \rangle$. 
For $m,m' \in \MM$, let $\Upsilon(m, m') := \{m +_\alpha m' \mid \alpha \in \pi(\MM)\}$.

\begin{definition}\label{definition: star product}
Let $\MM$ be a polyptych lattice and let $\langle \MM \rangle$ be as above. For $m_1,m_2 \in \MM$, we define 
\begin{equation*}\label{eq: def star product}
m_1\star m_2 := \bigoplus_{m \in \Upsilon(m_1, m_2)} m, 
\end{equation*} 
and for $m \in \MM$ we define $m \star \infty = \infty$; we then extend the definition of $\star$ to all combinations $\bigoplus_{m \in \mathcal{S}} m$ of $\langle \MM \rangle$ by distributivity over $\oplus$. 
\exampleqed
\end{definition}

To see that the $\star$ product gives $\langle \MM \rangle$ the structure of an idempotent commutative semiring, we need the following. 

\begin{theorem}
The binary operation $\star$ of \Cref{definition: star product} is associative and commutative. 
\end{theorem}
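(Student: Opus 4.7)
The plan is to first establish both properties on ``basic elements'' $m_1, m_2, m_3 \in \MM$ and then extend to all of $\langle \MM \rangle$ using distributivity and the commutativity of $\oplus$. Commutativity on basic elements is immediate: addition in each chart $M_\alpha$ is a classical $F$-module operation, hence commutative, so $m_1 +_\alpha m_2 = m_2 +_\alpha m_1$ for every $\alpha \in \pi(\MM)$. Thus $\Upsilon(m_1, m_2) = \Upsilon(m_2, m_1)$ as subsets of $\MM$, which gives $m_1 \star m_2 = m_2 \star m_1$.

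For associativity on basic elements, unpacking the definitions via distributivity yields $(m_1 \star m_2) \star m_3 = \bigoplus_{s \in S_1} s$ and $m_1 \star (m_2 \star m_3) = \bigoplus_{s \in S_2} s$, where $S_1 = \{(m_1 +_\alpha m_2) +_\beta m_3 : \alpha, \beta \in \mathcal{I}\}$ and $S_2 = \{m_1 +_\gamma (m_2 +_\delta m_3) : \gamma, \delta \in \mathcal{I}\}$. Note that $S_1$ and $S_2$ are distinct as subsets of $\MM$ in general, since addition fails to be associative across different charts. One must therefore verify the defining relation $\ptconv_F(S_1) = \ptconv_F(S_2)$ in $\langle \MM \rangle$; this is the heart of the argument.

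The key claim is that for every point $p \in \Sp(\MM)$, we have $\min_{s \in S_1} p(s) = p(m_1) + p(m_2) + p(m_3) = \min_{s \in S_2} p(s)$. For $S_1$, the plan is to apply the defining property \eqref{eq: def point min} of points twice --- first inside on $\beta$ to conclude that $\min_\beta p((m_1 +_\alpha m_2) +_\beta m_3) = p(m_1 +_\alpha m_2) + p(m_3)$, then outside on $\alpha$ to reduce $\min_\alpha p(m_1 +_\alpha m_2)$ to $p(m_1) + p(m_2)$ --- combined with associativity of addition in $F$. The computation for $S_2$ is symmetric. Since $S \subset \HH_{p, a}$ if and only if $\min_{s \in S} p(s) \geq a$, the equality of these minima on every point forces $S_1$ and $S_2$ to lie in exactly the same PL half-spaces, from which $\ptconv_F(S_1) = \ptconv_F(S_2)$ follows by the reasoning of \Cref{lem-pconvex-1}.

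Finally, the extension of commutativity and associativity from basic elements to all of $\langle \MM \rangle$ should follow formally from the distributivity of $\star$ over $\oplus$ together with the commutativity of $\oplus$. A small bookkeeping point to check en route is that $\star$ is well-defined on $\langle \MM \rangle$, i.e., respects the point-convex-hull relation; this reduces once again to the same ``minima of points'' argument applied to the sets $\bigcup_{s \in \mathcal{S}} \Upsilon(s, m)$. The main obstacle I anticipate is establishing the combinatorial identity $\min_{S_1} p = \min_{S_2} p$ cleanly --- everything else is bookkeeping, but it is here that the defining property of points does the real work, converting PL-associativity into ordinary associativity in $F$.
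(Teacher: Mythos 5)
Your proposal is correct and follows essentially the same route as the paper: both reduce associativity to the equality $\min_{s \in S_1} p(s) = p(m_1)+p(m_2)+p(m_3) = \min_{s \in S_2} p(s)$ for every point $p$, and then conclude $\ptconv_F(S_1)=\ptconv_F(S_2)$ via the half-space characterization. You are slightly more explicit than the paper about applying \eqref{eq: def point min} twice and about the well-definedness of $\star$ on $\langle\MM\rangle$, but the core argument is identical.
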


\begin{proof}
Let $m_1,m_2,m_3 \in \MM$. We want to prove $(m_1 \star m_2)\star m_3 = m_1 \star (m_2 \star m_3)$. Unravelling definitions, the LHS is the sum over all elements in the set 
$$
\mathcal{S} = \{m \, \mid \, \exists  \, \alpha,\beta \in \mathcal{I}, \pi_\alpha(m) = \pi_\alpha(\pi_\beta^{-1}(\pi_\beta(m_1)+\pi_\beta(m_2))) + \pi_\alpha(m_3) \, \textup{ in } \, M_\alpha\} 
$$
whereas the RHS is the sum over 
$$
\mathcal{S}' = \{ m' \, \mid \, \exists \, \gamma, \delta \in \mathcal{I}, \pi_\gamma(m') = \pi_\gamma(m_1) + \pi_\gamma(\pi_\delta^{-1}(\pi_\delta(m_2) + \pi_\delta(m_3))) \, \textup{ in } \, M_\gamma \}.
$$
To prove the desired equality it now suffices to show that the point-convex hulls of $\mathcal{S}$ and $\mathcal{S}'$ coincide. For this, it in turn suffices to show that for any $p \in \Sp(\MM)$ and any $a \in F$, we have $\mathcal{S} \subset \HH_{p,a}$ if and only if $\mathcal{S}' \subset \HH_{p,a}$. To see this, first note that it follows from \Cref{eq: def point min} in the definition of points and the construction of the sets $\mathcal{S}$ and $\mathcal{S}'$ that 
\begin{equation}\label{eq: assoc of star}
    p(m_1) + p(m_2) + p(m_3) = \min\{p(m) \, \mid \, m \in \mathcal{S}\} = \min\{p(m') \, \mid \, m' \in \mathcal{S}'\}. 
\end{equation}
On the other hand, by definition of $\HH_{p,a}$, we have that $\mathcal{S} \subset \HH_{p,a}$ exactly if $p(m) \geq a$ for all $m \in \mathcal{S}$, i.e., $\min\{p(m) \, \mid \, m \in \mathcal{S}\} \geq a$. A glance at \Cref{eq: assoc of star} immediately yields that $\mathcal{S} \subseteq \HH_{p,a}$ if and only $\mathcal{S}' \subseteq \HH_{p,a}$. Thus $\ptconv_F(\mathcal{S}) = \ptconv_F(\mathcal{S}')$ as desired. For the case when some of the $m_i$ are equal to $\infty$, the associativity follows easily from the definition of $\star$. Thus $\star$ is associative on $\langle \MM \rangle$. Finally, the operation $+_\alpha$ is commutative for all $\alpha \in \mathcal{I}$, so it is straightforward that $\star$ is also commutative. 
\end{proof}

\begin{remark}\label{remark: zero is star product identity}
Recall from \Cref{remark: zero in MM} that there is an element $0_\MM$ in $\MM$ such that $m +_\alpha 0_\MM=m$ for all $m \in \MM$ and $\alpha \in \mathcal{I}=\pi(\MM)$. Thus $\Upsilon(m,0_\MM) = \{m\}$ and so $m \star 0_\MM = m$ for any $m \in \MM$, i.e., $0_\MM$ is the multiplicative identity with respect to the $\star$ product. This is analogous to $0$ being the ``multiplicative'' identity with respect to the semiring $\Z \cup \{\infty\}$ with operations $\odot = +$ and $\oplus =\min$. 
\exampleqed
\end{remark} 

Recall from \Cref{lemma: addition well def on cones} (and \Cref{notation: scalar mult on cones}) that there is a well-defined scalar multiplication operation $F_{\geq 0} \times \MM \to \MM$ which we denote by $(\lambda, m) \mapsto \lambda m$. The following is straightforward. 

\begin{lemma}\label{lemma: SMM is a semialgebra}
The commutative monoid $(\langle \MM \rangle, \star)$ can be equipped with an $F_{\geq 0}$-semimodule structure $F_{\geq 0} \times \langle \MM \rangle \to \langle \MM \rangle$ defined by $(\lambda, \oplus_{m \in \mathcal{S}} m) \to \oplus_{m \in \mathcal{S}} \lambda m$, and $(\lambda, \infty) \mapsto \infty$ for $\lambda >0$, and $(0,\infty)\mapsto 0$. Moreover, this semimodule structure is compatible with the operation $\oplus$ on $\langle \MM \rangle$. In particular, $(S_\MM, \oplus, \star)$ is a semiring over $F_{\geq 0}$. 
\end{lemma}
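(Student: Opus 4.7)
The plan has two phases: first, show that the map $(\lambda, \bigoplus_{m \in \mathcal{S}} m) \mapsto \bigoplus_{m \in \mathcal{S}} \lambda m$ descends to a well-defined operation on $\langle \MM \rangle$; second, verify the $F_{\geq 0}$-semimodule axioms together with distributivity against $\oplus$ and dispose of the special cases involving $\infty$.

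I expect well-definedness to be the main obstacle, since the defining relation on $\langle \MM \rangle$ is the somewhat opaque condition $\ptconv_F(\mathcal{S}) = \ptconv_F(\mathcal{S}')$. My strategy is to first recast this relation in a numerical form where the $F_{\geq 0}$-homogeneity of points can be applied directly. For a finite subset $\mathcal{S} \subset \MM$ and each $p \in \Sp(\MM)$, set $a_p(\mathcal{S}) := \min\{p(m) : m \in \mathcal{S}\} \in F$; since the minimum is attained, $\HH_{p, a_p(\mathcal{S})}$ is the smallest member of the family $\{\HH_{p, \bullet}\}$ containing $\mathcal{S}$, so
$$ \ptconv_F(\mathcal{S}) = \bigcap_{p \in \Sp(\MM)} \HH_{p, a_p(\mathcal{S})}. $$
From this one reads off that $\ptconv_F(\mathcal{S}) = \ptconv_F(\mathcal{S}')$ if and only if $a_p(\mathcal{S}) = a_p(\mathcal{S}')$ for every $p$; the nontrivial direction follows from $\mathcal{S}' \subseteq \ptconv_F(\mathcal{S}') = \ptconv_F(\mathcal{S}) \subseteq \HH_{p, a_p(\mathcal{S})}$ and symmetry. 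Now the homogeneity of \Cref{eq: def point F homog} gives $a_p(\lambda \mathcal{S}) = \lambda \, a_p(\mathcal{S})$, promoting the equality $a_p(\mathcal{S}) = a_p(\mathcal{S}')$ to $a_p(\lambda \mathcal{S}) = a_p(\lambda \mathcal{S}')$; running the characterization in reverse yields $\ptconv_F(\lambda \mathcal{S}) = \ptconv_F(\lambda \mathcal{S}')$, and well-definedness follows. The cases involving $\infty$ are immediate from the stipulated rules.

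With well-definedness in place, the remaining axioms reduce to generator-level checks on $\MM$. The axioms $1 \cdot a = a$, $0 \cdot a = 0_\MM$ (the $\star$-identity by \Cref{remark: zero is star product identity}), and $(\lambda \mu) a = \lambda(\mu a)$ follow from the well-defined scalar multiplication on $\MM$ of \Cref{lemma: addition well def on cones} and \Cref{notation: scalar mult on cones}. The distributivity $\lambda(a \star b) = (\lambda a) \star (\lambda b)$ reduces, by $\oplus$-bilinearity of $\star$, to generators $m, m' \in \MM$, where both sides unfold to $\bigoplus_{\alpha \in \mathcal{I}} \lambda(m +_\alpha m') = \bigoplus_{\alpha \in \mathcal{I}} (\lambda m +_\alpha \lambda m')$, equal by the $F$-linearity of each $\pi_\alpha$. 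For $(\lambda + \mu) a = (\lambda a) \star (\mu a)$ on a generator $m \in \MM$, the elements $\lambda m$, $\mu m$, and $(\lambda + \mu) m$ all lie on the ray through $m$ and hence in a single PL cone of $\Sigma(\MM)$, so \Cref{lemma: addition well def on cones} yields $\Upsilon(\lambda m, \mu m) = \{(\lambda + \mu) m\}$ and thus $(\lambda m) \star (\mu m) = (\lambda + \mu) m$. The distributivity $\lambda(a \oplus b) = (\lambda a) \oplus (\lambda b)$ is built into the definition of the action. Combined with the associativity and commutativity of $\star$ established in the preceding theorem, this yields the $F_{\geq 0}$-semialgebra structure on $S_\MM$.
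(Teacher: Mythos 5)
Your well-definedness argument is clean: the numerical reformulation $\ptconv_F(\mathcal{S})=\bigcap_{p}\HH_{p,a_p(\mathcal{S})}$, the equivalence $\ptconv_F(\mathcal{S})=\ptconv_F(\mathcal{S}')\Leftrightarrow a_p(\mathcal{S})=a_p(\mathcal{S}')$ for all $p$, and the homogeneity $a_p(\lambda\mathcal{S})=\lambda a_p(\mathcal{S})$ together give exactly what is needed. (The paper offers no proof of its own, only ``straightforward,'' so there is nothing to compare against.)

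There is, however, a genuine gap in the axiom verification. You claim that ``the remaining axioms reduce to generator-level checks on $\MM$,'' but this fails for the semimodule axiom $(\lambda+\mu)a=(\lambda a)\star(\mu a)$. The other axioms do reduce: for $(\lambda\mu)a=\lambda(\mu a)$ and for $\lambda(a\star b)=(\lambda a)\star(\lambda b)$, both sides are simultaneously $\oplus$-additive in each slot (using that $\star$ distributes over $\oplus$ and that scalar multiplication distributes over $\oplus$ by definition), so one may expand over $\oplus$ and match generator by generator. But for $(\lambda+\mu)a=(\lambda a)\star(\mu a)$ this expansion is asymmetric: taking $a=a_1\oplus a_2$ gives
\begin{align}
\textup{LHS}&=(\lambda+\mu)a_1\oplus(\lambda+\mu)a_2,\\
\textup{RHS}&=\bigl((\lambda a_1)\star(\mu a_1)\bigr)\oplus\bigl((\lambda a_1)\star(\mu a_2)\bigr)\oplus\bigl((\lambda a_2)\star(\mu a_1)\bigr)\oplus\bigl((\lambda a_2)\star(\mu a_2)\bigr).
\end{align}
Your generator check handles only the diagonal terms $(\lambda a_i)\star(\mu a_i)=(\lambda+\mu)a_i$; the cross terms $(\lambda a_1)\star(\mu a_2)$ and $(\lambda a_2)\star(\mu a_1)$ do not appear on the left and must be shown to be absorbed under the defining relation on $\langle\MM\rangle$. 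This is not automatic and requires exactly the machinery you already set up for well-definedness: for any $p\in\Sp(\MM)$, $m_1,m_2\in\MM$ and $\alpha\in\pi(\MM)$,
$$p(\lambda m_1+_\alpha\mu m_2)\ \geq\ p(\lambda m_1)+p(\mu m_2)\ =\ \lambda p(m_1)+\mu p(m_2)\ \geq\ (\lambda+\mu)\min\{p(m_1),p(m_2)\}=\min\{p((\lambda+\mu)m_1),p((\lambda+\mu)m_2)\},$$
so each cross-term element lies in $\ptconv_F(\{(\lambda+\mu)m_1,(\lambda+\mu)m_2\})$, and the extra summands leave the point-convex hull (hence the class in $\langle\MM\rangle$) unchanged. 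For a general $a=\bigoplus_{m\in\mathcal{S}}m$ the same inequality with $m_1,m_2\in\mathcal{S}$ shows $\mathcal{T}_R:=\{\lambda m+_\alpha\mu m'\mid m,m'\in\mathcal{S},\alpha\}\subseteq\ptconv_F(\{(\lambda+\mu)m\mid m\in\mathcal{S}\})$, while the reverse containment is clear, giving the axiom in full. (The same style of argument is also needed to justify that the $F_{\geq 0}$-action actually extends the canonical $\Z_{\geq 0}$-action $n\cdot a=a^{\star n}$, which likewise does not follow from a generator check.) Adding this point-convex-hull absorption step would close the gap.
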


\begin{definition}\label{definition: canonical semialgebra MM}
The \textbf{canonical $F_{\geq 0}$-semialgebra} $S_\MM$ is the idempotent commutative semiring with underlying set $\langle \MM\rangle$, equipped with the additive operation $\oplus$ with additive identity $\infty$, and the associative commutative product operation $\star$ of \Cref{definition: star product}. Moreover, we equip $S_\MM$ with the $F_{\geq 0}$-semialgebra structure given in \Cref{lemma: SMM is a semialgebra}. 
(We will frequently refer to $S_\MM$ as the ``canonical semialgebra'' instead of the ``canonical idempotent semialgebra over $F_{\geq 0}$''.) 
\exampleqed
\end{definition}

\begin{example}\label{example: canonical semialgebra of trivial PL}
The trivial polyptych lattice of rank $r$ over $\Z$ consists of a single chart $M_\alpha\simeq \Z^r$.
For this polyptych lattice, the reader may check that the caononical semialgebra $S_\MM$ coincides with the idempotent commutative semiring of integral polytopes in $\Z^r$ from \Cref{example: lattice polytope semialg}, where the $\Z_{\geq 0}$-semimodule structure is given by the usual dilation operation. 
\exampleqed
\end{example} 

The following key result will allow us to build concrete valuations in Section~\ref{sec_detrop}.  It also motivates the original definition of points; see \Cref{remark: motivation for points}. We use the natural map $\MM \to S_\MM$ sending $m$ to its equivalence class in $S_\MM$. 

\begin{proposition}\label{lemma-pointsaresemialgebramaps}
Let $\MM$ be a finite polyptych lattice over $F$ and let $S_\MM$ denote the canonical semialgebra over $F_{\geq 0}$ of $\MM$. Then there is a one-to-one correspondence between the set of $F_{\geq 0}$-semialgebra morphisms $S_\MM \to (\bar{F}, \min, +)$ taking finite values on $S_\MM \setminus \{\infty\}$, and, the set of points $\Sp(\MM)$. The correspondence is given by restricting an $F_{\geq 0}$-semialgebra homomorphism $S_\MM \to \bar{F}$ to $\MM$. 
\end{proposition}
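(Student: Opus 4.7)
The plan is to construct the inverse to the restriction map explicitly, then verify both directions.

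First, I would construct the inverse map: given $p \in \Sp(\MM)$, I define $\phi_p: S_\MM \to \bar{F}$ by $\phi_p(\infty)=\infty$ and $\phi_p\bigl(\bigoplus_{m \in \mathcal{S}} m\bigr) := \min\{p(m) \mid m \in \mathcal{S}\}$ for any finite subset $\mathcal{S} \subset \MM$. The first obstacle is well-definedness: by the definition of $\langle \MM \rangle$, we have $\bigoplus_{m \in \mathcal{S}} m = \bigoplus_{m' \in \mathcal{S}'} m'$ in $S_\MM$ precisely when $\ptconv_F(\mathcal{S}) = \ptconv_F(\mathcal{S}')$, so I must show the minimum of $p$ on both sides agrees. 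For this, I would argue: if $a := \min\{p(m) \mid m \in \mathcal{S}\}$, then $\mathcal{S} \subseteq \HH_{p,a}$ by definition of the PL half-space, so $\ptconv_F(\mathcal{S}) \subseteq \HH_{p,a}$, hence $\ptconv_F(\mathcal{S}') \subseteq \HH_{p,a}$, hence $\mathcal{S}' \subseteq \HH_{p,a}$, yielding $\min\{p(m') \mid m' \in \mathcal{S}'\} \geq a$. Symmetry gives equality. I expect this step to be the main obstacle, since it is the place where the specific quotient structure of $\langle\MM\rangle$ enters, and it is what tethers the algebraic definition of $S_\MM$ to the combinatorial definition of points.

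Next, I would verify that $\phi_p$ is an $F_{\geq 0}$-semialgebra homomorphism. Compatibility with $\oplus$ is immediate from the formula. For compatibility with $\star$, I compute
\begin{equation*}
\phi_p(m \star m') = \phi_p\!\Bigl(\bigoplus_{m'' \in \Upsilon(m,m')} m''\Bigr) = \min\{p(m +_\alpha m') \mid \alpha \in \pi(\MM)\} = p(m) + p(m'),
\end{equation*}
where the last equality is exactly condition \eqref{eq: def point min} in the definition of a point; the general case follows by distributivity of $\star$ over $\oplus$. Compatibility with $F_{\geq 0}$-scalar multiplication comes from \eqref{eq: def point F homog} together with the definition of scalar multiplication on $\langle \MM \rangle$ given in \Cref{lemma: SMM is a semialgebra}. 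Finally, $\phi_p(0_\MM) = p(0_\MM) = 0$ by $F_{\geq 0}$-homogeneity applied with $\lambda=0$, and $\phi_p$ takes finite values on $S_\MM \setminus \{\infty\}$ since $p$ is $F$-valued and the minimum of finitely many elements of $F$ lies in $F$.

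Finally, I would show the two constructions are mutually inverse. Given $\phi: S_\MM \to \bar{F}$ a semialgebra morphism taking finite values off $\infty$, set $p := \phi|_\MM : \MM \to F$. Condition \eqref{eq: def point F homog} follows from compatibility with scalar multiplication. Condition \eqref{eq: def point min} follows from compatibility with $\star$ and $\oplus$:
\begin{equation*}
p(m)+p(m') = \phi(m)+\phi(m') = \phi(m\star m') = \phi\!\Bigl(\bigoplus_{m'' \in \Upsilon(m,m')} m''\Bigr) = \min\{p(m+_\alpha m') \mid \alpha \in \pi(\MM)\}.
\end{equation*}
So $p \in \Sp(\MM)$. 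It is then immediate from the defining formulas that $\phi_{\phi|_\MM} = \phi$ (using that any element of $S_\MM$ is a finite $\oplus$-sum of elements of $\MM$, together with $\oplus$-compatibility) and $(\phi_p)|_\MM = p$. This completes the bijection.
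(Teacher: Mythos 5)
Your proof is correct and follows essentially the same approach as the paper's: both directions of the correspondence are established by the same formulas, and the crux in both cases is showing well-definedness of $p \mapsto \phi_p$ on the quotient $\langle \MM \rangle$. Your well-definedness argument via the chain $\mathcal{S} \subseteq \HH_{p,a} \Rightarrow \ptconv_F(\mathcal{S}) \subseteq \HH_{p,a} \Rightarrow \mathcal{S}' \subseteq \HH_{p,a}$ is a slightly more direct phrasing of the paper's argument-by-contradiction, but the content is identical.
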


\begin{proof} 
To prove the claim, we will first show that if $\tilde{p}: (S_\MM, \oplus, \star) \to (\bar{F}, \min, +)$ is a morphism of $F_{\geq 0}$-semialgebras, then the restriction $p := \tilde{p} \vert_{\MM}$ to $\MM$ is a point. First observe that since $\tilde{p}$ takes finite values on $S_\MM \setminus \{0\}$, the restriction $p$ is well-defined as a map $p: \MM \to F$. We now check the conditions to be a point. Since $\tilde{p}$ is a morphism of semirings, for any $m, m' \in \MM$ we must have $\tilde{p}(m \star m') = \tilde{p}(m)+\tilde{p}(m')$. Here the LHS is $\tilde{p}(m \star m')=\tilde{p}(\oplus_{\alpha \in \pi(\MM)} m+_\alpha m') = \min\{\tilde{p}(m+_\alpha m') \, \mid \, \alpha \in \pi(\MM)\}$ where the first equality is the definition of $\star$ and the second is because $\tilde{p}$ is a map of semirings. Since $m+_\alpha m'$ is an element of $\MM$ for all $\alpha$ it follows that $p$ satisfies~\eqref{eq: def point min}. We also know that $\tilde{p}$ is a morphism of $F_{\geq 0}$-semimodules, so $\tilde{p}$ is compatible with the scalar multiplication by $F_{\geq 0}$ on $S_\MM$ and $\bar{F}$. This implies that for all $m \in \MM$ and $\lambda \in F_{\geq 0}$ we have $p(\lambda m) = \tilde{p}(\lambda m) = \lambda \tilde{p}(m) = \lambda p(m)$, which is condition~\eqref{eq: def point F homog}. Hence $p \in \Sp(\MM)$, as required. 

Now we prove that any point $p \in \Sp(\MM)$ can be uniquely extended to a morphism $\tilde{p}: S_\MM \to \bar{F}$ of $F_{\geq 0}$-semialgebras. 
To see this, recall first that 
the underlying set of the canonical semialgebra $S_\MM$ consists of formal sums $\oplus_{m \in \mathcal{S}} m$, in addition to the formal element $\infty$. We extend $p$ to all of $S_\MM$ by defining $\tilde{p}(\oplus_{m \in \mathcal{S}} m) := \min_{m \in \mathcal{S}}\{p(m)\}$ and $\tilde{p}(\infty)=\infty$. We claim this is well-defined, i.e., respects the relations on $S_\MM$. First, we have $\tilde{p}(m \oplus m)=\tilde{p}(m)$ since $\min\{\tilde{p}(m),\tilde{p}(m)\}=\min\{p(m),p(m)\}=p(m)=\tilde{p}(m)$ for $m \in \MM$. 
Next suppose $\mathcal{S}, \mathcal{S}'$ are two finite subsets of $\MM$ with $\ptconv_{F}(\mathcal{S})=\ptconv_{F}(\mathcal{S}')$. We need to show $\tilde{p}(\oplus_{m \in \mathcal{S}} m) = \tilde{p}(\oplus_{m' \in \mathcal{S}'} m')$, or equivalently, $\min\{p(m) \mid m \in \mathcal{S}\} = \min\{p(m') \mid m' \in \mathcal{S}'\}$. Suppose the equality does not hold; without loss of generality we may suppose the LHS is greater than the RHS. Then there exists $a \in F$ such that $\mathcal{S} \subset \HH_{p,a}$ but $\mathcal{S}' \not \subset \HH_{p,a}$, but this implies $\ptconv_{F}(\mathcal{S}) \neq \ptconv_{F}(\mathcal{S}')$, which is a contradiction. Hence $p(\oplus_{m \in \mathcal{S}} m) = p(\oplus_{m' \in \mathcal{S}'} m')$. We also have $\tilde{p}(m \oplus \infty) = \min\{\tilde{p}(m),\infty\} = \tilde{p}(m)$ and $\tilde{p}(\infty \oplus \infty)=\min\{\infty,\infty\} = \infty=\tilde{p}(\infty)$. This shows that our definition of $\tilde{p}$ respects the relations on $\langle \MM \rangle$, and it takes the $\oplus$ operation to $\min$ by construction. Now we claim it takes the $\star$ operation to $+$. Indeed we have for any $m,m' \in \MM$ that $\tilde{p}(m \star m') = \tilde{p}(\oplus_{m'' \in \Upsilon(m,m')} m'') = \min\{p(m+_\alpha m') \mid \alpha \in \pi(\MM)\} = p(m)+p(m')=\tilde{p}(m)+\tilde{p}(m')$ where in the last equality we have used the property \eqref{eq: def point min} of points. We also have $\tilde{p}(m \star \infty) = \tilde{p}(\infty) = \infty = \tilde{p}(m)+\infty$. This shows that $\tilde{p}$ is a semiring map. Finally, the fact that $\tilde{p}$ is an $F_{\geq 0}$-semialgebra morphism, i.e. that it respects scalar multiplication by $F_{\geq 0}$, is precisely the condition~\eqref{eq: def point F homog}. 
This completes the proof. 
\end{proof}

We end the section with a result about canonical semialgebras associated to a product of polyptych lattices. 
Before stating the lemma, we must give a definition. Let $\MM, \MM'$ be finite polyptych lattices with associated canonical semialgebras $S_\MM, S_{\MM'}$. 
We first take the tensor product $S_{\MM} \otimes_{\mathcal{B}} S_{\MM'}$ of $\mathcal{B}$-semimodules as defined in \cite[\textsection 6]{PareigisRohrl}; this is a $\mathcal{B}$-semimodule. To equip it with a product operation, by abuse of notation also denoted $\star$, we define 
\begin{equation*}
    (\psi_1 \otimes \phi_1) \star (\psi_2 \otimes \phi_2) := (\psi_1 \star \psi_2) \otimes (\phi_1 \star \phi_2) \quad \forall \psi_1,\psi_2 \in S_\MM, \phi_1, \phi_2 \in S_{\MM'}
\end{equation*}
where on the RHS, the $\star$ denotes the corresponding operations on $S_\MM$ and $ S_{\MM'}$; we then extend $\star$ by distributivity. It can be checked that this makes $S_\MM \otimes_{\mathcal{B}} S_{\MM'}$ into an idempotent commutative semiring. An $F_{\geq 0}$-semialgebra structure can be given by defining $(\lambda, m \otimes m') \mapsto (\lambda m) \otimes (\lambda m')$ and extending. 
With the above definition in hand, we can state and prove the following. 

\begin{lemma}\label{lemma: SMM SMMprime} 
Let $\MM,\MM'$ be two finite polyptych lattices. Then there is a natural map of $F_{\geq 0}$-semialgebras $\kappa: S_{\MM} \otimes_{\mathcal{B}} S_{\MM'} \to S_{\MM \times \MM'}$ taking $\infty$ to $\infty$ and $m \otimes m' \mapsto (m,m')$. 
\end{lemma}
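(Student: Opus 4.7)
The plan is to define $\kappa$ on pure tensors by $m \otimes m' \mapsto (m,m')$ (viewing $(m,m') \in \MM \times \MM'$ as a generator of $S_{\MM \times \MM'}$), by sending any tensor containing $\infty$ to $\infty$, and then extending $\oplus$-additively to $S_\MM \otimes_\mathcal{B} S_{\MM'}$. To see that $\kappa$ descends to the tensor product, the $\mathcal{B}$-bilinearity relations (distributivity of $\otimes$ over $\oplus$) and the absorption relations involving $\infty$ are immediate from the construction. The substantive check is that the convex-hull equivalence relations defining $S_\MM$ and $S_{\MM'}$ in each slot are preserved by $\kappa$.

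Concretely, if $\mathcal{S}, \mathcal{S}' \subset \MM$ are finite subsets with $\ptconv_F(\mathcal{S}) = \ptconv_F(\mathcal{S}')$ and $n \in \MM'$, I must show
\begin{equation*}
\ptconv_F(\mathcal{S} \times \{n\}) = \ptconv_F(\mathcal{S}' \times \{n\})
\end{equation*}
in $\MM \times \MM'$, together with the symmetric statement with the roles of $\MM$ and $\MM'$ reversed. By \Cref{lem-pconvex-1} it suffices to show that a PL half-space in $\MM \times \MM'$ contains $\mathcal{S} \times \{n\}$ if and only if it contains $\mathcal{S}' \times \{n\}$. Here I invoke \Cref{lemma: points of product PL}: every $\tilde p \in \Sp(\MM \times \MM')$ has the form $\tilde p = p \circ \mathrm{pr} + q \circ \mathrm{pr}'$ for unique $p \in \Sp(\MM)$, $q \in \Sp(\MM')$, so $\tilde p(m, n) = p(m) + q(n)$. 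This reduces the inclusion $\mathcal{S} \times \{n\} \subset \HH_{\tilde p, a}$ to $\mathcal{S} \subset \HH_{p, a - q(n)}$, and the hypothesis $\ptconv_F(\mathcal{S}) = \ptconv_F(\mathcal{S}')$ then closes the argument.

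Once well-definedness is established, I would check that $\kappa$ is a morphism of $F_{\geq 0}$-semialgebras. Respecting $\oplus$ is automatic from the extension rule. For $\star$, by distributivity it suffices to check pure tensors: using $(a \otimes b) \star (c \otimes d) = (a \star c) \otimes (b \star d)$ in the source, and the fact that addition in the chart $(\alpha,\alpha')$ of $\MM \times \MM'$ is componentwise,
\begin{equation*}
\kappa\bigl((m_1 \otimes m'_1) \star (m_2 \otimes m'_2)\bigr) \;=\; \bigoplus_{\alpha,\alpha'} (m_1 +_\alpha m_2,\, m'_1 +_{\alpha'} m'_2) \;=\; (m_1,m'_1) \star (m_2,m'_2)
\end{equation*}
in $S_{\MM \times \MM'}$. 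The identities $\kappa(0_\MM \otimes 0_{\MM'}) = 0_{\MM\times\MM'}$ and $\kappa(\infty) = \infty$ are immediate. The $F_{\geq 0}$-linearity then follows from the computation $\kappa(\lambda\cdot (m \otimes m')) = \kappa((\lambda m)\otimes (\lambda m')) = (\lambda m, \lambda m') = \lambda \cdot (m,m')$, where the last equality uses that scalar multiplication on $\MM \times \MM'$ is componentwise in each chart (cf.\ \Cref{lemma: addition well def on cones}).

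I expect the well-definedness step to be the main obstacle, specifically propagating the convex-hull equivalence in a single factor to an equivalence in the product polyptych lattice. The key enabling result is \Cref{lemma: points of product PL}, which identifies $\Sp(\MM \times \MM')$ with the product $\Sp(\MM) \times \Sp(\MM')$ in a way compatible with the projections; without this explicit description of points on a product it would be awkward to reconcile the chart structure of $\MM\times\MM'$ with the defining relations on the source tensor product.
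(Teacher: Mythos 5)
Your proposal is correct and follows essentially the same route as the paper: define $\kappa$ on pure tensors, verify compatibility with $\star$ by the same componentwise computation, and note $F_{\geq 0}$-homogeneity. The only place you go beyond the paper's text is the well-definedness step: the paper invokes the universal property of $\otimes_\mathcal{B}$ from \cite[\textsection 6]{PareigisRohrl} and declares the resulting $\mathcal{B}$-balanced map on $\langle\MM\rangle\times\langle\MM'\rangle$ ``straightforward to check,'' whereas you carry out that check explicitly, using $\Sp(\MM\times\MM')\cong\Sp(\MM)\times\Sp(\MM')$ from \Cref{lemma: points of product PL} to reduce preservation of the $\ptconv$ relation in each slot to a half-space containment in the product --- which is precisely the content the paper leaves implicit.
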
 

\begin{proof} 
As a first step, we consider a morphism of the underlying semigroups, $\langle \MM \rangle \otimes_{\mathcal{B}} \langle \MM' \rangle \to \langle \MM\times \MM' \rangle$. To specify such a morphism, by \cite[\textsection 6]{PareigisRohrl} it suffices to give a $\mathcal{B}$-balanced map $\langle \MM \rangle \times \langle \MM' \rangle \to \langle \MM \times \MM' \rangle$. It is straightforward to check that the map sending $(\oplus_{m \in \mathcal{S}} m, \oplus_{m' \in \mathcal{S}'} m') \mapsto \bigoplus_{(m,m') \in \mathcal{S} \times \mathcal{S}'} (m,m')$ (and, any element with $\infty$ in either or both factor(s) is sent to $\infty$), is $\mathcal{B}$-balanced in the sense of \cite[Definition 6.1]{PareigisRohrl}. Thus the above map corresponds to a morphism $\langle \MM \rangle \otimes_{\mathcal{B}} \langle \MM' \rangle \to \langle \MM\times \MM' \rangle$ of $\mathcal{B}$-semimodules. Moreover, it is straightforward to check that the above map sends $\infty$ to $\infty$ and $m \otimes m'$ to $(m,m')$. We denote this morphism by $\kappa$. 

It remains to check that $\kappa$ also respects the product operation on both sides. By distributivity, it suffices to check that $\kappa((m_1 \otimes m'_1) \star (m_2 \otimes m'_2)) = \kappa(m_1 \otimes m'_1) \star \kappa(m_2 \otimes m'_2)$. (Here, by abuse of notation, we are using $\star$ to denote the product operation on both $S_\MM \otimes_{\mathcal{B}} S_{\MM'}$ and on $S_{\MM \times \MM'}$.) The LHS is 
\begin{equation*}
    \begin{split}
  \kappa((m_1 \otimes m'_1) \star (m_2 \otimes m'_2)) &= \kappa((m_1 \star m_2) \otimes (m'_1 \star m'_2)) \, \textup{ by definition of $\star$ on $S_\MM \otimes_{\mathcal{B}} S_{\MM'}$} \\
  &= \kappa((\oplus_\alpha (m_1 +_\alpha m_2)) \otimes (\oplus_{\alpha'} (m'_1 +_{\alpha'} m'_2))) \\
  & = \kappa\left( \oplus_{\alpha,\alpha'} (m_1 +_\alpha m_2) \otimes (m'_1 +_{\alpha'} m'_2 \right)) \\
  & = \bigoplus_{\alpha,\alpha'} (m_1 +_\alpha m_2, m'_1 +_{\alpha'} m'_2) 
    \end{split}
\end{equation*}
On the RHS we have 
\begin{equation*}
    \begin{split} 
\kappa(m_1 \otimes m'_1) \star \kappa(m_2 \otimes m'_2) &= (m_1,m'_1) \star (m_2, m'_2) \, \, \textup{ by definition of $\kappa$} \\
& = \bigoplus_{\alpha,\alpha'} ((m_1,m'_2) +_{(\alpha,\alpha')} (m_2,m'_2)) \, \, \textup{ by definition of $\star$ on $S_{\MM \times \MM'}$}  \\
& = \bigoplus_{\alpha,\alpha'} (m_1+_\alpha m_2, m'_1 +_{\alpha'} m'_2) \, \, \textup{ by definition of $\MM \times \MM'$} \\
    \end{split} 
\end{equation*}
so the LHS and RHS agree. Finally, $m \otimes m' \mapsto (m,m')$ respects the $F_{\geq 0}$-scalar multiplication by construction. The claim follows.  
\end{proof} 

\section{Dual pairings of polyptych lattices}\label{sec: duals}

Our next step is to define a \textbf{(strict) dual pairing} between polyptych lattices, in analogy with the classical situation of a lattice and its dual lattice.

\begin{definition} 
\label{def_dual}
Let $\MM, \NN$ be finite polyptych lattices of rank $r$ over $F$ with associated PL fans $\Sigma(\MM)$ and $\Sigma(\NN)$ respectively. Let $F \subseteq F' \subseteq \R$. We say that a pair of maps $\v: \MM_{\R} \to \Sp_{\R}(\NN)$ and $\w: \NN_{\R} \to \Sp_{\R}(\MM)$ is a \textbf{strict dual $F'$-pairing} if: 
\begin{enumerate} 
\item[(1)] $\v$ and $\w$ restrict respectively to maps $\v: \MM_{F'} \to \Sp_{F'}(\NN)$ and $\w: \NN_{F'} \to \Sp_{F'}(\MM)$ (by abuse of notation we denote the restrictions also by $\v$ and $\w$),
\item[(2)] $\v(m)(n) = \w(n)(m)$ for all $n \in \NN_{F'}, m \in \MM_{F'}$, 
\item[(3)] $\v: \MM_{F'} \to \Sp_{F'}(\NN)$ and $\w: \NN_{F'} \to \Sp_{F'}(\MM)$ are both bijections, and
\item[(4)] the preimages $\v^{-1}\Sp_{\R}(\NN,\gamma)$ (respectively $\w^{-1}\Sp_{\R}(\MM,\alpha)$) are precisely the maximal-dimensional cones of $\Sigma(\MM)$ (respectively $\Sigma(\NN)$), as $\gamma$ ranges over $\pi(\NN)$ (respectively, $\alpha$ ranges over $\pi(\MM)$), giving a bijection between the index set $\pi(\NN)=\mathcal{J}$ of charts of $\NN$ and the set of maximal-dimensional faces of $\Sigma(\MM)$ (respectively $\pi(\MM)$ and the maximal-dimensional faces of $\Sigma(\NN)$). 
\end{enumerate} 
In the above setting, 
we say that $(\MM, \NN, \v,\w)$ is a \textbf{strict dual ($F'$)-pair} of polyptych lattices; 
more informally, we also refer to $\NN_{F'}$ as a \textbf{strict ($F'$-)dual to $\MM$}.
If $(\MM,\NN,\v,\w)$ satisfy only the axioms (1)-(3), we call it a \textbf{dual $(F')$-pair}, and $\NN_{F'}$ a \textbf{($F'$-) dual to} $\MM$. If $\MM$ has a strict dual (respectively dual) pairing with itself, we say that $\MM$ is strictly self-dual (respectively self-dual). 
\exampleqed
\end{definition}

The following records some immediate consequences of the definition.  

\begin{lemma}\label{lemma: bijection charts and faces for duals and dilations and PL on cones} 
Let $(\MM, \NN, \v, \w)$ be two finite polyptych lattices over $F$ equipped with a dual $F'$-pairing for some $F \subseteq F' \subseteq \R$. Then for any cone $C$ in $\Sigma(\MM)$ (resp. $\Sigma(\NN)$), the map $\v$ (resp. $\w$) is linear on $C \cap \MM_{F'}$ (resp. $C \cap \NN_{F'}$). 
If, in addition, the dual pairing is strict, then both $\MM$ and $\NN$ are full in the sense of \Cref{definition: full PL}. 
\end{lemma}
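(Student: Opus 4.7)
The plan is to reduce the linearity claim to the piecewise linearity of points on the cones of $\Sigma(\MM)$ (as established in the lemma on points being linear on the faces of $\Sigma(\MM)$), by converting statements about $\v$ into statements about $\w$ via the pairing identity $\v(m)(n)=\w(n)(m)$. For the fullness claim, I would use condition (4) of the strict pairing to cover $\NN_\R$ by the preimages $\w^{-1}(\Sp_\R(\MM,\alpha))$, and combine this with the bijection in condition (3) to produce, for each point of $\MM$, a chart on which it is linear.

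For the first claim, I would fix a cone $C\in\Sigma(\MM)$, elements $m_1,m_2\in C\cap\MM_{F'}$, and a scalar $\lambda\in F'_{\geq 0}$, and verify pointwise, for each $n\in\NN_{F'}$, that
\begin{equation*}
\v(m_1+m_2)(n)=\v(m_1)(n)+\v(m_2)(n)\quad\text{and}\quad\v(\lambda m_1)(n)=\lambda\v(m_1)(n).
\end{equation*}
Using axiom (2) of the dual pairing, each identity translates into one involving only $\w(n)$ evaluated on elements of $C$. Since $\w(n)\in\Sp_{F'}(\MM)$ is a point by axiom (1), the lemma on points linear on faces tells me $\w(n)$ is $F'$-linear on $C$, which delivers both identities. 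The symmetric argument, swapping the roles of $\MM$ and $\NN$, gives the corresponding statement for $\w$.

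For the second claim, assume strictness; I would show $\MM$ is full, and the argument for $\NN$ is symmetric. By axiom (4), the preimages $\w^{-1}(\Sp_\R(\MM,\alpha))$ as $\alpha$ ranges over $\pi(\MM)$ are exactly the maximal cones of $\Sigma(\NN)$. Since $\Sigma(\NN)$ is complete, these cones cover $\NN_\R$, so $\w(\NN_\R)\subseteq\bigcup_\alpha\Sp_\R(\MM,\alpha)$. Given $p\in\Sp_F(\MM)$, I would extend $p$ to an element of $\Sp_{F'}(\MM)$ via the lemma on points linear on faces, invoke the bijection in axiom (3) to obtain $n\in\NN_{F'}\subset\NN_\R$ with $\w(n)=p$, and conclude $p\in\Sp_\R(\MM,\alpha)$ for some $\alpha$. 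Restricting back to $M_\alpha$ shows $p_\alpha$ is $F$-linear, hence $p\in\Sp_F(\MM,\alpha)$, which gives $\Sp_F(\MM)=\bigcup_\alpha\Sp_F(\MM,\alpha)$.

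The main technical obstacle is the bookkeeping between the rings $F\subseteq F'\subseteq\R$. In particular, I need to verify that the $\R$-valued maps $\v,\w$ restrict coherently to their $F'$-versions (so that $\w(n)$ regarded as an element of $\Sp_{F'}(\MM)$ agrees with its evaluation as a function on $\MM_\R$), and that the property of being \emph{linear on chart $\alpha$} is preserved under both the restriction from $\R$ to $F$ and the extension in the reverse direction. These compatibilities should be immediate from the definitions but require some care to unwind.
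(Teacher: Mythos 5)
Your proposal is correct and follows essentially the same route as the paper's proof: for linearity, reduce via axiom (2) to the $F'$-linearity of $\w(n)$ (a point) on cones of $\Sigma(\MM)$, and for fullness, combine completeness of the PL fan with axioms (3) and (4). You are somewhat more explicit than the paper about the $F\subseteq F'\subseteq\R$ bookkeeping (extending a point to $\Sp_\R$, concluding linearity over $\R$, then restricting back to conclude $F$-linearity on a chart), which is genuinely the point that needs care and is left slightly implicit in the published argument.
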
 

\begin{proof}
For the first claim, let $C$ be a maximal-dimensional cone in $\Sigma(\MM)$. Let $m,m' \in C \cap \MM_{F'}$ and $\lambda,\mu \in F'_{\geq 0}$. We wish to show $\v(\lambda m + \mu m')=\lambda \v(m) + \mu \v(m')$. It would suffice to show $\v(\lambda m + \mu m')(n)=\lambda \v(m)(n) + \mu \v(m')(n)$ for all $n \in \NN_{F'}$. By the property (2) of dual pairings, it in turn suffices to check $\w(n)(\lambda m + \mu m') = \lambda \w(n)(m) + \mu \w(n)(m')$, but this holds since $\w(n)$ is an element of $\Sp_{F'}(\MM)$, so by \Cref{lemma: points linear on faces of SigmaM} it is $F$-linear on $C$. The second claim follows because the union of the maximal faces of $\Sigma(\MM)$ is $\MM$, hence by property (4) of \Cref{def_dual}, the union $\bigcup_\beta \v^{-1}(\Sp(\NN,\beta))$ is all of $\MM$. Since $\v$ is a bijection by property (3) of \Cref{def_dual}, this implies $\bigcup_\beta \Sp(\NN, \beta)$ is all of $\NN$, i.e., $\NN$ is full. The argument is similar for $\MM$. 
\end{proof}

\begin{lemma}\label{eq: strict duals Fgeq0 linear}
Suppose $(\MM, \NN, \v, \w)$ are a strict dual $F'$-pair. Then $\v$ and $\w$ are $F'_{\geq 0}$-linear, i.e., $\v(\lambda m)\lambda \v(m), \w(\lambda n)=\lambda \w(n)$ for all $\lambda \in F'_{\geq 0}, m \in \MM_{F'}, n \in \NN_{F'}$. 
\end{lemma}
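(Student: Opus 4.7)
The plan is to deduce the claim directly from \Cref{lemma: bijection charts and faces for duals and dilations and PL on cones}, which already establishes that $\v$ (resp.\ $\w$) is $F'$-linear on the intersection of any maximal-dimensional PL cone of $\Sigma(\MM)$ (resp.\ $\Sigma(\NN)$) with $\MM_{F'}$ (resp.\ $\NN_{F'}$), combined with the completeness of these PL fans from \Cref{lemma: fan of MM}(b). In particular, strictness of the dual pairing is not really needed in this argument beyond what was already used in the previous lemma; the statement is packaged separately for later reference.

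First I would fix $m \in \MM_{F'}$ and $\lambda \in F'_{\geq 0}$ and choose a maximal-dimensional PL cone $\mathcal{C} \in \Sigma(\MM)$ containing $m$, which exists because $\Sigma(\MM)$ is complete. Since each chart image $\pi_\alpha(\mathcal{C})$ is a classical rational polyhedral cone, it is closed under multiplication by non-negative scalars, so the well-defined element $\lambda m$ (in the sense of \Cref{notation: scalar mult on cones}) also lies in $\mathcal{C}$. The same reasoning (with $\lambda = 0$) shows $0_\MM \in \mathcal{C}$, so that $m$, $\lambda m$, and $0_\MM$ all lie in $\mathcal{C}\cap\MM_{F'}$.

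Next, I would apply the $F'$-linearity of $\v$ on $\mathcal{C}\cap\MM_{F'}$ to the inputs $(\lambda,m)$ and $(0,0_\MM)$, which yields
\begin{equation*}
\v(\lambda m) \;=\; \v(\lambda\cdot m + 0\cdot 0_\MM) \;=\; \lambda\,\v(m) + 0\cdot\v(0_\MM) \;=\; \lambda\,\v(m)
\end{equation*}
in $\Sp_{\R}(\NN)$, where I use that the zero scalar multiple of any point is the zero function by the $F'_{\geq 0}$-homogeneity property \eqref{eq: def point F homog}. An entirely analogous argument, using the completeness of $\Sigma(\NN)$ and the linearity of $\w$ on the maximal-dimensional cones of $\Sigma(\NN)$, establishes $\w(\lambda n) = \lambda \w(n)$ for all $n \in \NN_{F'}$ and $\lambda \in F'_{\geq 0}$. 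No real obstacle is anticipated; the only minor subtlety is the bookkeeping that ensures $\lambda m$ and $0_\MM$ lie in a common cone so that linearity on that cone applies.
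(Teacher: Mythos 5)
Your proof is correct, but it takes a genuinely different route from the paper's. You invoke \Cref{lemma: bijection charts and faces for duals and dilations and PL on cones} (linearity of $\v$ resp.\ $\w$ on cones of $\Sigma(\MM)$ resp.\ $\Sigma(\NN)$) together with completeness of $\Sigma(\MM)$, and then specialize to the inputs $(\lambda, m)$ and $(0, 0_\MM)$ lying in a common maximal cone. The paper instead reads the claim off in one line directly from property (2) of the pairing and the $F'_{\geq 0}$-homogeneity \eqref{eq: def point F homog} of the point $\w(n)$: for every $n \in \NN_{F'}$, $\v(\lambda m)(n) = \w(n)(\lambda m) = \lambda\,\w(n)(m) = \lambda\,\v(m)(n)$, hence $\v(\lambda m) = \lambda\,\v(m)$, and symmetrically for $\w$. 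Both arguments ultimately rest only on property (2), so your observation that strictness is not needed is correct and applies equally to the paper's proof (the lemma is phrased for strict duals only because that is the context in which it is subsequently used). Your route is perfectly valid since it leans on an already-proven lemma, but it is somewhat longer: it requires completeness of $\Sigma(\MM)$, closure of cones under non-negative dilation, and membership of $0_\MM$ in every cone, all to conclude what the paper's argument obtains immediately from the axioms. One small misattribution worth flagging: the equality $0\cdot\v(0_\MM)=0$ follows from the pointwise definition of $F'_{\geq 0}$-scalar multiplication on the space of points (functions), not from \eqref{eq: def point F homog}, which concerns a point's behavior on a scaled \emph{argument}; this does not affect the correctness of your argument.
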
 

\begin{proof} 
The $F'_{\geq 0}$-scalar multiplication on the space of points is given by pointwise multiplication of functions (here we think of a point as a function $\MM_{F'} \to F'$ or $\NN_{F'} \to F'$). The $F'_{\geq 0}$-scalar multiplication operation on $\MM_{F'}, \NN_{F'}$ was observed in \Cref{notation: scalar mult on cones}. The claim of the lemma now follows from property (2) of strict duals. Indeed, for any $\lambda \in F', m \in \MM_{F'}, n \in \NN_{F'}$ we have $\v(\lambda m)(n)=\w(n)(\lambda m)=\lambda \w(n)(m) = \lambda \v(m)(n)$ which implies $\v(\lambda m)=\lambda \v(m)$, and the argument is similar to see $\w(\lambda n)=\lambda \w(n)$. 
\end{proof} 

Some examples are in order. 

\begin{example}\label{ex: trivial dual pair}
Let $\MM$ be the trivial polyptych lattice of rank $r$ over $\Z$ first discussed in \Cref{example: canonical semialgebra of trivial PL}. We can think of $\MM$ as a single rank-$r$ $\Z$-lattice $M$, and its strict dual $\NN$ may be taken to be the classical dual lattice $N=\Hom(M,\Z)$.
The pairings $\v(m)(n)=\w(n)(m)$ are the usual pairings between a lattice $M$ and its dual $\Hom(M,\Z)$.
\exampleqed
\end{example} 

\begin{example}\label{ex_dual}
Returning to our running example, it can be seen that the $\MM$ from \Cref{ex_points} is self-($\Z$-)dual. (The maps we give below are defined over $\Z$ but easily extend to $\R$; for brevity we stick to $\Z$ coefficients.) 
In \Cref{ex_points}, we saw that $\Sp(\MM)$ is in bijection with the set $T :=\{(a,b,b')\in\Z^3\mid b+b'=\min\{0,a\}\}$.
In the notation of \Cref{ex_points}, we have $a=p(\mathbb{e}_1)$, $b=p(\mathbb{e}_2)$, $b'=p(\mathbb{e}'_2)$.
Let $((x,y),\mu_{12}(x,y)) = ((x,y),(x',y))$ denote an element of $\MM$. Then since $x'=\min\{0,y\}-x$ by definition of $\mu_{12}$,  it follows that the map
\begin{equation*}
\MM\to T,\qquad \w((x,y),(x',y))=(y,x,x')
\end{equation*}
is well-defined and easily seen to be a bijection.
By using the bijection between $T$ and $\Sp(\MM)$ in \Cref{ex_points} we obtain a bijection (by abuse of notation also denoted $\w$)  $\w:\MM\to \Sp(\MM)$ given by
\begin{equation}\label{eqeq_dual_pairing}
\w:\MM\to \Sp(\MM),\qquad \w((x,y),(x',y))((u,v),(u',v)) :=\begin{cases}
    uy+vx, & v\ge 0\\
    uy-vx', & v\le 0.
\end{cases}
\end{equation}
If we wish to express $\w$ purely in terms of $M_1$ coordinates we may equivalently write 
\begin{equation}\label{eqeq_dual_pairing 1}
\w:\MM\to \Sp(\MM),\qquad \w((x,y),(x',y))((u,v),(u',v))=\begin{cases}
    uy+vx, & v\ge 0\\
    uy-v(\min\{0,y\}-x), & v\le 0.
\end{cases}
\end{equation}
Similarly we may also write (in terms of $M_2$ coordinates) 
\begin{equation}\label{eq_dual_pairing 2}
\begin{split}
\w:\MM\to \Sp(\MM),\qquad \w((x,y),(x',y))((u,v),(u',v)) &=\begin{cases}
    (\min\{0,v\}-u')y+v(\min\{0,y\} - x'), & v\ge 0\\
    (\min\{0,v\}-u')y-vx', & v\le 0 
\end{cases} \\ 
& = \begin{cases}
    -y u' + (\min\{0,y\}-x')v, \quad v \geq 0 \\
    -yu' + (y-x')v, \quad v \leq 0.
\end{cases}
\end{split}
\end{equation}
We claim that $\MM$ is self-dual, with respect to the map $\w$ defined above and setting $\v = \w$. To prove this, we must check the axioms of \Cref{def_dual}. Property (1) is clear, by definition. Property (2) is a straightforward check by taking cases, and we already saw (3) in the discussion above. We now prove (4). From \Cref{eqeq_dual_pairing 1} it follows that $\w((x,y),(x',y))$ is linear on $M_1$ precisely when $y \geq 0$, i.e. $\w^{-1}(\Sp(\MM,1)) = \{y \geq 0\}$ and similarly, from \Cref{eq_dual_pairing 2} $\w^{-1}(\Sp(\MM,2)) = \{y \leq 0\}$. These are precisely the $2$ domains of linearity of $M_1$ and correspond to the maximal-dimensional cones of $\Sigma(\MM)$, so we are done. 
\exampleqed
\end{example}

Duals and strict duals behave well with respect to products of polyptych lattices. 

\begin{lemma}\label{lemma: duals of product PL}
Let $F \subseteq F' \subseteq \R$. Let $(\MM,\NN, \v, \w)$ and $(\MM',\NN', \v', \w')$ be two strict dual $F'$-pairs of finite polyptych lattices over $F$. Then their direct product $(\MM \times \MM', \NN \times \NN', \v \times \v', \w \times \w')$, where we have used the isomorphism of \Cref{lemma: points of product PL} to identify $\Sp(\MM) \times \Sp(\MM')$ with $\Sp(\MM \times \MM')$ (and similarly for the dual), is a strict dual $F'$-pair of polyptych lattices. Moreover, the same statement holds when ``strict dual'' is replaced with ``dual''. 
\end{lemma}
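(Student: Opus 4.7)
The plan is to verify the four axioms of \Cref{def_dual} for the putative strict dual pair $(\MM \times \MM', \NN \times \NN', \v \times \v', \w \times \w')$ one at a time, working through the identification $\Sp(\NN) \times \Sp(\NN') \cong \Sp(\NN \times \NN')$ established in \Cref{lemma: points of product PL} (and similarly on the $\MM$ side). Under this identification, $(\v \times \v')(m,m')$ is the point of $\NN \times \NN'$ given by $(n,n') \mapsto \v(m)(n) + \v'(m')(n')$, and analogously for $(\w \times \w')(n,n')$.

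Axioms (1), (2), (3) should follow routinely from the same axioms for each of the two given strict dual pairs. For (1), the $F'$-restriction of a product map is the product of the $F'$-restrictions, so $\v \times \v'$ sends $(\MM \times \MM')_{F'}$ into $\Sp_{F'}(\NN \times \NN')$ (here one also uses that the product-to-coproduct-of-points identification respects the $F'$-subsemigroups, which is immediate from the explicit formula in \Cref{lemma: points of product PL}). For (2), under the identification one computes directly
\[
(\v \times \v')(m,m')(n,n') = \v(m)(n) + \v'(m')(n') = \w(n)(m) + \w'(n')(m') = (\w \times \w')(n,n')(m,m'),
\]
using axiom (2) for the given pairs. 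For (3), $\v \times \v'$ is, after the identification, literally the Cartesian product of two bijections and hence a bijection; the same holds for $\w \times \w'$.

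The main obstacle, and the technical heart of the proof, is axiom (4). For this I would first prove a short auxiliary observation: for finite polyptych lattices $\MM, \MM'$, the PL fan $\Sigma(\MM \times \MM')$ has as its maximal-dimensional PL cones exactly the products $\mathcal{C} \times \mathcal{C}'$ with $\mathcal{C}$ maximal in $\Sigma(\MM)$ and $\mathcal{C}'$ maximal in $\Sigma(\MM')$. This follows from \Cref{lemma: fan of MM}(c) together with the definition of $\MM \times \MM'$: the mutation $\mu_{\alpha,\beta} \times \mu'_{\alpha',\beta'}$ is linear on a cone in $(\MM \times \MM')_\R$ if and only if $\mu_{\alpha,\beta}$ is linear on its projection to $M_\alpha \otimes \R$ and $\mu'_{\alpha',\beta'}$ is linear on its projection to $M'_{\alpha'} \otimes \R$. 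Granting this, and combining with the final statement of \Cref{lemma: points of product PL} (which says the identification restricts to a bijection $\Sp(\NN,\gamma) \times \Sp(\NN',\gamma') \to \Sp(\NN \times \NN', (\gamma,\gamma'))$), one gets
\[
(\v \times \v')^{-1}\Sp_{\R}\bigl(\NN \times \NN', (\gamma,\gamma')\bigr) = \v^{-1}\Sp_{\R}(\NN,\gamma) \times (\v')^{-1}\Sp_{\R}(\NN',\gamma'),
\]
and by strict duality of the original pairs this is precisely a product of maximal cones of $\Sigma(\MM)$ and $\Sigma(\MM')$, hence a maximal cone of $\Sigma(\MM \times \MM')$. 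The analogous statement for $\w \times \w'$ and the bijections between the index sets of $\NN \times \NN'$ and maximal cones of $\Sigma(\MM \times \MM')$ (and symmetrically) then follows immediately.

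Finally, for the ``dual pair'' (not strict) version, one simply drops axiom (4) and observes that the verifications of axioms (1)--(3) above never used strictness. So $(\MM \times \MM', \NN \times \NN', \v \times \v', \w \times \w')$ is a dual $F'$-pair whenever the two factors are. The only step that could require additional care is the elementary check that the identification $\Sp(\NN) \times \Sp(\NN') \cong \Sp(\NN \times \NN')$ is compatible with $F'$-scalars and restricts properly to $F'$-points; these follow from the explicit formula $(q,q') \mapsto q \circ \mathrm{pr} + q' \circ \mathrm{pr}'$ of \Cref{lemma: points of product PL}.
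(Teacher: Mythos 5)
Your proof is correct and follows exactly the approach the paper intended: the paper itself leaves this as a ``straightforward definition check'' and only hints at using the final claim of \Cref{lemma: points of product PL} for axiom (4), which is precisely what you do. Your explicit auxiliary observation that the maximal cones of $\Sigma(\MM \times \MM')$ are products of maximal cones of $\Sigma(\MM)$ and $\Sigma(\MM')$ is a point the paper tacitly relies on; spelling it out (and justifying it via \Cref{lemma: fan of MM}(c)) makes the argument complete.
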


\begin{proof} 
The proof is a straightforward definition check which we leave to the reader. We note only that to check axiom (4) for strict duality, it is useful to use the last claim in \Cref{lemma: points of product PL}. 
\end{proof} 

In the presence of a dual, it turns out we can also give a convenient characterization of point-convex hulls. We will use this in the arguments below.

\begin{proposition}\label{lem-pconvex-2}
Let $\MM$ be a finite polyptych lattice and $S$ be a compact subset of $\MM_\R$. If there exists a dual $\R$-pairing $(\MM,\NN, \v, \w)$, then $\ptconv_{\R}(S)=\{m\in \MM_\R \mid \v(m)\ge \min\{\v(s) \mid s \in S\}\, \textup{as functions on} \, \NN_\R \}$.
  \end{proposition}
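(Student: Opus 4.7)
The plan is to deduce this characterization by combining the earlier Lemma~\ref{lem-pconvex-1} with the bijection and pairing properties of the dual $\R$-pairing $(\MM, \NN, \v, \w)$. Lemma~\ref{lem-pconvex-1} already gives
\[
\ptconv_{\R}(S)=\{m\in \MM_{\R} \mid \forall p \in \Sp_{\R}(\MM),\ p(m) \geq \min\{p(s) \mid s \in S\}\},
\]
so it suffices to show that the quantifier ``$\forall p \in \Sp_\R(\MM)$'' can equivalently be replaced by the condition ``$\v(m) \geq \min\{\v(s)\mid s\in S\}$ as functions on $\NN_\R$''.

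First I would unpack the functional inequality. By definition, $\v(m)\geq \min\{\v(s)\mid s\in S\}$ as functions on $\NN_\R$ means that for every $n\in\NN_\R$ one has
\[
\v(m)(n) \;\geq\; \min\{\v(s)(n)\mid s\in S\}.
\]
Using property (2) of a dual pairing, namely $\v(m)(n) = \w(n)(m)$ (and similarly $\v(s)(n)=\w(n)(s)$), this rewrites as
\[
\w(n)(m) \;\geq\; \min\{\w(n)(s)\mid s\in S\} \quad \text{for every } n\in\NN_\R.
\]

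Next I would use property (3) of a dual $\R$-pairing, which asserts that $\w:\NN_\R \to \Sp_\R(\NN)$, when extended (as it must, by property (1)) to the $\R$-points, is a bijection onto $\Sp_\R(\MM)$. Consequently, as $n$ ranges over $\NN_\R$, the point $\w(n)$ ranges exactly over all of $\Sp_\R(\MM)$. Therefore the displayed condition is equivalent to requiring $p(m) \geq \min\{p(s)\mid s\in S\}$ for every $p\in\Sp_\R(\MM)$, which by Lemma~\ref{lem-pconvex-1} characterizes membership in $\ptconv_\R(S)$. (Note the minima in question exist because, as in the proof of Lemma~\ref{lem-pconvex-1}, each $p\in\Sp_\R(\MM)$ is continuous on $\MM_\R$ and $S$ is compact.)

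There is no real obstacle here: the argument is essentially a formal rewriting via the defining pairing identity $\v(m)(n)=\w(n)(m)$ and the surjectivity of $\w$ onto $\Sp_\R(\MM)$. The only point requiring any care is making sure we are working with $\R$-points throughout, so that Lemma~\ref{lem-pconvex-1} applies verbatim and so that $\w$ is indeed a bijection onto all of $\Sp_\R(\MM)$, both of which are guaranteed by properties (1) and (3) of \Cref{def_dual} together with the compactness of $S$.
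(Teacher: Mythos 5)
Your proof is correct and follows the paper's argument exactly: both unpack the functional inequality pointwise over $n \in \NN_\R$, rewrite $\v(m)(n)=\w(n)(m)$ via axiom (2), use the bijectivity of $\w$ onto $\Sp_\R(\MM)$ from axiom (3) to replace the quantifier over $\NN_\R$ with one over $\Sp_\R(\MM)$, and then invoke Lemma~\ref{lem-pconvex-1}. (Minor slip: you once write the codomain of $\w$ as $\Sp_\R(\NN)$, but it is $\Sp_\R(\MM)$ -- you state the correct target later in the same sentence, so this is just a typo.)
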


\begin{proof}
The duality between $\MM_\R$ and $\NN_\R$ gives us that for $m\in\MM_\R$,
\begin{align*}
    \v(m)\ge \min\{\v(s) \mid s \in S\} 
    & \qquad\Longleftrightarrow\qquad 
    \forall n\in\NN_\R,\ \v(m)(n)\ge \min\{\v(s)(n) \mid s \in S\}
    \\ &
    \qquad\Longleftrightarrow\qquad 
    \forall n\in\NN_\R,\ \w(n)(m)\ge \min\{\w(n)(s) \mid s \in S\}
        \\ &
    \qquad\Longleftrightarrow\qquad 
    \forall p\in\Sp_\R(\MM),\ p(m)\geq \min\{p(s) \mid s \in S\}
.\end{align*}
The result now follows from \Cref{lem-pconvex-1}. 
\end{proof}

Recall from \Cref{example: Fgeq0 semialgebra structures on basic examples} that the piecewise linear $\R$-valued functions $\O_\MM$ on $\MM_\R$ can be equipped with an $F_{\geq 0}$-semialgebra structure. 
We now introduce a subsemialgebra $P_\MM$ of $\O_\MM$. Note that by \Cref{lemma: points linear on faces of SigmaM} we know that any point in $\Sp_{F'}(\MM)$ is piecewise $F'$-linear, which naturally extends to $\Sp_\R(\MM) \subset \O_\MM$. 

\begin{definition}\label{definition: point algebra}
Let $\MM$ be a finite polyptych lattice over $F$. Let $F \subseteq F' \subseteq \R$. As discussed above, we view $\Sp_{F'}(\MM)$ as a subset of $\O_\MM$. We define the \textbf{point semialgebra $P_{\MM_{F'}}$ of $\MM$} to be the subsemialgebra over $F_{\geq 0}$ of $(\O_\MM, +, \min)$ generated by $\Sp_{F'}(\MM)$. (We also add the element $\infty$ to $P_{\MM_{F'}}$.) Note that $P_{\MM_{F'}}$ is closed under $F'_{\geq 0}$-scalar multiplication by \Cref{remark: Fgeq0 closed SpMM}. 
\exampleqed
\end{definition}

\begin{example} 
We continue with the trivial polyptych lattice in \Cref{ex: trivial dual pair}. 
In this case, the point algebra $P_\MM$ consists of piecewise linear functions on $M$ obtained by taking minimums and sums of the linear functions $N=\Hom(M,\Z) = \Sp(\MM)$. In fact, any convex piecewise-linear functions on $M$ can be obtained by taking finite min-combinations of linear functions, so in this example, $\P_\MM$ is precisely the set of piecewise-linear convex functions $M \to \Z$. 
\exampleqed
\end{example}

In the presence of a dual pairing, the point subsemialgebra introduced in \Cref{definition: point algebra} turns out to be isomorphic to the canonical semialgebra of its dual. More precisely, we have the following. 

\begin{proposition}\label{prop-salgebras}
Let $\MM,\NN$ be finite polyptych lattices over $F$,  $F \subseteq F' \subseteq \R$, and $(\MM, \NN, \v, \w)$ a strict dual $F'$-pair. 
Let $S_{\MM_{F'}}, S_{\NN_{F'}}$ denote the canonical semialgebras of $\MM_{F'}$ and $\NN_{F'}$ respectively. Then $\v$ and $\w$ extend to $F_{\geq 0}$-semialgebra isomorphisms (which by abuse of notation we also denote by $\w$ and $\v$): 
\begin{equation*}\label{eq: SN isom to PM and vice versa}
\w: \S_{\NN_{F'}} \to \P_{\MM_{F'}}  \ \ \ \  \v: \S_{\MM_{F'}} \to \P_{\NN_{F'}}.
\end{equation*}

\end{proposition}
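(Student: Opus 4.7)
The plan is to build $\w: S_{\NN_{F'}} \to P_{\MM_{F'}}$ explicitly on the generators of $S_{\NN_{F'}}$ and then verify the four required properties in sequence: well-definedness, compatibility with $\oplus,\star,$ and scalar multiplication, and bijectivity (the argument for $\v$ is identical by symmetry). Any element of $S_{\NN_{F'}}$ can be written in the form $\oplus_{n\in\mathcal{S}} n$ for a finite $\mathcal{S}\subset\NN_{F'}$ (with $\infty$ as the empty sum), so I define
\[
\w\bigl(\oplus_{n\in\mathcal{S}} n\bigr) \; := \; \min_{n\in\mathcal{S}} \w(n) \quad\text{as a function on }\MM_\R, \qquad \w(\infty) := \infty,
\]
using the piecewise-linear extension of each $\w(n)\in\Sp_{F'}(\MM)$ to $\MM_\R$ provided by \Cref{lemma: points linear on faces of SigmaM}. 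This manifestly sends $\oplus$ to $\min$ and takes values in $P_{\MM_{F'}}$.

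For well-definedness, the relations on $\langle \NN_{F'}\rangle$ identify $\oplus_{n\in\mathcal{S}} n$ with $\oplus_{n'\in\mathcal{S}'} n'$ precisely when $\ptconv_{F'}(\mathcal{S}) = \ptconv_{F'}(\mathcal{S}')$. Suppose this holds; for any $p\in\Sp_{F'}(\NN)$, set $a = \min_{n\in\mathcal{S}} p(n)$ so that $\mathcal{S}\subset\HH_{p,a}$, hence $\ptconv_{F'}(\mathcal{S})\subset\HH_{p,a}$ by \Cref{prop_halfconvex}, so $\mathcal{S}'\subset\HH_{p,a}$, giving $\min_{n'\in\mathcal{S}'}p(n')\geq \min_{n\in\mathcal{S}} p(n)$; the reverse inequality is symmetric. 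Applying this with $p=\v(m)$ for each $m\in\MM_{F'}$ (which lies in $\Sp_{F'}(\NN)$ by property (1) of \Cref{def_dual}) and using $\v(m)(n) = \w(n)(m)$ from property (2), we conclude $\min_{n\in\mathcal{S}}\w(n)(m) = \min_{n'\in\mathcal{S}'}\w(n')(m)$ for every $m\in\MM_{F'}$, establishing well-definedness. For the $\star$-product, it suffices by distributivity to check single generators:
\[
\w(n_1\star n_2)(m) \;=\; \min_{\alpha\in\pi(\NN)}\w(n_1+_\alpha n_2)(m) \;=\; \min_{\alpha}\v(m)(n_1 +_\alpha n_2) \;=\; \v(m)(n_1) + \v(m)(n_2),
\]
where the last step is the defining equation \eqref{eq: def point min} applied to the point $\v(m)$; this equals $\w(n_1)(m)+\w(n_2)(m)$, as needed. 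Compatibility with $F_{\geq 0}$-scalar multiplication follows from \Cref{eq: strict duals Fgeq0 linear} (and the $F_{\geq 0}$-homogeneity of points).

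Finally, surjectivity is immediate: $P_{\MM_{F'}}$ is generated as an $F_{\geq 0}$-semialgebra by $\Sp_{F'}(\MM)$, and by property (3) of strict duality, every $p\in\Sp_{F'}(\MM)$ equals $\w(n)$ for a unique $n\in\NN_{F'}$, so the image of $\w$ contains the generators and is closed under the semialgebra operations. For injectivity, suppose $\w(\oplus_{n\in\mathcal{S}} n) = \w(\oplus_{n'\in\mathcal{S}'} n')$; evaluating at an arbitrary $m\in\MM_{F'}$ and using property (2) yields $\min_{n\in\mathcal{S}}\v(m)(n) = \min_{n'\in\mathcal{S}'}\v(m)(n')$, and since $\v:\MM_{F'}\to\Sp_{F'}(\NN)$ is a bijection, this equality holds for every $p\in\Sp_{F'}(\NN)$. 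If $\ptconv_{F'}(\mathcal{S})\neq\ptconv_{F'}(\mathcal{S}')$, pick (without loss of generality) $m\in\ptconv_{F'}(\mathcal{S})\setminus\ptconv_{F'}(\mathcal{S}')$; then some $\HH_{p,a}$ contains $\mathcal{S}'$ but not $m$, giving $\min_{n'\in\mathcal{S}'} p(n')\geq a > p(m)\geq \min_{n\in\mathcal{S}} p(n)$ (the last inequality because $m\in\ptconv_{F'}(\mathcal{S})\subset\HH_{p,\min_{n\in\mathcal{S}} p(n)}$), contradicting the equality of minima. Hence $\ptconv_{F'}(\mathcal{S}) = \ptconv_{F'}(\mathcal{S}')$ and the two sums coincide in $S_{\NN_{F'}}$. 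The main subtlety throughout is ensuring that all half-space and point-convex-hull arguments are performed consistently over $F'$ (not $\R$), which requires careful use of property (1) of strict duality so that $\v(m)$ produces points valued in $F'$ when $m\in\MM_{F'}$, and this is where the strict dual structure, rather than a mere dual pair, is used.
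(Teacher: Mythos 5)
Your proof is correct and takes essentially the same approach as the paper: define $\w$ on generators by sending $\oplus_{n\in\mathcal{S}} n$ to $\min_{n\in\mathcal{S}}\w(n)$, verify well-definedness and injectivity through the equivalence $\ptconv_{F'}(\mathcal{S})=\ptconv_{F'}(\mathcal{S}')\Leftrightarrow \min_{n\in\mathcal{S}}\w(n)=\min_{n'\in\mathcal{S}'}\w(n')$, check $\star\mapsto+$ via the defining equation of points, and invoke \Cref{eq: strict duals Fgeq0 linear} for scalar compatibility. (One trivial slip: the containment $\ptconv_{F'}(\mathcal{S})\subset\HH_{p,a}$ given $\mathcal{S}\subset\HH_{p,a}$ is immediate from \Cref{def_pt_convex hull}, not from \Cref{prop_halfconvex}.)
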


\begin{proof}
The assertion is clearly symmetric so it suffices to prove the claim for $\w: \NN_{F'} \to \Sp_{F'}(\MM)$. We extend the definition of $\w$ to $\langle \NN_{F'} \rangle$ by $\w(\bigoplus_{n \in \mathcal{S}} n) := \min\{\w(n) \mid n \in \mathcal{S}\}$ for a finite subset $\mathcal{S}$ of $\NN_{F'}$, and, $\w(\infty)=\infty$. We must first show this is well-defined. The claim $\w(n \oplus n) = \w(n)$ follows immediately since $\min\{\w(n),\w(n)\} = \w(n)$ for any function. Next we must show that if $\mathcal{S}, \mathcal{S'}$ are two finite subsets of $\NN_{F'}$ with the same point-convex hull, then $\w(\bigoplus_{n \in \mathcal{S}} n) = \w(\bigoplus_{n' \in \mathcal{S}'} n')$ or equivalently, $\min\{ \w(n) \mid n \in \mathcal{S}\} = \min\{ \w(n') \mid n' \in \mathcal{S}'\}$. To see this, we have the equivalences 
\begin{equation*}
    \begin{split}
        \ptconv_{F'}(\mathcal{S}) &= \ptconv_{F'}(\mathcal{S}')  \Leftrightarrow \forall p \in \Sp_{F'}(\NN), a \in F', \, \mathcal{S} \subset \HH_{p,a} , \textup{ iff } \mathcal{S}' \subset \HH_{p,a} \\
        & \Leftrightarrow\forall p \in \Sp_{F'}(\NN), a \in F', \, \min\{p(n) \mid n \in \mathcal{S}\} \geq a \, \textup{ iff } \, \min\{p(n') \mid n' \in \mathcal{S}'\}  \geq a  \\ 
        & \Leftrightarrow  \forall p \in \Sp_{F'}(\NN), \, \min\{p(n) \mid n \in \mathcal{S}\} = \min\{p(n') \mid n' \in \mathcal{S}'\} \\ 
        & \Leftrightarrow \forall m \in \MM_{F'}, \, \min\{\v(m)(n) \mid n \in \mathcal{S}\} = \min\{\v(m)(n') \mid n' \in \mathcal{S}'\} \, \textup{ since $\v$ is bijective } \\ 
        & \Leftrightarrow \forall m \in \MM_{F'}, \, \min\{\w(n)(m) \mid n \in \mathcal{S}\} = \min\{\w(n')(m) \mid n' \in \mathcal{S}'\} \, \textup{ since $\v, \w$ form a pairing} \\ 
        & \Leftrightarrow \min\{\w(n) \mid n \in \mathcal{S}\} = \min\{\w(n') \mid n' \in \mathcal{S}'\} , \textup{ as functions on $\MM_{F'}$} 
    \end{split}
\end{equation*} 
as desired. Thus the extension $\w: \langle \NN_{F'}\rangle \to \P_{\MM_{F'}}$ is well-defined, and $\w$ is a semigroup homomorphism by definition. We claim $\w$ is a bijection. Indeed, by hypothesis, $\w$ is a bijection on the generating sets, so $\w$ is certainly surjective, and the argument above shows that $\w$ is injective. 

Next we check that $\w$ takes the star product to the addition operation in $\mathcal{O}_\MM$, i.e., $\w(n \star n') = \w(n) + \w(n')$ for any $n, n' \in \NN_{F'}$. We have 
\begin{equation*} 
\begin{split}
\w(n \star n')(m) & = \w(\bigoplus_\alpha n +_\alpha n')(m) \, \textup{ by definition of $\star$} \\ 
& = \min\{ \w(n +_\alpha n')(m) \mid \alpha \in \pi(\MM) \} \\ 
& = \min \{ \v(m)(n+_\alpha n') \mid \alpha \in \pi(\MM)\} \\
& = \v(m)(n) + \v(m)(n') \quad \textup{ since $\v(m)$ is a point} \\
& = \w(n)(m) + \w(n')(m) = (\w(n)+\w(n'))(m) \\
\end{split} 
\end{equation*}
for arbitrary $m \in \MM_{F'}$, as required. 
Finally, it follows from \Cref{eq: strict duals Fgeq0 linear} that $\v$ and $\w$ also respect the homogeneity with respect to $F_{\geq 0}$-scalar multiplication, so they are $F_{\geq 0}$-semialgebra morphisms. This completes the proof. 
\end{proof}

\begin{example}\label{ex: SMM and PNN iso in trivial case}
Recall that in \Cref{example: canonical semialgebra of trivial PL} we saw that the canonical semialgebra $S_\MM$ for the trivial polyptych lattice over $\Z$ is the $\Z_{\geq 0}$-semialgebra of integral polytopes. In this case, the map $S_\MM \to P_\NN$ is the morphism taking an integral polytope $P$ to its support function $\varphi_P: u \mapsto \min\{\langle m, u \rangle \, \mid \, m \in P\}$ as defined in e.g.\ \cite[p.186]{Cox_Little_Schenck}. Our PL analogue of this classical construction will make its appearance in \Cref{definition: support function}. 
\exampleqed
\end{example}

\subsection{A polyptych lattice structure on $\Sp(\MM)$}\label{subsec: PL SpM}

Let $\MM$ and $\NN$ be finite polyptych lattices of rank $r$ over $F$, and suppose that $(\MM,\NN, \v: \MM_{\R} \to \Sp_{\R}(\NN), \w: \NN_{\R} \to \Sp_{\R}(\MM))$ form a strict dual $F'$-pair for $F \subseteq F' \subseteq \R$. If $F' \subseteq \R$ is a \textit{subfield} of $\R$, we will show below that the set $\Sp_{F'}(\MM)$ can be endowed with a PL structure over $F'$, in the sense that there exists a polyptych lattice over $F'$ for which $\Sp_{F'}(\MM)$ can be identified with its set of elements. In fact, in this case, this polyptych lattice will be strongly isomorphic (in a sense to be defined precisely below) to $\NN_{F'}$. Moreover, in the case when $F=\Z$, we may then use this isomorphism to endow a subset of $\Sp_{F'}(\MM)$ with a polyptych lattice $\Z$-structure such that it is strongly isomorphic to $\NN$ over $\Z$.

 To begin, we make use of property (4) of \Cref{def_dual} for strict dual pairs. Let $\gamma \in \pi(\NN)$ and 
\begin{equation}\label{eq: def Cj}
C_\gamma := \v^{-1}\Sp_{\R}(\NN, \gamma) \subset \MM_{\R}.
\end{equation} 
By property (4), $C_\gamma$ is one of the maximal-dimensional faces of $\Sigma(\MM)$, and as $\gamma$ ranges over $\pi(\NN)$, we obtain all such faces. 
Let $L_{C_\gamma}: \Sp(\MM) \to \Hom(C_\gamma \cap \MM, F)$ be the restriction map of \Cref{definition: L_C}. 
The point of the next result is that the restriction map $L_{C_\gamma}$ and the map $\w: \NN \to \Sp(\MM)$ can be used to define coordinate chart maps on $\Sp(\MM)$ which are compatible with the chart maps of $\NN$.

\begin{proposition}\label{prop-evaluationmap}
Let $\MM,\NN$ be finite polyptych lattices of rank $r$ over $F$, let $F \subseteq F' \subseteq \R$, and let $(\MM, \NN, \v, \w)$ a strict dual $F'$-pair. Let $\gamma \in \pi(\NN)$ and consider $C_\gamma$ as above. Then there exists an injective $F'$-linear map $\w_\gamma: N_\gamma \otimes F' \to \Hom(C_\gamma \cap \MM_{F'}, F')$ making the following diagram commute: 
\begin{equation}\label{eq: w gamma def}
\begin{tikzcd}
\NN \otimes F' \arrow[r, "\pi_\gamma"] \arrow[d, "\w"]
& N_\gamma \otimes F' \arrow[d, "\w_\gamma"] \\
\Sp_{F'}(\MM) \arrow[r, "L_{C_\gamma}"]
& \Hom(C_\gamma \cap \MM_{F'}, F'). 
\end{tikzcd}
\end{equation}
Moreover, if $F'$ is a field, then $\w_\gamma$ is an $F'$-linear isomorphism of $F'$-vector spaces. 
\end{proposition}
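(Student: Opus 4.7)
The plan is to \emph{define} $\w_\gamma$ so that the square in \eqref{eq: w gamma def} commutes tautologically, and then verify successively that it is well-defined, $F'$-linear, injective, and (when $F'$ is a field) surjective. Concretely, I would set
\[
\w_\gamma(n) := L_{C_\gamma}\bigl(\w(\pi_\gamma^{-1}(n))\bigr) \in \Hom(C_\gamma \cap \MM_{F'}, F')
\]
for $n \in N_\gamma \otimes F'$. For this to land in $\Hom(C_\gamma \cap \MM_{F'}, F')$ I would invoke \Cref{lemma: points linear on faces of SigmaM} (points are piecewise $F'$-linear with domains of linearity the cones of $\Sigma(\MM)$), which ensures that the restriction of any point in $\Sp_{F'}(\MM)$ to the PL cone $C_\gamma$ is genuinely $F'$-linear. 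Commutativity of the diagram is then immediate from the definition.

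Next, to establish $F'$-linearity of $\w_\gamma$, the trick is to evaluate at an arbitrary $m \in C_\gamma \cap \MM_{F'}$ and use the pairing condition together with the defining property of $C_\gamma$. Indeed,
\[
\w_\gamma(n)(m) \;=\; \w(\pi_\gamma^{-1}(n))(m) \;=\; \v(m)(\pi_\gamma^{-1}(n)) \;=\; \bigl(\v(m)\circ \pi_\gamma^{-1}\bigr)(n),
\]
and since $m \in C_\gamma = \v^{-1}\Sp_{F'}(\NN,\gamma)$, the function $\v(m)\circ \pi_\gamma^{-1}: N_\gamma \otimes F' \to F'$ is $F'$-linear. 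Thus for each fixed $m$ the evaluation $n \mapsto \w_\gamma(n)(m)$ is $F'$-linear, and hence $\w_\gamma$ is $F'$-linear as a map into $\Hom(C_\gamma \cap \MM_{F'}, F')$.

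For injectivity, suppose $\w_\gamma(n) = 0$, i.e.\ $\w(\pi_\gamma^{-1}(n))$ vanishes on $C_\gamma \cap \MM_{F'}$. Since $C_\gamma$ is a maximal-dimensional PL cone of $\Sigma(\MM)$, \Cref{lem-convex-11} (or rather its straightforward extension to $\Sp_{F'}(\MM)$, whose proof uses only that $\MM$ is finite so that a translate into $C_\gamma$ can be found chart-by-chart) says that $L_{C_\gamma}$ is injective on $\Sp_{F'}(\MM)$. Hence $\w(\pi_\gamma^{-1}(n)) = 0$ in $\Sp_{F'}(\MM)$; but the zero function is $\w(0_\NN)$, so bijectivity of $\w$ (property (3) of a strict dual pair) forces $\pi_\gamma^{-1}(n) = 0_\NN$ and thus $n = 0$.

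Finally, when $F'$ is a field both sides are $F'$-vector spaces, so it suffices to match dimensions. The source $N_\gamma \otimes F'$ has dimension $r$. For the target, $C_\gamma$ is an $F$-rational polyhedral cone of maximal dimension $r$ in $\MM_\R$, so it contains an $F$-basis of $\MM_F$; tensoring with $F'$, the cone $C_\gamma \cap \MM_{F'}$ spans $\MM_{F'}$ over $F'$, so any $F'$-linear functional on $C_\gamma \cap \MM_{F'}$ extends uniquely to an element of $\Hom_{F'}(\MM_{F'}, F')$, giving $\dim_{F'}\Hom(C_\gamma \cap \MM_{F'}, F') = r$. Thus $\w_\gamma$ is an injective $F'$-linear map between $F'$-vector spaces of equal finite dimension, hence an isomorphism. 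I expect the main subtlety to be the last step: being careful that ``$\Hom(C_\gamma \cap \MM_{F'}, F')$'' really means $F'$-linear maps (equivalently, functionals compatible with addition and $F'_{\geq 0}$-scaling inside the cone) and that $F$-rationality of $C_\gamma$ supplies enough lattice points to pin down the extension to $\MM_{F'}$.
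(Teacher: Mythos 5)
Your proof is correct and follows the same overall architecture as the paper's: the same definition $\w_\gamma := L_{C_\gamma} \circ \w \circ \pi_\gamma^{-1}$, injectivity via the injectivity of $L_{C_\gamma}$ together with the bijectivity of $\w$ and $\pi_\gamma$, and a dimension count for the isomorphism when $F'$ is a field. The one place you diverge is in establishing $F'$-linearity, and your route there is genuinely cleaner. The paper first proves additivity by checking the pairing identity only at a maximal $\R$-linearly independent set $\{m_1,\dots,m_r\}\subset C_\gamma \cap \MM_{F'}$ and then invoking the $F'$-linearity of $\w(\pi_\gamma^{-1}(u))$ on $C_\gamma$ to propagate the equality to all of $C_\gamma$; it then treats $F'_{\geq 0}$-scaling separately (using that mutations commute with nonnegative dilations and \Cref{lemma: bijection charts and faces for duals and dilations and PL on cones}), and finally handles negative scalars by the trick $0 = \w_\gamma(\lambda n + (-\lambda)n) = \w_\gamma(\lambda n) + (-\lambda)\w_\gamma(n)$. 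Your observation that $\w_\gamma(n)(m) = \v(m)(\pi_\gamma^{-1}(n)) = (\v(m)\circ\pi_\gamma^{-1})(n)$, and that for $m \in C_\gamma \cap \MM_{F'}$ the function $\v(m)\circ\pi_\gamma^{-1} : N_\gamma \otimes F' \to F'$ is \emph{fully} $F'$-linear (all scalars, by the definition of $\Sp_{F'}(\NN,\gamma)$), collapses all three of those steps at once: linearity of $n \mapsto \w_\gamma(n)(m)$ is immediate for every $m$, hence so is linearity of $\w_\gamma$ as a map into $\Hom(C_\gamma\cap\MM_{F'},F')$. Your closing caution about what $\Hom(C_\gamma\cap\MM_{F'},F')$ should mean is well taken and matches the implicit convention in the paper (functionals additive and compatible with scaling on the cone, which then extend uniquely across $\MM_{F'}$ because the full-dimensional $F$-rational cone contains a lattice basis).
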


\begin{proof}
    We first define the map $\w_\gamma$ as $L_{C_\gamma} \circ \w \circ \pi_\gamma^{-1}$. This is well-defined since $\pi_\gamma$ is an isomorphism, and it follows immediately that \Cref{eq: w gamma def} commutes. 
    
    We now show that $\w_\gamma$ is $F'$-linear. We begin by showing that for $u,u' \in N_\gamma \otimes F'$, we have $\w_\gamma(u)+\w_\gamma(u')=\w_\gamma(u+u')$. Let $m_1, \ldots, m_r \in C_\gamma \cap \MM_{F'}$ form a maximal $\R$-linearly independent set. By assumption on $C_\gamma$, we know $\v(m_k) \in \Sp_{F'}(\NN, \gamma)$, i.e., $\v(m_k)$ is $F'$-linear on the coordinate chart $N_\gamma \otimes F'$, for all $k$. Hence $\v(m_k)(\pi_\gamma^{-1}(u)) + \v(m_k)(\pi_\gamma^{-1}(u')) = \v(m_k)(\pi_\gamma^{-1}(u+u'))$ for each $k$, and duality of $\v,\w$ in turn implies
    \begin{equation}\label{eq: wj for k} \w(\pi_\gamma^{-1}(u))(m_k) + \w(\pi_\gamma^{-1}(u'))(m_k) = \w(\pi_\gamma^{-1}(u+u'))(m_k).
    \end{equation}
    for all $k$. By definition $\w$ takes values in $\Sp_{F'}(\MM)$ and hence by \Cref{lemma: points linear on faces of SigmaM} its images are $F'$-linear on $C_\gamma \cap \MM_{F'}$ (since $C_\gamma$ is a cone in $\Sigma(\MM)$). The $\{m_k\}$ are a maximally linearly independent set, so the equality in \Cref{eq: wj for k} for all $k$ implies $\w(\pi_\gamma^{-1}(u))+\w(\pi_\gamma^{-1}(u'))=\w(\pi_\gamma^{-1}(u+u'))$ as functions on $C_\gamma \cap \MM_{F'}$, i.e., $L_{C_\gamma}(\w(\pi_\gamma^{-1}(u))) + L_{C_\gamma}(\w(\pi_\gamma^{-1}(u'))) = L_{C_\gamma}(\w(\pi_\gamma^{-1}(u+u')))$, which by definition of $\w_\gamma$ yields $\w_\gamma(u)+\w_\gamma(u') = \w_\gamma(u+u')$. 
    Next, we argue that $\pi_\gamma^{-1}(\lambda \, u) = \lambda \pi_\gamma^{-1}(u)$ for any non-negative scalar $\lambda \in F'_{\geq 0}$ and any $u \in N_\gamma \otimes F'$. Indeed, since mutation maps are piecewise-linear, it follows that $\pi_\gamma$ (and $\pi_\gamma^{-1}$) commutes with non-negative dilations, so $\pi_\gamma^{-1}(\lambda \, u) = \lambda \pi_\gamma^{-1}(u)$. By \Cref{lemma: bijection charts and faces for duals and dilations and PL on cones} we know that $\w$ also commutes with dilations, and it is straightforward that restriction to $C_\gamma \cap \MM_{F'}$ does as well. Hence $\w_\gamma$ also commutes with non-negative dilations, as required. Finally, we wish to show that $\w_\gamma(\lambda \, u)=\lambda\, \w_\gamma(u)$ for $\lambda < 0, \lambda \in F'$. 
    To see this, observe that we already know $\w_\gamma(0)=0$ by taking $\lambda=0$ for the dilation factor. Thus we know that $\w_\gamma(\lambda \, n + (-\lambda) n) = \w_\gamma(0) = 0$. By what was shown above, we therefore conclude $0 = \w_\gamma(\lambda \, n + (-\lambda) n) = \w_\gamma(\lambda \, n) + \w_\gamma((-\lambda)n) = \w_\gamma(\lambda \, n) + (-\lambda) \w_\gamma(n)$, since $-\lambda $ is non-negative. But this implies $\w_\gamma(\lambda \, n) = - (- \lambda) \w_\gamma(n) = \lambda \w_\gamma(n)$, as desired. Hence $\w_\gamma(\lambda \, n)= \lambda \w_\gamma(n)$ for \emph{any} $\lambda \in F'$, concluding the proof that $\w_\gamma$ is $F'$-linear on $N_\beta$.
    
    For the last claim, note that by \Cref{lem-convex-11} we know that $L_{C_\gamma}$ is one-to-one, and both $\pi_\gamma^{-1}$ and $\w$ are bijections. This implies that $\w_\gamma$ is also one-to-one. In the case that $F'$ is a field, both $N_\gamma \otimes F'$ and $\Hom(C_\gamma \cap \MM_{F'}, F')$ are $F'$-vector spaces of dimension $r$, so an injective linear map between them must be a linear isomorphism. 
\end{proof}

For the remainder of this section, we assume that $F'$ is a field, so that by \Cref{prop-evaluationmap} the maps $\w_\gamma$ are $F'$-linear isomorphisms. In this context, we now return to our stated goal of equipping $\Sp_{F'}(\MM)$ with the structure of a polyptych lattice, and identifying it with $\NN_{F'}$. 
In fact, this follows as a straightforward consequence of \Cref{prop-evaluationmap}.
However, for the latter step, we need first to define a notion that allows us to identify two polyptych lattices.

\begin{definition}
Let $\MM = (\{M_\alpha\}_{\alpha \in \mathcal{I}}, \{ \mu_{\alpha,\beta}: M_\alpha \to M_\beta \}_{\alpha,\beta \in \mathcal{I}}), \MM' = (\{M'_{\alpha'}\}_{i' \in \mathcal{I}'}, \{\mu'_{\alpha', \beta'}: M'_{\alpha'} \to M'_{\beta'}\}_{\alpha',\beta' \in \mathcal{I}'})$ be two polyptych lattices of rank $r$ over $F$. 
A \textbf{strong isomorphism of polyptych lattices} from $\MM$ to $\MM'$ is a tuple of maps $(\Phi: \mathcal{I} \to \mathcal{I}', \{\Phi_\alpha: M_\alpha \to M'_{\Phi(\alpha)} \}_{\alpha \in \mathcal{I}})$ where $\Phi$ is a bijection, and each $\Phi_\alpha$ is a linear isomorphism such that the diagrams 
\begin{equation}\label{eq: linear morphism of PL}
\begin{tikzcd}
M_\alpha \arrow[r, "\mu_{\alpha,\beta}"] \arrow[d, "\Phi_\alpha"]
& M_{\beta} \arrow[d, "\Phi_{\beta}"] \\
M_{\Phi(\alpha)}' \arrow[r, "\mu_{\Phi(\alpha),\Phi(\beta)}' "]
& M_{\Phi(\beta)}'
\end{tikzcd}
\end{equation}
commute for all $\alpha,\beta \in \mathcal{I}$. 
When such $\Phi$ and $\Phi_\alpha$ exist, we say that $\MM$ and $\MM'$ are \textbf{strongly isomorphic as polyptych lattices}. 
\exampleqed
\end{definition}

\begin{remark}
There are other natural candidates for a notion of isomorphism of polyptych lattices. For instance, and roughly speaking, if some mutations in the definition of a polyptych lattice are actually linear (not just piecewise linear), then in some sense, those mutations give redundant information, and the polyptych lattice with such mutations omitted should be viewed as the ``same'' as the original polyptych lattice. Discussing these ideas is beyond the scope of this manuscript. We intend to develop them in future work. 
\exampleqed
\end{remark}

We may now state and prove the following consequence of \Cref{prop-evaluationmap}.

\begin{proposition}\label{cor-fulldimiso}
Let $\MM,\NN$ be finite polyptych lattices of rank $r$ over $F$, let $F \subseteq F' \subseteq \R$, and let $(\MM, \NN, \v, \w)$ a strict dual $F'$-pair. Assume $F'$ is a field. Then: 
\begin{enumerate} 
\item[(1)] 
The set of points $\Sp_{F'}(\MM)$ can be equipped with the structure of a polyptych lattice of rank $r$ over $F'$, with charts $\{\Hom(C_\gamma \cap \MM_{F'}, F')\}_{\gamma\in \pi(\NN)}$, chart maps $L_{C_\gamma}: \Sp_{F'}(\MM) \to \Hom(C_\gamma \cap \MM_{F'},F')$, and mutation maps $L_{C_{\delta}} \circ L_{C_\gamma}^{-1}$. In particular, $\pi(\Sp(\MM)) = \pi(\NN) = \mathcal{J}$. 

\item[(2)] The map $\Phi: \pi(\NN) \to \pi(\Sp(\MM))$ given by the identity, i.e. $\Phi(\gamma)=\gamma$ for all $\gamma \in \mathcal{J}$, and the linear isomorphisms $\w_\gamma: N_\gamma \otimes F' \to \Hom(C_\gamma \cap \MM_{F'}, F')$ for $\gamma \in \mathcal{J}$, give a strong isomorphism between $\NN_{F'}$ and $\Sp_{F'}(\MM)$. In particular, $\Sp_{F'}(\MM)$ and $\NN_{F'}$ are strongly isomorphic. 
\end{enumerate} 
\end{proposition}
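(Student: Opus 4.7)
The strategy is to reduce both parts to \Cref{prop-evaluationmap}, which supplies $F'$-linear isomorphisms $\w_\gamma: N_\gamma \otimes F' \to \Hom(C_\gamma \cap \MM_{F'}, F')$ fitting into the commuting square \eqref{eq: w gamma def} involving $\pi_\gamma$, $\w$, and $L_{C_\gamma}$. Once this is unpacked, the claims amount to a short diagram chase together with one genuine verification concerning piecewise linearity of mutations.

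For part (1), I would first check that each proposed chart $\Hom(C_\gamma \cap \MM_{F'}, F')$ is a free $F'$-module of rank $r$: since $C_\gamma$ is a maximal-dimensional cone in $\MM_\R$ by property (4) of strict duality, $C_\gamma \cap \MM_{F'}$ spans $\MM_{F'}$ as an $F'$-module, so an $F'$-linear functional on it is determined by its values on any $F'$-basis contained in $C_\gamma \cap \MM_{F'}$. Next, I would verify that each chart map $L_{C_\gamma}: \Sp_{F'}(\MM) \to \Hom(C_\gamma \cap \MM_{F'}, F')$ is a bijection: injectivity is \Cref{lem-convex-11}, and surjectivity follows from the factorization $\w_\gamma = L_{C_\gamma} \circ \w \circ \pi_\gamma^{-1}$ in \eqref{eq: w gamma def} together with the fact that $\w_\gamma$ is an isomorphism (since $F'$ is a field), while $\pi_\gamma$ and $\w$ are bijections by definition of a polyptych lattice and by property (3) of strict duality, respectively. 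Defining mutation maps $\mu^{\Sp}_{\gamma,\delta} := L_{C_\delta} \circ L_{C_\gamma}^{-1}$, the three axioms of \Cref{definition: polyptych lattice} (identity, involutivity, cocycle) hold tautologically from this definition.

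The only substantive point is piecewise $F'$-linearity of $\mu^{\Sp}_{\gamma,\delta}$, which is the main (though still routine) obstacle. The plan is to conjugate with the isomorphisms $\w_\gamma$: inverting \eqref{eq: w gamma def} yields $L_{C_\gamma}^{-1} = \w \circ \pi_\gamma^{-1} \circ \w_\gamma^{-1}$, so
$$\mu^{\Sp}_{\gamma,\delta} = L_{C_\delta} \circ L_{C_\gamma}^{-1} = \w_\delta \circ \pi_\delta \circ \pi_\gamma^{-1} \circ \w_\gamma^{-1} = \w_\delta \circ \mu^{\NN}_{\gamma,\delta} \circ \w_\gamma^{-1},$$
where $\mu^{\NN}_{\gamma,\delta}$ is the mutation of $\NN_{F'} = \NN \otimes_F F'$ viewed as a polyptych lattice over $F'$. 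Since $\mu^{\NN}_{\gamma,\delta}$ is piecewise $F'$-linear with respect to some complete $F'$-rational fan in $N_\gamma \otimes F'$, and conjugation by the $F'$-linear isomorphisms $\w_\gamma^{-1}$ and $\w_\delta$ preserves completeness, $F'$-rationality, and linearity on each cone, the pushed-forward fan $\{\w_\gamma(C)\}$ in $\Hom(C_\gamma \cap \MM_{F'}, F')$ witnesses piecewise $F'$-linearity of $\mu^{\Sp}_{\gamma,\delta}$.

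For part (2), once (1) is in place all the required data are assembled: $\Phi: \pi(\NN) \to \pi(\Sp_{F'}(\MM)) = \pi(\NN)$ is the identity and thus a bijection, and each $\w_\gamma$ is an $F'$-linear isomorphism by \Cref{prop-evaluationmap}. The commutativity of the strong isomorphism square \eqref{eq: linear morphism of PL} for each pair $(\gamma, \delta)$ is precisely the identity $\mu^{\Sp}_{\gamma,\delta} \circ \w_\gamma = \w_\delta \circ \mu^{\NN}_{\gamma,\delta}$ already established in the proof of (1), so (2) follows at once.
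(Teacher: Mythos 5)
Your proof is correct, and for part (1) you take a genuinely different route from the paper. Where the paper constructs a candidate complete fan on $\Hom(C_\gamma \cap \MM_{F'}, F')$ from scratch --- taking the cones $L_{C_\gamma}(\Sp_\R(\MM,\alpha))$ for $\alpha \in \pi(\MM)$, invoking fullness of $\MM$ (via \Cref{lemma: bijection charts and faces for duals and dilations and PL on cones}) to get completeness, checking $F'$-rationality by tracing through the commuting square, and then separately verifying via \Cref{prop-conesinthepoints} that the mutations $L_{C_\delta}\circ L_{C_\gamma}^{-1}$ are linear on each cone --- you instead conjugate the whole problem back to $\NN_{F'}$. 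The identity $\mu^{\Sp}_{\gamma,\delta} = \w_\delta \circ \mu^{\NN}_{\gamma,\delta} \circ \w_\gamma^{-1}$ (which follows immediately from inverting \eqref{eq: w gamma def}) reduces piecewise $F'$-linearity of the new mutations to the known piecewise $F'$-linearity of $\mu^{\NN}_{\gamma,\delta}$, since linear isomorphisms transport complete $F'$-rational fans and preserve linearity on cones. This is cleaner, avoids appealing to the fullness of $\MM$ and to \Cref{prop-conesinthepoints}, and has the bonus that part (2) becomes a literal rereading of the same conjugation identity, whereas the paper defers (2) to a ``straightforward computation.'' The one thing the paper's approach delivers that yours does not explicitly exhibit is a concrete description of the resulting fan on the new charts in terms of the sets $\Sp(\MM,\alpha)$; but that description is not actually needed for the statement being proved, so your shorter argument is a valid and arguably preferable substitute.
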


\begin{proof} 
We begin with (1). The properties of \Cref{definition: polyptych lattice} follow immediately from the definition of the mutations as $\mu'_{\gamma, \delta}:= L_{C_{\delta}} \circ L_{C_\gamma}^{-1}$, so the only claim to be proved is that there exists a complete fan in $\Hom(C_\gamma, \R)$ such that all mutations $\mu'_{\gamma,\delta}$ are linear on any cone of the fan. We will take the fan with maximal-dimensional cones given by $L_{C_\gamma}(\Sp_\R(\MM,\alpha))$ as $\alpha$ varies over $\pi(\MM)$. Since $\Sp(\MM)$ is full, the $\Sp(\MM,\alpha)$ cover $\Sp(\MM)$, so this does define a complete fan. Moreover, each $L_{C_\gamma}(\Sp_\R(\MM,\alpha))$ is an $F'$-rational polyhedral cone, since by strict duality, $\w^{-1}(\Sp_\R(\MM,\alpha))$ is a cone of $\Sigma(\MM)$, and in particular is $F$-rational and polyhedral. This implies $\pi_\gamma(\w^{-1}(\Sp_\R(\MM,\alpha))$ is also $F$-rational and polyhedral, and since $\w_\gamma$ is $F'$-linear by \Cref{prop-evaluationmap}, we conclude from the diagram in \Cref{eq: w gamma def} that $L_{C_\gamma}(\Sp_\R(\MM,\alpha))$ is $F'$-rational polyhedral. Now we claim that for any $\gamma,\delta \in \pi(\NN)=\mathcal{J}$ and any $\alpha \in \pi(\MM)$, the mutation $\mu'_{\gamma,\delta}$ is linear on $L_{C_\gamma}(\Sp_{F'}(\MM,\alpha))$. 
To see this, suppose $p,q \in \Sp_{F'}(\MM,\alpha)$ and consider $L_{C_\gamma}(p)$ and $L_{C_\gamma}(q)$. We want to show $\mu'_{\gamma,\delta}(L_{C_\gamma}(p)+L_{C_\gamma}(q)) = \mu'_{\gamma,\delta}(L_{C_\gamma}(p)) + \mu'_{\gamma,\delta}(L_{C_\gamma}(q))$. We compute both sides. By definition of the mutation, the RHS is $L_{C_\delta}(p)+L_{C_\delta}(q)$. 
We know from \Cref{prop-conesinthepoints} that $\Sp_{F'}(\MM,\alpha)$ is closed under addition, so $p+q \in \Sp_{F'}(\MM,\alpha)$, and hence $L_{C_\delta}(p+q)=L_{C_\delta}(p)+L_{C_\delta}(q)$. By the same argument, $L_{C_\gamma}(p)+L_{C_\gamma}(q)=L_{C_\gamma}(p+q)$. Now we can see that the LHS is $\mu'_{\gamma,\delta}(L_{C_\gamma}(p+q))= L_{C_\delta}(p+q)$, which is the RHS. Finally, the mutations respect scalar multiplication by non-negative scalars  in $F'$ since the restriction maps $L_{C_\gamma}$ for any $\gamma$ commutes with non-negative dilations. 
This proves (1). 

To prove (2) we must check that the following diagrams 
\begin{equation*}\label{eq: strong iso of NN with SpMM}
\begin{tikzcd}
    N_\gamma \otimes F' \arrow[r, "\mu_{\gamma,\delta}"] \arrow[d, "\w_\gamma"]
& N_\delta \otimes F' \arrow[d, "\w_\delta"] \\
\Hom(C_\gamma \cap \MM_{F'},F') \arrow[r, "L_{\delta} \circ L_\gamma^{-1} "]
& \Hom(C_\delta \cap \MM_{F'}, F') 
\end{tikzcd}
\end{equation*}
commute for all $\gamma$ and $\delta$. This is a straightforward computation using the definition of $\w_\gamma$ and $\w_\delta$. 
\end{proof}

\begin{corollary}\label{prop-strictpair-unique}
Let $F$ be a field and let $\MM$ be a finite polyptych lattice over $F$. Suppose $\NN, \NN'$ are both strict dual $F$-pairs to $\MM$. Then $\NN$ and $\NN'$ are strongly isomorphic.      
\end{corollary}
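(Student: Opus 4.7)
The strategy is to use \Cref{cor-fulldimiso}(2) twice: each of the strict dual pairs $(\MM,\NN,\v,\w)$ and $(\MM,\NN',\v',\w')$ produces a strong isomorphism between the corresponding dual polyptych lattice and $\Sp_F(\MM)$. The only subtlety is that each of these statements endows $\Sp_F(\MM)$ with a polyptych lattice structure (via \Cref{cor-fulldimiso}(1)) whose charts are indexed by $\pi(\NN)$ or $\pi(\NN')$, respectively. I will show that these two structures on $\Sp_F(\MM)$ coincide after reindexing by the common set of maximal-dimensional cones of $\Sigma(\MM)$, and then compose the isomorphisms.

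More concretely, I would first set up the identification of index sets. Writing $C_\gamma := \v^{-1}(\Sp_\R(\NN,\gamma))$ for $\gamma \in \pi(\NN)$ and $C'_{\gamma'} := (\v')^{-1}(\Sp_\R(\NN',\gamma'))$ for $\gamma' \in \pi(\NN')$, axiom~(4) of \Cref{def_dual} gives bijections from $\pi(\NN)$ and $\pi(\NN')$ to the set of maximal-dimensional cones of $\Sigma(\MM)$ (which depends only on $\MM$). Composing these bijections defines a bijection $\Phi \colon \pi(\NN) \to \pi(\NN')$ characterized by $C_\gamma = C'_{\Phi(\gamma)}$.

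Next, I would observe that under $\Phi$, the two polyptych lattice structures on $\Sp_F(\MM)$ produced by \Cref{cor-fulldimiso}(1) literally coincide: for each $\gamma \in \pi(\NN)$, both structures use the chart $\Hom(C_\gamma \cap \MM_F, F) = \Hom(C'_{\Phi(\gamma)} \cap \MM_F, F)$, the same chart map $L_{C_\gamma}$, and the same mutations $L_{C_\delta} \circ L_{C_\gamma}^{-1}$. In other words, the identity on $\Sp_F(\MM)$ together with $\Phi$ is a strong isomorphism between the two polyptych lattice structures. Finally, \Cref{cor-fulldimiso}(2) yields strong isomorphisms $\NN \to \Sp_F(\MM)$ and $\NN' \to \Sp_F(\MM)$ (using the respective indexings), and composing the first with the inverse of the second — after checking, as a brief preliminary, that strong isomorphisms are closed under inversion and composition — produces the desired strong isomorphism $\NN \to \NN'$.

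The main conceptual point, and perhaps the only thing requiring care, is that the polyptych lattice structure on $\Sp_F(\MM)$ built in \Cref{cor-fulldimiso}(1) is intrinsic to $\MM$ once one reindexes charts by maximal cones of $\Sigma(\MM)$; the dual $\NN$ appears only to provide a labeling of these cones. Once this is articulated, the proof is a diagram chase applying \Cref{cor-fulldimiso}(2) twice.
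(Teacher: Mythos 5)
Your proposal is correct and follows essentially the same approach as the paper: apply \Cref{cor-fulldimiso}(2) to both $\NN$ and $\NN'$ to obtain strong isomorphisms to $\Sp_F(\MM)$ and compose using transitivity of strong isomorphism. You are somewhat more careful than the paper's one-line proof, which glosses over the need to verify that the two a priori different PL structures on $\Sp_F(\MM)$ (indexed by $\pi(\NN)$ and $\pi(\NN')$ respectively) coincide after reindexing by maximal cones of $\Sigma(\MM)$ --- a point you correctly identify and resolve.
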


\begin{proof}
Both $\NN$ and $\NN'$ are strongly isomorphic to $\Sp_F(\MM)$ equipped with the PL structure constructed in \Cref{cor-fulldimiso} and strong isomorphism can easily be seen to be transitive. 
\end{proof}

We conclude this section by giving a construction of a lattice structure on the polyptych lattice $\Sp_{F'}(\MM)$ in the case when $F'$ is a field, $F=\Z$, and $F\subseteq F'$. More specifically, the proof of \Cref{prop-evaluationmap} shows that the diagram \Cref{eq: w gamma def} commutes. Now consider the image $\w(\NN)$ in $\Sp_{F'}(\MM)$, and also the image $L_{C_\gamma}(\w(\NN))$ in $\Hom(C_\gamma \cap \MM_{F'}, F')$. Then we get a commutative diagram \begin{equation}\label{eq: w gamma def integral}
\begin{tikzcd}
\NN  \arrow[r, "\pi_\gamma"] \arrow[d, "\w"]
& N_\gamma  \arrow[d, "\w_\gamma"] \\
\w(\NN) \arrow[r, "L_{C_\gamma}"]
& L_{C_\gamma}(\w(\NN)) 
\end{tikzcd}
\end{equation}
where each corner of the above diagram is a subset of the corresponding corner in \Cref{eq: w gamma def}. By construction, the left vertical arrow is a bijection, and so is the bottom horizontal arrow. Since $\pi_\gamma$ is a bijection by hypothesis on polyptych lattices, we conclude that the right vertical arrow $\w_\gamma$ in \Cref{eq: w gamma def integral} is a $\Z$-linear isomorphism. 
In particular, $L_{C_\gamma}(\w(\NN))$ is a free $\Z$-lattice of rank $r$ for each $\gamma$. We may now define a PL structure, over $\Z$, on the subset $\w(\NN)$ of $\Sp_{F'}(\MM)$ by considering the maps $L_{C_\gamma}: \w(\NN) \to L_{C_\gamma}(\w(\NN))$ for each $\gamma \in \pi(\NN)=\mathcal{J}$ and mutation maps given by $L_{C_\delta} \circ L_{C_\gamma}^{-1}: L_{C_\gamma}(\w(\NN)) \to L_{C_\delta}(\w(\NN))$ for all $\delta,\gamma \in \pi(\NN)$. By the same argument as in the proof of \Cref{prop-evaluationmap}, this gives rise to a PL structure over $\Z$ on $\w(\NN)$. By construction, $\w(\NN)$ is a subset of $\Sp_{F'}(\MM)$ and may be thought of as a lattice structure inside $\Sp_{F'}(\MM)$.

\section{Polyptych lattice polytopes and their duals} \label{sec_PL_polytopes}

In \Cref{subsec_PL_halfspaces_basics} we initiated a theory of convex geometry in the PL setting. In this section we develop this further by introducing PL polytopes and their duals. 

\begin{definition} \label{def-PL polytope}
We say $\PP \subset \MM_{\R}$ is a \textbf{PL polytope over $F$} if it is compact and it is a finite intersection of PL half spaces over $F$, i.e., 
$$\PP = \bigcap_{i=1}^\ell \HH_{p_i, a_i}$$
for some collection of points $p_i \in \Sp(\MM)$ and $a_i \in F$. 
\exampleqed
\end{definition}

\begin{example}\label{ex_M_polytope}
We continue with our running example from \Cref{ex_half_space}.
Consider the three points determined by the following table:
\begin{center}
    \begin{tabular}{|l|l|l|}
    \hline
        $p_1(\mathbb{e}_1)=-1$ & $p_1(\mathbb{e}_2)=0$ & $p_1(\mathbb{e}'_2)=-1$ 
        \\ \hline
        $p_2(\mathbb{e}_1)=0$ & $p_2(\mathbb{e}_2)=1$ & $p_2(\mathbb{e}'_2)=-1$ 
        \\ \hline
        $p_3(\mathbb{e}_1)=1$ & $p_3(\mathbb{e}_2)=-1$ & $p_3(\mathbb{e}'_2)=1$ 
        \\ \hline
    \end{tabular}
.\end{center}
The PL polytope over $\Z$ obtained by intersecting the three PL half-spaces $\PP=\HH_{p_1,-1}\cap \HH_{p_2,-1}\cap \HH_{p_3,-1}$, is depicted in \Cref{fig_polytopes}.
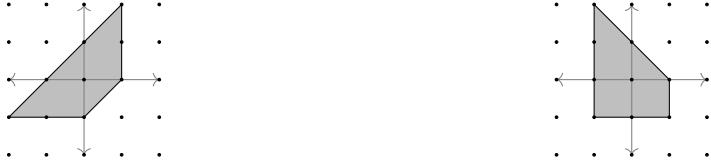
\begin{figure}[h]
    \centering
    \begin{tikzpicture}
    \filldraw[fill=gray!50] (-.5*2,-.5*1)--(0,-.5*1)--(.5*1,0)--(.5*1,.5*2)--(-.5*2,-.5*1);
    \draw[gray,<->] (-1,0)--(1,0);
    \draw[gray,<->] (0,-1)--(0,1);
    \foreach \i in {-2,...,2}
      \foreach \j in {-2,...,2}{
        \filldraw[black] (.5*\i,.5*\j) circle(.5pt);
      };
    \end{tikzpicture}
    \hspace{5cm}
    \begin{tikzpicture}
    \filldraw[fill=gray!50] (-.5*1,.5*2)--(-.5*1,-.5*1)--(.5*1,-.5*1)--(.5*1,0)--(-.5*1,.5*2)
    ;
    \draw[gray,<->] (-1,0)--(1,0);
    \draw[gray,<->] (0,-1)--(0,1);
    \foreach \i in {-2,...,2}
      \foreach \j in {-2,...,2}{
        \filldraw[black] (.5*\i,.5*\j) circle(.5pt);
      };
    \end{tikzpicture}
    \caption{The two chart images of the PL polytope $\PP$ from \Cref{ex_M_polytope}. On the left is $\pi_1(\PP)$ and on the right is $\pi_2(\PP)$. }
    \label{fig_polytopes}
\end{figure}
\exampleqed
\end{example}

\begin{lemma}\label{rem_vert_superset}\label{lemma: PL polytope is point convex}
Let $\MM$ be a finite polyptych lattice over $F$ and let $\PP \subset \MM_{\R}$ be a PL polytope over $F$.  Then: (1) for any $\alpha \in \pi(\MM)$, the chart image $\pi_\alpha(\PP)$ is a (classical) polytope in $M_\alpha \otimes \R$, so in particular $\PP$ is chart-convex, and, (2) $\PP$ is point-convex over $\R$. 
\end{lemma}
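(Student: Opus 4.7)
The plan is to prove (2) first, as it follows almost directly from the definitions, and then to establish (1) by analyzing the chart image of each PL half-space $\HH_{p_i,a_i}$ individually.

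For (2), by hypothesis $\PP = \bigcap_{i=1}^{\ell} \HH_{p_i,a_i}$ with $p_i \in \Sp(\MM)$ and $a_i \in F$. By \Cref{lemma: points linear on faces of SigmaM} each $p_i$ extends naturally to a point in $\Sp_\R(\MM)$, so each $\HH_{p_i,a_i}$ is in particular a PL half-space over $\R$. The inclusion $\ptconv_\R(\PP) \subseteq \PP$ is then immediate, since $\ptconv_\R(\PP)$ is the intersection of all PL half-spaces over $\R$ containing $\PP$ and $\PP$ is itself such an intersection; the reverse inclusion $\PP \subseteq \ptconv_\R(\PP)$ is tautological. Hence $\PP$ is point-convex over $\R$.

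For (1), fix $\alpha \in \pi(\MM)$; it suffices to prove that each $\pi_\alpha(\HH_{p_i,a_i})$ is a classical polyhedron in $M_\alpha \otimes \R$, since then $\pi_\alpha(\PP) = \bigcap_i \pi_\alpha(\HH_{p_i,a_i})$ is a finite intersection of polyhedra, hence a polyhedron, and its compactness follows from the compactness of $\PP$ together with the continuity of $\pi_\alpha$ (which in turn follows from the continuity of the mutations, built into the definition of piecewise linearity). A compact polyhedron is a polytope, and chart-convexity of $\PP$ is then immediate from \Cref{def_M_convex}.

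The key step is to realize the PL function $p_i \circ \pi_\alpha^{-1}: M_\alpha \otimes \R \to \R$ as a pointwise minimum of finitely many $\R$-linear functions. By \Cref{lemma: points linear on faces of SigmaM}, this function is $\R$-linear on each cone $\pi_\alpha(\mathcal{C})$ for $\mathcal{C} \in \Sigma(\MM)$; for each maximal cone $\mathcal{C}$, let $\ell_\mathcal{C}: M_\alpha \otimes \R \to \R$ denote the $\R$-linear extension of that restriction. By \Cref{lemma: point PL concave}, $p_i$ is convex in the sense of \Cref{definition: convex} (i.e., concave in the standard sense of convex analysis), and a short argument---testing the concavity inequality on segments from an interior point of $\mathcal{C}$ to an arbitrary $u \in M_\alpha \otimes \R$---shows that $\ell_\mathcal{C}(u) \geq (p_i \circ \pi_\alpha^{-1})(u)$ for every $u$, with equality whenever $u \in \pi_\alpha(\mathcal{C})$. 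Consequently $p_i \circ \pi_\alpha^{-1} = \min_\mathcal{C} \ell_\mathcal{C}$, where the minimum ranges over maximal cones $\mathcal{C}$ of $\Sigma(\MM)$, and therefore
\[
\pi_\alpha(\HH_{p_i,a_i}) \;=\; \{u \in M_\alpha \otimes \R \mid p_i \circ \pi_\alpha^{-1}(u) \geq a_i\} \;=\; \bigcap_{\mathcal{C}}\{u \mid \ell_\mathcal{C}(u) \geq a_i\}
\]
is a finite intersection of classical half-spaces, hence a polyhedron. The only nontrivial obstacle is the min-of-linear-functions reformulation; once this is in hand, the remainder of the proof reduces to standard facts in classical convex geometry.
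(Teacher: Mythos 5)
Your proof is correct and follows the same overall strategy as the paper: express $(p_i)_\alpha := p_i \circ \pi_\alpha^{-1}$ as a pointwise minimum of finitely many linear functions for part~(1), and for part~(2) observe that since $\PP$ is itself an intersection of PL half-spaces over $\R$, the point-convex hull of $\PP$ can only shrink the intersection. The one substantive difference is that the paper merely asserts ``$(p_k)_\alpha$ is defined as the minimum of a finite collection of linear functions'' as a consequence of \Cref{lemma: points linear on faces of SigmaM} and \Cref{lemma: point PL concave}, whereas you supply the missing justification explicitly: the segment-from-an-interior-point argument showing $\ell_\mathcal{C}(u) \geq (p_i)_\alpha(u)$ for all $u$. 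That detail is worth having (and is essentially the content of \Cref{lemma: PL is a min of linears} in the appendix, which the paper states without proof). Your phrasing of~(2) is also a bit more direct than the paper's ``omit redundant half-spaces'' argument, though both are correct; the crucial observation you correctly flag is that each $\HH_{p_i,a_i}$ over $F$ extends via \Cref{lemma: points linear on faces of SigmaM} to a PL half-space over $\R$, so it is one of the half-spaces in the intersection defining $\ptconv_\R(\PP)$.
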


    \begin{proof} 
    First we prove (1). 
    Let $\PP = \cap_{k=1}^\ell \HH_{p_k,a_k}$. We know from \Cref{lemma: point PL concave} that a point in $\Sp(\MM)$ is piecewise-linear and convex on any chart. Thus, on any chart $M_\alpha$, $(p_k)_\alpha$ is defined as the minimum of a finite collection of linear functions. In particular, $\pi_\alpha(\HH_{p_k,a_k})$ is a finite intersection of (classical) half-spaces in $M_\alpha \otimes \R$. Since this is true of all $k$, it follows that $\pi_\alpha(\PP)$ is a compact finite intersection of (classical) half-spaces and hence a (classical) polytope in $M_\alpha \otimes \R$. Polytopes are convex, and since this is true of all $\alpha$, $\PP$ is chart-convex. Next we prove (2). By definition $\ptconv_\R(\PP) = \cap_{\PP \subset \HH_{p,a}} \HH_{p,a}$. Note that if $\PP = \cap_{k=1}^\ell \HH_{p_k,a_k} \subset \HH_{p,a}$, then $\left(\cap_{k=1}^\ell \HH_{p_k,a_k}\right) \cap \HH_{p,a} = \cap_{k=1}^\ell \HH_{p_k,a_k}$. This implies that in the intersection $\cap_{\PP \subset \HH_{p,a}} \HH_{p,a}$ we may omit all $\HH_{p,a}$ which are not equal to any of the $\HH_{p_k,a_k}$ without altering the result. Thus $\ptconv_\R(\PP)=\cap_{k=1}^\ell \HH_{p_k,a_k} = \PP$, as was to be shown. 
    \end{proof} 

    \Cref{lemma: PL polytope is point convex}(1) allows us to define a notion of a vertex of a PL polytope.

\begin{definition}\label{def_vertices}
Let $\MM$ be a finite polyptych lattice over $F$ and let $\PP$ be a PL polytope over $F$. An element $m \in \MM$ is a \textbf{vertex} of $\PP$ if it is a vertex of the image of $\PP$ in some coordinate chart $M_\alpha \otimes \R$. More precisely, the \textbf{set of vertices $V(\PP)$} of $\PP$ is defined to be 
\begin{equation*}\label{eq: def vertices of P}
V(\PP) := \{ m \in \MM_{\R}  \, \mid \, \exists \alpha \in \pi(\MM), \, \, \pi_\alpha(m)\,  \textup{ is a vertex of } \, \pi_\alpha(\PP)\}. 
\end{equation*} 
\end{definition}

\begin{remark}\label{remark: vertices finite}
The finiteness assumptions on $\MM$ and $\PP$ imply that $V(\PP)$ is a finite set. 
\exampleqed
\end{remark}

\begin{example}\label{example: M polytope vertices}
Continuing with our running example in \Cref{ex_M_polytope}, for the $\MM$-polytope $\PP$ depicted in \Cref{fig_polytopes} it is straightforward to see that  
\begin{equation}\label{eq_ex_vertices}
    V(\PP)=\{(\varepsilon_1,-\varepsilon_1),(\varepsilon_1+2\varepsilon_2,-\varepsilon_1+2\varepsilon_2),(-2\varepsilon_1-\varepsilon_2,\varepsilon_1-\varepsilon_2),(-\varepsilon_1,\varepsilon_1),(-\varepsilon_2,-\varepsilon_1-\varepsilon_2)\}
.\end{equation} 
Note that $(\varepsilon_1,-\varepsilon_1)$ is a vertex of $\pi_1(\PP)$ but not of $\pi_2(\PP)$.
\exampleqed
\end{example}

 As a first step, we prove an analogue of a classical fact concerning polytopes. 

\begin{proposition}\label{poly_is_conv_verts}
Let $\MM$ be a finite polyptych lattice over $F$ and $\PP \subset \MM_\R$ a PL polytope over $F$. Then $\PP=\ptconv_\R(V(\PP))$.  
\end{proposition}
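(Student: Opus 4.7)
The plan is to establish the two inclusions $\ptconv_\R(V(\PP)) \subseteq \PP$ and $\PP \subseteq \ptconv_\R(V(\PP))$ separately, using the characterization of point-convex hulls in \Cref{lem-pconvex-1}.

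For the first inclusion, I would first observe that $V(\PP) \subseteq \PP$: if $m \in V(\PP)$ then by \Cref{def_vertices} there exists some $\alpha \in \pi(\MM)$ with $\pi_\alpha(m)$ a classical vertex of $\pi_\alpha(\PP)$, so $\pi_\alpha(m) \in \pi_\alpha(\PP)$ and hence $m \in \PP$ because $\pi_\alpha$ is a bijection. Since \Cref{lemma: PL polytope is point convex}(2) tells us $\PP$ is point-convex over $\R$, I would apply \Cref{lemma: S subset T point convex} to obtain $\ptconv_\R(V(\PP)) \subseteq \ptconv_\R(\PP) = \PP$.

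For the second inclusion, the plan is to apply \Cref{lem-pconvex-1}: it suffices to show that for every $m \in \PP$ and every $p \in \Sp_\R(\MM)$, we have $p(m) \geq \min_{v \in V(\PP)} p(v)$. Fix any chart $\alpha \in \pi(\MM)$; \Cref{lemma: PL polytope is point convex}(1) guarantees that $\pi_\alpha(\PP)$ is a classical polytope, so I may write $\pi_\alpha(m) = \sum_i \lambda_i u_i$ as a convex combination of the classical vertices $u_i$ of $\pi_\alpha(\PP)$ with $\lambda_i \geq 0$ and $\sum_i \lambda_i = 1$. By \Cref{def_vertices}, each preimage $v_i := \pi_\alpha^{-1}(u_i)$ lies in $V(\PP)$. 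By \Cref{lemma: point PL concave}, the restriction $p_\alpha := p \circ \pi_\alpha^{-1}$ is convex on $M_\alpha \otimes \R$ in the paper's sense, and iterating the two-point inequality upgrades this to the finite-sum estimate $p_\alpha(\sum_i \lambda_i u_i) \geq \sum_i \lambda_i p_\alpha(u_i)$; hence
\[
p(m) \;=\; p_\alpha\!\left(\sum_i \lambda_i u_i\right) \;\geq\; \sum_i \lambda_i\, p_\alpha(u_i) \;=\; \sum_i \lambda_i\, p(v_i) \;\geq\; \min_i p(v_i) \;\geq\; \min_{v \in V(\PP)} p(v),
\]
as required.

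I do not anticipate a serious obstacle in this argument. The conceptual content is that the problem reduces chart-by-chart to classical polytope theory via \Cref{lemma: PL polytope is point convex}(1), after which the chart-wise convexity of any point $p$ furnished by \Cref{lemma: point PL concave} handles the required bound. Notably, the global piecewise-linear structure of $p$ on all of $\MM_\R$ is never directly invoked; only the (classical) convexity of $p_\alpha$ on a single chart is used, and any chart works.
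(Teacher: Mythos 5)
Your proof is correct and is essentially the same argument the paper gives, just packaged slightly differently in the second inclusion. Both proofs establish $\ptconv_\R(V(\PP)) \subseteq \PP$ via $V(\PP) \subseteq \PP$, point-convexity of $\PP$, and \Cref{lemma: S subset T point convex}. For the reverse inclusion, the paper works with half-spaces directly: it shows that each $\HH_{p,a}$ containing $V(\PP)$ has $\pi_\alpha(\HH_{p,a})$ a closed convex set containing all classical vertices of $\pi_\alpha(\PP)$, hence contains $\pi_\alpha(\PP)$. You instead invoke the functional characterization of $\ptconv_\R$ from \Cref{lem-pconvex-1} and apply the finite Jensen inequality to $p_\alpha$ (noting that $V(\PP)$ is finite by \Cref{remark: vertices finite}, so compactness of $S$ in \Cref{lem-pconvex-1} is satisfied). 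These are two transcriptions of the same underlying fact—that $\pi_\alpha(\HH_{p,a}) = \{p_\alpha \geq a\}$ is convex precisely because $p_\alpha$ satisfies \Cref{eq: definition convex}—and neither version is substantially shorter or more general than the other. Your final closing remark, that only chart-wise convexity of a single $p_\alpha$ is used rather than the global structure of $p$, is accurate and matches the paper's proof as well.
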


\begin{proof}
By definition of point-convex hulls, we have that $\ptconv_\R(V(\PP)) = \cap_{V(\PP) \subset \HH_{p,a}} \HH_{p,a}$ and similarly for $\ptconv_\R(\PP)$. Since $V(\PP) \subset \PP$, it follows that any PL half-space containing $\PP$ contains $V(\PP)$, so any $\HH_{p,a}$ appearing in the intersection for $\ptconv_\R(\PP)$ appears in that for $\ptconv_\R(V(\PP))$. It follows that $\ptconv_\R(V(\PP)) \subset \ptconv_\R(\PP)=\PP$, where the last equality follows from \Cref{lemma: PL polytope is point convex}(2).  It now suffices to show the other inclusion, $\PP \subset \ptconv_\R(V(\PP))$. For this it would suffice to show $\pi_\alpha(\PP) \subset \pi_\alpha(\HH_{p,a})$ for any PL half-space $\HH_{p,a}$ with $V(\PP) \subset \HH_{p,a}$. We know $\pi_\alpha(\PP)$ is a classical compact polytope in $M_\alpha \otimes \R$ and $\pi_\alpha(\HH_{p,a})$ is a finite intersection of classical half-spaces in $M_\alpha \otimes \R$. By definition of $V(\PP)$, all vertices of $\pi_\alpha(\PP)$ are contained in $\pi_\alpha(V(\PP))$. By hypothesis on $\HH_{p,a}$, we know $\pi_\alpha(V(\PP)) \subset \pi_\alpha(\HH_{p,a})$ so all vertices of $\pi_\alpha(\PP)$ are contained in $\pi_\alpha(\HH_{p,a})$. Since $\pi_\alpha(\HH_{p,a})$ is itself a closed convex set and contains all vertices of $\pi_\alpha(\PP)$, it also contains its (classical) convex hull, namely $\pi_\alpha(\PP)$. This completes the proof. 
\end{proof}

 In the setting of strictly dualizable finite polyptych lattices, we may derive further properties of PL polytopes and of point-convex hulls. 
For instance, we saw in \Cref{lemma: PL polytope is point convex} that any PL polytope has coordinate chart images which are classical polytopes. In \Cref{prop_polytopes_are_convex}, we obtain a converse, under additional hypotheses. 

\begin{proposition} \label{prop_polytopes_are_convex}
Let $\MM$ be a finite polyptych lattice over $F$, and let $(\MM, \NN, \v, \w)$ be a strict dual $F'$-pair for $F'$ a field. 
If $\mathcal{Q} \subset \MM_{\R}$ has the property that $\pi_\alpha(\mathcal{Q}) \subset M_\alpha \otimes \R$ is an $F'$-rational polytope for all $\alpha \in \mathcal{I}$, then $\mathcal{Q}$ is a PL polytope, i.e., $\mathcal{Q}$ is a finite intersection of PL half-spaces over $F'$.
\end{proposition}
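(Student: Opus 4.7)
The plan is to extract a finite vertex set and realize $\mathcal{Q}$ as a finite intersection of PL half-spaces indexed by a refinement of the dual PL fan. Let $V := V(\mathcal{Q})$ denote the finite set of vertices of $\mathcal{Q}$ from \Cref{def_vertices}; the hypothesis and classical polytope theory give $\pi_\alpha(\mathcal{Q}) = \conv(\pi_\alpha(V))$ for each chart $\alpha$. Using the strict dual pairing, define $\phi := \min_{v \in V} \v(v)$ on $\NN_{F'}$. Since each $\v(v)$ is $F'$-linear on the maximal-dimensional cones of $\Sigma(\NN)$ by \Cref{lemma: points linear on faces of SigmaM}, $\phi$ is piecewise linear on a finite refinement $\Sigma^\ast$ of $\Sigma(\NN)$ whose cones are $F'$-rational. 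Choose $F'$-rational ray representatives $n_1,\ldots,n_K \in \NN_{F'}$ of $\Sigma^\ast$, and set $\mathcal{Q}' := \bigcap_{k=1}^K \HH_{\w(n_k),\,\phi(n_k)}$, a finite intersection of PL half-spaces over $F'$; the claim is $\mathcal{Q}' = \mathcal{Q}$.

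The containments $\mathcal{Q} \subseteq \mathcal{Q}' \subseteq \ptconv_\R(V)$ follow essentially directly. For the first: given $m \in \mathcal{Q}$, write $\pi_\alpha(m)$ as a convex combination of $\{\pi_\alpha(v)\}_{v \in V}$ in some chart $\alpha$, and apply the piecewise-linear convexity of $\w(n_k)$ on $M_\alpha$ in the sense of \Cref{definition: convex} (guaranteed by \Cref{lemma: point PL concave}) to get $\w(n_k)(m) \geq \min_v \w(n_k)(v) = \phi(n_k)$. For the second: $\v(m) - \phi$ is $F'$-linear on each cone of $\Sigma^\ast$, so the inequalities $\v(m)(n_k) \geq \phi(n_k)$ at the rays imply $\v(m) \geq \phi$ on all of $\NN_\R$ by non-negative extension from rays to cones, and \Cref{lem-pconvex-2} then yields $m \in \ptconv_\R(V)$.

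The main obstacle is closing the chain by proving $\ptconv_\R(V) \subseteq \mathcal{Q}$, i.e.\ showing that $\v(m) \geq \phi$ on $\NN_\R$ forces $\pi_\alpha(m) \in \conv(\pi_\alpha(V))$ in every chart. I would argue by contrapositive: for $m \not\in \mathcal{Q}$, pick $\alpha$ with $\pi_\alpha(m) \notin \conv(\pi_\alpha(V))$ and invoke the classical separating hyperplane theorem to produce $\ell \in \Hom(M_\alpha, F')$ with $\ell(\pi_\alpha(m)) < \min_v \ell(\pi_\alpha(v))$. I must then lift $\ell$ to a point $p \in \Sp_{F'}(\MM)$ satisfying $p(m) < \min_v p(v)$, contradicting $m \in \ptconv_\R(V)$. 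For this, choose a maximal-dimensional cone $C_\gamma$ of $\Sigma(\MM)$ containing a vertex $v^\ast$ that achieves $\min_v \ell(\pi_\alpha(v))$, and use the $F'$-linear isomorphism $\w_\gamma : N_\gamma \otimes F' \to \Hom(C_\gamma \cap \MM_{F'}, F')$ from \Cref{prop-evaluationmap} to produce $n_\gamma \in N_\gamma$ with $\w(n_\gamma)|_{C_\gamma} = \ell \circ \pi_\alpha|_{C_\gamma}$, setting $p := \w(n_\gamma)$. A concavity argument, using that points are piecewise-linearly convex in the paper's sense, shows $p \leq \ell \circ \pi_\alpha$ globally with equality on $C_\gamma$. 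The delicate remaining step is to verify $\min_v p(v) = \min_v \ell(\pi_\alpha(v))$, which requires choosing $\gamma$ so that the $\ell$-minimizing face of $\pi_\alpha(\mathcal{Q})$ sits inside $\pi_\alpha(C_\gamma)$; I expect this to follow from the chart-polytope hypothesis and the combinatorial structure of $\Sigma(\MM)$, and it will be the technical crux of the argument.
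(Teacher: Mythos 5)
Your approach is genuinely different from the paper's, and the first two containments in your chain $\mathcal{Q} \subseteq \mathcal{Q}' \subseteq \ptconv_\R(V)$ are sound. However, there is a real gap in the closing step $\ptconv_\R(V) \subseteq \mathcal{Q}$, and it is not the step you flagged.

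The problem with the lifting. You construct $p := \w(n_\gamma)$ so that $p|_{C_\gamma} = \ell \circ \pi_\alpha|_{C_\gamma}$. Since $p$ is a point, $p_\alpha$ is ``convex'' in the paper's sense (concave in the usual sense), so by \Cref{corollary: PL half space in classical half space} one has $p \leq \ell \circ \pi_\alpha$ everywhere, with equality only guaranteed on $C_\gamma$. Thus $p(m) \leq \ell(\pi_\alpha(m)) < \min_v \ell(\pi_\alpha(v))$. But to conclude $m \notin \ptconv_\R(V)$ via \Cref{lem-pconvex-1}, you need $p(m) < \min_v p(v)$, and the inequality $p(v) \leq \ell(\pi_\alpha(v))$ for all $v$ gives $\min_v p(v) \leq \min_v \ell(\pi_\alpha(v))$ — the wrong direction. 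For vertices $v'$ outside $C_\gamma$, the value $p(v')$ can be strictly and arbitrarily smaller than $\ell(\pi_\alpha(v'))$; if some such vertex has $p(v') \leq p(m)$, the point $p$ fails to separate $m$ from $V$ and the contrapositive argument collapses. Your proposed patch — placing the $\ell$-minimizing vertex $v^\ast$ inside $C_\gamma$ — only enforces $p(v^\ast) = \min_v \ell(\pi_\alpha(v))$, i.e.\ confirms the unhelpful upper bound on $\min_v p(v)$; it does nothing to prevent the dip at other vertices. The gap is structural, not a matter of a more careful choice of $\gamma$: linear separating functionals need not lift to points, precisely because points are piecewise-linear min-combinations of linear functions and can lie strictly below a given linear function outside a single cone.

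The paper's proof avoids the issue altogether. Rather than lifting a globally separating hyperplane, the paper works facet-by-facet on the pieces $K \cap \mathcal{C}$ of $\mathcal{Q}$ within cones $\mathcal{C}$ of $\Sigma(\MM)$, extracts $\rho \in \Hom(\mathcal{C}\cap\MM_{F'},F')$ supporting $\mathcal{Q}\cap\mathcal{C}$ along $K\cap\mathcal{C}$, lifts $\rho$ to $\w(n)$ via the surjectivity of $L_{\mathcal{C}}$, and then — crucially — invokes fullness of $\MM$ to find a chart $M_\beta$ on which $\w(n)$ is linear. In that chart, $\pi_\beta(\HH_{\w(n),a})$ is a genuine classical half-space, and the inclusion $\pi_\beta(\mathcal{Q}) \subset \pi_\beta(\HH_{\w(n),a})$ is checked directly using classical convex geometry on the polytope $\pi_\beta(\mathcal{Q})$. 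Because the containment is certified chart-by-chart against classical half-spaces rather than by a one-cone concavity estimate, the paper never needs to control a lifted point's behavior away from the cone on which it was prescribed. If you wish to salvage your route, you would need an argument showing that the separating $\ell$ can be chosen so that its lift is bounded below by $\ell \circ \pi_\alpha$ on all of $V$ simultaneously, and it is not clear how to arrange that in general.
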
 

\begin{proof} 
For simplicity we assume in this proof that $\mathcal{Q}$ is full-dimensional; the argument is similar for the smaller-dimensional cases by first restricting to an appropriate subspace. For a maximal-dimensional cone $\mathcal{C}$ in $\Sigma(\MM)$, consider the intersection $\mathcal{Q} \cap \mathcal{C}$. For $\alpha \in \pi(\MM)$, by hypothesis we know $\pi_\alpha(\mathcal{Q})$ is a classical polytope in $M_\alpha \otimes \R$. Let $K$ be the preimage under $\pi_\alpha$ of a facet of $\pi_\alpha(\mathcal{Q})$ and consider $K \cap \mathcal{C}$. Allowing $\alpha$ and $K$ and $\mathcal{C}$ to vary, let $\mathcal{F}$ denote the set of all such subsets $K \cap \mathcal{C}$ such that their images under the charts $\pi_\alpha$ are nonempty and codimension $1$. 

For each subset of the form $K \cap \mathcal{C}$ in $\mathcal{F}$, we now construct a corresponding PL half-space over $F'$ as follows. 
Since $\mathcal{Q}$ is $F'$-rational and $K \cap \mathcal{C}$ is nonempty and codimension $1$, it follows that there exists $\rho \in \Hom(\mathcal{C} \cap \MM_{F'},F')$ (which we may extend naturally to $\Hom(\mathcal{C}, \R)$) such that $K \cap \mathcal{C}$ lies in $\rho^{-1}(a)$ for some $a \in F'$ and $\mathcal{Q} \cap \mathcal{C}$ lies in $\rho^{-1}([a,\infty))$. 
Since $F'$ is a field, by \Cref{prop-evaluationmap} we know that the restriction $L_{\mathcal{C}}:\Sp_{F'}(\MM) \to \Hom(\mathcal{C} \cap \MM_{F'},F')$ is surjective, and we know $\w$ is bijective, so there exists $n \in \NN_{F'}$ such that the restriction of $\w(n)$ to $\mathcal{C}$ coincides with $\rho$. We now claim $\mathcal{Q} \subset \HH_{\w(n),a}$. To see this, recall by \Cref{lemma: bijection charts and faces for duals and dilations and PL on cones} that $\MM$ is full, so $\w(n)$ is linear on some chart $M_\alpha$.  
 It follows that $\pi_\alpha(\HH_{\w(n),a})$ is a classical half-space in $M_\alpha$, and by our choice of $\rho$ and $n$, $\pi_\alpha(\mathcal{Q})$ lies in $\pi_\alpha(\HH_{\w(n),a})$, implying $\mathcal{Q} \subset \HH_{\w(n),a}$ as claimed.

In the above paragraph we constructed a half-space $\HH_{\w(n),a}$ for each set of the form $K \cap \mathcal{C}$ in $\mathcal{F}$. For the argument below we denote this association by $K \cap \mathcal{C} \mapsto \HH_{K \cap \mathcal{C}}$. We have already seen that  $\mathcal{Q} \subset \cap_{K\cap \mathcal{C} \in \mathcal{F}} \HH_{K \cap \mathcal{C}}$. 
Now let $\alpha \in \pi(\MM)$. Let $H_{\alpha, K \cap \mathcal{C}}$ denote the classical half-space in $M_\alpha \otimes \R$ containing $\pi_\alpha(\mathcal{Q})$ and whose boundary contains $\pi_\alpha(K \cap \mathcal{C})$.  
 By \Cref{corollary: PL half space in classical half space}, since $H_{\alpha, K \cap \mathcal{C}}$ is defined by a linear function on $M_\alpha \otimes \R$ which agrees with $\w(n)_\alpha$ on $\pi_\alpha(\mathcal{C})$, we have $\pi_\alpha(\HH_{K \cap \mathcal{C}}) \subseteq H_{\alpha,K \cap \mathcal{C}}$.    Thus $\pi_\alpha(\mathcal{Q}) \subseteq  \bigcap_{K \cap \mathcal{C} \in \mathcal{F}} \pi_\alpha(\HH_{ K \cap \mathcal{C}}) \subseteq \bigcap_{K \cap \mathcal{C} \in \mathcal{F}} H_{\alpha,K \cap \mathcal{C}}$. The last expression includes an intersection over all facets of $\pi_\alpha(\mathcal{Q})$, so the inclusion must be an equality, i.e., $\pi_\alpha(\mathcal{Q}) = \bigcap_{K \cap \mathcal{C} \in \mathcal{F}} \pi_\alpha( \HH_{K \cap \mathcal{C}}) = \bigcap_{K \cap \mathcal{C} \in \mathcal{F}}  H_{\alpha,K \cap \mathcal{C}}$.  This proves that $\mathcal{Q} = \cap_{K \cap \mathcal{C} \in \mathcal{F}} \HH_{K \cap \mathcal{C}}$ is an intersection of PL half-spaces. 
Moreover, in the setting that $\MM$ is a finite polyptych lattice, then $\Sigma(\MM)$ is finite, there are finitely many polytopes $\pi_\alpha(\mathcal{Q})$ to be considered and each $\pi_\alpha(\mathcal{Q})$ has finitely many facets, so $\mathcal{F}$ is finite and $\mathcal{Q}$ can be realized as a finite intersection. 
\end{proof}

We now turn to some technical lemmas that are needed in later sections.

\begin{lemma}\label{lemma: P with interior zero}
Let $\MM$ be a finite polyptych lattice over $\Z$ and let $(\MM,\NN,\v,\w)$ be a strict dual $\R$-pairing. Then there exists a PL polytope $\PP \subset \MM_\R$ over $\Z$ which contains $0_\MM$ in its interior. 
\end{lemma}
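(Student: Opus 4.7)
The plan is to construct $\PP$ as an intersection of $2r$ PL half-spaces built from a $\Z$-basis of a single chart of the dual lattice $\NN$, each with threshold $-1$. Concretely, we fix any chart $\gamma_0 \in \pi(\NN)$ with $\Z$-basis $f_1,\ldots,f_r$ of $N_{\gamma_0}$, set $n_i^\pm := \pi_{\gamma_0}^{-1}(\pm f_i) \in \NN$, let $p_i^\pm := \w(n_i^\pm) \in \Sp(\MM)$, and define
$$
\PP := \bigcap_{i=1}^{r} \bigl(\HH_{p_i^+,\, -1} \cap \HH_{p_i^-,\, -1}\bigr) \subseteq \MM_\R.
$$
This is manifestly a finite intersection of PL half-spaces over $\Z$. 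The interior condition is immediate: each $p_i^\pm$ is continuous (\Cref{lemma: points linear on faces of SigmaM}) and satisfies $p_i^\pm(0_\MM)=0>-1$ by positive homogeneity of points, so a small open neighborhood of $0_\MM$ lies in each $\HH_{p_i^\pm,-1}$ and hence in the finite intersection. The real content is compactness.

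To prove $\PP$ is compact, we fix any $\alpha \in \pi(\MM)$ and argue that the image $\pi_\alpha(\PP) \subseteq M_\alpha \otimes \R$ is bounded; since $\pi_\alpha$ is a homeomorphism this suffices. The set $\pi_\alpha(\PP) = \bigcap_{i,\pm}\{u : (p_i^\pm)_\alpha(u)\geq -1\}$ is a closed (by continuity) convex (by \Cref{prop_halfconvex} and \Cref{corollary: intersection of chart-convex}) subset of $M_\alpha \otimes \R$ containing a neighborhood of the origin, so its boundedness is equivalent to the triviality of its recession cone. Using positive homogeneity, a ray $\{tv : t \geq 0\}$ lies in $\pi_\alpha(\PP)$ precisely when $(p_i^\pm)_\alpha(v)\geq 0$ for all $i,\pm$; via the pairing this becomes $q(n_i^\pm)\geq 0$ for all $i,\pm$, where $q := \v(\pi_\alpha^{-1}(v)) \in \Sp_\R(\NN)$.

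The main obstacle is to show that $q(n_i^\pm)\geq 0$ for all $i,\pm$ forces $q \equiv 0$. First, the point axiom applied to $(n_i^+, n_i^-)$ gives $q(n_i^+)+q(n_i^-) = \min_\gamma q(n_i^+ +_\gamma n_i^-) \leq q(n_i^+ +_{\gamma_0} n_i^-) = q(0_\NN) = 0$, since $\pi_{\gamma_0}(n_i^+) + \pi_{\gamma_0}(n_i^-) = f_i + (-f_i) = 0$; together with nonnegativity this forces $q(n_i^\pm) = 0$, equivalently $q_{\gamma_0}(\pm f_i) = 0$. Next, by \Cref{lemma: point PL concave} $q_{\gamma_0}$ is convex in the paper's sense (i.e.\ concave in the standard sense) and positively homogeneous, hence superadditive: $q_{\gamma_0}(u_1+u_2) \geq q_{\gamma_0}(u_1)+q_{\gamma_0}(u_2)$. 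Writing a general $u = \sum c_i f_i$ in the form $\sum c_i^+ f_i + \sum c_i^-(-f_i)$ with $c_i^\pm \geq 0$ and iterating superadditivity and homogeneity yields $q_{\gamma_0}(u) \geq 0$; by symmetry $q_{\gamma_0}(-u) \geq 0$; and finally superadditivity on $(u,-u)$ gives $q_{\gamma_0}(u)+q_{\gamma_0}(-u)\leq q_{\gamma_0}(0)=0$. Hence $q_{\gamma_0} \equiv 0$, so $q \equiv 0$ on $\NN_\R$.

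To conclude: $\v(\pi_\alpha^{-1}(v)) = 0 = \v(0_\MM)$, and injectivity of $\v$ (strict duality axiom (3)) forces $\pi_\alpha^{-1}(v)=0_\MM$, so $v=0$. The recession cone is trivial, $\pi_\alpha(\PP)$ is bounded, $\PP$ is compact, and we obtain the desired PL polytope over $\Z$ containing $0_\MM$ in its interior.
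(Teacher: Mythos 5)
Your strategy is genuinely different from the paper's: you build $\PP$ from a $\Z$-basis of a single chart of $\NN$ and prove compactness by showing the recession cone of $\pi_\alpha(\PP)$ is trivial via superadditivity of points together with the pairing axiom. The recession-cone computation itself is correct and is a nice argument. However, the very first step has a gap. You assert $p_i^\pm := \w(n_i^\pm) \in \Sp(\MM)$, but the hypothesis only gives a strict dual \emph{$\R$}-pairing, so axiom (1) of \Cref{def_dual} provides $\w: \NN_\R \to \Sp_\R(\MM)$ and nothing more: there is no reason for $\w(n)$ with $n \in \NN$ to take integer (or even rational) values on the $\Z$-lattice $\MM$. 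As a concrete obstruction, on the trivial rank-$r$ polyptych lattice one may rescale the standard dual pairing by $\sqrt{2}$; axioms (1)--(4) of \Cref{def_dual} still hold over $\R$, but then $\w(n)(m) = \sqrt{2}\langle n,m\rangle$ is irrational for generic lattice $m,n$. Consequently your $\PP = \bigcap_{i,\pm} \HH_{p_i^\pm,-1}$ is in general only a PL polytope over $\R$, not over $\Z$ as the lemma requires. This is not a cosmetic issue: your boundedness proof leans on the exact form $p_i^\pm = \w(n_i^\pm)$ (you rewrite $(p_i^\pm)_\alpha(v)\geq 0$ as $q(n_i^\pm)\geq 0$ via $\v(m)(n)=\w(n)(m)$), so you cannot simply swap in nearby rational points without re-proving boundedness. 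The paper sidesteps this by working cone-by-cone: for each maximal cone $\mathcal{C}$ of $\Sigma(\MM)$ it chooses $\Q$-rational linear functionals in $\Hom(\mathcal{C},\Q)$ with negative rational thresholds that cut out a compact region inside $\mathcal{C}$, extends them to points in $\Sp_\Q(\MM)$ via surjectivity of $L_{\mathcal{C}}$ (\Cref{prop-evaluationmap}), clears denominators to land in $\Sp(\MM)$, and intersects the resulting $\PP_\mathcal{C}$ over the finitely many $\mathcal{C}$; rationality and compactness are then built in from the start. If you want to keep your construction, you would need an analogous approximation-and-rescale step and a compactness argument that is robust to it.
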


\begin{proof} 
Let $\mathcal{C}$ be a maximal-dimensional cone in $\Sigma(\MM)$ and consider $\Hom(\mathcal{C},\R)$. Choose a finite set $S_\mathcal{C} = \{p^\mathcal{C}_1, p^\mathcal{C}_2, \cdots, p^\mathcal{C}_{\ell_\mathcal{C}}\}$ of elements of $\Hom(\mathcal{C},\Q)$ and parameters $a^\mathcal{C}_1,\cdots,a^\mathcal{C}_{\ell_\mathcal{C}} \in \Q_{<0}$ so that the subset $\cap_{i=1}^{\ell_\mathcal{C}} \{p^\mathcal{C}_i \geq a^\mathcal{C}_i\}$ in $\mathcal{C}$ is compact in $\mathcal{C}$. Note such $p^\mathcal{C}_i, a^\mathcal{C}_i$ always exist by classical linear algebra on the cone $\mathcal{C}$, and by the choice of $a^\mathcal{C}_i < 0$, we know $0_\MM \in \mathcal{C}$ must lie in $\cap_{i=1}^{\ell_\mathcal{C}} \{p^\mathcal{C}_i \geq a^\mathcal{C}_i\}$. Again by \Cref{prop-evaluationmap} we know $\Sp_{\Q}(\MM) \to \Hom(\mathcal{C} \cap \MM_\Q, \Q)$ is surjective, so there exist points $\tilde{p}^\mathcal{C}_1, \cdots, \tilde{p}^\mathcal{C}_{\ell_\mathcal{C}}$ in $\Sp_\Q(\MM)$ that restrict to $p^\mathcal{C}_1, \cdots, p^\mathcal{C}_{\ell_\mathcal{C}}$ on $\mathcal{C}$. By rescaling as necessary we may also assume $\tilde{p}^{\mathcal{C}}_j \in \Sp(\MM)$ and $a_j \in \Z_{<0}$. By construction, $\PP_\mathcal{C} := \cap_{i=1}^{\ell_\mathcal{C}} \HH_{\tilde{p}^\mathcal{C}_i, a^\mathcal{C}_i}$ contains $0_\MM$ in its interior. 
Repeating this construction for all maximal-dimensional cones $\mathcal{C}$, consider $\PP := \cap_\mathcal{C} \PP_\mathcal{C}$ where the intersection is over all maximal-dimensional cones of $\Sigma(\MM)$. Since $\MM$ is finite, this is a finite intersection, and $\PP$ again contains $0_\MM$ in its interior. Moreover, $\PP$ is compact since by construction $\PP$ is contained in a compact set in each of the finitely many cones $\mathcal{C}$. Hence $\PP$ is a PL polytope. This concludes the proof. 
\end{proof}

In \Cref{lemma: pt convex hulls lattice points finite} 
we obtain some finiteness results for point-convex hulls.

\begin{lemma}\label{lemma: pt convex hulls lattice points finite}
Under the same assumptions as \Cref{lemma: P with interior zero}, we have: 
\begin{enumerate} 
\item[(1)] if $S \subset \MM$ is finite, then $\ptconv_\R(S) \cap \MM$ is finite, and, 
\item[(2)] if $S = \{m\} \subset \MM$ has a single element, then $\ptconv_\R(S) = \{m\}$. 
\end{enumerate}
\end{lemma}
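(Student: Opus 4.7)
The plan is to prove (1) by sandwiching $\ptconv_\R(S)$ inside a suitably rescaled PL polytope containing $0_\MM$ in its interior, and to prove (2) by exploiting the classical concavity of points together with an interior-minimum argument.

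For (1), I would start by invoking \Cref{lemma: P with interior zero} to obtain a PL polytope $\PP=\bigcap_{i=1}^\ell \HH_{p_i,a_i}$ whose interior contains $0_\MM$; since $p_i(0_\MM)=0$ for each $i$ by $\R_{\geq 0}$-homogeneity, one has $a_i<0$. For $\lambda>0$, using $\R_{\geq 0}$-homogeneity of the $p_i$ and of the scalar action on $\MM_\R$, one checks that $\lambda\PP=\bigcap_i \HH_{p_i,\lambda a_i}$, which is again a PL polytope. Given a finite $S=\{s_1,\ldots,s_k\}\subset\MM$, one can then choose $\lambda$ large enough that $p_i(s_j)\geq \lambda a_i$ for all $i,j$; this is possible because $a_i<0$ and only finitely many inequalities appear. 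With such a $\lambda$, $S\subset\lambda\PP$, and since $\lambda\PP$ is point-convex by \Cref{lemma: PL polytope is point convex}, \Cref{lemma: S subset T point convex} gives $\ptconv_\R(S)\subseteq\lambda\PP$. Fixing any chart index $\alpha$, the image $\pi_\alpha(\lambda\PP)$ is a compact classical polytope by \Cref{lemma: PL polytope is point convex}(1), hence meets the lattice $M_\alpha$ in a finite set. Since $\pi_\alpha:\MM\to M_\alpha$ is a bijection, this forces $\ptconv_\R(S)\cap\MM$ to be finite.

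For (2), strict duality forces $\NN$ to be full by \Cref{lemma: bijection charts and faces for duals and dilations and PL on cones}; the argument given there in fact yields $\Sp_\R(\NN)=\bigcup_\gamma \Sp_\R(\NN,\gamma)$. Pick $\gamma\in\pi(\NN)$ with $\v(m)\in\Sp_\R(\NN,\gamma)$, so that $\v(m)\circ\pi_\gamma^{-1}:N_\gamma\otimes\R\to\R$ is $\R$-linear. Now let $q\in\ptconv_\R(\{m\})$; by \Cref{lem-pconvex-2}, $\v(q)(n)\geq\v(m)(n)$ for every $n\in\NN_\R$. Set $h:=(\v(q)-\v(m))\circ\pi_\gamma^{-1}$ on $N_\gamma\otimes\R$. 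By \Cref{lemma: point PL concave} and \Cref{definition: convex}, $\v(q)\circ\pi_\gamma^{-1}$ is concave in the classical sense, and since $\v(m)\circ\pi_\gamma^{-1}$ is linear, the difference $h$ is classically concave as well. Moreover $h\geq 0$, and $h(0)=0$ because $\v(q)$ and $\v(m)$ both vanish at $0_\NN$ by $\R_{\geq 0}$-homogeneity.

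The decisive step is the elementary observation that a classically concave nonnegative function $h$ on a real vector space vanishing at the origin must be identically zero: for any $v\in N_\gamma\otimes\R$, the identity $0=\tfrac{1}{2}v+\tfrac{1}{2}(-v)$ together with concavity yields $0=h(0)\geq \tfrac{1}{2}h(v)+\tfrac{1}{2}h(-v)\geq 0$, forcing $h(v)=h(-v)=0$. Thus $\v(q)=\v(m)$ on all of $N_\gamma\otimes\R$, and since $\pi_\gamma$ is a bijection this equality persists on $\NN_\R$; injectivity of $\v$ then gives $q=m$. I expect the main obstacle to be the insight in (2) of reducing to classical concavity via fullness of $\NN$ and then invoking the interior-minimum argument; step (1) is more mechanical, essentially a rescaling of $\PP$.
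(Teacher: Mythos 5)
Your argument for part (1) follows the paper's proof essentially verbatim: you rescale the polytope $\PP$ from \Cref{lemma: P with interior zero} by a factor large enough to swallow $S$, then invoke \Cref{lemma: S subset T point convex} together with compactness of a chart image. The only small smoothing needed is an explicit remark that $\lambda\PP$ is again compact (e.g.\ because $\pi_\alpha(\lambda\PP)=\lambda\,\pi_\alpha(\PP)$ and scalar dilation preserves compactness, or via \Cref{lemma: once compact always compact} chart by chart), but this is exactly what the paper says in passing as well.

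Your proof of part (2), however, is a genuinely different route and it is correct. The paper proves $\ptconv_\R(\{m\})=\{m\}$ by directly building a PL polytope equal to the singleton: for every maximal cone $\mathcal C$ of $\Sigma(\MM)$ containing $m$, it uses the surjectivity $\Sp_\R(\MM)\twoheadrightarrow\Hom(\mathcal C,\R)$ (a consequence of strict duality via \Cref{prop-evaluationmap}) to cut $\{m\}$ out of $\mathcal C$ by half-spaces, and then chart-convexity shows the resulting intersection cannot contain anything off $\cup_{\mathcal C\ni m}\mathcal C$. You instead pass to the dual side via \Cref{lem-pconvex-2}, observe that $\v(q)\geq\v(m)$ for any $q\in\ptconv_\R(\{m\})$, pick a chart $\gamma$ of $\NN$ in which $\v(m)$ is linear (this uses axiom (4) of strict duality, just as fullness does), and reduce the problem to the fact that a nonnegative classically concave function on a vector space that vanishes at the origin must vanish identically, then conclude via injectivity of $\v$. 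Your version is arguably cleaner conceptually---it isolates the convexity (in the paper's sign convention, concavity) of points as the driving force and avoids constructing an auxiliary polytope per cone---whereas the paper's proof is more uniform with its proof of \Cref{lemma: P with interior zero} and re-uses that machinery directly. Both proofs lean on strict duality; yours uses it through the bijection $\v$ and the chart decomposition of $\Sp_\R(\NN)$, the paper's through the surjectivity of restriction to cones. One small stylistic note: your citation of \Cref{lemma: bijection charts and faces for duals and dilations and PL on cones} for fullness is phrased over the base ring, while you need the $\R$-level statement $\Sp_\R(\NN)=\bigcup_\gamma\Sp_\R(\NN,\gamma)$; you correctly flag that the lemma's argument yields the $\R$-version, but it is slightly cleaner to cite axiom (4) of \Cref{def_dual} and completeness of $\Sigma(\MM)$ directly, since that is exactly what gives the decomposition without any extension-of-scalars subtlety.
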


\begin{proof} 
We begin with (1). Since $\MM$ is defined over $\Z$, in order to see that $\ptconv_\R(S) \cap \MM$ is finite, it suffices to show that $\ptconv_\R(S)$ is compact. Since $\ptconv_\R(S)$ is closed by construction, it in turn suffices to show that it is contained in a compact set. 
Recall that by \Cref{lemma: P with interior zero} we know there exists a compact PL polytope $\PP = \cap_{i=1}^\ell \HH_{p_i, a_i} \subset \MM_\R$ with $a_i<0$ such that $0_\MM$ is in its interior. Then for $N >> 0$ a sufficiently large integer, any finite set $S$ lies in $N \cdot \PP := \cap_{i=1}^{\ell} \HH_{p_i, Na_i}$. Since a PL polytope is point-convex, by \Cref{lemma: S subset T point convex} it follows that $\ptconv_\R(S) \subset N \cdot \PP$. Moreover, since $\PP$ is compact, so is $N \cdot \PP$. This proves (1). 

Now we show (2). For each $\mathcal{C}$ a maximal-dimensional cone in $\Sigma(\MM)$ with $m \in \mathcal{C}$, by an argument similar to the proof of \Cref{lemma: P with interior zero}, we may find $\tilde{p}^\mathcal{C}_1, \cdots, \tilde{p}^\mathcal{C}_{\ell_\mathcal{C}}$ in $\Sp_\R(\MM)$ and scalars $a_i^\mathcal{C}$ such that $\left(\cap_{i=1}^{\ell_\mathcal{C}} \HH_{\tilde{p}^\mathcal{C}_i, a^\mathcal{C}_i}\right) \cap \mathcal{C} = \{m\}$. Define $\PP$ to be the intersection of all such $\HH_{\tilde{p}^\mathcal{C}_i, a^\mathcal{C}_i}$ as $\mathcal{C}$ ranges over those max-dimensional cones with $m \in \mathcal{C}$. We claim $\PP=\{m\}$. By construction $\PP \cap \mathcal{C} = \{m\}$ for all $\mathcal{C}$ containing $\{m\}$. Since a PL polytope is chart-convex by \Cref{lemma: point convex implies chart-convex}, it cannot contain a point $m'$ where $m' \not \in \mathcal{C}$ for $\mathcal{C}$ containing $m$ (since a broken line connecting $m'$ and $m$ cannot be contained in $\PP$). Thus $\PP=\{m\}$ and thus $\ptconv_\R(\{m\}) \subset \PP = \{m\}$ but $\{m\} \subset \ptconv(\{m\})$ by definition, so the two sets are equal. 
\end{proof}

We now define a notion of a \textbf{dual} of a PL polytope, in those cases when $\MM$ has a strict dual. 
We need a preliminary definition. 

\begin{definition}\label{definition: support function} 
Let $\MM$ be a finite polyptych lattice over $F$ and let $(\MM, \NN, \v, \w)$ be a strict dual $F$-pair. Let $\PP \subset \MM_\R$ be a PL polytope over $F$. 
The \textbf{support function} $\psi_\PP:\NN_{\R} \to \R$ associated to $\PP \subset \MM_\R$ is defined as 
\begin{equation}\label{eq: def support function}
\psi_\PP(-) := \min\{\v(u)(-) \mid u \in \PP\}.
\end{equation} 
\exampleqed
\end{definition} 

Note that since $\PP$ is compact, then for any $n\in\NN_\R$ the minimum in \Cref{eq: def support function} exists.
This function is the  generalization, to the setting of polyptych lattices, of the definition of a support function of a classical polytope (see e.g.\ \cite[Proposition 4.2.14]{Cox_Little_Schenck}).
The following lemma tells us that $\psi_\PP$ can be computed by taking a minimum of a finite set -- namely, a minimum over the vertices $V(\PP)$ of $\PP$.

\begin{lemma}\label{lem_psidelta_finite} 
In the setting of \Cref{definition: support function}, 
the support function $\psi_\PP: \NN_\R \to \R$ satisfies 
\begin{equation}\label{eq: psiP finite min}
\psi_\PP=\bigoplus_{s\in V(\PP)}\v(s).
\end{equation}
In particular, $\psi_\PP$ is continuous, and $\psi_\PP$ is an element of $\P_{\NN_\R}$.
\end{lemma}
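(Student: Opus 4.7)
The plan is to deduce the identity $\psi_\PP = \bigoplus_{s \in V(\PP)} \v(s)$ by combining two earlier structural results, with essentially no further case analysis needed. First I would observe that by \Cref{poly_is_conv_verts} we have $\PP = \ptconv_\R(V(\PP))$, and that $V(\PP)$ is finite by \Cref{remark: vertices finite} (hence, in particular, compact). We are therefore in position to apply \Cref{lem-pconvex-2} to the compact set $S = V(\PP)$, obtaining the characterization
\[
\PP \;=\; \bigl\{\, m \in \MM_\R \,\big|\, \v(m) \ge \min\{\v(s) \mid s \in V(\PP)\} \text{ as functions on } \NN_\R \,\bigr\}.
\]

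Next, I would evaluate at an arbitrary $n \in \NN_\R$: the above characterization says that for every $u \in \PP$, $\v(u)(n) \ge \min_{s \in V(\PP)} \v(s)(n)$. Taking the infimum over $u \in \PP$ yields $\psi_\PP(n) \ge \min_{s \in V(\PP)} \v(s)(n)$. The reverse inequality is immediate from $V(\PP) \subseteq \PP$ and the definition of $\psi_\PP$ as an infimum over $\PP$ (which, since $\PP$ is compact and $\v(u)(n) = \w(n)(u)$ is continuous in $u$ by \Cref{lemma: points linear on faces of SigmaM}, is attained). Combining the two inequalities gives \Cref{eq: psiP finite min}.

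The remaining two assertions fall out quickly. Since $V(\PP)$ is finite and each $\v(s) \in \Sp_\R(\NN)$ is piecewise $\R$-linear (hence continuous) by \Cref{lemma: points linear on faces of SigmaM}, the function $\psi_\PP$ is a finite minimum of continuous functions and is therefore itself continuous. Moreover, because $\psi_\PP$ is obtained by applying the $\oplus = \min$ operation finitely many times to elements of $\Sp_\R(\NN)$, it lies in the subsemialgebra of $\O_\NN$ generated by $\Sp_\R(\NN)$, which is exactly $\P_{\NN_\R}$ (cf.\ \Cref{definition: point algebra}).

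The only subtle point—and what could be called the ``main step''—is recognizing that the geometric content (the minimum of a piecewise-linear function on a PL polytope being attained at a vertex) is already packaged in \Cref{poly_is_conv_verts} and \Cref{lem-pconvex-2}; no direct argument involving the polyhedral decomposition of $\PP$ by cones of $\Sigma(\MM)$, or any case analysis about which chart witnesses a given vertex, is required.
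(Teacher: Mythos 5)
Your proposal is correct and follows essentially the same route as the paper: both directions of the inequality are established via $V(\PP)\subseteq\PP$, \Cref{poly_is_conv_verts}, and \Cref{lem-pconvex-2}, with the continuity and membership in $\P_{\NN_\R}$ following from finiteness of $V(\PP)$. The only cosmetic difference is that the paper fixes a pointwise minimizer $u_\circ$ in $\PP$ before invoking \Cref{lem-pconvex-2}, whereas you apply the characterization to every $u\in\PP$ and then pass to the infimum; the compactness/attainment remark in your parenthetical is therefore not actually needed for the reverse inequality, but this is harmless.
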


\begin{proof}
Since $V(\PP)\subseteq\PP$ and $\psi_\PP = \min_{s \in \PP}\{\v(s)\}$ by definition, certainly $\psi_\PP\le\bigoplus_{s\in V(\PP)}\v(s)$.
To show the reverse inequality, let $n\in\NN_\R$ be arbitrary. By compactness of $\PP$, the minimum in the RHS of \Cref{eq: def support function} is achieved, so there exists $u_\circ\in\PP$ such that $\psi_\PP(n)=\v(u_\circ)(n)$ for the fixed choice of $n$. 
From \Cref{poly_is_conv_verts} we know that $\PP = \ptconv_\R(V(\PP))$ so $u_\circ \in \ptconv_\R(V(\PP))$. Then by \Cref{lem-pconvex-2} we know
\begin{equation*}
\psi_\PP(n)=\v(u_\circ)(n)\ge\bigoplus_{s\in V(\PP)}\v(s)(n), \end{equation*}
as functions on $\NN_\R$. This proves \Cref{eq: psiP finite min}.  Finally, since $V(\PP)$ is finite and each $\v(s)$ is an element of $\Sp_\R(\MM)$, it follows immediately that $\psi_\PP$ is continuous and, by \Cref{definition: point algebra}, $\psi_\P$ is in $\P_\NN$. 
\end{proof}

\begin{example} 
Continuing with our running example, we listed the vertices of the PL polytope $\PP$ in \Cref{example: M polytope vertices} . From that computation it now follows that $\psi_\PP$ is the function
\begin{equation*}
    \begin{split} 
\psi_{\PP}(-)  = \min\{&\v((-2,-1),(1,-1))(-), 
\v((0,-1),(-1,-1))(-), \v((1,0),(-1,0))(-),  \\
& 
\v((1,2),(-1,2))(-), \v((-1,0),(1,0))(-)\}
\end{split} 
\end{equation*} 
where $\v$ is the function 
$$
\v((x,y),(x',y))((u,v),(u',v)) = 
\begin{cases} 
uy+vx, \quad v \geq 0, \\
uy - v(\min\{0,y\}-x), \quad v \leq 0. 
\end{cases} 
$$
\exampleqed
\end{example}

The following lemma shows how to recover the PL polytope $\PP$ from its support function $\psi_\PP$.

\begin{lemma}\label{lem_support_to_poly}
In the setting of \Cref{definition: support function}, we have \begin{equation}\label{eq: PP from PsiP}
\PP=\{u\in \MM_\R \mid \forall n\in \NN_\R, \v(u)(n)\ge \psi_\PP(n)\}.
\end{equation}
\end{lemma}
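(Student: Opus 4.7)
The plan is to prove both inclusions of the claimed equality \eqref{eq: PP from PsiP} using the previously established results \Cref{poly_is_conv_verts}, \Cref{lem_psidelta_finite}, and \Cref{lem-pconvex-2}.

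For the inclusion $\PP \subseteq \{u \in \MM_\R \mid \forall n \in \NN_\R, \v(u)(n) \geq \psi_\PP(n)\}$, I would argue directly from the definition of $\psi_\PP$. If $u \in \PP$, then $u$ is one element of the set over which the minimum defining $\psi_\PP(n)$ in \Cref{eq: def support function} is taken, so $\psi_\PP(n) \leq \v(u)(n)$ for every $n \in \NN_\R$. This is essentially immediate, so no work is required.

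For the reverse inclusion, let $u \in \MM_\R$ be any element satisfying $\v(u)(n) \geq \psi_\PP(n)$ for all $n \in \NN_\R$. Using \Cref{lem_psidelta_finite} I can rewrite $\psi_\PP$ as the finite minimum $\psi_\PP = \bigoplus_{s \in V(\PP)} \v(s)$, and the hypothesis on $u$ then reads $\v(u) \geq \min\{\v(s) \mid s \in V(\PP)\}$ as functions on $\NN_\R$. Now $V(\PP)$ is a finite (in particular compact, by \Cref{remark: vertices finite}) subset of $\MM_\R$, so the hypotheses of \Cref{lem-pconvex-2} are satisfied, and this characterization of the point-convex hull with respect to a dual pairing yields $u \in \ptconv_\R(V(\PP))$. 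Finally, by \Cref{poly_is_conv_verts}, we have $\ptconv_\R(V(\PP)) = \PP$, and therefore $u \in \PP$, as desired.

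There is no serious obstacle here: the argument is just a careful assembly of \Cref{lem_psidelta_finite} (which converts the minimum over $\PP$ in the definition of $\psi_\PP$ into a minimum over the finite set $V(\PP)$), \Cref{lem-pconvex-2} (which translates the inequality $\v(u) \geq \min\{\v(s) \mid s \in V(\PP)\}$ into membership in the point-convex hull of $V(\PP)$), and \Cref{poly_is_conv_verts} (which identifies this point-convex hull with $\PP$ itself). The only mild subtlety is ensuring that \Cref{lem-pconvex-2} is applicable, which requires the existence of a dual $\R$-pairing and the compactness of the set $V(\PP)$; both are supplied by the standing assumptions of \Cref{definition: support function} together with \Cref{remark: vertices finite}.
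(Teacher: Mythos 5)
Your proof is correct, but it takes a genuinely different route from the paper's. For the nontrivial inclusion, the paper argues via the half-space description of $\PP$: writing $\PP = \cap_{i=1}^\ell \HH_{\w(n_i), a_i}$ and testing the hypothesis $\v(u)(n) \geq \psi_\PP(n)$ only at the finitely many $n = n_i$, it chains
\[
\w(n_i)(u) = \v(u)(n_i) \geq \psi_\PP(n_i) = \min\{\w(n_i)(u') \mid u' \in \PP\} \geq a_i,
\]
where the last inequality uses $\PP \subset \HH_{\w(n_i), a_i}$. This is short, self-contained, and needs nothing beyond the definition of $\psi_\PP$ and strict duality. You instead go through the vertex description: you invoke \Cref{lem_psidelta_finite} to replace the minimum over $\PP$ by a minimum over $V(\PP)$, then apply \Cref{lem-pconvex-2} with $S = V(\PP)$ to recognize the right-hand side of \eqref{eq: PP from PsiP} as $\ptconv_\R(V(\PP))$, and finish with \Cref{poly_is_conv_verts}. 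Your argument is more conceptual — it makes visible that the right-hand side is literally a point-convex hull — but it relies on more upstream machinery; the paper's argument is cheaper because it reads the answer directly off the defining inequalities of $\PP$. Both are valid; one small thing to note is that \Cref{lem-pconvex-2} is stated for a dual $\R$-pairing whereas \Cref{definition: support function} supplies only a strict dual $F$-pair, but the paper already makes this same transition in the proof of \Cref{lem_psidelta_finite}, so this is consistent with the paper's conventions rather than a gap you introduced.
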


\begin{proof}
It follows from the definition of support functions that the LHS is contained in the RHS. To see the opposite containment, let $u$ be contained in the RHS. Let $\PP = \cap_{i=1}^\ell \HH_{p_i,a_i} = \cap_{i=1}^\ell \HH_{\w(n_i),a_i}$ where $n_i$ are such that $\w(n_i)=p_i$. (Such choices $n_i$ exist since $\w$ is bijective.) Then
$$
\w(n_i)(u) =\v(u)(n_i) \geq \psi_\PP(n_i) = \min\{\v(u')(n_i) \mid u' \in \PP\} = \min\{\w(n_i)(u') \mid u' \in \PP\} \geq a 
$$
where the last inequality is because $\PP \subset \HH_{\w(n_i),a_i}$ for all $i$. Hence $u \in \HH_{\w(n_i),a_i}$ for all $i$, i.e., $u \in \PP$. 
\end{proof}

We are now prepared to define the dual PL polytope associated to certain PL polytopes, with respect to a choice of strict dual.

\begin{definition}\label{definition: dual convex set}
Let $\MM$ be a finite polyptych lattice over $F$, let $(\MM, \NN, \v: \MM_\R \to \Sp_\R(\NN), \w: \NN_\R \to \Sp_\R(\MM))$ be a strict dual $F$-pair, and let $\PP \subset \MM_\R$ be a PL polytope over $F$ containing $0_\MM \in \MM_\R$ in its interior, with corresponding support function $\psi_{\PP}$.
The \textbf{dual PL polytope $\PP^\vee$ to $\PP$ with respect to the strict dual $\NN$} is defined to be 
\begin{equation*}\label{eq: def dual Pcheck}
\PP^\vee := \{n \in \NN_\R \, \mid \, \psi_\PP(n) \geq -1\} \subset \NN_\R.
\end{equation*}
\exampleqed
\end{definition}

The following lemma justifies the terminology as one can compare it with the definition of the dual of a polytope in the classical sense \cite[\textsection 2.2]{Cox_Little_Schenck}.

 \begin{lemma}\label{lemma: first properties of dual}
 In the setting of \Cref{definition: dual convex set}, the set $\PP^\vee$ is a PL polytope in $\NN_\R$. Specifically, $\PP^\vee$ can be expressed as 
 \begin{equation*}\label{eq: Pvee as N polytope}
  \PP^\vee = \bigcap_{m \in V(\PP)} \HH_{\v(m),-1}
 \end{equation*}
 and $\PP^\vee$ is compact in $\NN_\R$. 
\end{lemma}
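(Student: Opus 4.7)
The plan is to verify the intersection formula first, and then derive compactness from a recession-cone argument.

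For the intersection formula, I would apply \Cref{lem_psidelta_finite} to rewrite the support function as $\psi_\PP = \bigoplus_{s \in V(\PP)} \v(s) = \min_{s \in V(\PP)} \v(s)$. Unwinding \Cref{definition: dual convex set}, the condition $n \in \PP^\vee$ becomes $\v(s)(n) \geq -1$ for every $s \in V(\PP)$, which immediately yields $\PP^\vee = \bigcap_{m \in V(\PP)} \HH_{\v(m),-1}$. Since $V(\PP)$ is finite by \Cref{remark: vertices finite}, this realizes $\PP^\vee$ as a finite intersection of PL half-spaces, in particular a closed subset of $\NN_\R$.

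For compactness, by finiteness of the chart set of $\NN$ it suffices to show that each chart image $\pi_\gamma(\PP^\vee) \subset N_\gamma \otimes \R$ is bounded. Since each $\v(m)_\gamma := \v(m) \circ \pi_\gamma^{-1}$ is a continuous, positively homogeneous, and (paper-)convex piecewise-linear function on $N_\gamma \otimes \R$, standard structure theory writes it as the pointwise minimum of finitely many linear functionals $\ell^{(m)}_1, \ldots, \ell^{(m)}_{k_m}$, each vanishing at the origin. Thus $\pi_\gamma(\HH_{\v(m),-1}) = \bigcap_i \{u : \ell^{(m)}_i(u) \geq -1\}$, making $\pi_\gamma(\PP^\vee)$ a classical polyhedron. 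Since $0_\NN \in \PP^\vee$ (the point $\w(0_\NN)$ is identically zero by the $F'_{\geq 0}$-linearity of $\w$ on cones from \Cref{eq: strict duals Fgeq0 linear}, so $\v(s)(0_\NN) = 0 \geq -1$ for all $s$), this polyhedron is nonempty, and its recession cone is the intersection of the recession cones of the defining half-spaces, which a direct computation identifies with $\{v \in N_\gamma \otimes \R : (\psi_\PP)_\gamma(v) \geq 0\}$. Boundedness of $\pi_\gamma(\PP^\vee)$ thus reduces to proving $\psi_\PP(n) < 0$ for every $n \in \NN_\R \setminus \{0_\NN\}$.

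The main obstacle is establishing this strict negativity, which is where the interior hypothesis on $0_\MM$ enters. Given $n \neq 0_\NN$, bijectivity of $\w$ forces $\w(n)$ to be a nonzero element of $\Sp_\R(\MM)$, so some chart $M_\alpha$ admits a vector $u \in M_\alpha \otimes \R$ with $\w(n)_\alpha(u) > 0$. Applying the convexity of $\w(n)_\alpha$ (via \Cref{lemma: point PL concave}) to $0 = \tfrac{1}{2}u + \tfrac{1}{2}(-u)$ and using $\w(n)_\alpha(0) = 0$ yields $\w(n)_\alpha(-u) \leq -\w(n)_\alpha(u) < 0$. Rescaling $-u$ by a sufficiently small positive factor (permissible by positive homogeneity of the point $\w(n)$) produces an element $m^* \in \MM_\R$ with $\w(n)(m^*) < 0$ and with $\pi_\alpha(m^*)$ lying inside a ball $B(0, \epsilon) \subseteq \pi_\alpha(\PP)$ that exists because $0_\MM$ is in the interior of $\PP$. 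Hence $m^* \in \PP$ and $\psi_\PP(n) \leq \v(m^*)(n) = \w(n)(m^*) < 0$, completing the argument.
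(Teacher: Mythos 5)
Your derivation of the intersection formula $\PP^\vee = \bigcap_{m \in V(\PP)} \HH_{\v(m),-1}$ via \Cref{lem_psidelta_finite} and \Cref{remark: vertices finite} is the same as the paper's.

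Your compactness argument is genuinely different from the paper's and, in fact, more careful. The paper argues that $\PP^\vee = \psi_\PP^{-1}([-1,0])$ is compact because it is ``the preimage of a compact set under the continuous function $\psi_\PP$, hence is itself compact.'' As stated, that step is not valid: continuity of $\psi_\PP$ only gives closedness of the preimage, not boundedness (consider a homogeneous piecewise-linear $\psi$ vanishing on a ray through the origin). Since $\psi_\PP$ is positively homogeneous and $\leq 0$, boundedness of $\psi_\PP^{-1}([-1,0])$ is \emph{equivalent} to the strict inequality $\psi_\PP(n) < 0$ for all $n \neq 0_\NN$, and proving that is precisely where the hypothesis that $0_\MM$ lies in the \emph{interior} of $\PP$ must enter. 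The paper's written proof never actually invokes the interior assumption (it only uses $0_\MM \in \PP$), so it has a genuine gap. Your recession-cone computation reduces boundedness to this strict negativity, and you then establish it via the interior-ball rescaling, which is exactly the missing ingredient. This is the correct and complete argument.

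One small wrinkle: you assert that because $\w(n)$ is a nonzero element of $\Sp_\R(\MM)$, there exist a chart $\alpha$ and a vector $u$ with $\w(n)_\alpha(u) > 0$. That does not follow immediately from nonzero-ness — the constraint~\eqref{eq: def point min} does not obviously forbid a nonzero point from being $\leq 0$ everywhere. The fix is trivial and does not affect the argument: nonzero-ness gives some $u$ with $\w(n)_\alpha(u) \neq 0$; if that value is already negative, skip the convexity step and rescale $u$ directly into the interior ball, otherwise proceed exactly as you wrote with $-u$.
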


\begin{proof} 
We compute 
\begin{equation*}
\begin{split}
\PP^\vee & := \{n \in \NN_{\R} \, \mid \, \psi_{\PP}(n) \geq -1\} \\ 
& = \{n \in \NN_{\R} \, \mid \, \bigoplus_{m\in V(\PP)} \v(m)(n)  \geq -1\} \, \textup{ by \Cref{lem_psidelta_finite}} \\
& = \bigcap_{m \in V(\PP)} \{n \in \NN_{\R} \mid \v(m)(n) \geq -1\} \\
& = \bigcap_{m \in V(\PP)} \HH_{\v(m), -1} 
\end{split}
\end{equation*}
and the last equality is a finite intersection since we know (cf.~\Cref{remark: vertices finite}) that $V(\PP)$ is finite. Next, note that $\psi_\PP(n)$ is always $\leq 0$ since, if $0_\MM \in \PP$ as assumed, then $\v(0_\MM)(n)=\w(n)(0_\MM)=0$ for any $n \in \NN_\R$ because $\w(n)$ is piecewise linear and hence evaluates to $0$ on $0$ for any $n \in \NN_\R$. Therefore $\{\psi_\PP \geq -1\} = \psi_\PP^{-1}([-1,0])$ is the preimage of a compact set under the continuous function $\psi_\PP$ (see \Cref{lem_psidelta_finite}), hence is itself compact. Thus $\PP^\vee$ is a PL polytope in $\NN_\R$ as claimed. 
\end{proof}

We also observe the following. 

\begin{lemma}
    Let $\MM$ be a finite polyptych lattice over $F$ and let $(\MM, \NN, \v, \w)$ be a strict dual $F$-pair. Let $S = \{m_1,\cdots,m_\ell\} \subset \MM$ be a finite subset of $\MM$. Suppose that $\PP := \cap_{i=1}^{\ell} \mathcal{H}_{\v(m_i),-1}$ is full-dimensional, compact, and contains $0_\NN$ in its interior. Then $\ptconv(S) = \PP^\vee$. 
\end{lemma}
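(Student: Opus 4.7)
The plan is to show both inclusions $\ptconv(S) \subseteq \PP^\vee$ and $\PP^\vee \subseteq \ptconv(S)$ by combining the convex-hull characterization of \Cref{lem-pconvex-2} with the $\R_{\geq 0}$-homogeneity of points in $\Sp_\R(\MM)$ and $\Sp_\R(\NN)$ and the compactness hypothesis on $\PP$.

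For the inclusion $\ptconv(S) \subseteq \PP^\vee$, I will first observe that by hypothesis $\PP \subseteq \NN_\R$ is a compact PL polytope containing $0_\NN$ in its interior, so \Cref{lemma: first properties of dual}---applied with the roles of $\MM$ and $\NN$ swapped, which is legitimate since \Cref{def_dual} is symmetric in $\MM$ and $\NN$---shows that $\PP^\vee$ is itself a PL polytope in $\MM_\R$, and is therefore point-convex by \Cref{lemma: PL polytope is point convex}. To check $S \subseteq \PP^\vee$, I observe that for each $m_i \in S$ and each $n \in \PP$, the defining inclusion $\PP \subseteq \HH_{\v(m_i), -1}$ gives $\w(n)(m_i) = \v(m_i)(n) \geq -1$; taking a minimum over $n \in \PP$ yields $\psi_\PP(m_i) \geq -1$, so $m_i \in \PP^\vee$. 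Then \Cref{lemma: S subset T point convex} yields $\ptconv(S) \subseteq \PP^\vee$.

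For the reverse inclusion $\PP^\vee \subseteq \ptconv(S)$, set $\varphi(n) := \min_i \v(m_i)(n)$, so that $\PP = \{\varphi \geq -1\}$. Since $S$ is finite, hence compact, \Cref{lem-pconvex-2} reduces the task to showing that for each $m \in \PP^\vee$ and each $n \in \NN_\R$, one has $\v(m)(n) \geq \varphi(n)$. The key observation is that both $\v(m)$ and $\varphi$ are $\R_{\geq 0}$-homogeneous of degree $1$ (by \Cref{lemma: points linear on faces of SigmaM} applied to $\v(m)$ and to each $\v(m_i)$, together with the fact that a pointwise minimum of positively homogeneous functions is positively homogeneous), so the inequality can be verified by scaling $n$ to the boundary $\partial\PP$. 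For $n = 0_\NN$ both sides vanish. For $n \neq 0_\NN$, compactness of $\PP$ forces $\varphi(n) < 0$: otherwise $\varphi(tn) = t\varphi(n) \geq 0 > -1$ for all $t \geq 0$ would place the entire ray through $n$ inside $\PP$, contradicting boundedness. Setting $t_0 := -1/\varphi(n) > 0$ then gives $\varphi(t_0 n) = -1$ and hence $t_0 n \in \PP$, so by the definition of $\PP^\vee$ we obtain $\v(m)(t_0 n) \geq -1$; dividing by $t_0$ yields $\v(m)(n) \geq \varphi(n)$, as required.

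The main obstacle I anticipate is the homogeneity-and-scaling step in the second inclusion: one must verify that the natural extensions of the points $\v(m)$ and $\v(m_i)$ to $\MM_\R \to \R$ remain $\R_{\geq 0}$-homogeneous (which is exactly what \Cref{lemma: points linear on faces of SigmaM} provides) and that compactness of $\PP$ rigorously forces $\varphi(n) < 0$ on $\NN_\R \setminus \{0_\NN\}$. Once the scaling-to-boundary trick is established, the remainder reduces cleanly to the formal definitions of PL half-spaces, support functions, and point-convex hulls.
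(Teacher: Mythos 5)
Your proof is correct and takes essentially the same route as the paper's: the first inclusion uses point-convexity of $\PP^\vee$ and the fact that each $m_i \in \PP^\vee$ by construction of $\PP$, and the second inclusion reduces via \Cref{lem-pconvex-2} and strict duality to the homogeneity-and-scaling argument. One small but genuine improvement in your writeup of the second inclusion: the paper justifies the key inequality $\min_i \v(m_i)(n) < 0$ for $n \neq 0_\NN$ by appealing to the interior condition $0_\NN \in \mathrm{int}(\PP)$, which by itself is insufficient (a strip like $\{-1 \leq n_1 \leq 1\}$ satisfies it but fails the conclusion); your ray-containment argument explicitly invokes compactness of $\PP$, which is what is actually needed and is part of the hypotheses.
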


\begin{proof} 
We first claim $\ptconv(S) \subset \PP^\vee$. Since $\PP^\vee$ is point-convex, to see the inclusion it suffices to show $m_i \in \PP^\vee$ for each $m_i \in S$. By definition of $\PP^\vee$ it suffices to show $\psi_\PP(m_i) = \min\{\w(u)(m_i) \mid u \in \PP\} = \min\{\v(m_i)(u) \mid u \in \PP\} \geq -1$ where the first equality is the definition of $\psi_\PP$ and the second equality follows from the axioms of strict duality. But the last equality holds by definition of $\PP$. 

To complete the proof we need to show the opposite inclusion, $\PP^\vee \subset \ptconv(S)$. Let $m \in \PP^\vee$. By definition of $\PP^\vee$ and $\psi_\PP$, this means $\w(u)(m) =\v(m)(u) \geq -1$ for all $u \in \PP$. By \Cref{lem-pconvex-1} we know $\ptconv(S) = \{m \in \MM_\R \mid \forall p \in \Sp_\R(\MM), p(m) \geq \min\{p(m_i)\mid i \in [\ell]\}\}$. Using strict duality it therefore suffices to show $\w(n)(m) \geq \min_{i \in [\ell]}\{\w(n)(m_i)\}$ for all $n \in \NN_\R$. Suppose not, i.e. there exists $n_0 \in \NN_\R$ with $\v(m_i)(n_0)=\w(n_0)(m_i) > \v(m)(n_0)$. Note that since $0_\NN$ is in the interior of $\PP$ by assumption, it follows that for any $n \in \NN_\R$, we must have $\min_{i \in [\ell]}\{\v(m_i)(n)\} < 0$. Thus $\v(m)(n_0) < 0$, which means we may find some scalar $\lambda>0$ such that $\v(m_i)(\lambda n_0) \geq -1 >  \v(m)(\lambda n_0)$ for all $i \in [\ell]$. This implies $\lambda n_0 \in \PP$ but then $m \not \in \PP^\vee$. This is a contradiction. Hence $m \in \ptconv(S)$ as desired. 
\end{proof}

\begin{example}\label{ex_dual_polytope} 
We now compute the dual PL polytope $\PP^\vee$ to the PL polytope $\PP$ from \Cref{ex_M_polytope} utilizing the dual pairing described in \Cref{ex_dual}.
(For this example, $\MM$ is self-dual, so $\MM=\NN$. However, for clarity, we will use $\NN$ when we are referring to the objects needed to define $\PP^\vee$.)
By \Cref{lemma: first properties of dual}, we have $\PP^\vee=\bigcap_{m\in V(\PP)} \HH_{\v(m), -1}$, where $\HH_{\v(m), -1}=\{n\in\NN_\R\mid \v(m)(n)\ge -1\}$.
Recalling $V(\PP)$ from \eqref{eq_ex_vertices} and using \eqref{eqeq_dual_pairing}, we have that for $i=1,2$, $\pi_\alpha(\PP^\vee)$ is given by the following conditions:
\begin{center}\renewcommand{\arraystretch}{1.2}
    \begin{tabular}{|r|r|r|}
    \hline
    $m\in V(\PP)$ & $\pi_1(\HH_{s, -1})$ & $\pi_2(\HH_{s, -1})$ \\
    \hline \hline
        $(\varepsilon_1,-\varepsilon_1)$ & $y\ge-1$ & $y\ge-1$ 
        \\ \hline
        $(\varepsilon_1+2\varepsilon_2,-\varepsilon_1+2\varepsilon_2)$ & $2x+y\ge-1$ & $-2x'+\min\{y,3y\}\ge-1$ 
        \\ \hline
       $(-2\varepsilon_1-\varepsilon_2,\varepsilon_1-\varepsilon_2)$ & $-x+\min\{-y,-2y\}\ge-1$ & $x'-2y\ge-1$ 
        \\ \hline
        $(-\varepsilon_1,\varepsilon_1)$ & $-y\ge-1$ & $-y\ge-1$
        \\ \hline
       $(-\varepsilon_2,-\varepsilon_1-\varepsilon_2)$ & $-x+\min\{0,y\}\ge-1$ & $x'\ge-1$ 
        \\ \hline
    \end{tabular}
.\end{center}
We depict the resulting chart images in \Cref{fig_dual_polytope}.
Notice that not all chart images of $\PP^\vee$ are integral polytopes in the classical sense.
\begin{figure}[h]
    \centering
    \begin{tikzpicture}
    \filldraw[fill=gray!50] (.5*0,-.5*1)--(.5*1,.5*0)--(-.5*1,.5*1)--(.5*0,-.5*1);
    \draw[gray,<->] (-1.5,0)--(1.5,0);
    \draw[gray,<->] (0,-1)--(0,1);
    \foreach \i in {-2,...,2}
      \foreach \j in {-2,...,2}{
        \filldraw[black] (.5*\i,.5*\j) circle(.5pt);
      };
    \end{tikzpicture}
    \hspace{5cm}
    \begin{tikzpicture}
    \filldraw[fill=gray!50] (-.5*1,-.5*1)--(.5*.5,0)--(.5*1,.5*1)--(-.5*1,0)--(-.5*1,-.5*1)
    ;
    \draw[gray,<->] (-1,0)--(1,0);
    \draw[gray,<->] (0,-1)--(0,1);
    \foreach \i in {-2,...,2}
      \foreach \j in {-2,...,2}{
        \filldraw[black] (.5*\i,.5*\j) circle(.5pt);
      };
    \end{tikzpicture}
    \caption{The two chart images of the PL polytope $\PP^\vee$ in $\NN_\R$ from \Cref{ex_dual_polytope}. On the left is $\pi_1(\PP^\vee)$ and on the right is $\pi_2(\PP^\vee)$. Note that $\pi_2(\PP^\vee)$ is not an integral polytope in the classical sense.}
    \label{fig_dual_polytope}
\end{figure}
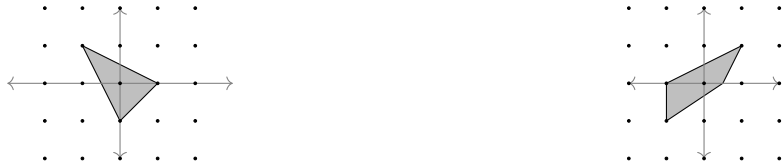
\exampleqed
\end{example}

For the rest of this section, we assume that $\MM$ is a polyptych lattice over $\Z$. 
\Cref{ex_dual_polytope} suggests that the PL analogue of the classical notion of an integral polytope may be somewhat subtle. 
We make the following definition. 

\begin{definition} 
Let $\MM$ be a finite polyptych lattice over $\Z$ and 
let $\PP \subset \MM_\R$ be a PL polytope over $\Z$. We say that $\PP$ is an \textbf{integral PL polytope} if $\pi_\alpha(\PP)$ is an integral polytope in $M_\alpha \otimes \R$ (i.e., all its vertices are in $M_\alpha$) for every $\alpha \in \pi(\MM)$.
\exampleqed
\end{definition} 

Thus, for example, $\PP^\vee$ in \Cref{fig_dual_polytope} is not an integral PL polytope, whereas the PL polytope $\PP$ in \Cref{fig_polytopes} is integral. The following is straightforward; we use it in \Cref{sec-Example}.  

\begin{lemma}\label{lem_lattice_intercrit}
Let $\MM$ be a finite polyptych lattice over $\Z$. 
    Let $\PP$ be a PL polytope in $\MM_\R$. If $\pi_\alpha(\PP)\cap\pi_\alpha(\mathcal{C})$ is an integral polytope for all $\alpha\in\pi(\MM)$ and all maximal-dimensional cones $\mathcal{C}\in\Sigma(\MM)$, then $\PP$ is integral.
\end{lemma}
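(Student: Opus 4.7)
The plan is to show that for each chart $\alpha \in \pi(\MM)$, every vertex of the classical polytope $\pi_\alpha(\PP)$ already lies in $M_\alpha$; this will prove $\PP$ is integral by definition.

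First I would fix $\alpha \in \pi(\MM)$ and a vertex $v$ of $\pi_\alpha(\PP) \subset M_\alpha \otimes \R$; recall from \Cref{lemma: PL polytope is point convex}(1) that $\pi_\alpha(\PP)$ is indeed a classical polytope, so it has finitely many vertices. Since $\Sigma(\MM)$ is a complete PL fan in $\MM_\R$ by \Cref{lemma: fan of MM}(b), the images $\{\pi_\alpha(\mathcal{C}) \mid \mathcal{C} \in \Sigma(\MM)\}$ form a complete fan in $M_\alpha \otimes \R$ by \Cref{lemma: PL fan projection}. Hence there exists a maximal-dimensional cone $\mathcal{C} \in \Sigma(\MM)$ such that $v \in \pi_\alpha(\mathcal{C})$.

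The key observation is then that $v$ must be a vertex of the intersection $\pi_\alpha(\PP) \cap \pi_\alpha(\mathcal{C})$. Indeed, $v$ is an extreme point of $\pi_\alpha(\PP)$, i.e.\ it cannot be written as a proper convex combination of two distinct points of $\pi_\alpha(\PP)$. Since $\pi_\alpha(\PP) \cap \pi_\alpha(\mathcal{C}) \subseteq \pi_\alpha(\PP)$ and $v$ lies in this intersection, $v$ remains an extreme point of the intersection, hence a vertex of it. By hypothesis, $\pi_\alpha(\PP) \cap \pi_\alpha(\mathcal{C})$ is an integral polytope in $M_\alpha \otimes \R$, so all of its vertices lie in $M_\alpha$; in particular $v \in M_\alpha$.

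Since $v$ was an arbitrary vertex of $\pi_\alpha(\PP)$, we conclude that $\pi_\alpha(\PP)$ is an integral polytope in $M_\alpha \otimes \R$. As $\alpha$ was also arbitrary, $\PP$ is integral by definition. I do not foresee any serious obstacle here: the only subtle point is verifying that a vertex of a polytope remains a vertex upon intersection with a polyhedral cone containing it, which is a standard fact about extreme points.
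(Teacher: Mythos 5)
Your proof is correct and follows essentially the same approach as the paper: fix a vertex of a chart image, place it in a maximal cone of the chart fan, observe it remains a vertex of the intersection, and conclude integrality from the hypothesis. The only cosmetic difference is that you use the extreme-point characterization of vertices while the paper uses the equivalent linear-functional-maximizer characterization.
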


\begin{proof} 
The image $\pi_\alpha(\PP)$ (respectively $\pi_\alpha(\mathcal{C})$) is a classical polytope (respectively polyhedral cone) in $M_\alpha \otimes \R$ so it suffices to check that if $v$ is a vertex of a polytope $P$ in a vector space $V$ and $\mathcal{C}$ is a full-dimensional polyhedral cone with $v \in \mathcal{C}$, then $v$ is a vertex of $P \cap \mathcal{C}$. A characterization of vertices of a polytope is that there exists a linear function $L$ on $V$ such that $\{v\} = \{x \in P: L(x) \geq L(y), \, \textup{ for all } \, y \in P\}$. Using this characterization it is straightforward to see that $v$ is also a vertex of $P \cap \mathcal{C}$. 
\end{proof}

We also define the following PL analogue of a Gorenstein-Fano polytope. 

\begin{definition}\label{definition: PL Gorenstein Fano} 
Let $\MM$ be a finite polyptych lattice over $\Z$. We say that a PL polytope $\PP \subset \MM_\R$ over $\Z$ is \textbf{chart-Gorenstien-Fano} if $\PP$ is an integral PL polytope, and, its PL half-space representation is of the form 
$$
\PP = \bigcap_{i=1}^{\ell} \HH_{p_i, -1}
$$
where $p_i \in \Sp(\MM)$ and $a_i=-1$ for all $i \in [\ell]$. 
\exampleqed
\end{definition}

The following lemma gives motivation for the terminology, and, indicates why a study of chart-Gorenstien-Fano PL polytopes would be of interest. Recall that a \emph{classical} polytope $P$ in $M \otimes \R$ (for a classical lattice $M \cong \Z^r$) is said to be Gorenstein-Fano\footnote{In \cite{Cox_Little_Schenck} these are called \textbf{reflexive polytopes}.}  if it is full-dimensional, integral, and has a facet presentation as $\cap_{F} H_{u_F,-1}$ where $F$ ranges over facets of $P$, $u_F$ are elements of the dual lattice $\mathrm{Hom}(M,\Z)$, and $H_{u_F,-1} := \{m \in M \otimes \R \, \mid \, u_F(m) \geq -1\} \subset M \otimes \R$.

\begin{lemma} 
Let $\MM$ be a finite polyptych lattice over $\Z$ and let $\PP \subset \MM_\R$ be a full-dimensional PL polytope over $\Z$. Then $\PP$ is chart-Gorenstien-Fano if and only if, for any $\alpha \in \pi(\MM)$, the chart image $\pi_\alpha(\PP) \subset M_\alpha \otimes \R$ is Gorenstein-Fano in the classical sense. 
\end{lemma}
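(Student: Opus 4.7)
The forward direction is a direct unpacking of the definitions. Assume $\PP = \bigcap_{i=1}^\ell \HH_{p_i,-1}$ with $p_i \in \Sp(\MM)$ and $\PP$ integral PL, and fix any $\alpha \in \pi(\MM)$. By \Cref{lemma: points linear on faces of SigmaM} and \Cref{lemma: point PL concave}, the pullback $(p_i)_\alpha := p_i \circ \pi_\alpha^{-1}$ is a piecewise $\Z$-linear convex function on $M_\alpha \otimes \R$, hence is a finite minimum $(p_i)_\alpha = \min_j u_{i,j}^\alpha$ of integer linear functionals $u_{i,j}^\alpha \in \Hom(M_\alpha, \Z)$. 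Therefore $\pi_\alpha(\HH_{p_i,-1}) = \bigcap_j H_{u_{i,j}^\alpha, -1}$, and $\pi_\alpha(\PP) = \bigcap_{i,j} H_{u_{i,j}^\alpha, -1}$ is a finite intersection of classical half-spaces of the form $H_{u,-1}$ with $u \in \Hom(M_\alpha, \Z)$. Since $\PP$ is full-dimensional and integral PL, $\pi_\alpha(\PP)$ is a full-dimensional integral polytope, and extracting its facet representation from the above intersection shows that $\pi_\alpha(\PP)$ is classically Gorenstein-Fano.

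For the reverse direction, assume each $\pi_\alpha(\PP)$ is Gorenstein-Fano. The chart images are then full-dimensional integral polytopes, so $\PP$ is integral PL, and $0_\MM$ lies in the interior of $\PP$ in every chart. It remains to produce a PL half-space representation of $\PP$ with all thresholds equal to $-1$. Start from any PL half-space representation $\PP = \bigcap_{i=1}^\ell \HH_{p_i, a_i}$ with $p_i \in \Sp_\Z(\MM)$ and $a_i \in \Z$, guaranteed by the hypothesis that $\PP$ is a PL polytope, and tighten each $a_i$ to $b_i := \min\{p_i(m) : m \in \PP\}$, which is a strictly negative integer whenever $\HH_{p_i, b_i}$ is non-redundant. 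In chart $\alpha$, writing $(p_i)_\alpha = \min_j u_{i,j}^\alpha$ as above, the representation $\pi_\alpha(\PP) = \bigcap_{i,j} H_{u_{i,j}^\alpha, b_i}$ must subsume the Gorenstein-Fano facet presentation $\pi_\alpha(\PP) = \bigcap_F H_{u_F,-1}$ with primitive integer normals $u_F$. Each facet $F$ is thus realized by some $(i,j)$ with $u_{i,j}^\alpha = c_i \, u_F$ and $b_i = -c_i$ for a positive integer $c_i$.

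The plan is to replace each such $p_i$ by a ``primitive'' point $q_F := (1/c_i) p_i \in \Sp_\Z(\MM)$ attached to each facet $F$; then $(q_F)_\alpha$ has $u_F$ itself as a linear piece, $q_F \geq -1$ on $\PP$, and $\PP' := \bigcap_F \HH_{q_F,-1}$ is a PL half-space representation of $\PP$ with all thresholds equal to $-1$. Equality $\PP' = \PP$ then follows by checking, chart by chart, that $\pi_\alpha(\PP')$ reproduces the facet presentation of $\pi_\alpha(\PP)$.

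The main obstacle is the integrality of $q_F$: we need $(1/c_i) p_i$ to take integer values on \emph{all} of $\MM$, not merely in one chart. This is a saturation property of $\Sp_\Z(\MM)$ inside $\Sp_\Q(\MM)$, and we expect it to follow from the simultaneous Gorenstein-Fano hypothesis across all charts, which forces every facet-supporting linear piece of $(p_i)_\alpha$ to be divisible by $c_i$ as an element of $\Hom(M_\alpha, \Z)$; by \Cref{lem-convex-11}, a point of $\Sp_\Z(\MM)$ is determined by its restriction to any maximal-dimensional cone of $\Sigma(\MM)$, so this chart-wise divisibility should propagate to divisibility of $p_i$ by $c_i$ on all of $\MM$.
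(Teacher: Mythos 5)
Your proposal follows essentially the same approach as the paper's proof; the forward direction is nearly identical. In the reverse direction you also mirror the paper's strategy --- normalize each non-redundant $p_i$ so that its threshold becomes $-1$, deduce integrality of the normalized point on one maximal cone of $\Sigma(\MM)$ from the chart Gorenstein--Fano hypothesis, and propagate via \Cref{lem-convex-11} --- though the paper starts from a \emph{minimal} PL half-space representation (which implicitly absorbs your explicit threshold-tightening step). The one step where you hedge is exactly where a little more care is needed: the \emph{statement} of \Cref{lem-convex-11} says only that a point is determined by its restriction to a maximal cone $\mathcal{C}$, but what you actually need is the stronger fact that integrality of $p$ on $\mathcal{C}\cap\MM$ forces integrality on all of $\MM$. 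The paper fills this by invoking the \emph{proof} of \Cref{lem-convex-11} rather than its statement: for $v\notin\mathcal{C}$ one picks $w\in\mathcal{C}\cap\MM$ with $v+_\alpha w\in\mathcal{C}\cap\MM$ for all $\alpha$, so that $p(v)=\min_\alpha\{p(v+_\alpha w)\}-p(w)$, and the right-hand side is an integer once $p$ is integer-valued on $\mathcal{C}\cap\MM$. With this step made precise, your argument is complete and coincides in substance with the paper's.
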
 

\begin{proof} 
We begin by showing the ``only if'' direction. Suppose $\PP$ is chart-Gorenstien-Fano. 
Let $\alpha \in \pi(\MM)$. 
It suffices to show that $\pi_\alpha(\PP)$ is an integral polytope with a half-space representation $\cap_{i=1}^N H_{u_i, -1}$ for some $N$ and $u_i$ for $i \in [N]$, where the $u_i$ are elements of the lattice dual to $M_\alpha$. Firstly, the fact that $\pi_\alpha(\PP)$ is integral follows directly from the definition of chart-Gorenstien-Fano. Secondly, since $\pi_\alpha$ is a bijection, the chart image $\pi_\alpha(\PP)$ is the intersection $\cap_{i=1}^\ell \{ v \in M_\alpha \otimes \R \, \mid \, (p_i \circ \pi_\alpha^{-1})(v) \geq -1\}$. We know from \Cref{lemma: points linear on faces of SigmaM} that $p_i$ is piecewise linear and from \Cref{lemma: point PL concave} that $p_i$ is convex. Thus $p_i \circ \pi_\alpha^{-1}$ is a finite min-combination of $\Z$-linear functions on $M_\alpha$. In other words, there exists an integer $\ell_{i,\alpha}>0$ and $u_{i,\alpha,j}$ for $1 \leq j \leq \ell_{i,\alpha}$ in the dual lattice to $M_\alpha$ such that $\pi_\alpha(\mathcal{H}_{p_i,-1}) = \{ v \in M_\alpha \otimes \R \, \mid \, (p_i \circ \pi_\alpha^{-1})(v) \geq -1\} = \cap_{j=1}^{\ell_{i,\alpha}} H_{u_j, -1}$, where $H_{u_j,-1}$ is the classical half-space in $M_\alpha \otimes \R$ determined by $u_j$ and the parameter $-1$. Note that the parameter remains unchanged since $p_i \circ \pi_\alpha^{-1} = \min_{j\in [\ell_{i,\alpha}]} u_{i,\alpha,j}$ is required to be $\geq -1$, so each $u_{i,\alpha,j}$ must be $\geq -1$. Assembling these $u_{i,\alpha,j}$ over all $i$ we obtain that $\pi_\alpha(\PP) = \cap_{i=1}^{\ell} \cap_{j =1}^{\ell_{i,\alpha}} H_{u_{i,\alpha,j},-1}$. This is a classical Gorenstein-Fano polytope. 

Now we show the ``if'' direction. Suppose $\PP$ has the property that each chart image is Gorenstein-Fano. By definition of PL polytopes we know that $\PP$ may be written as $\cap_{i=1}^\ell \mathcal{H}_{p_i, a_i}$ for some $p_i \in \Sp(\MM)$ and $a_i \in \Z$, and we may assume that this representation is minimal. It would suffice to see that each $a_i$ is strictly negative, and, that $\frac{1}{\lvert a_i \rvert}p_i$ is an element of $\Sp(\MM)$. The fact that each $\pi_\alpha(\PP)$ is classically Gorenstein-Fano implies that $0 \in M_\alpha \otimes \R$ is contained in the interior of $\pi_\alpha(\PP)$ for all $\PP$, which implies $0_\MM$ is contained in the interior of $\PP$. This is only possible if $a_i<0$ for all $i$. Now we claim $\frac{1}{\lvert a_i\rvert} p_i \in \Sp(\MM)$. Note that it is immediate that $\frac{1}{\lvert a_i\rvert} p_i$ is in $\Sp_\R(\MM)$, since the conditions~\eqref{eq: def point min} and~\eqref{eq: def point F homog} remain true after scalar multiplication. Hence we only need to show that $\frac{1}{\lvert a_i\rvert}p_i$ maps elements in $\MM$ to $\Z$.  In fact, to see this, it would suffice to show that there exists a full-dimensional cone $\mathcal{C}$ in $\Sigma(\MM)$ such that $\frac{1}{\lvert a_i\rvert}p_i$ maps elements in $\mathcal{C} \cap \MM$ to $\Z$. This is because the proof of \Cref{lem-convex-11} shows that if a point in $\Sp_\R(\MM)$ takes values in $\Z$ on $\mathcal{C} \cap \MM$ for a full-dimensional $\mathcal{C}$ in $\Sigma(\MM)$, then it takes values in $\Z$ in all of $\MM$.  By minimality of the representation $\PP = \cap_{i=1}^{\ell} \mathcal{H}_{p_i,a_i}$, there must exist some full-dimensional $\mathcal{C}$ on which $\mathcal{H}_{p_i,a_i}$ defines a codim-$1$ facet of $\PP \cap \mathcal{C}$ and hence of $\pi_\alpha(\PP)$ for any $\alpha$. We know $p_i$ is linear on $\mathcal{C}$ and by hypothesis, $\pi_\alpha(\PP)$ is Gorenstein-Fano, so we conclude $\frac{1}{\lvert a_i\rvert}p_i$ is integral on $\mathcal{C}$, as desired. 
\end{proof} 

We refer the reader to \cite{CookEscobarHaradaManon2024} for further examples of chart-Gorenstein-Fano PL polytopes.

\section{Detropicalizations of polyptych lattices}\label{sec_detrop} 

In this section, we make the connection between the combinatorial data introduced in the previous sections and (algebraic) geometry. The geometry is then further developed in \Cref{section: compactifications}, where we build compact projective varieties corresponding to a choice of PL polytope $\PP \subset \MM_\R$, in a manner similar to the construction of a compactification of the torus $(\C^*)^n$ arising from a choice of classical polytope. Building this compactification in \Cref{section: compactifications}, however, requires a choice of an algebra, denoted $\Aa_\MM$, which fills the role played by the Laurent polynomial ring in the classical toric geometry situation. This algebra $\Aa_\MM$ (and a choice of valuation with domain $\Aa_\MM$) is the topic of this section.

We assume throughout this section that \textbf{$\K$ is an algebraically closed field}. 
We begin by recalling the definition of a quasivaluation $\fv: \Aa \to \mathcal{S}$, with values in an idempotent semialgebra. The concept is an analogue of a classical discrete valuation defined on a field.  We have the following (see e.g.~\cite{KavehManon-PL}).

\begin{definition}\label{def_valuation}
Let $\Aa$ be a Noetherian $\K$-algebra which is an integral domain. Let $(\mathcal{S}, \odot, \oplus)$ be an idempotent semialgebra.  
    We say a map $\fv: \Aa \to \mathcal{S}$ is a \textbf{quasivaluation with values in $\mathcal{S}$} if we have:
\begin{enumerate}
    \item[(1)] $\fv(fg) \ge \fv(f)\odot\fv(g)$, for all $f,g \in \Aa$,
    \item[(2)] $\fv(f+g)\ge \fv(f)\oplus\fv(g)$, for all $f,g \in \Aa$,
    \item[(3)] $\fv(cf)=\fv(f)$, for all $c\in \K^*$ and $f \in \Aa$, and 
    \item[(4)] $\fv(0)=\infty$, and, $0$ is the only element in $\Aa$ that maps to $\infty$. 
\end{enumerate}
We say that $\fv$ is a \textbf{valuation with values in $\mathcal{S}$} if the inequality in axiom (1) is in fact an equality, i.e., $\fv(fv)=\fv(f) \odot \fv(g)$. 
\exampleqed 
\end{definition}

There are several particularly significant cases of idempotent semialgebras which appear as the co-domains of our (quasi)valuations in this paper. Firstly, the canonical semialgebras $S_\MM$ of \Cref{definition: canonical semialgebra MM} appear as codomains of valuations in our theory of detropicalizations, begun in \Cref{def_detrop} below. Secondly, in the strictly dualizable setting, the related case of piecewise-linear functions $\O_\NN$ as discussed in \Cref{example: OMM as semialgebra}, or its point subsemialgebra $P_\NN$ of \Cref{definition: point algebra}, also occurs -- due to the identification in \Cref{prop-salgebras}. Thirdly, the $\Z^r$-valued weight (quasi)valuations in e.g.\ \cite{KavehManon-Siaga, EscobarHarada} are examples where the codomain is a totally ordered abelian group (viewed as an idempotent semialgebra as explained in \Cref{ex: Malpha as semialgebra}), and we will see similar examples below.

\begin{remark}\label{remark: giansiracusa}
The notion of (quasi)valuations with values in an idempotent semialgebra is certainly not new. To cite just two examples, it plays a prominent role in the work of Giansiracusa-Giansiracusa \cite{Giansiracusa} on tropical algebraic geometry, and, the idea is used by Kaveh and the third author in \cite{KavehManon-MathZ, KavehManon-PL, KavehManon-PL-part2} to study toric vector bundles, toric principal bundles, and toric flat families. One distinction between previous uses of these valuations and our use of them in this paper is that, in the tropical context such as in \cite{Giansiracusa}, one generally starts with the algebra $\Aa$ (often thought of as the coordinate ring of a variety $X$), and then asks about valuations from $\Aa$ to different targets $\mathcal{S}$. In our setting, the point of view is the opposite, as will become clear in \Cref{def_detrop}; we start with a fixed $\MM$ and its canonical idempotent semialgebra $S_\MM$, and then ask for choices of algebras $\Aa_\MM$, and of valuations on $\Aa_\MM$, with target $S_\MM$. 
\exampleqed
\end{remark}

In the case when the codomain of a (quasi)valuation is a totally ordered abelian group $\overline{\Gamma}=\Gamma \cup \{\infty\}$, as in the third class of examples mentioned above, we can construct corresponding objects which are useful in our analysis. 
We keep this discussion brief and refer the reader to e.g.~\cite{KavehManon-Siaga} for details. The following discussion is valid for any quasivaluation, not just valuations, so we present it in this generality. Suppose $\Aa$ is a $\K$-algebra equipped with a quasivaluation $\fv: \Aa \to \overline{\Gamma}$. Given $i \in \Gamma$ we define the $\K$-vector space
\begin{equation*}\label{eq: def valuation filtration}
    F_i := F_{\fv \geq i} := \{f \in \Aa \, \mid \, \fv(f) \geq i\}
\end{equation*}
and denote by $\mathcal{F}_{\fv} := \{F_{\fv \geq i}\}_{i \in \Gamma}$ the filtration defined by the $F_{\fv \geq i}$. 
This is a decreasing filtration, namely if $j \leq i$ then $F_j \supseteq F_i$. It follows immediately from the definitions that $\mathcal{F}_{\fv}$ is multiplicative, i.e. $F_{\fv \geq i} \cdot F_{\fv \geq j} \subset F_{\fv \geq i+j}$ for all $i,j \in \Gamma$. 
 We define the \textbf{associated graded algebra} $\mathrm{gr}_{\fv}(\Aa) := \mathrm{gr}_{\mathcal{F}_{\fv}}(\Aa)$ of $\Aa$ with respect to $\fv$ by 
 \begin{equation*}
     \mathrm{gr}_{\fv}(\Aa) := \bigoplus_{i \in \Gamma} F_{\fv \geq i}/F_{\fv > i}
 \end{equation*}
 where $F_{\fv > i} := \cup_{\ell > i} F_{\fv \geq \ell}$ and the multiplication on $\mathrm{gr}_{\fv}(\Aa)$ is defined by 
 \begin{equation*}
     F_{\fv \geq i}/F_{\fv > i} \times F_{\fv \geq j}/F_{\fv > j} \to F_{\fv \geq i+j}/F_{\fv > i+j},\quad ([g],[h]), \mapsto [gh]
 \end{equation*}
and then extending linearly. 
We say that $\fv$ has \textbf{one-dimensional leaves} if each quotient $F_{\fv \geq i}/F_{\fv > i}$ has at most dimension $1$ as a $\K$-vector space.  
We also recall that the \textbf{value semigroup} of $\fv$, denoted $S(\Aa,\fv)$, is the image in $\Gamma$ of $\Aa\setminus\{0\}$ under $\fv$. The \textbf{rank} of a valuation, denoted $\mathrm{rank}(\fv)$, is the rank of its value semigroup, i.e., the rank of the abelian subgroup generated by $S(\Aa,\fv)$ in ${\Gamma}$. We say that $\fv$ has \textbf{full rank}, or that $\fv$ is a \textbf{full rank valuation}, if $\mathrm{rank}(\fv) = \dim(\Aa)$.

We now come to a key definition -- that of a \textbf{detropicalization} of a polyptych lattice. 

\begin{definition}\label{def_detrop}
Let $\MM$ be a finite polyptych lattice of rank $r$ over $F$ with associated canonical idempotent semialgebra $S_\MM$. Let $\Aa_\MM$ be a Noetherian $\mathbb{K}$-algebra which is an integral domain and $\fv: \Aa_\MM \to S_\MM$ a valuation with values in $S_\MM$. We say that the pair $(\Aa_\MM, \fv)$ is a \textbf{detropicalization of $\MM$} if every element of $\MM$ is in the image of $\fv$, and the Krull dimension of $\Aa_\MM$ equals the rank $r$ of $\MM$. \exampleqed
\end{definition}

The motivation for the terminology comes from the historical context described in \Cref{remark: giansiracusa}. 
As before, in the case of the trivial polyptych lattice, we recover a familiar object from toric geometry. 

\begin{example}\label{example: detrop trivial PL}
Let $\MM$ be the trivial polyptych lattice of rank $r$ over $\Z$. Recall from \Cref{example: canonical semialgebra of trivial PL} that in this case, the canonical semialgebra $S_\MM$ is the semialgebra of integral polytopes. Then the reader may check that we may choose a detropicalization of $\MM$ to be the pair of $\Aa_\MM = \K[x_1^{\pm}, x_2^{\pm}, \cdots, x_r^{\pm}]$, the Laurent polynomial ring in $r$ variables, equipped with the valuation $\fv: \K[x_1^{\pm}, \cdots, x_r^{\pm}] \to S_\MM$ which takes a Laurent polynomial $f$ to its Newton polytope $\mathrm{Newt}(f)$. 
\exampleqed
\end{example}

We make the following definition,
which 
 is the PL analogue of the notion of ``adapted basis'' from \cite[Definition 2.27]{KavehManon-Siaga}.

\begin{definition}\label{def_CAB}
Let $\MM$ be a finite polyptych lattice over $F$. Let $(\Aa_\MM,\fv: \Aa_\MM \to S_\MM)$ be a detropicalization of $\MM$. We say that a $\K$-vector space basis $\B$ of $\Aa_\MM$ is a \textbf{convex adapted basis} for $\fv: \Aa_\MM \to S_\MM$ if 
\begin{enumerate} 
\item[(1)] $\fv(\sum \lambda_i \bb_i) = \bigoplus_i \fv(\bb_i)$, for any finite collection $\lambda_i \in \K^*$ and $\bb_i \in \B$, and
\item[(2)] $\fv(\bb) \in \MM \subset S_\MM$ for all $\bb \in \B$.\exampleqed
\end{enumerate}
\end{definition}

\begin{remark} 
From the assumption (2) on (quasi)valuations in \Cref{def_valuation}, it follows that for \textit{any} basis of $\Aa_\MM$ we always have $\fv(\sum_i \lambda_i \bb_i) \geq \bigoplus_i \fv(\bb_i)$. The strength of the assumption (1) in \Cref{def_CAB} is that the inequality is in fact an equality. We will comment on the power of the other assumption (2) in \Cref{remark: CAB axiom}. 
\exampleqed
\end{remark}

In order to obtain sharper results about detropicalizations, it will be useful to restrict to the case when $\MM$ has a strict dual.   \textbf{Thus, for the remainder of this section, we let $(\MM, \NN, \v, \w)$ be a fixed choice of strict dual pair of polyptych lattices, so in particular, we assume that $\MM$ is strictly dualizable.}  
In this setting, a detropicalization $(\Aa_\MM, \fv: \Aa_\MM \to S_\MM)$ may be viewed by \Cref{prop-salgebras} as a map $\fv: \Aa_\MM \to \P_\NN$, and by the properties of the isomorphism $S_\MM \to \P_\NN$ (see \Cref{prop-salgebras}), a convex adapted basis $\B$ has the properties that $\fv(\sum \lambda_i \bb_i) = \min_i\{\fv(\bb_i)\}$ and $\fv(\bb) \in \Sp(\NN)$ for all $\bb \in \B$. We frequently take this point of view in the discussion that follows.

\begin{example}\label{example: detrop running example}
We continue with our running example. Namely, consider the (strictly) self-dual polyptych lattice in \Cref{ex_dual}. 
Let $\Aa=\C[x_1,x_2,t^{\pm}]/\langle x_1x_2-1-t \rangle$. 
In this example we define a valuation $\fv$ so that $(\Aa,\fv)$ is a detropicalization of $\MM$. We also give a convex adapted basis. 
We keep the details brief, since the reader can find the full example in \cite{CookEscobarHaradaManon2024}.
We also remark that $\Aa$ is a cluster algebra of type $A_1$ with one frozen variable, i.e.~the cluster algebra associated to the quiver with 2 vertices: one corresponding to a cluster variable and the other corresponding to a frozen variable,  see e.g.~\cite[\textsection 2.7 and \textsection 3.1]{FominWilliamsZelevinsky}.

We define the valuation $\fv$ by specifying an additive basis $\B$ of $\Aa$, defining $\fv$ on the basis elements, and then extending the definition to all of $\Aa$.
The basis is given by 
    \begin{equation*}
        \B=\{\overline{x_1^{u_1}x_2^{u_2}t^w}\mid u_1,u_2\in \Z_{\ge 0},\ w\in\Z,\ \min\{u_1,u_2\}=0\}
        \subset \Aa
    .\end{equation*}
We define $\fv$ on $\B$ using the self-dual pairing $\w$, as follows: 
\begin{equation*}
        \fv:\B\to\Sp(\NN),\qquad \fv(\overline{x_1^{u_1}x_2^{u_2}t^w}):=
        \w((w,u_2-u_1),(-w-u_1,u_2-u_1)) \in \Sp(\NN).
\end{equation*}
We have now specified the piecewise linear function $\fv(\mathbb{b}) \in \Sp(\NN) \subset \P_\NN$ for each basis element $\mathbb{b}$ in $\B$. Since $\B$ forms an additive basis of $\Aa$, any element in $\Aa$ can be written uniquely as $\sum_i \lambda_i \mathbb{b}_i$ with all $\lambda_i\neq 0$. We then define the valuation $\fv: \Aa \to \P_\NN$ by 
\begin{equation*}
        \fv:\Aa\to\P_\NN,\qquad
        \fv\left(\sum \lambda_i \mathbb{b}_i\right) := \bigoplus_i \fv(\mathbb{b}_i), \, \; \quad \textup{ and} \, \, \fv(0) := \infty 
,\end{equation*}
where the $\bigoplus$ denotes the min-operation in $\P_\NN$.
The pair $(\Aa,\nu)$ is a detropicalization of $\MM$ and $\B$ is a convex adapted basis for $\nu$, as is shown in detail in \cite{CookEscobarHaradaManon2024}. \exampleqed
\end{example}

Our first result holds in the presence of strict duals; it relates valuations with values in $P_\NN$ with valuations with values in a totally ordered abelian group such as $\overline{M_\alpha}$ (as in \Cref{ex: Malpha as semialgebra}), thus providing a link between the semialgebra-valued valuations of \Cref{def_valuation}, and the more classical valuations taking values in totally ordered groups.

\begin{proposition}\label{lem-canonicaltototal}
Let $\MM$ be a finite polyptych lattice of rank $r$ over $F$ and $(\MM,\NN,\v,\w)$ a strict dual $F$-pair. 
Let $\Aa$ be a Noetherian $\K$-algebra which is an integral domain and let $\fv: \Aa \to \P_\NN$ be a valuation with values in $\P_\NN$ in the sense of \Cref{def_valuation}. Let $\alpha \in \pi(\MM)$ be a choice of coordinate chart of $\MM$ and $\tilde{\rho} \subset C_\alpha$ be a choice of ordered basis for the cone $C_\alpha := \w^{-1}(\Sp_\R(\MM,\alpha)) \subset \NN_\R$. Then there exists a total order on $M_\alpha$ and a corresponding idempotent semialgebra $\overline{M_\alpha}$ (as described in \Cref{ex: Malpha as semialgebra}), such that the following hold. 
\begin{enumerate} 
\item[(1)] Associated to $\fv$ and the choice of $\alpha$ and $\tilde{\rho}$, there exists a valuation $\fv_{\alpha, \tilde{\rho}}: \Aa \to \overline{M}_{\alpha}$.
\item[(2)] If $(\Aa = \Aa_\MM,\fv: \Aa_\MM \to \P_\NN)$ is, in addition, a detropicalization in the sense of \Cref{def_detrop}, then $\fv_{\alpha, \tilde{\rho}}$ has full rank. 
\end{enumerate} 
\end{proposition}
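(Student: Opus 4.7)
The plan is to extract $\fv_{\alpha,\tilde\rho}(f)$ as the ``lex-minimum vertex'' of the classical polytope in $M_\alpha\otimes \R$ associated to $\fv(f)$ via the chart map $\pi_\alpha$. First, by the analogue of \Cref{prop-evaluationmap} with the roles of $\MM$ and $\NN$ interchanged, there is an $\R$-linear isomorphism $\v_\alpha\colon M_\alpha\otimes\R\to \Hom(C_\alpha,\R)$, $u\mapsto[n\mapsto \v(\pi_\alpha^{-1}(u))(n)]$. Since $\tilde\rho=(n_1,\ldots,n_r)$ is an ordered basis of $\NN_\R$ lying in $C_\alpha$, the evaluation map $\mathrm{ev}_{\tilde\rho}\colon \Hom(C_\alpha,\R)\to\R^r$ is likewise a linear isomorphism, and the composition $\Psi := \mathrm{ev}_{\tilde\rho}\circ\v_\alpha$ restricts to an $F$-linear injection $M_\alpha\hookrightarrow F^r$. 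Pulling back the standard lex order on $F^r$ along $\Psi$ yields a group-compatible total order on $M_\alpha$, turning $\overline{M_\alpha}=M_\alpha\cup\{\infty\}$ into an idempotent $F_{\geq 0}$-semialgebra as in \Cref{ex: Malpha as semialgebra}.

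Next, for nonzero $f\in\Aa$, the element $\fv(f)\in\P_\NN\setminus\{\infty\}$ corresponds under the isomorphism $S_\MM\cong \P_\NN$ of \Cref{prop-salgebras} to a class $[\bigoplus_{m\in\mathcal{S}(f)} m]$ with $\mathcal{S}(f)\subset \MM$ finite, so that as a function $\fv(f)=\min_{m\in\mathcal{S}(f)}\v(m)$. The definition is
\begin{equation*}
\fv_{\alpha,\tilde\rho}(f) := \pi_\alpha(m^\ast),\qquad \fv_{\alpha,\tilde\rho}(0):=\infty,
\end{equation*}
where $m^\ast\in\mathcal{S}(f)$ is the unique element making $\pi_\alpha(m^\ast)$ minimal in the total order on $M_\alpha$; equivalently, $m^\ast$ is produced by iteratively restricting to those $m$ minimizing $\v(m)(n_i)$ for $i=1,\dots,r$. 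Uniqueness of $m^\ast$ is immediate from the injectivity of $\Psi$. To prove independence of the choice of representative $\mathcal{S}(f)$, I will adapt the arguments of \Cref{prop_polytopes_are_convex} and \Cref{lem-pconvex-2} to show that the chart image $\pi_\alpha(\ptconv_\R(\mathcal{S}(f)))$ coincides with the classical convex hull of $\pi_\alpha(\mathcal{S}(f))$ in $M_\alpha\otimes\R$, by lifting classical half-spaces in $M_\alpha\otimes\R$ to PL half-spaces via the bijection $C_\alpha\cong \Sp_\R(\MM,\alpha)\cong \Hom(M_\alpha,\R)$ afforded by strict duality; the lex-minimum of $\pi_\alpha(\mathcal{S}(f))$ then depends only on $P_f:=\ptconv_\R(\mathcal{S}(f))$.

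The valuation axioms follow from a classical support-function dictionary. Since $C_\alpha$ is a maximal-dimensional cone of $\Sigma(\NN)$, every $\v(m)$ is linear on $C_\alpha$ and $\fv(f)|_{C_\alpha}$ equals the classical support function of the rational polytope $P_f^\alpha:=\pi_\alpha(P_f)\subset M_\alpha\otimes\R$ under the pairing induced by $\v_\alpha$. The identity $\fv(fg)|_{C_\alpha}=\fv(f)|_{C_\alpha}+\fv(g)|_{C_\alpha}$ yields $P_{fg}^\alpha=P_f^\alpha+P_g^\alpha$ (classical Minkowski sum), and the classical fact that the lex-min vertex of a Minkowski sum is the sum of the lex-min vertices gives $\fv_{\alpha,\tilde\rho}(fg)=\fv_{\alpha,\tilde\rho}(f)+\fv_{\alpha,\tilde\rho}(g)$. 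Likewise, $\fv(f+g)\ge \fv(f)\oplus\fv(g)$ pointwise forces $P_{f+g}^\alpha\subseteq \mathrm{conv}(P_f^\alpha\cup P_g^\alpha)$, and since the lex-min of a smaller polytope is larger (in the lex order), this gives $\fv_{\alpha,\tilde\rho}(f+g)\ge\min\{\fv_{\alpha,\tilde\rho}(f),\fv_{\alpha,\tilde\rho}(g)\}$. Scalar invariance and the non-degeneracy $\fv_{\alpha,\tilde\rho}^{-1}(\infty)=\{0\}$ are immediate from $\fv$.

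For assertion (2), the detropicalization hypothesis supplies, for each $m\in\MM$, some $f_m\in\Aa_\MM$ with $\fv(f_m)=\v(m)$, so $\mathcal{S}(f_m)=\{m\}$ and $\fv_{\alpha,\tilde\rho}(f_m)=\pi_\alpha(m)$. As $m$ ranges over $\MM$, the images $\pi_\alpha(m)$ exhaust $M_\alpha$, so the value semigroup contains $M_\alpha$ (of rank $r$); combined with $\mathrm{rank}(\fv_{\alpha,\tilde\rho})\leq \dim\Aa_\MM=r$, the rank is exactly $r$. The main technical obstacle I anticipate is the well-definedness step, namely verifying $\pi_\alpha(\ptconv_\R(\mathcal{S}))=\mathrm{conv}(\pi_\alpha(\mathcal{S}))$, since the bookkeeping required to lift arbitrary classical half-spaces in $M_\alpha\otimes\R$ back to PL half-spaces in $\MM_\R$ makes essential use of the strict dual pairing and the identifications in \Cref{cor-fulldimiso}.
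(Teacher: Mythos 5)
Your construction of $\fv_{\alpha,\tilde\rho}$ is essentially the same as the paper's: pull back the lex order on $F^r$ along $\mathrm{ev}_{\tilde\rho}\circ\v_\alpha$ and take the lex-minimal element of the set of chart representatives of points appearing in $\fv(f)$, and your treatment of part (2) matches the paper's. However, your proposed route to well-definedness contains a genuine error. You assert a bijection $C_\alpha\cong\Sp_\R(\MM,\alpha)\cong\Hom(M_\alpha,\R)$; the first identification is correct, but the second is only an \emph{injection into a proper full-dimensional cone}. Indeed, strict duality gives $\Sp_\R(\MM,\alpha)=\w(C_\alpha)$, and $C_\alpha$ is one of several maximal cones of the complete fan $\Sigma(\NN)$, hence a proper cone; so most separating linear functionals on $M_\alpha\otimes\R$ do \emph{not} lift to points of $\MM$. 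Consequently the claim $\pi_\alpha(\ptconv_\R(\mathcal{S}))=\conv(\pi_\alpha(\mathcal{S}))$ that you propose to prove is actually \emph{false}. For instance, in the running example $\MM$ of \Cref{ex: running example}, take $\mathcal{S}=\{m_1,m_2\}$ with $\pi_1(m_1)=(0,1)$, $\pi_1(m_2)=(0,-1)$, and $m_5$ with $\pi_1(m_5)=(1/2,0)$. Using \Cref{eqeq_dual_pairing}, one checks that $\w(n)(m_5)=y/2\geq\min\{x,\min\{0,y\}\}=\min\{\w(n)(m_1),\w(n)(m_2)\}$ for every $n$, so $m_5\in\ptconv_\R(\mathcal{S})$ by \Cref{lem-pconvex-1}; yet $\pi_1(m_5)=(1/2,0)$ is not on the segment $\conv\{(0,1),(0,-1)\}$. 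The same kind of failure occurs in the other chart as well.

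The good news is that the well-definedness does not require the false polytope identity: the lex-minimum depends only on $\fv(f)$, not on the choice of representing set $\mathcal{S}(f)$. The paper handles this by composing $\fv$ with the semialgebra maps $L_{C_\alpha}$, $\iota_{\tilde\rho}$, $\phi_{\tilde\rho}$ to land in $\O^\oplus_{F^r_{\geq 0}}$, then using the unique minimal representation (cf.\ \Cref{lemma: PL is a min of linears}) of the resulting piecewise-linear convex function as a min of linear functionals, and finally invoking \Cref{lemma: min of functions and lex min} to show that the lex-minimal linear functional always appears in that unique minimal representation. This is the precise version of the ``dominated elements cannot be lex-minimal'' intuition that makes your construction go through. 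Similarly, your claimed Minkowski identity $P^\alpha_{fg}=P^\alpha_f+P^\alpha_g$ over-reaches: the equation $\fv(fg)\vert_{C_\alpha}=\fv(f)\vert_{C_\alpha}+\fv(g)\vert_{C_\alpha}$ pins down the support functions only on the cone $C_\alpha$, not on the full dual space, so it does not determine the polytopes themselves; but because the lex-minimum is determined by the restriction to $C_{\tilde\rho}\subseteq C_\alpha$, the additivity of the lex-min still follows, which is exactly what \Cref{lemma: min of functions and lex min} encodes.
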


The following general lemma is useful for the proof of \Cref{lem-canonicaltototal}. The proof is straightforward and uses the definition of the partial order on an idempotent semialgebra given in \Cref{remark: partial order of idempotent semialgebra}.

\begin{lemma}\label{lemma: semialg hom and valuations}
Let the notation be as in \Cref{def_valuation}. Suppose that $\fv: \Aa \to \mathcal{S}$ is a valuation with values in $\mathcal{S}$ and $F: \mathcal{S} \to \mathcal{S}'$ is homomorphism of idempotent semialgebras. Then the composition $F \circ \fv$ is a valuation with values in $\mathcal{S}'$. 
\end{lemma}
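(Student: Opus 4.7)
The proof is a direct verification that each of the four axioms of a valuation in \Cref{def_valuation} is preserved by post-composition with $F$, so the plan is simply to check them one at a time using the fact that $F$ is a homomorphism of idempotent semialgebras (and therefore preserves $\odot$, $\oplus$, and $\infty$).

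Axioms (1) and (3) are immediate: for $f,g \in \Aa$ we have
\begin{equation*}
(F\circ \fv)(fg) = F(\fv(fg)) = F(\fv(f) \odot \fv(g)) = F(\fv(f))\odot F(\fv(g)) = (F\circ \fv)(f) \odot (F \circ \fv)(g),
\end{equation*}
where the second equality uses that $\fv$ is a valuation (not merely a quasivaluation), and for $c \in \K^*$, $(F\circ \fv)(cf) = F(\fv(cf)) = F(\fv(f)) = (F\circ \fv)(f)$. For axiom (4), $(F \circ \fv)(0) = F(\fv(0)) = F(\infty) = \infty$ since $F$ preserves the additive identity; for the converse (that $0$ is the only element mapping to $\infty$), I will note that $F(\infty)=\infty$ together with $F$ being a semialgebra homomorphism implies this, since if $(F\circ\fv)(f) = \infty$ then in particular $F(\fv(f)) = \infty$, and one shows that $\fv(f) = \infty$ is forced (then axiom (4) for $\fv$ itself gives $f=0$).

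The one step that requires a small observation is axiom (2). Here I need monotonicity of $F$ with respect to the canonical partial order on idempotent semialgebras recalled in \Cref{remark: partial order of idempotent semialgebra}, where $a \le b$ iff $a \oplus b = a$. This monotonicity is immediate from the fact that $F$ preserves $\oplus$: if $a \le b$ then $a \oplus b = a$, so $F(a) \oplus F(b) = F(a \oplus b) = F(a)$, i.e.\ $F(a) \le F(b)$. Applying this to the inequality $\fv(f+g) \ge \fv(f)\oplus \fv(g)$ yields
\begin{equation*}
(F\circ \fv)(f+g) = F(\fv(f+g)) \ge F(\fv(f)\oplus \fv(g)) = F(\fv(f))\oplus F(\fv(g)) = (F\circ \fv)(f) \oplus (F\circ \fv)(g),
\end{equation*}
as required.

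There is no serious obstacle in this proof; the only mildly nontrivial point is recognizing that preservation of $\oplus$ by $F$ automatically entails monotonicity with respect to the partial order used to state axiom (2). The possible subtle issue is the converse half of axiom (4), where one must verify that $F$ does not send any element of $\mathcal{S} \setminus \{\infty\}$ to $\infty$; for the target $\mathcal{S}' = \overline{M}_\alpha$ used in \Cref{lem-canonicaltototal}, this follows from the definition of the map there, but in the abstract statement of the present lemma it should be recorded either as an explicit hypothesis on $F$ or verified from the semialgebra-homomorphism axioms in the conventions being used.
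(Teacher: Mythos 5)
Your proof is correct and follows exactly the route the paper has in mind: a direct axiom check, with the only non-mechanical step being that $F$ preserving $\oplus$ automatically makes it monotone for the partial order of \Cref{remark: partial order of idempotent semialgebra}, which is what axiom (2) needs. Your observation about the converse half of axiom (4) is also a legitimate one: as stated, the lemma implicitly assumes that $F^{-1}(\infty)=\{\infty\}$, a hypothesis the paper does not record but which does hold in every place the lemma is invoked (e.g.\ the semialgebra maps in \Cref{lem-canonicaltototal}, \Cref{lemma: rank 1 valuation}, and \Cref{prop: direct product of detrop} all have this property), so your proof is complete once that standing assumption is made explicit.
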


We can now prove \Cref{lem-canonicaltototal}.

\begin{proof}[Proof of \Cref{lem-canonicaltototal}]
We begin with (1). We will construct $\fv_{\alpha, \tilde{\rho}}$ by composing $\fv$ with a sequence of idempotent semialgebra maps. By \Cref{lemma: semialg hom and valuations} the result will be a valuation. We first need to set some notation. Recall from \Cref{lemma: points linear on faces of SigmaM} that for $p \in \Sp(\NN)$, its restriction $p \vert_{C_\alpha \cap \NN}$ is linear on $C_\alpha$. As in \Cref{prop-evaluationmap} we denote this restriction by $L_{C_\alpha}: \Sp(\NN) \to \Hom(C_\alpha \cap \NN,F)$. 
Since $\tilde{\rho} :=\{\rho_1,\cdots,\rho_r\} \subset C_\alpha$ is a basis contained in $C_\alpha$ we may consider $C_{\tilde{\rho}} := \mathrm{span}_{\R_{\geq 0}}\{\rho_1,\cdots,\rho_r\} \subset C_\alpha$ and restrict further to $C_{\tilde{\rho}} \cap \NN$. 
We denote this by $\iota_{\tilde{\rho}}: \Hom(C_\alpha \cap \NN, F) \to \Hom(C_{\tilde{\rho}}\cap \NN, F)$; since $\tilde{\rho}$ is a basis, $\iota_{\tilde{\rho}}$ is an isomorphism. Finally, using $\tilde{\rho}$ to identify $C_{\tilde{\rho}}$ with $F^r_{\geq 0}$, we also have the isomorphism $\phi_{\tilde{\rho}}: \Hom(C_{\tilde{\rho}} \cap \NN, F) \to \Hom(F^r_{\geq 0}, F)$. Note that each of these three $\Hom$-spaces can be identified with $F^r$ by evaluating against the basis $\tilde{\rho}$ or with the standard basis, denoted $\overline{\varepsilon}$, of $F^r$, and all relevant diagrams commute. 

We can now construct a sequence of semialgebra maps. 
Given $C\subset \NN$, let $\O^\oplus_C$ denote the algebra consisting of the piecewise linear functions that are finite min-combinations of elements of $\Hom(C\cap\NN,F)$.
Similarly, $\O^\oplus_{F^r_{\geq 0}}$ denotes the algebra consisting of the piecewise linear functions that are finite min-combinations of elements of $\Hom(F^r_{\geq 0},F)$.
Note that all the above maps extend naturally to algebra maps which, by abuse of notation,  denote as $L_{C_\alpha}: P_\NN \to \O^{\oplus}_{C_\alpha}, \iota_{\tilde{\rho}}: \O^{\oplus}_{C_\alpha} \to \O^{\oplus}_{C_{\tilde{\rho}}}, \phi_{\tilde{\rho}}: \O^{\oplus}_{C_{\tilde{\rho}}} \to \O^{\oplus}_{F^r_{\geq 0}}$. It is straightforward that these are morphisms of idempotent semialgebras.
Let $\sf Im$ denote the image of $P_\NN$ under the composition $\phi_{\tilde{\rho}} \circ \iota_{\tilde{\rho}} \circ L_{C_\alpha}$.
Now we define a map $\Phi: {\sf Im} \to \overline{F^r}$ and then prove that it is a semialgebra morphism, where $F^r$ is equipped with the lex order and $\overline{F^r}$ is the associated idempotent semialgebra as described in \Cref{ex: Malpha as semialgebra}. To define $\Phi$, note that any element ${\sf f}\in \sf Im$ can be written uniquely as ${\sf f} = \oplus_{p \in S} \phi_{\tilde{\rho}} \circ \iota_{\tilde{\rho}} \circ L_{C_\alpha}(p)$ for some finite minimal set $S \subset \Sp(\NN)$. 
Let $\mathrm{ev}_{\overline{\varepsilon}}: \Hom(F^r_{\geq 0}, F) \to F^r$ denote the isomorphism mentioned above, given by evaluation against the standard basis vectors of $F^r_{\geq 0}$. We may now define 
$$
\Phi: {\sf Im} \to \overline{F^r},\qquad
\Phi({\sf f}) := \min_{p \in S}\{ \mathrm{ev}_{\overline{\varepsilon}} \circ \phi_{\tilde{\rho}} \circ \iota_{\tilde{\rho}} \circ L_{C_\alpha}(p)\}
,$$
where the minimum is with respect to the lex order on $F^r$. We now claim that $\Phi$ is a semialgebra morphism. To prove this, we must first show that for ${\sf f} = \oplus_{p \in S} \phi_{\tilde{\rho}} \circ \iota_{\tilde{\rho}} \circ L_{C_\alpha}(p)$ and ${\sf g} = \oplus_{p' \in S'} \phi_{\tilde{\rho}} \circ \iota_{\tilde{\rho}} \circ L_{C_\alpha}(p')$, we have $\Phi(\min\{{\sf f,g}\}) = \min\{\Phi({\sf f}),\Phi({\sf g})\}$, where the $\min$ on the LHS is that of functions, and on the RHS is that of lex order. 
We have the equation $\min\{{\sf f,g}\} = \oplus_{p'' \in S \cup S'} \phi_{\tilde{\rho}} \circ \iota_{\tilde{\rho}} \circ L_{C_\alpha}(p'')$. However, this is not necessarily the unique minimal expression as a min-combination of linear functions. 
To obtain the result, it would suffice to see that the $m_{min}\in S\cup S'$ giving the lex-minimal element in the preceding equation appears in the unique minimal expression of $\min\{f,g\}$ as a min-combination of linear functions.
This follows from \Cref{lemma: min of functions and lex min}. Thus $\Phi(\min\{{\sf f,g}\}) = \min\{\Phi({\sf f}),\Phi({\sf g})\}$. A similar argument shows that $\Phi({\sf f}+{\sf g})=\Phi({\sf f})+\Phi({\sf g})$. Thus $\Phi$ is a semialgebra morphism. 

We may thus define a valuation with values in $F^r$ via the composition $\Phi \circ \phi_{\tilde{\rho}} \circ \iota_{\tilde{\rho}} \circ L_{C_\alpha} \circ \fv: \Aa \to \overline{F^r}$. 
By the commuting diagram in \Cref{prop-evaluationmap}, if ${\sf f} = \oplus_{p \in S} \phi_{\tilde{\rho}} \circ \iota_{\tilde{\rho}} \circ L_{C_\alpha}(p)$ is the unique expression for ${\sf f}\in \sf Im$, then $\Phi({\sf f})=\min_{m \in T}\{ \mathrm{ev}_{\overline{\varepsilon}} \circ \phi_{\tilde{\rho}} \circ \iota_{\tilde{\rho}} \circ {\v_\alpha}(m)\}$, where $T := \pi_\alpha(\v^{-1}(S)) \subset M_\alpha$ is finite.
Since the map $\mathrm{ev}_{\overline{\varepsilon}} \circ \phi_{\tilde{\rho}} \circ \iota_{\tilde{\rho}} \circ \v_\alpha: M_\alpha \to F^r$ is injective, we may equip $M_\alpha$ with a total order via this map, and define $\fv_{\alpha,\tilde{\rho}}$ by defining $\fv_{\alpha,\tilde{\rho}}(f)$ for $f \in \Aa \setminus\{0\}$ to be the unique preimage under $\mathrm{ev}_{\overline{\varepsilon}} \circ \phi_{\tilde{\rho}} \circ \iota_{\tilde{\rho}} \circ \v_\alpha$ of $\Phi \circ \phi_{\tilde{\rho}} \circ \iota_{\tilde{\rho}} \circ L_{C_\alpha} \circ \fv(f)$. This is also a valuation by construction.  This proves (1).

To prove (2), it suffices to show that if $\fv: \Aa_\MM \to \P_\NN$ additionally has the property that every element of $\Sp(\NN) \cong \MM$ is in the image of $\fv$, then $\fv_{\alpha,\tilde{\rho}}$ is full rank. If every element of $\Sp(\NN)$ is in the image of $\fv$, then by the commuting diagram in \Cref{eq: w gamma def} it follows that the image of $\fv_{\alpha,\tilde{\rho}}$ spans a submodule of full rank. This proves (2). 
\end{proof}

We will make use of a different (but equivalent, as we show below) description of the valuation $\fv_{\alpha,\tilde{\rho}}$ in our arguments in \Cref{section: compactifications}, as recorded in the following result.  

\begin{proposition}\label{prop: appendix connection}
In the setting of \Cref{lem-canonicaltototal}, assume that $(\Aa=\Aa_\MM, \fv: \Aa_\MM \to \P_\NN)$ is a detropicalization and that there exists $\B$ a convex adapted basis of $(\Aa_\MM,\fv)$. Then there exist $r$ many $\overline{F}$-valued valuations $\fv_{\alpha,\tilde{\rho},1}, \cdots, \fv_{\alpha,\tilde{\rho},r}$, such that 
\begin{equation}\label{eq: rank r val is wedge of r rank 1 val}
\fv_{\alpha,\tilde{\rho}} = \fv_{\alpha,\tilde{\rho},1} \circledast \cdots \circledast \fv_{\alpha,\tilde{\rho},r}
\end{equation}
where $\circledast$ is described in \Cref{sec: appendix valuations}. 
\end{proposition}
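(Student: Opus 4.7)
Plan of proof:

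The natural strategy is to obtain the $\fv_{\alpha,\tilde{\rho},i}$ as coordinate projections of $\fv_{\alpha,\tilde{\rho}}$, verify they are themselves valuations by leveraging the convex adapted basis $\B$, and then invoke the formal properties of the $\circledast$ operation (as set up in \Cref{sec: appendix valuations}) to recover the identity \eqref{eq: rank r val is wedge of r rank 1 val}.

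First, I would set up notation. Recall from the proof of \Cref{lem-canonicaltototal} that the composition $\mathrm{ev}_{\overline{\varepsilon}} \circ \phi_{\tilde{\rho}} \circ \iota_{\tilde{\rho}} \circ \v_\alpha : M_\alpha \to F^r$ is an injection which induces the total order on $M_\alpha$ used to define $\overline{M_\alpha}$, and that $\fv_{\alpha,\tilde{\rho}}(f)$ is characterized (for $f \neq 0$) as the unique preimage in $M_\alpha$ of $\Phi \circ \phi_{\tilde{\rho}} \circ \iota_{\tilde{\rho}} \circ L_{C_\alpha} \circ \fv(f) \in F^r$. Identifying $M_\alpha$ with its image in $F^r$, I would define
\[
\fv_{\alpha,\tilde{\rho},i} : \Aa_\MM \to \overline{F}, \qquad \fv_{\alpha,\tilde{\rho},i}(f) := \pi_i\bigl(\fv_{\alpha,\tilde{\rho}}(f)\bigr),
\]
where $\pi_i$ denotes projection to the $i$-th coordinate of $F^r$, with the convention $\pi_i(\infty)=\infty$.

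Second, I would verify that each $\fv_{\alpha,\tilde{\rho},i}$ is a valuation. The subtlety is that naive coordinate projection of a lex-valuation is not automatically a valuation, so this is where the convex adapted basis $\B$ is essential. For basis elements $\bb \in \B$ we have $\fv(\bb) \in \Sp(\NN)$, so $\fv_{\alpha,\tilde{\rho}}(\bb)$ lies in (the image of) $M_\alpha$ and each $\fv_{\alpha,\tilde{\rho},i}(\bb)$ is explicitly computable. Property (1) of a convex adapted basis gives $\fv(\sum \lambda_j \bb_j) = \bigoplus_j \fv(\bb_j)$ in $P_\NN$, which upon pushing through the semialgebra morphisms of \Cref{lem-canonicaltototal} translates to $\fv_{\alpha,\tilde{\rho}}(\sum \lambda_j \bb_j) = \min_{\mathrm{lex}}\{\fv_{\alpha,\tilde{\rho}}(\bb_j)\}$. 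I would then apply the appendix's criterion (cf.~\Cref{sec: appendix valuations}) that gives, from a rank-$r$ valuation admitting a convex adapted basis together with an order-inducing coordinate system, a well-defined sequence of rank-$1$ valuations obtained by iteratively restricting to successive filtration pieces and passing to associated graded algebras. This iterative procedure is exactly designed so that the $i$-th valuation extracted agrees with the $i$-th coordinate projection of $\fv_{\alpha,\tilde{\rho}}$ at the level of basis elements, and then extends by the multiplicativity/additivity properties forced on it by the appendix construction. The convex adapted basis property is what guarantees that this extension is consistent and well-defined on all of $\Aa_\MM$.

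Third, I would conclude by invoking the appendix's construction of $\circledast$ directly. By the defining property of the wedge operation, $\fv_{\alpha,\tilde{\rho},1} \circledast \cdots \circledast \fv_{\alpha,\tilde{\rho},r}$ is a valuation $\Aa_\MM \to \overline{F^r}$ (with lex order) whose $i$-th coordinate recovers $\fv_{\alpha,\tilde{\rho},i}$ on the appropriate filtration piece. Since $\fv_{\alpha,\tilde{\rho}}$ and the wedge agree on the convex adapted basis $\B$ by construction, and both satisfy the min-rule on $\K$-linear combinations of $\B$-elements (the wedge by its appendix properties, $\fv_{\alpha,\tilde{\rho}}$ by property (1) of convex adapted bases), the two valuations agree on all of $\Aa_\MM$.

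The main obstacle will be Step 2, namely checking that each coordinate projection $\fv_{\alpha,\tilde{\rho},i}$ genuinely satisfies the valuation axioms, since lex projections of valuations are in general only quasivaluations unless one passes to successive associated graded algebras. The key leverage is that the convex adapted basis $\B$ reduces every verification to a statement about elements of $\MM$ (which live at the combinatorial level), where the $F$-linearity of $\v_\alpha$ on $C_\alpha$ (\Cref{prop-evaluationmap}) and the full-rank statement of \Cref{lem-canonicaltototal}(2) give enough rigidity to make the coordinate projections behave additively and multiplicatively.
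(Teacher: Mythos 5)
Your definition of the rank-one pieces is the wrong one, and that gap is fatal to the approach as written. You set $\fv_{\alpha,\tilde{\rho},i}(f) := \pi_i(\fv_{\alpha,\tilde{\rho}}(f))$, the $i$-th coordinate of the lex-minimum. But the $i$-th coordinate of a lex-minimum over a set of vectors is \emph{not} the minimum of the $i$-th coordinates over that set; e.g.\ lex-$\min\{(1,0),(0,5)\}=(0,5)$ has second coordinate $5$, whereas the coordinatewise second minimum is $0$. The paper instead defines $\fv_{\alpha,\tilde{\rho},\ell}$ by composing $\fv$ with a genuinely different semialgebra morphism $\Phi_\ell$ that evaluates each point in $T$ at $\varepsilon_\ell$ and \emph{then} minimizes; this is the coordinatewise minimum, not a projection of the lex-minimum. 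The two constructions agree on elements of the convex adapted basis $\B$ (where $T$ is a singleton), which may be what misled you, but they diverge on a generic sum of basis elements.

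Worse, your $\fv_{\alpha,\tilde{\rho},i}$ for $i\geq 2$ are not valuations, nor even quasivaluations, so the $\circledast$ machinery of \Cref{sec: appendix valuations} cannot take them as input. The issue is that $\pi_i$ for $i\geq 2$ does not preserve the lex order: one can have $\fv_{\alpha,\tilde{\rho}}(f)=\fv_{\alpha,\tilde{\rho}}(g)=(0,5)$ while a cancellation forces $\fv_{\alpha,\tilde{\rho}}(f+g)=(1,-10)$; then $F_{\fv_{\alpha,\tilde{\rho},2}\geq 5}$ fails to be closed under addition, so you do not even get a filtration by $\K$-subspaces. You flag this concern yourself, but the fix you sketch --- extracting the rank-one pieces by "iteratively restricting to successive filtration pieces and passing to associated graded algebras" --- is not a procedure the appendix supplies (\Cref{sec: appendix valuations} only goes from $r$ valuations to one via $\circledast$ and $\boxplus$, never the reverse), and it would not produce your coordinate projections in any case. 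The correct move is to define $\fv_{\alpha,\tilde{\rho},\ell} := \Phi_\ell \circ \phi_{\tilde{\rho}} \circ \iota_{\tilde{\rho}} \circ L_{C_\alpha} \circ \fv$ directly as in the paper, where $\Phi_\ell$ is shown to be a semialgebra morphism by the same argument as for $\Phi$ in \Cref{lem-canonicaltototal}; \Cref{lemma: semialg hom and valuations} then makes each $\fv_{\alpha,\tilde{\rho},\ell}$ a valuation for free, and your step 3 (agreement on $\B$ plus $\B$ being a common adapted basis) becomes a correct closing argument.
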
 

\begin{proof} 
Let $\ell \in [r]$. Let $\varepsilon_\ell$ be the $\ell$-th standard basis vector in $F^r_{\geq 0}$. Let $\Phi_\ell: \O^{PL}_{F^r_{\geq 0}} \to \overline{F}$ be defined by $\Phi_\ell(f) := \min_{m \in T}\{\mathrm{ev}_{\varepsilon_\ell} \circ \phi_{\tilde{\rho}} \circ \iota_{\tilde{\rho}} \circ \v_\alpha(m)\}$ where $\mathrm{ev}_{\varepsilon_\ell}$ is the evaluation map at $\varepsilon_\ell$, and the $\min$ on the RHS is with respect to the total order on $F$. An argument similar to that given for \Cref{lem-canonicaltototal} shows that this is a semialgebra morphism, so we may define $r$ many $\overline{F}$-valued valuations by $\fv_{\alpha,\tilde{\rho},\ell} := \Phi_\ell \circ \phi_{\tilde{\rho}} \circ \iota_{\tilde{\rho}} \circ L_{C_\alpha} \circ \fv: \Aa_\MM \to \overline{F}$. To see \Cref{eq: rank r val is wedge of r rank 1 val} it would suffice to show that $\B$ is an adapted basis for both LHS and RHS, and that the two sides agree when evaluated on any basis element $\bb \in \B$. Both of these statements follow from the definitions of detropicalization, convex adapted bases, the LHS as given in the proof of \Cref{lem-canonicaltototal}, and $\circledast$ given in \Cref{sec: appendix valuations}. 
\end{proof}

 The valuations $\fv_{\alpha,\tilde{\rho}}$ in \Cref{lem-canonicaltototal} are also compatible with mutations in a natural sense.

\begin{proposition}\label{prop-mutation-elements}
In the setting of \Cref{lem-canonicaltototal}, assume that $(\Aa=\Aa_\MM, \fv: \Aa_\MM \to P_\NN)$ is a detropicalization and that there exists $\B$ a convex adapted basis of $(\Aa_\MM, \fv)$. Let $\fv_{\alpha,\tilde{\rho}}$ be the valuation constructed in \Cref{lem-canonicaltototal}.  Then $\fv_{\alpha, \tilde{\rho}}(\bb) = \pi_\alpha (\v^{-1} (\fv(\bb)))$ for any $\bb\in\B$, and, $\fv_{\alpha, \tilde{\rho}}(\B) = \pi_\alpha(\v^{-1}(\fv(\Aa\setminus\{0\})))$ is independent of the choice of $\tilde{\rho}$. 
 Moreover, for all $\alpha'\in\pi(\MM)$ and any choice of bases $\tilde{\rho}, \tilde{\rho}'$, we have that $\mu_{\alpha, \alpha'}\fv_{\alpha, \tilde{\rho}}(\bb) = \fv_{\alpha',\tilde{\rho}'}(\bb)$. 
\end{proposition}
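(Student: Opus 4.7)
The plan is to unwind the construction of $\fv_{\alpha,\tilde{\rho}}$ from the proof of \Cref{lem-canonicaltototal}, and show that when evaluated on a convex adapted basis element $\bb \in \B$, everything collapses to a singleton minimum. The crucial ingredient is the $\NN$-side analogue of the commuting diagram in \Cref{prop-evaluationmap} (obtained by swapping the roles of $\MM,\NN$ and of $\v,\w$), which gives the identity $L_{C_\alpha}\circ\v = \v_\alpha\circ\pi_\alpha$ as maps $\MM_{F'}\to \Hom(C_\alpha\cap\NN_{F'},F')$.

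For the first claim, fix $\bb\in\B$. Since $\B$ is convex adapted, \Cref{def_CAB}(2) together with the isomorphism $S_\MM\cong P_\NN$ of \Cref{prop-salgebras} gives that $\fv(\bb)\in \Sp(\NN)$, so $m:=\v^{-1}(\fv(\bb))$ is a well-defined single element of $\MM$. The $\NN$-side commuting diagram then yields $L_{C_\alpha}(\fv(\bb)) = \v_\alpha(\pi_\alpha(m))$, a single linear functional rather than a min-combination. Consequently, the indexing set $T$ in the formula for $\Phi$ (see the proof of \Cref{lem-canonicaltototal}) is the singleton $\{\pi_\alpha(m)\}$, and one reads off that $\Phi(\phi_{\tilde{\rho}}\circ\iota_{\tilde{\rho}}\circ L_{C_\alpha}(\fv(\bb))) = \mathrm{ev}_{\overline{\varepsilon}}\circ\phi_{\tilde{\rho}}\circ\iota_{\tilde{\rho}}\circ\v_\alpha(\pi_\alpha(m))$. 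The defining property of $\fv_{\alpha,\tilde{\rho}}(\bb)$ as the unique preimage under this composition then gives $\fv_{\alpha,\tilde{\rho}}(\bb)=\pi_\alpha(m)=\pi_\alpha(\v^{-1}(\fv(\bb)))$, as desired.

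The second claim is now nearly automatic: by the first claim, $\fv_{\alpha,\tilde{\rho}}(\B)=\{\pi_\alpha(\v^{-1}(\fv(\bb))):\bb\in\B\}$, which visibly does not depend on $\tilde{\rho}$. To match this with $\pi_\alpha(\v^{-1}(\fv(\Aa_\MM\setminus\{0\})))$, one uses the convex adapted hypothesis \Cref{def_CAB}(1): writing $f=\sum_i\lambda_i\bb_i$ with $\lambda_i\in\K^*$ gives $\fv(f)=\bigoplus_i\fv(\bb_i)$ in $P_\NN$, which under $P_\NN\cong S_\MM$ corresponds to the formal $\oplus$-sum of the elements $\v^{-1}(\fv(\bb_i))\in\MM$; taking $\v^{-1}$ of the components and applying $\pi_\alpha$ therefore produces exactly the same set as $\fv_{\alpha,\tilde{\rho}}(\B)$.

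For the third claim, apply the first claim twice: $\fv_{\alpha,\tilde{\rho}}(\bb)=\pi_\alpha(m)$ and $\fv_{\alpha',\tilde{\rho}'}(\bb)=\pi_{\alpha'}(m)$ for the same $m=\v^{-1}(\fv(\bb))\in\MM$. Since the $\alpha$-th and $\alpha'$-th chart representatives of any element of $\MM$ are by definition related by the mutation $\mu_{\alpha,\alpha'}$, we conclude $\mu_{\alpha,\alpha'}\fv_{\alpha,\tilde{\rho}}(\bb)=\mu_{\alpha,\alpha'}\pi_\alpha(m)=\pi_{\alpha'}(m)=\fv_{\alpha',\tilde{\rho}'}(\bb)$. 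The only subtle step in the whole argument is the first one, where some care is needed to interpret $\v^{-1}$ (only defined on $\Sp(\NN)$) via the identification with $S_\MM$; once that is set up, everything else is a definition-chase using diagrams already established in \Cref{sec: duals}.
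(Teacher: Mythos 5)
Your proposal is correct and follows essentially the same route as the paper's proof: you use \Cref{def_CAB}(2) together with the commuting diagram of \Cref{prop-evaluationmap} (in its $\NN$-side form) to get the first identity, then invoke \Cref{def_CAB}(1) to reduce the image of $\fv_{\alpha,\tilde{\rho}}$ to that of $\B$, and finally deduce the mutation compatibility by applying the first identity in two charts. The only difference is that you unwind the construction of $\Phi$ and the singleton index set $T$ more explicitly than the paper, which compresses that step into a single reference to the commuting diagram.
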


\begin{proof}
Let $\mathbb{b}\in\B$. By assumption (2) of \Cref{def_CAB}, we know $\fv(\mathbb{b})=(\v \circ \pi_\alpha^{-1})(m)$ for some $m\in M_\alpha$. Now by the commutative diagram \Cref{eq: w gamma def} and the definition of $\fv_{\alpha,\tilde{\rho}}$ we have $\fv_{\alpha,\tilde{\rho}}(\bb) = m$. 
The first claim of the proposition follows. For the second assertion, note that by assumption (1) of \Cref{def_CAB} we know that for any $f \in \Aa \setminus \{0\}$ we have $\fv(f) = \bigoplus_i \fv(\bb_i)$ for some $\bb_i \in \B$. It follows that any $\fv_{\alpha,\tilde{\rho}}(f)$ for $f \in \Aa_\MM \setminus \{0\}$ is equal to $\fv_{\alpha,\tilde{\rho}}(\bb)$ for some $\bb \in \B$ and the assertion follows. The last claim follows since, by what we have just shown, $\mu_{\alpha,\alpha'}\fv_{\alpha,\tilde{\rho}}(\bb) = \mu_{\alpha,\alpha'}\pi_\alpha(\v^{-1}(\fv(\b))) = \pi_{\alpha'}(\v^{-1}(\fv(\b))) = \fv_{\alpha',\tilde{\rho}'}(\bb)$. 
\end{proof}

\begin{remark}\label{remark: CAB axiom}
As can be seen from the above proof, the independence of $\fv_{\tilde{\rho}}(\bb)$ from the choice of basis holds because of assumption (2) in \Cref{def_CAB}, which stipulates that for $\bb\in\B$, its value under the valuation $\fv(\bb)$ is a single point in $\Sp(\NN)$, rather than a minimum of a collection of points. If its value were a non-trivial min-combination of points, then from the definition of $\psi_{\tilde{\rho}}$ it follows that we must take a minimum with respect to the lex order, which depends on $\tilde{\rho}$.  
\exampleqed
\end{remark}

Let $\B$ be a subset of $\Aa_\MM$. Our next goal is to show an equivalence between the conditions that $\B$ is a convex adapted basis, and, that $\B$ maps bijectively to $\MM$ (or equivalently $\Sp(\NN)$). This is the content of \Cref{corollary: CAB bijective with MM} and \Cref{prop-detropbasis}. This equivalence gives us an effective tool for constructing convex adapted bases. \textbf{For this discussion, we assume that $\MM$ is a polyptych lattice defined over $\Z$.}

 In the arguments below, it is useful to know when valuations have one-dimensional leaves, because -- among other things -- this allows us to detect convex adapted bases. For this purpose, Abhyankar's inequality \cite[Theorem 6.6.7]{HunekeSwanson} is useful; below, we give the statement using our notation and in the form appropriate for our situation. 

\begin{theorem}[Abhyankar's inequality]\label{theorem: Abhyankar}
Let $\Aa$ be a Noetherian $\K$-algebra which is an integral domain and let $\fv: \Aa \to \Gamma$ be a valuation where $\Gamma$ is a totally ordered abelian group. Let $\tilde{\fv}: \mathcal{K}(\Aa) \to \Gamma$ denote the natural extension of $\fv$ to the fraction field of $\Aa$ by defining $\tilde{\fv}(f/g) := \fv(f)-\fv(g)$ for $f,g \in \Aa, g \neq 0$. Let $\Aa_{\mathfrak{p}}$ denote the localization of $\Aa$ at the prime ideal $\mathfrak{p} := \{f \in \Aa: \fv(f) > 0\}$ with maximal ideal $m_{\mathfrak{p}} = \mathfrak{p}\Aa_{\mathfrak{p}}$ and residue field $Q(\mathfrak{p}) := \Aa_{\mathfrak{p}}/m_{\mathfrak{p}}$. Let $V := \{\frac{f}{g} \in \mathcal{K}(\Aa)^* \, \mid \, \fv(f)-\fv(g) \geq 0\} \cup \{0\} \subset \mathcal{K}(\Aa)$ be the valuation ring of $\fv$ with maximal ideal $m_V := \{ \frac{f}{g} \in \mathcal{K}(\Aa)^* \, \mid \, \fv(f)-\fv(g) > 0 \} \cup \{0\}$ and residue field $k(\fv)$. Let $\mathrm{rat.rk}(\fv) := \dim_{\Q}(\Gamma_v \otimes_{\Z} \Q)$. Then 
\begin{enumerate}
\item $\mathrm{rat.rk}(\fv) + \mathrm{tr.deg}_{Q(\mathfrak{p})}(k(\fv)) \leq \dim(\Aa_p)$, where $\dim(\Aa_p)$ denotes the Krull dimension of $\Aa_p$, and 
\item if $\mathrm{rat.rk}(\fv) + \mathrm{tr.deg}_{Q(\mathfrak{p})}(k(\fv)) = \dim(\Aa_p)$, then $k(\fv)$ is finitely generated over $Q(\mathfrak{p})$. 
\end{enumerate} 
\end{theorem}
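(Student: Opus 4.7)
The plan is to follow the classical argument for Abhyankar's inequality; since the statement is essentially a known result from commutative algebra being restated in the notation of this paper, the strategy is to reduce to the local case and then exhibit enough algebraically independent elements in $\Aa_\mathfrak{p}$ to force the dimension bound. Specifically, one first replaces $\Aa$ by $\Aa_\mathfrak{p}$: the value group $\Gamma_\fv$, the valuation ring $V$, and its residue field $k(\fv)$ all depend only on the localization, and $\dim(\Aa_\mathfrak{p})$ is exactly the quantity on the right-hand side. So it suffices to prove the bound in the case where $(\Aa,\mathfrak{p}, Q(\mathfrak{p}))$ is local with $\fv$ non-negative on $\Aa$ and strictly positive on $\mathfrak{p}$.

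Next, I would select the witnesses. Choose $f_1,\dots,f_s \in \Aa \setminus \{0\}$ with $s = \mathrm{rat.rk}(\fv)$ and $\fv(f_1),\dots,\fv(f_s)$ a $\Q$-linearly independent set in $\Gamma_\fv \otimes_\Z \Q$; and choose $h_1,\dots,h_t \in V$ with $t = \mathrm{tr.deg}_{Q(\mathfrak{p})}(k(\fv))$ whose residues $\bar h_i \in k(\fv)$ are algebraically independent over $Q(\mathfrak{p})$. Writing $h_i = g_i/g_i'$ with $g_i,g_i' \in \Aa$ and $\fv(g_i) = \fv(g_i')$, we may work entirely inside $\Aa$. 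The central claim I would then prove is that $f_1,\dots,f_s,g_1,\dots,g_t$ are algebraically independent modulo $\mathfrak{p}$ in a sense strong enough to force $s+t \le \dim \Aa$.

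The mechanism is the standard ``distinct-values trick'' combined with reduction to residues. Given an alleged polynomial relation $\sum_{I,J} c_{I,J}\, f^I g^J = 0$ in $\Aa$ (with $c_{I,J}\in\K$ or, more generally, in an appropriate subring), I would argue as follows: the $\Q$-linear independence of the $\fv(f_i)$ implies that for each fixed multi-index $J$, the values $\fv(f^I g^J)$ as $I$ varies are pairwise distinct, so after grouping terms by $J$ and sorting by value one uses axiom (2) of a valuation (with equality in the ``distinct values'' situation) to identify the minimum-value contribution; passing this minimum-value coefficient to $V/m_V = k(\fv)$ and invoking algebraic independence of the $\bar h_i$ over $Q(\mathfrak{p})$ forces the leading coefficient to vanish, then one iterates. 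This yields a chain of prime ideals of length $s+t$ in $\Aa_\mathfrak{p}$, giving the inequality in (1).

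The main obstacle will be setting up the ``staircase'' bookkeeping precisely so that grouping by value and then by residue is done in a compatible way, and handling the case where the $c_{I,J}$ lie not in a field but in the smaller ring $\Aa_\mathfrak{p}$ containing $\mathfrak{p}$-torsion. For part (2), the finite generation of $k(\fv)$ over $Q(\mathfrak{p})$ in the equality case would follow by contradiction: any transcendental extension of $Q(\mathfrak{p})(\bar h_1,\dots,\bar h_t)$ inside $k(\fv)$ could be lifted to an element of $V$ producing one extra algebraically independent residue, and combined with the existing $f_i$ via the same staircase argument this would yield $s + (t{+}1) \le \dim \Aa_\mathfrak{p}$, contradicting equality. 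This last step is where the rigidity of the equality case is leveraged, and is the most delicate point of the argument.
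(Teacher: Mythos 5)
This theorem is not proved in the paper at all: it is simply cited as \cite[Theorem 6.6.7]{HunekeSwanson}, so there is no proof in the paper to compare your attempt to. What can be assessed is whether your sketch, taken on its own, would yield a correct proof.

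For the inequality in part (1), the overall plan — localize at $\mathfrak{p}$, pick $f_1,\dots,f_s$ with $\Q$-linearly independent values, pick $h_1,\dots,h_t$ with algebraically independent residues, and run the ``distinct values'' argument on a hypothetical polynomial relation — is the right idea and is close to the standard proof (Abhyankar, Zariski--Samuel, Huneke--Swanson). The bookkeeping in the sorting step is more delicate than your sketch suggests: you need to first group terms by the value $\fv(f^I)$ (this is where the $\Q$-linear independence is used, to ensure that monomials $f^I$ with distinct $I$ have distinct values), and only then, within a single value level, pass to residues to use the algebraic independence of the $\bar h_i$. You also need to explain how the resulting algebraic independence statement translates into a lower bound for $\dim \Aa_{\mathfrak{p}}$: for a Noetherian local ring this is done by producing a chain of primes of length $s+t$, and one has to say where that chain comes from. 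These are not fatal problems, but as written the argument does not close.

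The genuine gap is in your proposed proof of part (2). You argue that if $k(\fv)$ were not finitely generated over $Q(\mathfrak{p})$, one could ``lift a transcendental extension'' to get one more algebraically independent residue and push the count to $s + (t{+}1)$, contradicting equality. But finite generation of a field extension is strictly stronger than having the expected transcendence degree: $k(\fv)$ could be an infinite \emph{algebraic} extension of $Q(\mathfrak{p})(\bar h_1,\dots,\bar h_t)$, in which case it is not finitely generated as a field extension, yet there is no extra transcendental element to find, and your argument produces no contradiction. Your argument only re-derives the transcendence degree bound, which is already part (1). The actual proof of the equality case is substantially more delicate (it also shows $\Gamma_\fv$ is finitely generated as an abelian group, which your sketch does not address); it typically proceeds by induction, peeling off one rank of the valuation at a time, and uses Noetherianity of $\Aa$ in a more serious way than simply to count. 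Since the paper relies on part (2) crucially in \Cref{proposition: full rank one dim leaves} — to conclude $k(\fv)$ is algebraic (hence equal to $\K$) over $Q(\mathfrak{p})$ rather than merely of transcendence degree zero — this gap matters.
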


It follows from Abhyankar's inequality that the full-rank valuations have one-dimensional leaves. The precise statement is as follows. 

\begin{proposition}\label{proposition: full rank one dim leaves}
Let $\Aa$ be a Noetherian $\K$-algebra which is an integral domain. Suppose that $\fv: \Aa \to \Gamma$ is a full rank valuation, i.e.~$\mathrm{rank}(\fv)$ equals the Krull dimension of $\Aa$. Then $\fv$ has one-dimensional leaves. 
\end{proposition}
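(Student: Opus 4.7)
The plan is to invoke Abhyankar's inequality (Theorem~\ref{theorem: Abhyankar}) to force the residue field $k(\fv)$ of the valuation ring $V \subset \mathcal{K}(\Aa)$ to equal $\K$, and then deduce the one-dimensional-leaves property from that equality.

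For the reduction step, suppose $f, g \in \Aa \setminus \{0\}$ both satisfy $\fv(f) = \fv(g) = i$. The extension $\tilde{\fv}$ of $\fv$ to $\mathcal{K}(\Aa)$ satisfies $\tilde{\fv}(f/g) = 0$, so $f/g$ lies in $V \setminus m_V$ and has a nonzero residue class in $k(\fv)$. If $k(\fv) = \K$, that class is some $c \in \K^*$, which forces $\tilde{\fv}(f/g - c) > 0$ and hence $\fv(f - cg) > i$; thus $[f] = c\,[g]$ in the leaf $F_{\fv \geq i}/F_{\fv > i}$. Since any two nonzero classes in a leaf are then proportional, each leaf is at most one-dimensional.

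For the main step, set $d := \dim(\Aa)$ and let $\mathfrak{p} := \{f \in \Aa : \fv(f) > 0\}$, which is a prime ideal of $\Aa$. The full-rank hypothesis combined with Abhyankar's inequality yields
$$
d = \mathrm{rat.rk}(\fv) \leq \mathrm{rat.rk}(\fv) + \mathrm{tr.deg}_{Q(\mathfrak{p})}(k(\fv)) \leq \dim(\Aa_\mathfrak{p}) \leq d,
$$
so these are all equalities. In particular $\mathrm{tr.deg}_{Q(\mathfrak{p})}(k(\fv)) = 0$, and part~(2) of Abhyankar's theorem then gives that $k(\fv)$ is finitely generated over $Q(\mathfrak{p})$; combined with transcendence degree zero, $k(\fv)/Q(\mathfrak{p})$ is a finite algebraic extension. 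Since $\dim(\Aa_\mathfrak{p}) = \dim(\Aa)$, the prime $\mathfrak{p}$ is maximal in $\Aa$, and the Nullstellensatz (applied to $\Aa$ finitely generated over $\K$, which is the setting of interest for our detropicalizations) identifies $Q(\mathfrak{p}) = \Aa/\mathfrak{p}$ with $\K$. Because $\K$ is algebraically closed and $k(\fv)/\K$ is finite, $k(\fv) = \K$, and we conclude via the reduction step.

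The main obstacle is this final identification $Q(\mathfrak{p}) = \K$: it relies on Hilbert's Nullstellensatz, which requires $\Aa$ to be a finitely generated $\K$-algebra rather than merely Noetherian. For the detropicalizations produced in this paper this is automatic, but in full generality one would need to argue directly that $k(\fv)$ is algebraic over $\K$, for example by exploiting that $\K \hookrightarrow V$ descends to $\K \hookrightarrow k(\fv)$ and then using the extracted equality $\mathrm{tr.deg}_{Q(\mathfrak{p})}(k(\fv)) = 0$ combined with a direct bound on $\mathrm{tr.deg}_\K(k(\fv))$.
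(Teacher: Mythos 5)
Your proof is correct and follows the same skeleton as the paper's: apply Abhyankar's inequality with the full-rank hypothesis to force $\mathrm{tr.deg}_{Q(\mathfrak{p})}(k(\fv)) = 0$ and $k(\fv)$ algebraic over $Q(\mathfrak{p})$, then identify $Q(\mathfrak{p})$ with $\K$, use algebraic closedness to get $k(\fv)=\K$, and deduce one-dimensional leaves. Two small divergences are worth noting. First, your reduction step (deducing one-dimensional leaves from $k(\fv)=\K$) works directly with $f/g$ and its residue class; the paper instead passes to $\mathrm{gr}_{\fv}(\mathcal{K}(\Aa)) \cong k(\fv)[\Gamma_\fv]$ and observes that each leaf of $\mathrm{gr}_\fv(\Aa)$ embeds in a graded piece of this group algebra, hence in $k(\fv)=\K$. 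Both are correct; yours is a bit more hands-on. Second, you identify $Q(\mathfrak{p})=\K$ via the weak Nullstellensatz, whereas the paper invokes the formula $\mathrm{tr.deg}_\K(Q(\mathfrak{p})) = \dim(\Aa/\mathfrak{p})$. You are right that the Nullstellensatz route requires $\Aa$ to be a finitely generated $\K$-algebra rather than merely Noetherian; but the paper's route has the identical dependence, since the transcendence-degree/dimension identity for quotients also holds only for finitely generated $\K$-algebras. So your caveat is a genuine observation about the proposition as stated: both arguments tacitly use that $\Aa$ is finitely generated over $\K$ (which is indeed the case for all detropicalizations constructed in the paper), and the proposition's hypotheses should arguably say so explicitly.
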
 

\begin{proof} 
If  $\mathrm{rank}(\fv)=\dim(\Aa)$, then $\mathrm{rat.rk}(\fv)=\dim(\Aa)$. Since $\dim(\Aa) \geq \dim(\Aa_{\mathrm{p}})$, by \Cref{theorem: Abhyankar}(1) this forces $\mathrm{tr.deg}_{Q(\fv)}(k(\fv))=0$ and $\dim(\Aa_{\mathrm{p}})=\dim(\Aa)$. Moreover, by \Cref{theorem: Abhyankar}(2) we have $k(\fv)$ is finitely generated over $Q(\mathfrak{p})$, so $k(\fv)$ is an algebraic extension of $Q(\mathfrak{p})$. Note now that $\K$ is a subfield of $Q(\mathfrak{p})$ since $\fv$ is trivial on $\K$ and that $\mathrm{tr.deg}_\K(Q(\mathfrak{p})) = \dim(\Aa/\mathrm{p})$. We have $\dim(\Aa_{\mathfrak{p}}) + \mathrm{height}(\mathfrak{p}) \leq \dim(\Aa)$ and $\mathrm{height}(\mathfrak{p}) = \dim(\Aa_{\mathfrak{p}})$, but the latter is equal to $\dim(\Aa)$ as we saw above. Hence $\dim(\Aa/\mathfrak{p}) = 0 = \mathrm{tr.deg}_\K(Q(\mathrm{p})) = 0$. Since $\K$ is algebraically closed, $\K=Q(\mathfrak{p})$. Going back to the transcendence degree of $k(\fv)$ over $Q(\mathfrak{p})$, this means $k(\fv)$ is algebraic over $\K$ and again by algebraic closure we have $k(\fv)=\K$. 

Now observe that when we pass to associated graded algebras, $\mathrm{gr}_{\fv}(\Aa) \subset \mathrm{gr}_{\fv}(\mathcal{K}(\Aa))$. One can see that $\mathrm{gr}_{\fv}(\mathcal{K}(\Aa)) \cong k(\fv)[\Gamma_\fv]$, the group ring over the field $k(\fv)$ of the value semigroup $\Gamma_{\fv}$. (Since the homogeneous elements of $\mathrm{gr}_{\fv}(\Aa)$ are not zero divisors and each graded piece is $1$-dimensional, it follows that $\mathrm{gr}_{\fv}(\Aa)$ is isomorphic to the semigroup algebra over $\Gamma_{\fv}$, see e.g.\ \cite[Remark 4.13]{BrunsGubeladze}.)  Hence each graded piece $F_{\fv \geq i}/F_{\fv > i}$ of $\mathrm{gr}_\fv(\Aa)$ for $i \in \Gamma_\fv$ is a subspace of $k(\fv)$. We have seen above that $k(\fv)=\K$; this implies each graded piece is a $1$-dimensional $\K$-vector space, or equivalently, $\fv$ on $\Aa$ has one-dimensional leaves, as was to be shown. 
\end{proof} 

The following is then immediate. 

\begin{corollary}\label{corollary: v alpha rho has one dim leaves}
Let $\MM$ be a finite polyptych lattice over $\Z$ and fix a choice of strict dual pair $(\MM, \NN, \v, \w)$ over $\Z$. Let $\Aa$ be a Noetherian $\K$-algebra which is an integral domain and let $\fv: \Aa \to \P_\NN$ be a valuation with values in $\P_\NN$. 
 Let $\alpha \in \pi(\MM)$ be a choice of coordinate chart of $\MM$ and let $\tilde{\rho} \subset C_\alpha$ be a choice of ordered basis for $\NN$, where $C_\alpha := \w^{-1}(\Sp_\R(\MM,\alpha)) \subset \NN_\R$. Then the associated valuation $\fv_{\alpha,\tilde{\rho}}$ has one-dimensional leaves, and the associated graded algebra $\mathrm{gr}_{\fv_{\alpha,\tilde{\rho}}}(\Aa_\MM)$ is isomorphic to the semigroup algebra $\K[S(\Aa_\MM,\fv_{\alpha,\tilde{\rho}})]$, where $S(\Aa_\MM,\fv_{\alpha, \tilde{\rho}})$ is the value semigroup of $\fv_{\alpha,\tilde{\rho}}$.  
\end{corollary}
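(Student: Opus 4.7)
The plan is to observe that this is essentially a direct corollary assembled from \Cref{lem-canonicaltototal}(2), \Cref{proposition: full rank one dim leaves}, and the final paragraph of the proof of \Cref{proposition: full rank one dim leaves}. First, I note that for the claim to make sense as stated (with $\Aa_\MM$ appearing in the final conclusion), one must tacitly assume that $(\Aa = \Aa_\MM, \fv)$ is a detropicalization of $\MM$; without this, the hypothesis that every element of $\MM$ lies in the image of $\fv$ is unavailable, and $\fv_{\alpha,\tilde{\rho}}$ need not have full rank. I would either add this hypothesis explicitly at the start of the proof or note that the corollary is stated under the standing assumption of being a detropicalization.

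Granting this, the first step is to invoke \Cref{lem-canonicaltototal}(2), which tells us that $\fv_{\alpha,\tilde{\rho}}\colon \Aa_\MM \to \overline{M_\alpha}$ is a full-rank valuation, i.e., the rank of its value semigroup equals $r = \dim(\Aa_\MM)$. Then I would apply \Cref{proposition: full rank one dim leaves} directly: any full-rank valuation on a Noetherian integral domain $\K$-algebra over an algebraically closed field has one-dimensional leaves. This establishes the first assertion.

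For the second assertion, I would re-run the final paragraph of the proof of \Cref{proposition: full rank one dim leaves} in this specific context. Concretely, passing to the fraction field $\mathcal{K}(\Aa_\MM)$ and its induced valuation $\tilde{\fv}_{\alpha,\tilde{\rho}}$, one has the standard identification $\mathrm{gr}_{\tilde{\fv}_{\alpha,\tilde{\rho}}}(\mathcal{K}(\Aa_\MM)) \cong k(\fv_{\alpha,\tilde{\rho}})[\Gamma_{\fv_{\alpha,\tilde{\rho}}}]$, and the computation inside that proof shows $k(\fv_{\alpha,\tilde{\rho}}) = \K$. Restricting to $\mathrm{gr}_{\fv_{\alpha,\tilde{\rho}}}(\Aa_\MM) \subseteq \mathrm{gr}_{\tilde{\fv}_{\alpha,\tilde{\rho}}}(\mathcal{K}(\Aa_\MM))$, only the graded pieces indexed by the value semigroup $S(\Aa_\MM,\fv_{\alpha,\tilde{\rho}})$ survive, and since $\Aa_\MM$ is a domain the homogeneous elements are non-zero divisors; together with the one-dimensional leaves property this forces $\mathrm{gr}_{\fv_{\alpha,\tilde{\rho}}}(\Aa_\MM) \cong \K[S(\Aa_\MM,\fv_{\alpha,\tilde{\rho}})]$ as claimed, exactly as in \cite[Remark 4.13]{BrunsGubeladze}.

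Since every ingredient has already been proved, there is no real obstacle here; the only mild subtlety is the implicit detropicalization hypothesis noted above, which I would address with a single clarifying sentence at the opening of the proof.
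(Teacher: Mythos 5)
Your proof is correct and follows the same route as the paper: invoke \Cref{lem-canonicaltototal}(2) for full rank, apply \Cref{proposition: full rank one dim leaves} for one-dimensional leaves, and then use the one-dimensional-leaves property together with the integral domain hypothesis (via \cite[Remark 4.13]{BrunsGubeladze}) to identify the associated graded with the semigroup algebra. You are also right that the corollary's statement tacitly assumes $(\Aa=\Aa_\MM,\fv)$ is a detropicalization --- the hypothesis is stated for a general valuation $\fv\colon\Aa\to\P_\NN$ while the conclusion refers to $\Aa_\MM$, and full rank via \Cref{lem-canonicaltototal}(2) indeed requires this.
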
 

\begin{proof} 
By \Cref{lem-canonicaltototal} we know that $\fv_{\alpha,\tilde{\rho}}$ has full rank, so \Cref{proposition: full rank one dim leaves} applies. Since each graded piece $F_{\fv_{\alpha,\tilde{\rho}} \geq i}/F_{\fv_{\alpha,\tilde{\rho}} > i}$ is at most $1$-dimensional, and is dimension $1$ precisely when $i \in S(\Aa_\MM,\fv_{\alpha,\tilde{\rho}})$, the second claim follows by the definition of the product structure on $\mathrm{gr}_{\fv_{\alpha,\tilde{\rho}}}(\Aa_\MM)$. 
\end{proof}

Another consequence of \Cref{proposition: full rank one dim leaves} is that a convex adapted basis $\B$ of a detropicalization $\fv: \Aa_\MM \to S_\MM \cong \P_\NN$ gives a bijection between $\B$ and $\MM \cong \Sp(\NN)$. 

\begin{corollary}\label{corollary: CAB bijective with MM}
    Let $\MM$ be a finite polyptych lattice over $\Z$ and fix a choice of strict dual pair $(\MM, \NN, \v, \w)$ over $\Z$. Let $\Aa_\MM$ be a Noetherian $\K$-algebra which is an integral domain and let $\fv: \Aa_\MM \to S_\MM \cong \P_\NN$ be a detropicalization of $\MM$. Let $\B$ be a convex adapted basis of $\fv: \Aa_\MM \to S_\MM \cong \P_\NN$. Then $\fv$ induces a bijection from $\B$ to $\MM\cong \Sp(\NN)$. 
\end{corollary}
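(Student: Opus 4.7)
The plan is to reduce both injectivity and surjectivity of $\fv|_\B: \B \to \MM$ to corresponding statements about the totally-ordered-group-valued valuation $\fv_{\alpha,\tilde{\rho}}: \Aa_\MM \to \overline{M_\alpha}$ constructed in \Cref{lem-canonicaltototal}, which by \Cref{corollary: v alpha rho has one dim leaves} has one-dimensional leaves. Fix any $\alpha \in \pi(\MM)$ and any ordered basis $\tilde{\rho}$ of $C_\alpha := \w^{-1}(\Sp_\R(\MM,\alpha)) \subset \NN_\R$. The map $\bar{\psi} := \pi_\alpha \circ \v^{-1}: \MM \cong \Sp(\NN) \to M_\alpha$ is a bijection, and by \Cref{prop-mutation-elements} we have $\fv_{\alpha,\tilde{\rho}}(\bb) = \bar{\psi}(\fv(\bb))$ for every $\bb \in \B$. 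It therefore suffices to show that $\fv_{\alpha,\tilde{\rho}}|_\B : \B \to M_\alpha$ is a bijection.

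For surjectivity, let $m \in \MM$ and set $\bar{m} := \bar{\psi}(m) \in M_\alpha$. Since $(\Aa_\MM, \fv)$ is a detropicalization, there exists $f \in \Aa_\MM \setminus \{0\}$ with $\fv(f) = m$. Writing $f = \sum_i \lambda_i \bb_i$ with $\lambda_i \in \K^*$ and $\bb_i \in \B$, axiom~(1) of a convex adapted basis, combined with the fact that $\fv_{\alpha,\tilde{\rho}}$ is obtained from $\fv$ by post-composition with the idempotent-semialgebra morphism $\P_\NN \to \overline{M_\alpha}$ built in the proof of \Cref{lem-canonicaltototal}, yields $\fv_{\alpha,\tilde{\rho}}(f) = \min_i \fv_{\alpha,\tilde{\rho}}(\bb_i)$ (in the total order on $M_\alpha$); let $\bb_{i_0}$ attain this minimum. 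Unpacking the semialgebra morphism on the single point $m \in \Sp(\NN) \subset \P_\NN$ and using the commutative square in \Cref{prop-evaluationmap}, one verifies directly that $\fv_{\alpha,\tilde{\rho}}(f) = \bar{\psi}(m) = \bar{m}$, and hence $\fv_{\alpha,\tilde{\rho}}(\bb_{i_0}) = \bar{m}$, as desired.

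For injectivity, suppose $\bb_1, \bb_2 \in \B$ are distinct but $\fv(\bb_1) = \fv(\bb_2)$; set $\bar{m} := \fv_{\alpha,\tilde{\rho}}(\bb_1) = \fv_{\alpha,\tilde{\rho}}(\bb_2)$. Since $\bb_1, \bb_2$ are distinct elements of a $\K$-vector space basis, $\bb_1 - \lambda \bb_2 \neq 0$ for every $\lambda \in \K^*$. Because $\fv_{\alpha,\tilde{\rho}}$ has one-dimensional leaves, the classes of $\bb_1$ and $\bb_2$ in the quotient $F_{\fv_{\alpha,\tilde{\rho}} \geq \bar{m}}/F_{\fv_{\alpha,\tilde{\rho}} > \bar{m}}$ are both nonzero and hence proportional, so there exists $\lambda \in \K^*$ with $\fv_{\alpha,\tilde{\rho}}(\bb_1 - \lambda \bb_2) > \bar{m}$. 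On the other hand, applying axiom~(1) of a convex adapted basis, axiom~(3) of a valuation, and the post-composition argument again gives $\fv_{\alpha,\tilde{\rho}}(\bb_1 - \lambda \bb_2) = \min\{\fv_{\alpha,\tilde{\rho}}(\bb_1), \fv_{\alpha,\tilde{\rho}}(\bb_2)\} = \bar{m}$, a contradiction.

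The main technical point, which I expect to require the most care to verify, is that $\B$ remains an \emph{adapted} basis (with equality, not merely the quasivaluation inequality) for the derived valuation $\fv_{\alpha,\tilde{\rho}}$. This is what makes both halves of the argument go through, and it hinges on the comparison map $\P_\NN \to \overline{M_\alpha}$ from \Cref{lem-canonicaltototal} being a genuine morphism of idempotent semialgebras, sending $\bigoplus$ to the $\min$ of the total order on $\overline{M_\alpha}$.
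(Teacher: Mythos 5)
Your proof is correct and follows essentially the same strategy as the paper: reduce to the associated totally-ordered-group valuation $\fv_{\alpha,\tilde{\rho}}$, invoke \Cref{corollary: v alpha rho has one dim leaves} for one-dimensional leaves, and exploit the convex-adapted-basis equality $\fv(\sum \lambda_i \bb_i) = \bigoplus_i \fv(\bb_i)$ after post-composing with the semialgebra morphism $\P_\NN \to \overline{M_\alpha}$ to rule out cancellation.

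The only difference is in the surjectivity half. The paper argues directly in $S_\MM$: if $\fv(f) = m \in \MM$ and $f = \sum \lambda_i \bb_i$, then $\bigoplus_i \fv(\bb_i) = m$, and since $\ptconv_\R(\{m\}) = \{m\}$ (\Cref{lemma: pt convex hulls lattice points finite}(2)) each $\fv(\bb_i)$ must equal $m$; the paper asserts this as ``immediate from the definitions'' without citing the singleton-hull lemma. You instead push everything through $\fv_{\alpha,\tilde{\rho}}$ and the bijection $\bar{\psi} = \pi_\alpha \circ \v^{-1}$, which is slightly longer but avoids any appeal to point-convex hulls of singletons and makes the semialgebra-morphism mechanism explicit. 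Both are valid; yours trades one reference for another but is arguably the more self-contained of the two. The ``main technical point'' you flag at the end — that $\B$ remains adapted with equality for $\fv_{\alpha,\tilde{\rho}}$ — is indeed exactly what the semialgebra-morphism property in \Cref{lem-canonicaltototal} delivers, and you are right to call it out as the load-bearing step.
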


\begin{proof} 
It follows immediately from the definitions of a detropicalization and a convex adapted basis (\Cref{def_detrop} and \Cref{def_CAB} respectively) that $\fv: \B \to \MM\cong\Sp(\NN)$ is surjective. Thus it suffices to see injectivity. For any choice of $\alpha \in \pi(\MM)$ and $\tilde{\rho}$ as in \Cref{lem-canonicaltototal} we have seen that $\fv_{\alpha,\tilde{\rho}}$ is full rank, so \Cref{proposition: full rank one dim leaves} applies. In particular, $\fv_{\alpha,\tilde{\rho}}$ has one-dimensional leaves. By \Cref{prop-mutation-elements} we know that images of convex adapted basis elements $\bb \in \B$ under $\fv$ (viewed in $S_\MM$) are related to those under $\fv_{\alpha,\tilde{\rho}}$ by the bijection $\pi_\alpha$. Since $\fv_{\alpha,\tilde{\rho}}$ has one-dimensional leaves, this implies that for any $m \in \MM$, there cannot exist two distinct basis elements $\bb,\bb'$ with $\fv(\bb)=\fv(\bb')=m$. Thus $\fv$ is injective on $\B$, and the result follows. 
\end{proof}

\Cref{prop-detropbasis} below can be viewed as a converse to the above. In particular, it allows us to give an effective criterion to construct convex adapted bases. 

\begin{proposition}\label{prop-detropbasis}
Let $\MM$ be a finite polyptych lattice of rank $r$ over $\Z$ and fix a choice of strict dual pair $(\MM, \NN, \v, \w)$ over $\Z$. Suppose $(\Aa_\MM, \fv: \Aa_\MM \to \P_\NN)$ is a detropicalization in the sense of \Cref{def_detrop}.
Let $\B \subset \Aa_\MM$ be a subset of $\Aa_\MM$ such that the restriction of $\fv$ to $\B$ induces a bijection from $\B$ to $\Sp(\NN) \subset \P_\NN$. Then $\B$ is a convex adapted basis of $\fv: \Aa_\MM \to \P_\NN$.
\end{proposition}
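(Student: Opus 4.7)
The axiom (2) of \Cref{def_CAB} is immediate from the hypothesis: each $\bb \in \B$ satisfies $\fv(\bb) \in \Sp(\NN)$, which under the isomorphism $S_\MM \xrightarrow{\sim} \P_\NN$ of \Cref{prop-salgebras} corresponds to an element of $\MM \subset S_\MM$. What remains is to establish that $\B$ is a $\K$-vector space basis of $\Aa_\MM$ and that axiom (1) of \Cref{def_CAB} holds. My plan is to transfer both questions to the full-rank valuations $\fv_{\alpha,\tilde{\rho}}\colon \Aa_\MM \to \overline{M_\alpha}$ constructed in \Cref{lem-canonicaltototal}, which by \Cref{proposition: full rank one dim leaves} (cf.\ \Cref{corollary: v alpha rho has one dim leaves}) have one-dimensional leaves. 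Tracing through the construction, I note that when $\fv(\bb)$ is a single point of $\Sp(\NN)$, the min-combination defining the map $\Phi$ reduces to a singleton; this yields the formula $\fv_{\alpha,\tilde{\rho}}(\bb) = \pi_\alpha(\v^{-1}(\fv(\bb)))$ of \Cref{prop-mutation-elements} but using only that $\fv(\bb) \in \Sp(\NN)$, rather than the (yet-unproved) convex-adapted-basis property. Consequently $\fv_{\alpha,\tilde{\rho}}|_\B\colon \B \to M_\alpha$ is a bijection and the value semigroup $S(\Aa_\MM,\fv_{\alpha,\tilde{\rho}})$ equals all of $M_\alpha$.

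The $\K$-linear independence of $\B$ and axiom (1) then both follow from a single one-dimensional-leaves computation, which I would carry out next. Given a finite combination $f = \sum_i \lambda_i \bb_i$ with distinct $\bb_i \in \B$ and $\lambda_i \in \K^*$, the values $\fv_{\alpha,\tilde{\rho}}(\bb_i)$ are pairwise distinct, so their lex-minimum is uniquely achieved by some $\bb_{i_\ast}$. Because the leaf at $m_\ast := \fv_{\alpha,\tilde{\rho}}(\bb_{i_\ast})$ is one-dimensional while $\sum_{i \neq i_\ast} \lambda_i \bb_i$ lies in the strictly deeper filtered piece $F_{\fv_{\alpha,\tilde{\rho}} > m_\ast}$, one concludes $\fv_{\alpha,\tilde{\rho}}(f) = m_\ast = \min_{\mathrm{lex},\,i}\fv_{\alpha,\tilde{\rho}}(\bb_i)$; this rules out any nontrivial relation $f = 0$, proving $\K$-linear independence. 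For axiom (1), I would use that the first coordinate of $\fv_{\alpha,\tilde{\rho}}(f)$ in the identification $M_\alpha \cong F^r$ via $\tilde{\rho}$ equals $\fv(f)(\rho_1)$ (from the construction in the proof of \Cref{lem-canonicaltototal}) and that the first coordinate of a lex-minimum is the ordinary minimum of first coordinates; the identity above then specializes to $\fv(f)(\rho_1) = \min_i \fv(\bb_i)(\rho_1)$. Varying $\alpha$ and letting $\rho_1$ range over primitive lattice points of $\NN$ extendable to $F$-bases lying inside the maximal cones of $\Sigma(\NN)$, together with $\R_{\geq 0}$-homogeneity and continuity of PL functions, extends this pointwise identity to the equality $\fv(f) = \bigoplus_i \fv(\bb_i)$ in $\P_\NN$.

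The main obstacle is the spanning step: for an arbitrary $f \in \Aa_\MM$ I would iteratively peel off leading terms, using one-dimensional leaves to choose at each stage the unique $\bb_k \in \B$ with $\fv_{\alpha,\tilde{\rho}}(\bb_k) = \fv_{\alpha,\tilde{\rho}}(f_k)$ and the unique $\lambda_k \in \K^*$ such that $f_{k+1} := f_k - \lambda_k \bb_k$ lies in $F_{\fv_{\alpha,\tilde{\rho}} > \fv_{\alpha,\tilde{\rho}}(f_k)}$. Because $M_\alpha$ with lex order is not well-ordered going up, termination of this process is not guaranteed by order-theoretic considerations alone. My plan for resolving this is to exploit the detropicalization structure: every $\fv(f) \in S_\MM$ has \emph{finite} support $S_f \subset \MM$ via the minimal decomposition $\fv(f) = \bigoplus_{p \in S_f}p$, and the multiplicative law $\fv(gh) = \fv(g)\star\fv(h)$ together with the Noetherian hypothesis on $\Aa_\MM$ should bound the supports of successive remainders and force termination in finitely many steps. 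Alternatively, one may invoke standard Khovanskii-basis techniques for Noetherian integral domains equipped with a full-rank valuation with one-dimensional leaves to lift the monomial basis of the associated graded $\gr_{\fv_{\alpha,\tilde{\rho}}}(\Aa_\MM) \cong \K[M_\alpha]$ directly to the desired basis of $\Aa_\MM$.
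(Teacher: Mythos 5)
Your overall strategy aligns closely with the paper's proof: transfer to the full-rank valuations $\fv_{\alpha,\tilde{\rho}}$, use one-dimensional leaves for linear independence, subduct iteratively for spanning, and finally extract axiom (1) of \Cref{def_CAB} pointwise on $\NN_\R$ and then extend. Your treatment of axiom (1) via the first coordinate of the lex-minimum is a minor repackaging of the paper's construction of an auxiliary valuation $\mathfrak{w}$ adapted to a fixed $n\in\NN$ and a basis beginning with $n$, and it works for essentially the same reasons.

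The genuine gap is in the termination of the subduction argument, and your proposed fix does not close it. You correctly observe that $(M_\alpha, <_{\mathrm{lex}})$ is not well-ordered, so the naive peeling of leading terms is not guaranteed to stop. But invoking the Noetherian hypothesis on $\Aa_\MM$ together with the multiplicative law $\fv(gh)=\fv(g)\star\fv(h)$ does not by itself bound the supports of the remainders: the minimal decomposition $\fv(f-c_\bb\bb)=\bigoplus_{m\in S'}\v(m)$ may have $S'\not\subseteq S_f$, so one cannot argue that the support shrinks directly, and the Noetherian property of $\Aa_\MM$ plays no role here. The paper instead tracks a different finite invariant: the set $\{p\in\Sp(\NN)\mid p\geq\fv(\cdot)\}$, which by \Cref{lem-pconvex-2} equals $\v(\ptconv_\R(S_f)\cap\MM)$, and this is finite by \Cref{lemma: pt convex hulls lattice points finite}(1) (point-convex hulls of finite subsets of $\MM$ contain only finitely many lattice points -- a fact that in turn relies on the existence of a PL polytope containing $0_\MM$ in its interior, \Cref{lemma: P with interior zero}). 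One then shows this finite set strictly decreases at each subduction step, which forces termination. Without identifying this combinatorial finiteness input, the spanning step is not complete. Your alternative suggestion of appealing to generic ``Khovanskii-basis techniques'' runs into the same problem, since those techniques also require some finiteness or well-ordering hypothesis, and the lex order on $\Z^r$ lacks it.
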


Before proving \Cref{prop-detropbasis} we give a preliminary lemma. Note that, by definition, in order to prove that $\B$ is a convex adapted basis, we must first show that $\B$ is a $\K$-vector space basis of $\Aa_\MM$.  The following lemma yields linear independence.

\begin{lemma}\label{lem-detropdep}
In the setting of \Cref{prop-detropbasis}, suppose that $\bb_1, \ldots, \bb_\ell \in \Aa_\MM$ have the property that $\fv(\bb_i) \in \Sp(\NN)$ for all $i$, and, $\fv(\bb_i) \neq \fv(\bb_j)$ for $i \neq j$. Then $\{\bb_1, \ldots, \bb_\ell\}$ is $\K$-linearly independent.  
\end{lemma}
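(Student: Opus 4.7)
The plan is to reduce the statement to a standard linear independence argument for valuations with one-dimensional leaves, by passing to a totally-ordered valuation in the sense of \Cref{lem-canonicaltototal}. Fix any chart $\alpha \in \pi(\MM)$ and any ordered basis $\tilde{\rho}$ of the cone $C_\alpha = \w^{-1}(\Sp_\R(\MM, \alpha))$, and consider the associated valuation $\fv_{\alpha,\tilde{\rho}}: \Aa_\MM \to \overline{M_\alpha}$ constructed there. Since $(\Aa_\MM, \fv)$ is a detropicalization, part (2) of \Cref{lem-canonicaltototal} guarantees that $\fv_{\alpha,\tilde{\rho}}$ has full rank, and then \Cref{proposition: full rank one dim leaves} (or equivalently \Cref{corollary: v alpha rho has one dim leaves}) implies that $\fv_{\alpha,\tilde{\rho}}$ has one-dimensional leaves.

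Next I would verify that the hypothesis $\fv(\bb_i) \in \Sp(\NN)$ is exactly what is needed so that the construction in the proof of \Cref{lem-canonicaltototal} simplifies to $\fv_{\alpha,\tilde{\rho}}(\bb_i) = \pi_\alpha(\v^{-1}(\fv(\bb_i)))$. Indeed, the lex-minimum defining $\fv_{\alpha,\tilde{\rho}}$ in general ranges over the (finite) set $S \subset \Sp(\NN)$ appearing in the unique minimal expression $\fv(\bb_i) = \bigoplus_{p \in S} p$ in $\P_\NN$; when $S = \{\fv(\bb_i)\}$ is a singleton, as is the case here, the minimum collapses to a single element, which under the identification of \Cref{prop-evaluationmap} becomes $\pi_\alpha(\v^{-1}(\fv(\bb_i)))$. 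This is the exact content of the first half of \Cref{prop-mutation-elements}, whose proof only uses that $\fv(\bb) \in \Sp(\NN)$ rather than the full strength of being a convex adapted basis. Since $\v$ and $\pi_\alpha$ are bijections and the $\fv(\bb_i)$ are distinct in $\Sp(\NN)$ by assumption, we conclude that the values $\fv_{\alpha,\tilde{\rho}}(\bb_1), \ldots, \fv_{\alpha,\tilde{\rho}}(\bb_\ell) \in M_\alpha$ are pairwise distinct.

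Finally, I would conclude by the usual ``smallest-value'' argument. Suppose for contradiction that there exist $c_1, \ldots, c_\ell \in \K$, not all zero, with $\sum_{i=1}^\ell c_i \bb_i = 0$. After discarding those $\bb_i$ with $c_i = 0$ and relabeling, assume $c_i \in \K^*$ for all $i$. Since the values $\fv_{\alpha,\tilde{\rho}}(\bb_i)$ are distinct with respect to the total order on $M_\alpha$, there is a unique index, say $i = 1$, achieving the minimum $m := \min_i \{\fv_{\alpha,\tilde{\rho}}(\bb_i)\}$. Passing to the quotient $F_{\fv_{\alpha,\tilde{\rho}} \geq m} / F_{\fv_{\alpha,\tilde{\rho}} > m}$ in the associated graded algebra kills every $c_i \bb_i$ with $i \geq 2$ (since those lie in $F_{\fv_{\alpha,\tilde{\rho}} > m}$), leaving $c_1 [\bb_1] = 0$. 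But $[\bb_1]$ is nonzero in the quotient by construction, and the quotient is a one-dimensional $\K$-vector space by the one-dimensional leaves property, which forces $c_1 = 0$, a contradiction. I do not expect any serious obstacle here; the main subtlety is the verification that the formula $\fv_{\alpha,\tilde{\rho}}(\bb_i) = \pi_\alpha(\v^{-1}(\fv(\bb_i)))$ applies before we know that $\B$ is a basis, but this follows directly from the construction rather than from \Cref{prop-mutation-elements} itself.
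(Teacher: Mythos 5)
Your proof is correct, and it takes a genuinely different route from the paper's. The paper argues directly in the PL/convexity framework: by \Cref{lem-convex-11} the $\fv(\bb_i) \in \Sp(\NN)$ restrict to pairwise distinct linear functions on any full-dimensional cone $C$ of $\Sigma(\NN_\R)$, and on a full-dimensional subcone of $C$ exactly one $\fv(\bb_{i_0})$ strictly achieves the minimum of all the $\fv(\bb_i)$ --- this contradicts the inequality $\fv(\bb_{i_0}) \geq \min\{\fv(\bb_k) \mid k \neq i_0\}$ that the putative relation forces. You instead push everything through \Cref{lem-canonicaltototal} to a totally-ordered $\overline{M_\alpha}$-valued valuation $\fv_{\alpha,\tilde{\rho}}$ and run the standard distinct-leading-terms argument there. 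Both work; you correctly spot the one potential circularity --- one cannot quote \Cref{prop-mutation-elements} because it presupposes a convex adapted basis --- and correctly note that the formula $\fv_{\alpha,\tilde{\rho}}(\bb_i) = \pi_\alpha(\v^{-1}(\fv(\bb_i)))$ is extracted directly from the construction in the proof of \Cref{lem-canonicaltototal} whenever $\fv(\bb_i)$ is a single point of $\Sp(\NN)$, with no basis hypothesis. One simplification available to you: the final step does not actually need one-dimensional leaves (nor full rank, nor the detropicalization hypothesis). Once the values $\fv_{\alpha,\tilde{\rho}}(\bb_i)$ are pairwise distinct in a totally ordered group, the unique minimizer survives as a nonzero class in the degree-$m$ graded piece regardless of that piece's dimension, which already forces $c_1 = 0$. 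The paper's version stays entirely inside the convexity machinery and avoids \Cref{lem-canonicaltototal}; yours is shorter once that machinery is assembled, at the cost of invoking more of it.
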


\begin{proof}
Suppose, in order to obtain a contradiction, that there exists a linear relation $\sum_{i \in [\ell]} \lambda_i \bb_i = 0$ in $\Aa_\MM$. Without loss of generality we may assume that $\lambda_i \neq 0$ for all $i \in [\ell]$. This implies that for any index $i_0 \in [\ell]$ we have $\bb_{i_0} = \frac{1}{\lambda_{i_0}}\left( \sum_{k \neq i_0} \lambda_k \bb_k\right)$ and hence, by assumptions on valuations, 
\begin{equation}\label{eq: linear indep 1}
\fv(\bb_{i_0}) \geq \bigoplus_{i \neq i_0} \fv(\bb_i) = \min\{ \fv(\bb_i) \, \mid \, i \neq i_0, i \in [\ell]\}
\end{equation}
as functions on $\NN_\R$. 
Now observe that since the $\fv(\bb_i)$ are elements of $\Sp(\NN) \subset \Sp_\R(\NN)$ by assumption, they restrict to linear functions on any maximal-dimensional cone $C \subset \NN_\R$ of $\Sigma(\NN_\R)$ by \Cref{lemma: points linear on faces of SigmaM}. Fix such a cone $C$. Consider the piecewise linear function $\Psi := \min\{\fv(\bb_i) \vert_C \, \mid \, i \in [\ell]\}$ on $C$. Since the $\fv(\bb_i)$ are pairwise distinct, their restrictions to $C$ must also be pairwise distinct, by \Cref{lem-convex-11}. Let $C = \cup_j \tilde{C}_j$ be the (finite) decomposition of $C$ into (full-dimensional) subcones of linearity of $\Psi$. Fix $\tilde{C}_j$. Then on $\tilde{C}_j$ there is a unique $\fv(\bb_{i_0})$ achieving the minimum in the definition of $\Psi$, and, $\fv(\bb_i) > \fv(\bb_{i_0})$ on the interior of  $\tilde{C}_j$, for all $i \neq i_0, i \in [\ell]$. This means that $\fv(\bb_{i_0}) < \min\{\fv(\bb_k) \, \mid \, k \neq i_0, k \in [\ell]\}$
on the interior of $\tilde{C}_j$, and in particular, $\fv(\bb_{i_0}) \not \geq \{\min\{\fv(\bb_k) \, \mid \, k \neq i_0, k \in [\ell]\}$
on $C$, which contradicts \Cref{eq: linear indep 1}. Thus the $\bb_i$ must be linearly independent. 
\end{proof}

\begin{proof}[Proof of \Cref{prop-detropbasis}]
To show $\B$ is a convex adapted basis, we must first show that $\B$ is a $\K$-vector space basis of $\Aa_\MM$. \Cref{lem-detropdep} shows that $\B$ is linearly independent. We now claim that $\B$ spans $\Aa_\MM$.
To see this, let $f\in\Aa_\MM$ and consider its image $\fv(f) \in \P_\NN$; by definition of $\P_\NN$, we know $\fv(f)$ can be expressed as $\fv(f) = \bigoplus_{m \in S} \v(m)$ for some finite set $S$ in $\MM$. Now let $\alpha \in \pi(\MM)$ and $\tilde{\rho} \subset C_\alpha$ be an ordered basis for $\NN$ where $C_\alpha := \w^{-1}(\Sp_\R(\MM,\alpha))$. Then by the construction of $\fv_{\alpha,\tilde{\rho}}$ as given in the proof of \Cref{lem-canonicaltototal}, we have $\fv_{\alpha,\tilde{\rho}}(f) = \pi_\alpha(m_0)$ for some $m_0 \in S$.  Now observe that since $\fv$ maps $\B$ bijectively to $\Sp(\NN) \cong \MM$, there exists $\bb \in \B$ with $\fv(\bb) = \v(m_0)$, which together with the construction of $\fv_{\alpha,\tilde{\rho}}$ imply that $\fv_{\alpha,\tilde{\rho}}(\bb) = \pi_\alpha(m_0)$. By \Cref{corollary: v alpha rho has one dim leaves} we know $\fv_{\alpha,\tilde{\rho}}$ has one-dimensional leaves, so there exists $c_{\bb} \in \K \setminus \{0\}$ such that 
\begin{equation}\label{eq: first step subduction} 
\fv_{\alpha,\tilde{\rho}}(f- c_{\bb} \bb) > \fv_{\alpha,\tilde{\rho}}(f).
\end{equation}
From properties of valuations, we know $\fv(f-c_{\bb}\bb) \geq \fv(f) \oplus \fv(\bb)$. By construction of $S$ and $\bb$, we know $\fv(\bb)=\v(m_0)$ appears in this min expression for $\fv(f)$. Thus $\fv(f) \oplus \fv(\bb) = \fv(f)$ and we conclude $\fv(f-c_\bb \bb) \geq \fv(f)$. From this we immediately obtain 
\begin{equation}\label{eq: pt conv hulls for subduction} 
\{p \in \Sp(\NN) \, \mid \, p \geq \fv(f - c_\bb \bb)\} \subseteq \{p \in \Sp(\NN) \, \mid \, p \geq \fv(f) \}.
\end{equation}
We have already seen above that $\fv(f)\oplus \fv(\bb)=\fv(f)$, or in other words $\fv(\bb) \geq \fv(f)$. Now $\fv(\bb) \in \Sp(\NN)$ by assumption, so $\fv(\bb)$ is in the RHS of \Cref{eq: pt conv hulls for subduction}. We now claim that $\fv(\bb)$ is not contained in the LHS of \Cref{eq: pt conv hulls for subduction}. Indeed, by passing to $\fv_{\alpha,\tilde{\rho}}$ and using \Cref{eq: first step subduction}, we see that the unique minimal expression for $L_{C_\alpha} \circ \fv(f-c_\bb \bb)$ does not contain $L_{C_\alpha} \circ \fv(\bb)$. 
On the other hand, $L_{C_\alpha} \circ \fv(\bb)$ does appear in $L_{C_\alpha} \circ \fv(f)$, so we know there exists a full-dimensional subcone $C'$ of linearity contained in $C_\alpha$ on which $L_{C_\alpha} \circ \fv(f) \vert_{C'} = L_{C_\alpha} \circ \fv(\bb) \vert_{C'}$ and $L_{C_\alpha}\circ \fv(\bb)$ is the unique term achieving the minimum. In particular, by injectivity of $\fv$ on $\B$ and injectivity of $L_{C_\alpha}$, we conclude $L_{C_\alpha} \circ \fv(\bb') > L_{C_\alpha} \circ \fv(\bb)$ on $C'$, where $\bb'$ is such that $L_C \circ \fv(\bb')$ appears in $L_C \circ \fv(f-C_\bb \bb)$. 
Hence $L_C \circ \fv(\bb) \geq L_C \circ \fv(f-C_\bb \bb)$ cannot hold on $C$. This in turn implies $\fv(\bb)$ cannot appear in the LHS of \Cref{eq: pt conv hulls for subduction}, as was to be shown. 

The argument just given shows that the LHS of \Cref{eq: pt conv hulls for subduction} is strictly smaller than the RHS. Since $\fv(f)=\bigoplus_{m \in S} \v(m)$, by \Cref{lem-pconvex-2} we obtain that the RHS is $\v(\ptconv_\R(S) \cap \MM)$. Since $S$ is finite, we have that the RHS is a finite set. Then, by repeating the argument in the previous paragraph by replacing $f$ by $f- c_\bb \bb$ each time, we obtain after finitely many steps an expression $f - \sum_i c_i \bb_i$ with $\{p \in \Sp(\NN) \, \mid \, p > \fv(f - \sum_i c_i \bb_i) \} = \emptyset$, i.e. $\fv(f - \sum_i c_i \bb_i)=\infty$. By our assumption on $\fv$ in \Cref{def_valuation}, this implies $f - \sum_i c_i \bb_i=0$, i.e. $f = \sum_i C_i \bb_i$. Thus $\B$ spans $\Aa_\MM$, as desired.

We must now prove property (1) in \Cref{def_CAB}. For this purpose, we introduce temporary notation. 
For $m \in \MM$ we let $\bb_m$ denote the unique element such that $\fv(\bb_m) = \v(m)$. We now claim that if $f = \sum_{m \in S} c_m \bb_m$ for $S \subset \MM$ a finite set and $c_m \in \K^*$ for all $m \in S$, then $\fv(f) = \bigoplus_{m \in S} \fv(\bb_m)$. 
To show this equality, it would suffice to show that for any $n \in \NN$, the function value $\fv(f)(n)$ is achieved by $\fv(\bb_m)(n)$ for some $m \in S$. 
To do this, we will define a new valuation; it will depend on a fixed but arbitrary choice of $n \in \NN$.  Let $C$ be a full-dimensional cone in $\Sigma(\NN)$ containing $n$ and fix a linearly independent set $\{n_1=n, n_2, \cdots, n_r\} \subset C$. 
Note that $n$ is the first element in this ordered set. In a manner similar to the construction of $\fv_{\alpha,\tilde{\rho}}$ in the proof of \Cref{lem-canonicaltototal}, we define $\Phi: \mathcal{O}^{\oplus}_C \to \Z^r$ by $\bigoplus_{m \in S} \v(m) \mapsto \min_{m \in S} \left\{ (\v(m)(n), \v(m)(n_2),\cdots, \v(m)(n_r)) \in \Z^r \right\}$ where the minimum is with respect to lex order on $\Z^r$. 
The argument that the restriction of $\Phi$ to the image of $L_C\circ\fv$ is a semialgebra homomorphism proceeds in the same way as in \Cref{lem-canonicaltototal} so we omit details. Define the valuation $\mathfrak{w} := \Phi \circ L_C \circ \fv$. Since $\{n,n_2,\cdots, n_r\}$ is linearly independent and $L_C$ surjects onto $\Hom(C,\R)$, it follows that the valuation $\mathfrak{w}$ is full rank and hence, by \Cref{corollary: v alpha rho has one dim leaves}, has one-dimensional leaves. Now by an argument similar to that given above, there exists $\bb_0 \in \B$ and $c_0 \in \K^*$ such that $\mathfrak{w}(f)=\mathfrak{w}(\bb_0)$ and $\mathfrak{w}(f-c_0 \bb_0) > \mathfrak{w}(f)$. By the same inductive process as in the argument above, we see that $\bb_0$ appears in the unique expression of $f$ as a linear combination of the basis elements $\B$. Now suppose $\fv(f) = \bigoplus_{i=1}^\ell \v(m_i)$ for some $\{m_1,\cdots,m_\ell\}$. Then we compute
\begin{equation}\label{eq: funny valuation}
    \begin{split} 
\mathfrak{w}(f) & = \min_{i\in [\ell]}\{(\v(m_i)(n), \v(m_i)(n_2),\cdots,\v(m_i)(n_r))\} \\
& 
 = (\fv(f)(n), \ldots) 
 = \mathfrak{w}(\bb_0) 
 = (\fv(\bb_0)(n), \ldots)
    \end{split} 
\end{equation}
where the second equality is because $\mathfrak{w}(f)=\mathfrak{w}(\bb_0)$ and the last equality follows from the fact that $\fv(\bb_0)$ is a single element in $\Sp(\NN)$ so the minimum in the definition of $\Phi$ is over a singleton set. 
Note that since the $\min$ in the RHS of the first equality is the lex order, the first entry of that RHS is $\min_{i \in [\ell]} \v(m_i)$.
From \Cref{eq: funny valuation} we see that $\fv(f)(n)=\fv(\bb_0)(n)$, i.e., the value $\fv(f)(n)$ is achieved by $\fv(\bb_0)$ where $\bb_0$ appears in the expression of $f$ as a linear combination of basis elements. This is what we wished to show, and concludes the proof. 
\end{proof}

We now record some observations about detropicalizations which will be useful in the sequel. 

\begin{proposition}\label{prop: direct product of detrop}
Let $\MM,\MM'$ be two finite polyptych lattices over $\Z$ of rank $r$ and $r'$ respectively. Let $(\Aa_{\MM},\fv: \Aa_{\MM} \to S_{\MM}), (\Aa_{\MM'}, \fv': \Aa_{\MM'} \to S_{\MM'})$ be detropicalizations of $\MM, \MM'$ with convex adapted bases $\B, \B'$, respectively. 
Let $\tilde{\fv}: \Aa_\MM \otimes \Aa_{\MM'} \to S_{\MM \times \MM'}$ denote the map obtained by composing the tensor product $\fv \otimes \fv': \Aa_{\MM} \otimes \Aa_{\MM'} \to S_{\MM} \otimes S_{\MM'}$ with the idempotent semialgebra homomorphism $S_{\MM} \otimes S_{\MM'} \to S_{\MM \times \MM'}$ of \Cref{lemma: SMM SMMprime}. Then $(\Aa_{\MM} \otimes \Aa_{\MM'}, \tilde{\fv})$ is a detropicalization of the direct product polyptych lattice $\MM \times \MM'$, with convex adapted basis $\tilde{\mathbb{B}} = \mathbb{B}_1 \times \mathbb{B}_2$. 
\end{proposition}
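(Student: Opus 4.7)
Standard commutative algebra yields that $\Aa_\MM \otimes_\K \Aa_{\MM'}$ is a Noetherian integral domain of Krull dimension $r+r'$, which matches the rank of $\MM \times \MM'$. Since $\B$ and $\B'$ are $\K$-bases of $\Aa_\MM$ and $\Aa_{\MM'}$, the product set $\tilde{\B} := \{b \otimes b' : b \in \B,\ b' \in \B'\}$ is a $\K$-basis of the tensor product. Using axiom (2) of \Cref{def_CAB} applied to $\B$ and $\B'$ (giving $\fv(b) \in \MM$ and $\fv'(b') \in \MM'$), we define $\tilde{\fv}$ on $\tilde{\B}$ by $\tilde{\fv}(b \otimes b') := (\fv(b), \fv'(b'))$, viewed as an element of $\MM \times \MM' \subset S_{\MM \times \MM'}$, and extend to general elements by $\tilde{\fv}\bigl(\sum_{i,j} c_{i,j}\, b_i \otimes b'_j\bigr) := \bigoplus_{(i,j):\, c_{i,j} \neq 0} (\fv(b_i), \fv'(b'_j))$ with $\tilde{\fv}(0) := \infty$. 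A direct check shows this agrees with the composition described in the proposition via \Cref{lemma: SMM SMMprime}, and surjectivity of $\tilde{\fv}$ onto $\MM \times \MM'$ follows from the surjectivities of $\fv,\fv'$.

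Most of the conditions to verify are then immediate. The convex adapted basis axioms of \Cref{def_CAB} for $\tilde{\B}$ are built into the definition, and axioms (2)--(4) of \Cref{def_valuation} are routine: axiom (2) uses that $\mathrm{supp}(F+G) \subseteq \mathrm{supp}(F) \cup \mathrm{supp}(G)$ together with the partial order on $S_{\MM \times \MM'}$ described in \Cref{remark: partial order of idempotent semialgebra}; axiom (3) follows since scaling by $c \in \K^\ast$ preserves basis supports; and axiom (4) holds because $\tilde{\fv}(F) = \infty$ exactly when the basis expansion of $F$ is empty, i.e.\ when $F = 0$.

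The substance of the proof is verifying multiplicativity: $\tilde{\fv}(FG) = \tilde{\fv}(F) \star \tilde{\fv}(G)$ in $S_{\MM \times \MM'}$. The plan is to first establish this on basis-element pairs $F = b_1 \otimes b'_1,\ G = b_2 \otimes b'_2$. Here both sides are explicit elements of $S_{\MM \times \MM'}$, and equality is detected (by the construction of the canonical semialgebra and \Cref{lem-pconvex-1}) by evaluating at arbitrary points of $\MM \times \MM'$. By \Cref{lemma: points of product PL} any such point decomposes as $\tilde{p} = p_\MM \circ \mathrm{pr} + p_{\MM'} \circ \mathrm{pr}'$ for some $p_\MM \in \Sp(\MM),\ p_{\MM'} \in \Sp(\MM')$; after extending via \Cref{lemma-pointsaresemialgebramaps}, applying the point axiom \Cref{eq: def point min} to $p_\MM$ against $\fv(b_1),\fv(b_2)$ and to $p_{\MM'}$ against $\fv'(b'_1),\fv'(b'_2)$ separately in each factor causes both sides to collapse to the common value $p_\MM(\fv(b_1)) + p_\MM(\fv(b_2)) + p_{\MM'}(\fv'(b'_1)) + p_{\MM'}(\fv'(b'_2))$.

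Extending multiplicativity to arbitrary $F, G$ is the main technical obstacle. The plan is to again pass to point-evaluations: for each point $\tilde{p} = (p_\MM, p_{\MM'})$, the composition $\omega_{\tilde{p}} := \tilde{p} \circ \tilde{\fv}: \Aa_\MM \otimes \Aa_{\MM'} \to \overline{F}$ should be shown to be a valuation, using that $\fv_1 := p_\MM \circ \fv$ and $\fv_2 := p_{\MM'} \circ \fv'$ are already valuations into $\overline{F}$ by \Cref{lemma: semialg hom and valuations}. Explicitly, $\omega_{\tilde{p}}$ is the basis-compatible tensor of $\fv_1$ and $\fv_2$, given on $\sum c_{i,j}\, b_i \otimes b'_j$ by $\min_{(i,j):\, c_{i,j} \neq 0} [\fv_1(b_i) + \fv_2(b'_j)]$. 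The delicate point is showing that coefficient cancellations in the basis expansion of $FG$ do not delete the basis monomials attaining this minimum; we expect to handle this by identifying the ``leading-term'' contributions to $FG$ coming from $F,G$ separately --- using that multiplicativity of $\fv,\fv'$ already guarantees basis-minimal monomials persist after expansion in each factor --- and then combining these across the two factors. Once $\omega_{\tilde{p}}(FG) = \omega_{\tilde{p}}(F) + \omega_{\tilde{p}}(G)$ is known for every $\tilde{p}$, the desired equality in $S_{\MM \times \MM'}$ follows from \Cref{lem-pconvex-1}.
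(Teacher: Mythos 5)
Your overall strategy matches the paper's: define $\tilde{\fv}$ on the product basis $\tilde{\B}$ via $\tilde{\fv}(\bb \otimes \bb') = (\fv(\bb), \fv'(\bb'))$, extend by the $\oplus$-operation, identify the result with the composition through $S_\MM \otimes_{\mathcal{B}} S_{\MM'}$, and verify the valuation and convex-adapted-basis axioms. The paper's own proof is very brief --- it simply declares multiplicativity of $\fv \otimes \fv'$ ``straightforward'' --- whereas you correctly single it out as the genuine content of the proposition, which is a welcome improvement in transparency.

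However, your argument for multiplicativity is not complete, and the way you propose to close it does not actually work as stated. Reducing to the point-evaluations $\omega_{\tilde{p}} = \tilde{p}\circ \tilde{\fv}$ is the right move: equality in $S_{\MM\times\MM'}$ is detected by evaluation against all points of $\Sp(\MM\times\MM')$, and these split as $(p_\MM,p_{\MM'})$ by \Cref{lemma: points of product PL}. But your plan --- ``identify the leading-term contributions to $FG$ coming from $F,G$ separately \dots and then combine these across the two factors'' --- does not rule out cancellation. The minimal-level part of $FG$ is computed from $F_0G_0$ (the minimal-level parts of each factor), and this is typically a \emph{sum} of several terms of the same $\omega_{\tilde{p}}$-level; their coefficients can cancel, and this cancellation is a phenomenon that mixes both tensor factors simultaneously, so no factor-by-factor bookkeeping of the kind you sketch will rule it out.

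What actually finishes the argument is passing to associated graded algebras. With $\fv_1 := p_\MM\circ \fv$ and $\fv_2 := p_{\MM'}\circ \fv'$ (valuations by \Cref{lemma: semialg hom and valuations} and \Cref{lemma-pointsaresemialgebramaps}), a direct check using the adapted basis $\tilde{\B}$ shows that
\begin{equation*}
\gr_{\omega_{\tilde{p}}}\bigl(\Aa_\MM \otimes_\K \Aa_{\MM'}\bigr) \;\cong\; \gr_{\fv_1}(\Aa_\MM)\; \otimes_\K\; \gr_{\fv_2}(\Aa_{\MM'})
\end{equation*}
as $\K$-algebras (the $\omega_{\tilde{p}}$-level-$k$ piece is spanned by $\bb_m \otimes \bb'_{m'}$ with $\fv_1(\bb_m)+\fv_2(\bb'_{m'})=k$, and multiplication is compatible). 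Each factor on the right is an integral domain because $\fv_1,\fv_2$ are valuations, and since $\K$ is assumed algebraically closed, the tensor product of two integral domains over $\K$ is again an integral domain. Hence $\gr_{\omega_{\tilde{p}}}(\Aa_\MM\otimes_\K\Aa_{\MM'})$ is a domain, which is exactly the statement that $\omega_{\tilde{p}}$ is a valuation (no cancellation at the minimal level). With that in hand, $\tilde{p}(\tilde{\fv}(FG)) = \omega_{\tilde{p}}(FG)=\omega_{\tilde{p}}(F)+\omega_{\tilde{p}}(G)=\tilde{p}\bigl(\tilde{\fv}(F)\star\tilde{\fv}(G)\bigr)$ for all $\tilde{p}$, and the desired equality in $S_{\MM\times\MM'}$ follows. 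I recommend replacing your final paragraph with this argument; it also makes the role of the algebraic-closedness hypothesis on $\K$ explicit, which the paper's proof leaves implicit.
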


\begin{proof}
We define the tensor product map $\fv \otimes \fv': \Aa_{\MM} \otimes \Aa_{\MM'} \to S_{\MM} \otimes S_{\MM'}$ using the convex adapted basis. Concretely, for $\bb \in \B, \bb' \in \B'$, we define $(\fv \otimes \fv')(\bb \otimes \bb') := \fv(\bb) \otimes \fv'(\bb') \in S_{\MM} \otimes S_{\MM'}$, and then extend linearly with respect to the addition operation in $\Aa_{\MM} \otimes \Aa_{\MM'}$ and the $\oplus$ operation in $S_{\MM} \otimes S_{\MM'}$. It is then straightforward that $\fv \otimes \fv'$, thus defined, satisfies the conditions to be a $S_{\MM} \otimes S_{\MM'}$-valued valuation on $\Aa_\MM \otimes \Aa_{\MM'}$. From \Cref{lemma: semialg hom and valuations} we know that composing a valuation with a homomorphism of idempotent semialgebras yields a valuation, so the result follows from  \Cref{lemma: SMM SMMprime}. Moreover, the set of elements $\{\bb \otimes \bb'\}_{\bb \in \B, \bb' \in \B'}$ is a $\K$-basis of $\Aa_\MM, \Aa_{\MM'}$, and is a convex adapted basis of $\tilde{\fv}$ by construction. 
\end{proof}

\section{Compactifications arising from polyptych lattice polytopes}\label{section: compactifications}

We can now state and prove the main results of this paper; the combinatorics and algebra in the previous sections are motivated by the geometry discussed below. We view the results below as a beginning of a broader theory and, as stated in the Introduction, leave further developments to future work. 

Throughout this section, we assume that $\MM$ is a finite polyptych lattice of rank $r$ over $\Z$ and that $(\MM, \NN, \v, \w)$ is a strict dual $\Z$-pair. We assume throughout this section that \textbf{$\K$ is an algebraically closed field}. We also fix a choice $(\Aa_\MM, \fv: \Aa_\MM \to S_\MM \cong \P_{\NN})$ of a detropicalization of $\MM$, and assume that there exists, and fix a choice of, a convex adapted basis $\B $ of $\Aa_\MM$ with respect to $\fv$. 

\subsection{Definitions and first properties}\label{subsec: compactification definition}

We start by defining a graded $\K$-algebra which is constructed using the data of the detropicalization $(\Aa_\MM, \fv)$ and a PL polytope over $\Z$.  The notation and terminology will be reminiscent of section rings of line bundles over projective toric varieties. This is not an accident; the special case when $\MM$ is a trivial polyptych lattice over $\Z$ recovers that well-known case.

Let $\PP \subset \MM_\R$ be a full-dimensional PL polytope over $\Z$ with $\PP = \bigcap_{i=1}^\ell \HH_{\w(n_i), a_i}$ for some $a_i \in \Z_{< 0}$ and $n_i\in\NN$.  Note that since $a_i<0$ for all $i$, the interior of $\PP$ contains $0_\MM$. 
For $k \in \Z_{\geq 0}$, we define the rescaled PL polytope $k\PP$ by 
$$
k \PP := \bigcap_{i=1}^{\ell} \HH_{\w(n_i), ka_i}. 
$$
We also define the set  
\begin{equation}\label{eq_gamma_kdelta}
    \Gamma(\Aa_\MM, k\PP) := \{f \in \Aa_\MM \mid \fv(f) \geq \psi_{k\PP}\}
\end{equation}
where $\psi_{k\PP}: \NN_\R \to \R$ denotes the support function of the PL polytope $k\PP$ 
as defined in~\eqref{eq: def support function}. We refer to $\Gamma(\Aa_\MM, k\PP)$ as \textbf{the subspace (of $\Aa_\MM$) of level $k$ with respect to $\PP$ and $\fv$}. (\Cref{prop: sections algebra}(1), below, justifies this terminology.)

To develop these ideas further, the notion of the support of an element $f\in\Aa_\MM$ will be useful. Recall that by definition of convex adapted bases, for any $\bb \in \B$ we know that $\fv(\bb) \in \Sp(\NN)$.

\begin{definition}\label{def: support of f}
In the setting above, let $f=\sum\lambda_i\bb_i \in \Aa_\MM$ for $\lambda_i \in \K$ and $\bb_i \in \B$. The \textbf{support of $f$ (with respect to $\fv$) } is defined as $\supp(f) := \ptconv_\R\left(\{\v^{-1}(\fv(\bb_i)) \in \MM \mid \lambda_i\neq 0\}\right) \subset \MM_\R$. 
\exampleqed
\end{definition}

\begin{remark}\label{remark: equivalent support desc}
For a detropicalization $\fv: \Aa_\MM \to S_\MM$, then by \Cref{corollary: CAB bijective with MM} the valuation $\fv$ induces a bijection between $m \in \MM$ and $\B$. In the discussion below, we will sometimes denote this correspondence as $\bb_m \leftrightarrow m$. In particular, $\fv(\bb_m) = m$. (When $\fv$ is viewed as a valuation with values in $\P_\NN$ via the isomorphism $S_\MM \cong \P_\NN$, then $\fv(\bb_m)=\v(m)$.) With this notation, let $f = \sum_{m \in S} c_m \bb_m$ with $c_m \in \K^*$ for a finite set $S \subset \MM$. Then we may equivalently describe the suppport of $f$ as $\supp(f)=\ptconv_\R(S)$. 
  \end{remark}

The next result is an observation about the support of products of convex basis elements.   

\begin{lemma}\label{lemma: support of product of CAB elements}
Let $\MM$ be a finite polyptych lattice over $\Z$ of rank $r$. Let $(\Aa_\MM, \fv: \Aa_\MM \to S_\MM)$ be a detropicalization of $\MM$ and $\B$ a convex adapted basis for $\fv: \Aa_\MM \to S_\MM$. Let $m, m' \in \MM$ and let $\bb_m, \bb_{m'} \in \B$ denote their associated convex adapted basis elements. Then $$
\supp(\bb_m \cdot \bb_{m'}) = \ptconv_\R(\{m +_\alpha m' \, \mid \, \alpha \in \pi(\MM)\}).
$$
\end{lemma}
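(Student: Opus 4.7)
The plan is to compute $\fv(\bb_m \cdot \bb_{m'})$ in two different ways and then exploit the defining relation on $S_\MM$ to extract a statement about point-convex hulls.

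First, since $\fv$ is a valuation (not merely a quasivaluation), axiom (1) of \Cref{def_valuation} gives
$$\fv(\bb_m \cdot \bb_{m'}) \;=\; \fv(\bb_m) \star \fv(\bb_{m'}) \;=\; m \star m'$$
in $S_\MM$, where I use the identification from \Cref{corollary: CAB bijective with MM} that $\fv(\bb_m) = m$ (see \Cref{remark: equivalent support desc}). By the very definition of the star product (\Cref{definition: star product}),
$$m \star m' \;=\; \bigoplus_{\alpha \in \pi(\MM)} \bigl(m +_\alpha m'\bigr)$$
as an element of $S_\MM$.

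Next, I would expand $\bb_m \cdot \bb_{m'}$ in the convex adapted basis. Write $\bb_m \cdot \bb_{m'} = \sum_{m'' \in S} c_{m''} \bb_{m''}$ for some finite set $S \subset \MM$ with $c_{m''} \in \K^*$. By condition (1) of \Cref{def_CAB} (convex adapted basis property),
$$\fv(\bb_m \cdot \bb_{m'}) \;=\; \bigoplus_{m'' \in S} \fv(\bb_{m''}) \;=\; \bigoplus_{m'' \in S} m''$$
in $S_\MM$. By \Cref{remark: equivalent support desc}, the support is $\supp(\bb_m \cdot \bb_{m'}) = \ptconv_\R(S)$.

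Combining the two computations yields the equality in $S_\MM$
$$\bigoplus_{m'' \in S} m'' \;=\; \bigoplus_{\alpha \in \pi(\MM)} (m +_\alpha m').$$
By construction of $S_\MM$ (the defining relation on $\langle \MM\rangle$ that formal sums are identified precisely when their point-convex hulls over $F$ agree), this equality in $S_\MM$ is equivalent to
$$\ptconv_\R(S) \;=\; \ptconv_\R\bigl(\{m +_\alpha m' \mid \alpha \in \pi(\MM)\}\bigr),$$
which is exactly the claim. The argument is essentially a bookkeeping exercise; the only potentially subtle point is ensuring that the defining relation for $S_\MM$ (phrased in \Cref{sec: space of points} using $\ptconv_F$) transfers correctly to $\ptconv_\R$, but since $\MM$ is defined over $\Z$ and the finite sets in question consist of lattice elements, this follows from the compatibility of point-convex hulls with scalar extension. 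No significant obstacle is anticipated.
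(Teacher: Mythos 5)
Your proof is correct and follows essentially the same route as the paper: compute $\fv(\bb_m \cdot \bb_{m'})$ once via the valuation axiom and the $\star$ product, once via the convex-adapted-basis expansion, and then invoke the defining relation of $S_\MM$ to convert the resulting equality into an equality of point-convex hulls. The paper's proof does not comment on the $\ptconv_F$ versus $\ptconv_\R$ distinction that you flag at the end, so your proposal is, if anything, a touch more careful on that point.
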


\begin{proof} 
Since $\B$ is a basis of $\Aa_\MM$, we may write $\bb_m \cdot \bb_{m'} = \sum_{m''} c_{m''} \bb_{m''}$ for some $c_{m''} \in \K$, and since $\B$ is a convex adapted basis, we must have $\fv(\bb_m \cdot \bb_{m'}) = \fv(\sum_{m''} c_{m''} \bb_{m''}) = \oplus_{c_{m''} \neq 0} \fv(\bb_{m''}) = \oplus_{c_{m''} \neq 0} m''$. On the other hand, since $\fv: \Aa_\MM \to S_\MM$ is a valuation, we know $\fv(\bb_m \cdot \bb_{m'}) = \fv(\bb_m) \star \fv(\bb_{m'}) = m \star m'$, where $\star$ denotes the product operation in $S_\MM$. By definition of the canonical semialgebra we have $m \star m' := \oplus_{\alpha \in \pi(\MM)} m +_\alpha m'$. Thus we have $\oplus_{c_{m''} \neq 0} m'' = \oplus_{\alpha \in \pi(\MM)} m +_\alpha m'$ in $S_\MM$, and by definition of $S_\MM$ this implies $\ptconv_\R(\{\oplus_{c_{m''} \neq 0} m''\}) = \ptconv_\R(\{\oplus_{\alpha \in \pi(\MM)} m +_\alpha m'\})$. By \Cref{remark: equivalent support desc} this implies $\supp(\bb_m \cdot \bb_{m'}) = \ptconv_\R(\{\oplus_{\alpha \in \pi(\MM)} m +_\alpha m'\})$ as required. 
\end{proof}

Our next lemma returns to the theme of PL polytopes, and in particular, relates \Cref{def: support of f} to \Cref{eq_gamma_kdelta}. 

\begin{lemma}\label{claim_support} 
In the setting above, let $\PP \subset \MM_\R$ be a PL polytope and $f \in \Aa_\MM$. Then $\fv(f) \geq \psi_{\PP}$ if and only if 
$\supp(f) \subseteq \PP$. 
\end{lemma}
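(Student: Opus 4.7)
The plan is to translate both sides of the biconditional into statements about values of piecewise-linear functions on $\NN_\R$, using the identification $S_\MM \cong \P_\NN$ of \Cref{prop-salgebras}. Writing $f = \sum_{m \in S} c_m \bb_m$ in the convex adapted basis, where $S \subset \MM$ is a finite subset and each $c_m \in \K^*$, property (1) of \Cref{def_CAB} combined with \Cref{corollary: CAB bijective with MM} tells us that
\[
\fv(f) \;=\; \bigoplus_{m \in S} \fv(\bb_m) \;=\; \bigoplus_{m \in S} \v(m) \;=\; \min_{m \in S}\{\v(m)\}
\]
as a function on $\NN_\R$. Similarly, by \Cref{definition: support function}, $\psi_{\PP}(n) = \min\{\v(u)(n) \mid u \in \PP\}$, and by \Cref{remark: equivalent support desc}, $\supp(f) = \ptconv_\R(S)$. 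With these translations in place, the proof becomes a short unpacking argument relying on two earlier structural facts about PL polytopes and support functions.

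For the ``if'' direction, I would assume $\supp(f) = \ptconv_\R(S) \subseteq \PP$, so in particular $S \subseteq \PP$. Then for each $m \in S$ and each $n \in \NN_\R$, the definition of $\psi_{\PP}$ gives $\v(m)(n) \geq \psi_{\PP}(n)$. Taking the minimum over $m \in S$ yields $\fv(f)(n) \geq \psi_{\PP}(n)$ for every $n \in \NN_\R$, i.e., $\fv(f) \geq \psi_{\PP}$.

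For the ``only if'' direction, I would assume $\fv(f) \geq \psi_{\PP}$, i.e. $\min_{m \in S}\{\v(m)(n)\} \geq \psi_{\PP}(n)$ for every $n \in \NN_\R$. Then for each fixed $m_0 \in S$ one has $\v(m_0)(n) \geq \psi_{\PP}(n)$ for all $n \in \NN_\R$, which by \Cref{lem_support_to_poly} is precisely the condition that $m_0 \in \PP$. Hence $S \subseteq \PP$. Since $\PP$ is point-convex by \Cref{lemma: PL polytope is point convex}(2), \Cref{lemma: S subset T point convex} gives $\supp(f) = \ptconv_\R(S) \subseteq \PP$, completing the argument. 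There is no genuine obstacle here: the only nontrivial input is the characterization of $\PP$ in terms of its support function $\psi_\PP$ via \Cref{lem_support_to_poly}, which has already been established; everything else is a direct unravelling of the definitions of convex adapted bases, point-convex hulls, and PL half-spaces.
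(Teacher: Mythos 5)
Your proposal is correct and follows essentially the same route as the paper's own proof: both directions reduce to the fact that $\fv(f) = \bigoplus_{m \in S} \v(m)$ via the convex adapted basis property, then invoke \Cref{lem_support_to_poly} to translate $\v(m)\geq\psi_\PP$ into $m\in\PP$, and finally use point-convexity of $\PP$ (together with \Cref{lemma: S subset T point convex}) to pass from $S\subseteq\PP$ to $\supp(f)=\ptconv_\R(S)\subseteq\PP$.
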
 

\begin{proof}
    We assume without loss of generality that $f=\sum\lambda_i\bb_i$ where  $\lambda_i\neq 0$ for all $i$ and $\bb_i \in \B$. 
    To prove the claim, first suppose that $\fv(f) \geq \psi_{\PP}$. Since $\fv(f)=\bigoplus_{i} \fv(\bb_i)$ by \Cref{def_CAB} we conclude that for all $i$ and $n\in\NN_\R$ we have $\fv(\bb_i)(n)\ge \psi_\PP(n)$. \Cref{lem_support_to_poly} then implies that for all $i$, $\v^{-1}(\fv(\bb_i))\in\PP$. Since $\PP$ is point-convex by \Cref{prop_polytopes_are_convex}, it follows that $\supp(f)\subset \PP$. 
    Conversely, suppose $\supp(f)\subset \PP$. Then for all $i$, $\v^{-1}(\fv(\bb_i))\in\PP$. By \Cref{lem_support_to_poly} we have that for all $i$, $\fv(\bb_i)\ge \psi_\PP$, so $\fv(f) = \bigoplus_i \fv(\bb_i) \geq \psi_{\PP}$.
\end{proof}

It follows immediately from \Cref{claim_support} that 
\begin{equation}\label{eq: level k support in kP}
\Gamma(\Aa_\MM, k\PP) = \{ f \in \Aa_\MM \, \mid \, \supp(f) \subseteq k\PP \}.
\end{equation}

The sets $\Gamma(\Aa_\MM, k\PP)$ will be the graded pieces of a graded ring which we define below. For this to make sense, we need some technical properties of these sets, as recorded in the next proposition. 

\begin{proposition}\label{prop: sections algebra}
In the setting above, $\PP \subset \MM_\R$ be a full-dimensional PL polytope with $\PP = \bigcap_{i =1}^\ell \HH_{\w(n_i), a_i}$ for $a_i \in \Z_{< 0}$ and $n_i\in\NN$. Let $k \in \Z_{\geq 0}$ and $\Gamma(\Aa_\MM, k\PP) \subset \Aa_\MM$ be the set defined in \Cref{eq_gamma_kdelta}. Then: 
\begin{enumerate} 
\item $\Gamma(\Aa_\MM, k\PP)$ is a vector space, i.e., it is closed under addition and scalar multiplication by $\K$.  
\item For any $k' \in \Z$ with $k' > k$, we have $\Gamma(\Aa_\MM, k\PP)\subseteq \Gamma(\Aa_\MM, k'\,\PP)$. 
\item Moreover, for any $k' \in \Z_{\geq 0}$, the product structure in $\Aa_\MM$ induces a map 
\[\Gamma(\Aa_\MM, k\PP)\otimes \Gamma(\Aa_\MM, k'\, \PP) \to \Gamma(\Aa_\MM, (k + k')\PP).\]
\end{enumerate}
\end{proposition}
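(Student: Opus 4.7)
The overall strategy is to reduce each of the three claims to a property of the valuation $\fv$ combined with the half-space description of $k\PP$. The key auxiliary fact to exploit is \Cref{claim_support}, which translates the defining condition $\fv(f) \geq \psi_{k\PP}$ into the support condition $\supp(f) \subseteq k\PP$, giving us a flexible way to switch between an algebraic and a combinatorial description of $\Gamma(\Aa_\MM, k\PP)$.

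For part (1), closure under addition follows from the valuation axioms: $\fv(f+g) \geq \fv(f) \oplus \fv(g) \geq \psi_{k\PP} \oplus \psi_{k\PP} = \psi_{k\PP}$, using idempotency of $\oplus$ in $P_\NN$. Closure under scalar multiplication by $\lambda \in \K^*$ follows from $\fv(\lambda f) = \fv(f)$ (axiom (3) of valuations), while $\lambda = 0$ is handled by $\fv(0) = \infty$, which is maximal in the partial order on $P_\NN$. For part (2), I will observe that if $k' > k$, then $k' a_i < k a_i$ (since $a_i < 0$), and hence each half-space $\HH_{\w(n_i), k'a_i}$ strictly contains $\HH_{\w(n_i), ka_i}$. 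This yields $k\PP \subseteq k'\PP$, so the conclusion follows from \Cref{claim_support}: $\supp(f) \subseteq k\PP \subseteq k'\PP$.

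For part (3), the main computation is as follows. Using the isomorphism $S_\MM \cong P_\NN$ from \Cref{prop-salgebras}, I view $\fv$ as taking values in $P_\NN$, where the multiplicative operation $\odot$ is pointwise addition of functions. Then the equality $\fv(fg) = \fv(f) \odot \fv(g)$ (which holds because $\fv$ is a valuation, not merely a quasivaluation) gives $\fv(fg)(n) = \fv(f)(n) + \fv(g)(n) \geq \psi_{k\PP}(n) + \psi_{k'\PP}(n)$ for all $n \in \NN_\R$. It then remains to show $\psi_{k\PP} + \psi_{k'\PP} \geq \psi_{(k+k')\PP}$. Here I will use that $k\PP = \{ku : u \in \PP\}$ is just the classical dilation (which one verifies directly from the half-space presentation using that $\w(n_i)$ is $F_{\geq 0}$-homogeneous by \Cref{eq: strict duals Fgeq0 linear}), together with the homogeneity $\v(ku) = k\v(u)$, to conclude $\psi_{k\PP} = k\psi_\PP$. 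Then $\psi_{k\PP} + \psi_{k'\PP} = (k+k')\psi_\PP = \psi_{(k+k')\PP}$, completing the argument.

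None of the three steps is a serious obstacle, but the main technical point worth care is the scaling identity $\psi_{k\PP} = k\psi_\PP$, which is what allows the additive relation on support functions to cleanly match the half-space threshold $k+k'$. An alternative and perhaps more elementary route to (3) would bypass support functions entirely: writing $f = \sum_i \lambda_i \bb_{m_i}$ and $g = \sum_j \mu_j \bb_{m'_j}$, one uses \Cref{lemma: support of product of CAB elements} to see $\supp(\bb_{m_i}\bb_{m'_j}) = \ptconv_\R(\{m_i +_\alpha m'_j\})$, and then invokes the fact that each $\w(n_\ell)$ is a point to obtain $\w(n_\ell)(m_i +_\alpha m'_j) \geq \w(n_\ell)(m_i) + \w(n_\ell)(m'_j) \geq (k+k')a_\ell$; since $(k+k')\PP$ is point-convex, this places the support of every summand of $fg$ inside $(k+k')\PP$. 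I would likely include this second proof as a remark or use it as an internal check.
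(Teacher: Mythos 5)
Your proof is correct and follows essentially the same route as the paper: part (1) via the valuation axioms and idempotency of $\oplus$, part (2) via \Cref{claim_support} and the nesting $k\PP \subseteq k'\PP$ forced by $a_i < 0$, and part (3) via the scaling identity $\psi_{k\PP} = k\psi_\PP$ combined with $\fv(fg) = \fv(f) + \fv(g)$. Your justification of $\psi_{k\PP} = k\psi_\PP$ through the dilation characterization of $k\PP$ is slightly more explicit than the paper's "follows straightforwardly," and the alternative argument for (3) via \Cref{lemma: support of product of CAB elements} is a valid secondary route.
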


\begin{proof}
To prove (1), first observe that if $f,g$ satisfy $\fv(f) \geq \psi_{k\PP}$ and $\fv(g) \geq \psi_{k\PP}$, respectively, then $\fv(f+g) \geq \fv(f) \oplus \fv(g) \geq \psi_{k \PP}$ as desired. We also have that $\fv(cf)=\fv(f)$ for $c \neq 0$, $c \in \K$.

We now prove (2). We have seen in \Cref{claim_support} that $\fv(f) \geq \psi_{k\PP}$ if and only if $\supp(f) \subseteq k\PP$. Notice that since $a_i<0$ for all $i$ it follows that $k \PP$ is contained in $ k' \PP$ for any $k' > k$. Hence $\supp(f) \subset k \PP$ implies $\supp(f) \subset k' \PP$ which means $\fv(f) \geq \psi_{k'\PP}$. This yields the claim.

To see (3), we first note that it follows straightforwardly from the definition of support functions and of $k\PP$ that  
$\psi_{k\PP}=k\psi_\PP$. 
 Now suppose $\fv(f)\ge \psi_{k\PP}$ and $\fv(g)\ge \psi_{k' \PP}$. Then 
\begin{equation*}
    \fv(fg)=\fv(f)+\fv(g)\ge 
    \psi_{k\PP} + \psi_{k' \PP} = 
k \psi_{\PP} + k' \psi_{\PP} = (k+k') \psi_{\PP} = \psi_{(k+k')\PP}
.\end{equation*}
Thus $fg \in \Gamma(\Aa_\MM, (k+k')\PP)$ by \Cref{claim_support}, as was to be shown. 
\end{proof}

By \Cref{prop: sections algebra} we may define the following (graded) algebra. 

\begin{definition}\label{definition: M polytope algebra}
In the setting above, 
we define the graded algebra $\Aa_\MM^\PP$ as
\begin{equation*}\label{eq: def APM}
\Aa_\MM^\PP := \bigoplus_{k \geq 0} \Gamma(\Aa_\MM, k\PP) \cdot t^k \subseteq \Aa_\MM[t]
\end{equation*}
where $t$ is a formal variable keeping track of the grading. We call $\Aa_\MM^\PP$ the \textbf{PL polytope algebra} (associated to $(\Aa_\MM,\fv)$ and $\PP$).
\exampleqed
\end{definition}

In the course of the geometric arguments to follow, it will be useful to have a perspective on the graded algebra $\Aa_\MM[t]$ in terms of direct products of polyptych lattices. We develop this here in preparation for Sections~\ref{subsec:compactification} to ~\ref{subsec: Cox rings}. The first step observes that $\Aa_\MM[t]$ is itself a detropicalization.

\begin{lemma}\label{lemma: AaMMt is detrop}
Let $\MM$ be a finite polyptych lattice over $\Z$ and suppose $(\Aa_\MM, \fv: \Aa_\MM \to S_\MM)$ is a detropicalization of $\MM$ and $\B$ is a convex adapted basis for $\Aa_\MM$ with respect to $\fv$. Let $\Z$ denote the trivial polyptych lattice of rank $1$ over $\Z$. Then the pair $(\Aa_\MM[t, t^{-1}], \tilde{\fv}: \Aa_\MM[t, t^{-1}] \to S_{\MM \times \Z})$ where the valuation $\tilde{\fv}$ with $\tilde{\fv}(\bb \cdot t^k) = (\fv(\bb), k) \in \MM \times \Z$ is constructed in \Cref{prop: direct product of detrop}, is a detropicalization of the direct product polyptych lattice $\MM \times \Z$ of rank $\mathrm{rank}(\MM)+1$ over $\Z$. Moreover, the set $\{\bb \cdot t^k\}_{\bb \in \B, k\in \Z}$ is a convex adapted basis of $\Aa_\MM[t,t^{-1}]$ with respect to $\tilde{\fv}$. 
\end{lemma}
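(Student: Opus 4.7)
My plan is to deduce the statement as a direct application of the previously established \Cref{prop: direct product of detrop} (the direct-product result for detropicalizations), together with \Cref{example: detrop trivial PL} (the trivial polyptych lattice case), followed by identifying the tensor product algebra with a Laurent polynomial extension.

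First, I would invoke \Cref{example: detrop trivial PL} specialized to $r=1$: this gives that the pair $(\K[t,t^{-1}], \fv_t)$, where $\fv_t(f)$ is the Newton polytope of the Laurent polynomial $f$, is a detropicalization of the trivial polyptych lattice $\Z$ of rank $1$ over $\Z$. Under the identification $S_\Z \cong \{\text{integral polytopes in }\Z\}$ from \Cref{example: canonical semialgebra of trivial PL}, the lattice element $k \in \Z$ corresponds to the singleton polytope $\{k\}$, i.e.\ $\fv_t(t^k) = k$. I would then verify directly, or note it follows from \Cref{prop-detropbasis} applied to the trivial PL (which is self-dual via \Cref{ex: trivial dual pair}), that $\B_t := \{t^k\}_{k \in \Z}$ is a convex adapted basis: the bijection $\B_t \to \Z$ via $t^k \mapsto k$ is the content of \Cref{prop-detropbasis}.

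Next, I would apply \Cref{prop: direct product of detrop} to the two detropicalizations $(\Aa_\MM, \fv, \B)$ and $(\K[t,t^{-1}], \fv_t, \B_t)$. This produces a detropicalization of the product polyptych lattice $\MM \times \Z$ with underlying algebra $\Aa_\MM \otimes_\K \K[t,t^{-1}]$, valuation $\tilde\fv$ defined on the tensor-product convex adapted basis by $(\fv \otimes \fv_t)(\bb \otimes t^k) = \fv(\bb) \otimes k \in S_\MM \otimes_{\mathcal{B}} S_\Z$ and then composed with the semialgebra map $\kappa: S_\MM \otimes_{\mathcal{B}} S_\Z \to S_{\MM \times \Z}$ of \Cref{lemma: SMM SMMprime}, which sends $m \otimes k \mapsto (m,k)$. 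The convex adapted basis is $\{\bb \otimes t^k\}_{\bb \in \B, k \in \Z}$.

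Finally, I would use the standard $\K$-algebra isomorphism $\Phi: \Aa_\MM \otimes_\K \K[t,t^{-1}] \xrightarrow{\sim} \Aa_\MM[t,t^{-1}]$ sending $\bb \otimes t^k \mapsto \bb \cdot t^k$ to transport the detropicalization structure. Under $\Phi$ the tensor-basis becomes $\{\bb \cdot t^k\}_{\bb \in \B, k \in \Z}$, which is manifestly a $\K$-basis of $\Aa_\MM[t,t^{-1}]$, and the resulting valuation sends $\bb \cdot t^k \mapsto (\fv(\bb), k) \in \MM \times \Z$, matching the given formula for $\tilde\fv$. The Krull dimension of $\Aa_\MM[t,t^{-1}]$ is $\dim \Aa_\MM + 1 = r + 1 = \mathrm{rank}(\MM \times \Z)$, so the dimension condition in \Cref{def_detrop} is satisfied.

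The only technical point requiring care — and the main (mild) obstacle — is to confirm that the valuation produced by \Cref{prop: direct product of detrop} is genuinely the map $\bb \cdot t^k \mapsto (\fv(\bb), k)$, which amounts to tracking how $\kappa$ acts on basis elements together with ensuring that the tensor-basis property (needed to apply \Cref{def_CAB}(1)) is preserved under $\Phi$; both checks are immediate from the definitions.
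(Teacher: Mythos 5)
Your proposal takes essentially the same route as the paper: invoke \Cref{example: detrop trivial PL} for $(\K[t,t^{-1}],\{t^k\})$ as a detropicalization of the trivial rank-$1$ lattice with convex adapted basis $\{t^k\}$, then apply \Cref{prop: direct product of detrop}. You add a few extra verifications (the tensor-to-Laurent identification, the Krull dimension count, the explicit tracking of $\kappa$ on basis elements) that the paper treats as immediate, but the argument is the same one.
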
 

\begin{proof} 
In \Cref{example: detrop trivial PL} we saw that a detropicalization of the trivial rank-$1$ polyptych lattice $\Z$ may be chosen to be $\K[t,t^{-1}]$, with valuation sending a Laurent polynomial $\sum_k c_k t^k$ to the piecewise-linear function $\min\{ \langle k, \cdot \rangle \, \mid \, c_k \neq 0\}$ on $\R$ where we denote by $\langle \cdot, \cdot \rangle$ the standard inner product on $\R$. It is straightforward to see that the monomials $\{t^k \, \mid \, k \in \Z\}$ form a convex adapted basis for $\K[t,t^{-1}]$ with respect to this valuation. Applying \Cref{prop: direct product of detrop} to $\MM$ and $\Z$ then yields the claims. 
\end{proof}

The next lemma now follows.  

\begin{lemma}\label{lemma: rank 1 valuation}
In the setting of \Cref{lemma: AaMMt is detrop}, let $n \in \NN$ and $a \in \Z$. There is a $\overline{\Z}$-valued valuation $\tilde{\fv}_{n,a}$ on $\Aa_\MM[t]$ satisfying $\tilde{\fv}_{n,a}(\bb \cdot t^k) = \fv(\bb)(n) - ka$. 
\end{lemma}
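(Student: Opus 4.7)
The plan is to construct $\tilde{\fv}_{n,a}$ as the composition of the detropicalization valuation from \Cref{lemma: AaMMt is detrop} with a semialgebra morphism $S_{\MM \times \Z} \to \overline{\Z}$ corresponding to a suitable point of $\MM \times \Z$, and then restrict from $\Aa_\MM[t, t^{-1}]$ to $\Aa_\MM[t]$.

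First, I would invoke \Cref{lemma: AaMMt is detrop} to get the detropicalization $\tilde{\fv}\colon \Aa_\MM[t, t^{-1}] \to S_{\MM \times \Z}$ with $\tilde{\fv}(\bb \cdot t^k) = (\fv(\bb), k) \in \MM \times \Z \subset S_{\MM \times \Z}$ on basis elements. Next, since $(\MM, \NN, \v, \w)$ is a strict dual $\Z$-pair and the trivial rank-$1$ polyptych lattice $\Z$ is strictly self-dual under the pairing $(k, a) \mapsto ka$, \Cref{lemma: duals of product PL} gives that $(\MM \times \Z, \NN \times \Z)$ is a strict dual $\Z$-pair. In particular, the element $(n, -a) \in \NN \times \Z$ determines a point $p_{n,-a} \in \Sp(\MM \times \Z)$ given explicitly by
\begin{equation*}
p_{n,-a}(m, k) \;=\; \v(m)(n) + k \cdot (-a) \;=\; \v(m)(n) - ka.
\end{equation*}

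The next step is to invoke \Cref{lemma-pointsaresemialgebramaps}, which produces a unique extension of $p_{n,-a}$ to a $\Z_{\geq 0}$-semialgebra morphism $\tilde{p}_{n,-a}\colon S_{\MM \times \Z} \to \overline{\Z}$ that takes finite values on $S_{\MM \times \Z} \setminus \{\infty\}$. Then by \Cref{lemma: semialg hom and valuations}, the composition $\tilde{p}_{n,-a} \circ \tilde{\fv}\colon \Aa_\MM[t, t^{-1}] \to \overline{\Z}$ is a $\overline{\Z}$-valued valuation (property (4) of \Cref{def_valuation} is preserved because $\tilde{p}_{n,-a}$ is finite away from $\infty$ and $\tilde{\fv}$ is a valuation). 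Restricting to the subalgebra $\Aa_\MM[t] \subset \Aa_\MM[t, t^{-1}]$ then yields a valuation $\tilde{\fv}_{n,a}$ on $\Aa_\MM[t]$, as required.

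Finally, I would verify the stated formula on basis elements $\bb \cdot t^k$:
\begin{equation*}
\tilde{\fv}_{n,a}(\bb \cdot t^k) \;=\; \tilde{p}_{n,-a}\bigl(\tilde{\fv}(\bb \cdot t^k)\bigr) \;=\; \tilde{p}_{n,-a}(\fv(\bb), k) \;=\; p_{n,-a}(\fv(\bb), k) \;=\; \fv(\bb)(n) - ka,
\end{equation*}
where in the last expression we interpret $\fv(\bb)(n)$ via the identification $S_\MM \cong \P_\NN$ from \Cref{prop-salgebras}, under which the element $\fv(\bb) \in \MM \subset S_\MM$ corresponds to $\v(\fv(\bb)) \in \Sp(\NN) \subset \P_\NN$ and so $\fv(\bb)(n) = \v(\fv(\bb))(n)$. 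The main (minor) obstacle is purely notational: tracking the identification of $\fv(\bb) \in \MM$ with the function $\v(\fv(\bb)) \in \Sp(\NN)$, and matching the sign convention by pairing against $(n,-a)$ rather than $(n,a)$. Once these identifications are set up, every step is a direct application of results established earlier in the paper.
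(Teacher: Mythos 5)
Your proposal is correct and follows the paper's own argument: the paper also obtains $\tilde{\fv}_{n,a}$ by composing the detropicalization $\tilde{\fv}$ with the point $(n,-a) \in \NN \times \Z \cong \Sp(\MM \times \Z)$, citing \Cref{lemma-pointsaresemialgebramaps} and \Cref{lemma: semialg hom and valuations}. You have simply spelled out the identifications and the sign convention that the paper leaves implicit.
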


\begin{proof} 
The claim follows from \Cref{lemma: semialg hom and valuations} and \Cref{lemma-pointsaresemialgebramaps}. 
Since $(n,-a) \in \NN \times \Z$ is an element of the dual to $\MM \times \Z$, hence a point in $\Sp(\MM \times \Z)$, we obtain $\tilde{\fv}_{n,a}$ by composing $\tilde{\fv}$ with this point.
\end{proof}

Given $\PP = \bigcap_{i =1}^\ell \HH_{\w(n_i), a_i} \subset \MM_\R$, let $\tilde{\fv}_i$ denote the valuation $\tilde{\fv}_{n_i,a_i}$ from the lemma above. It is straightforward from the definitions and \Cref{lemma: S subset T point convex} that, for $i \in [\ell]$,
\begin{equation}\label{eq: fvi support}
\supp(f) \subset \HH_{\w(n_i), ka_i} \ \Longleftrightarrow \ \fv(f)(n_i)-ka_i \geq 0 \ \Longleftrightarrow \ \tilde{\fv}_i(f \cdot t^k)\geq 0. 
\end{equation}
Motivated by this,  and in analogy with~\Cref{eq: level k support in kP}, we define 
\begin{equation*}\label{eq:def Gamma half-space}
\Gamma(\Aa_\MM, \HH_{\w(n_i), k a_i}):=\{f\in\Aa_\MM\mid \supp(f)\subset\HH_{\w(n_i), ka_i} \}
\end{equation*} 
and also define 
\begin{equation}\label{eq: def A_niai}
\Aa^{n_i,a_i}_\MM := \bigoplus_{k \in \Z_{\geq 0}} \Gamma(\Aa_\MM, \HH_{\w(n_i), ka_i}) \cdot t^k = \bigoplus_{k \in \Z_{\geq 0}} \left\{ f \cdot t^k \in \Aa_\MM[t] \, \mid \, \tilde{\fv}_i(f \cdot t^k) \geq 0 \right\}
\end{equation}
where the last equality follows from \Cref{eq: fvi support}. Now it is immediate, since $\PP = \cap_{i \in [\ell]} \HH_{\w(n_i),a_i}$, that 
\begin{equation*}\label{eq: AMP intersect A_niai}
\Aa_\MM^\PP = \bigcap_{i \in [\ell]} \Aa_\MM^{n_i,a_i} \subset \Aa_\MM[t].
\end{equation*}
It is straightforward that $\Aa_\MM^\PP \hookrightarrow \Aa_\MM^{n_i,a_i} \hookrightarrow \Aa_\MM[t]$ are inclusions of graded rings.

Next, we make some constructions using the rank-$1$ valuations constructed in \Cref{lemma: rank 1 valuation}. For each $i \in [\ell]$, consider the $\overline{\Z}$-valued valuation $\tilde{\fv}_{n_i,a_i}$ on $\Aa_\MM[t]$ constructed in \Cref{lemma: rank 1 valuation}. Since $\Aa_\MM[t]$ is an integral domain, we may consider the extension of $\tilde{\fv}_{n_i,a_i}$ to its field of fractions $\mathcal{K}(\Aa_\MM[t])$. Let $\mathcal{O}_{n_i,a_i}$ denote the associated discrete valuation ring in $\mathcal{K}(\Aa_\MM[t])$, i.e., 
\begin{equation}\label{eq: def O ni ai}
\mathcal{O}_{n_i,a_i} := \left\{ \frac{f}{g} \in \mathcal{K}(\Aa_\MM[t]) \, \mid \, \tilde{\fv}_{n_i,a_i}(f) - \tilde{\fv}_{n_i,a_i}(g) \geq 0 \right\} \subset  \mathcal{K}(\Aa_\MM[t]).
\end{equation}
Let $\mathfrak{m}_{n_i,a_i}$ denote the unique maximal ideal in $\mathcal{O}_{n_i,a_i}$, i.e., 
\begin{equation*}\label{eq: def m ni ai}
\mathfrak{m}_{n_i,a_i} = \left\{ \frac{f}{g} \in \mathcal{K}(\Aa_\MM[t]) \, \mid \, \tilde{\fv}_{n_i,a_i}(f) - \tilde{\fv}_{n_i,a_i}(g) > 0 \right\} \subset  \mathcal{K}(\Aa_\MM[t]).
\end{equation*}
We then define 
\begin{equation}\label{eq: def Pi}
P_i := \Aa_\MM^{n_i,a_i} \cap \mathfrak{m}_{n_i,a_i} = \{ f \cdot t^k \in \Aa_\MM^{n_i,a_i} \, \mid \, \fv(f)(n_i) - k a_i > 0\} \subset \Aa_\MM^{n_i,a_i}
\end{equation}
viewed as an ideal of $\Aa^{n_i,a_i}_\MM$, 
and similarly $Q_i := \Aa_\MM^\PP \cap \mathfrak{m}_{n_i,a_i}$, an ideal in $\Aa_\MM^\PP$. Note that $P_i$ and $Q_i$ are both prime ideals, by construction.

We saw in \Cref{corollary: v alpha rho has one dim leaves} that for the full-rank valuation $\fv_{\alpha,\tilde{\rho}}$, the associated graded algebra is the semigroup algebra over its value semigroup. For technical reasons, we will need below to consider $\overline{\Z}$-valued valuations in place of $\overline{\Z^r}$-valued valuations, but in such a way that their associated graded algebras are the same. 
We give the precise statement in \Cref{lemma: rank 1 modify full rank} below, for which we need some preparation. Following the notation in \Cref{lem-canonicaltototal}, by slight abuse of notation we also denote by $\tilde{\rho}$ the basis of $C_\alpha \times \R$ obtained by adding the vector $(0,1) \in C_\alpha \times \Z$ to $\tilde{\rho}$. Since $\Z$ is a trivial polyptych lattice, it is straightforward to check that it is self-dual with respect to the standard inner product pairing, so by \Cref{lemma: duals of product PL} we have $\NN \times \Z$ is the strict dual of $\MM \times \Z$. In particular $C_\alpha \times \R = (\w \times \mathrm{id})^{-1}(\Sp(\MM \times \Z, \alpha))$. Again by slight abuse of notation we denote by $\fv_{\alpha,\tilde{\rho}}: \Aa_\MM^\PP \to \overline{\Z^{\mathrm{rank}(\MM)} \times \Z}$ the full-rank valuation associated to $\alpha \in \pi(\MM \times \Z)$ and $\tilde{\rho}$, as constructed in \Cref{lem-canonicaltototal}, with domain restricted from $\Aa_\MM[t,t^{-1}]$ to the subalgebra $\Aa_\MM^\PP$. We equip $\Z^{\mathrm{rank}(\MM)} \times \Z$ with the lex order which compares the $\Z$ factor first and then orders with respect to the $\Z^{\mathrm{rank}(\MM)}$ factor. (Informally, the grading with respect to the variable $t$ ``comes first''.) Now we let $S$ temporarily denote the saturated affine semigroup of lattice points (in $M_\alpha \times \Z$) lying in the cone over $\pi_\alpha(\PP)$; more precisely, 
\begin{equation}\label{eq: def temp S}
S := \{ (m,k) \, \mid \, m \in M_\alpha \cap k \cdot \pi_\alpha(\PP), \, k \in \Z_{\geq 0} \} \subset M_\alpha \times \Z.
\end{equation}

\begin{lemma}\label{lemma: rank 1 modify full rank}
With assumptions and notation as above, (1) the value semigroup $S(\Aa_\MM^\PP, \fv_{\alpha,\tilde{\rho}})$ is equal to $S$ as defined in \Cref{eq: def temp S}, (2) $\mathrm{gr}_{\fv_{\alpha,\tilde{\rho}}}(\Aa_\MM^\PP)$ is isomorphic to the semigroup algebra $\K[S]$, and (3) there exists a $\overline{\Z}$-valued valuation $\bar{\fv}: \Aa_\MM^\PP \to \overline{\Z}$ such that $\mathrm{gr}_{\bar{\fv}}(\Aa_\MM^\PP) \cong \mathrm{gr}_{\fv_{\alpha,\tilde{\rho}}}(\Aa_\MM^\PP) \cong \K[S] = \K[S(\Aa_\MM^\PP, \fv_{\alpha,\tilde{\rho}})]$ as $\K$-algebras. 
\end{lemma}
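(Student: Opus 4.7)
The plan for (1) is to compute the valuation $\fv_{\alpha,\tilde{\rho}}$ on an explicit basis of $\Aa_\MM^\PP$. By \Cref{lemma: AaMMt is detrop} the pair $(\Aa_\MM[t,t^{-1}], \tilde{\fv})$ is a detropicalization of the product polyptych lattice $\MM \times \Z$ with convex adapted basis $\{\bb \cdot t^k : \bb \in \B,\ k \in \Z\}$, and by \Cref{lemma: duals of product PL} the pair $(\MM \times \Z, \NN \times \Z)$ is strictly dual, so the setup of \Cref{lem-canonicaltototal} applies on this product. The chart of $\MM \times \Z$ corresponding to $\alpha$ together with the single chart of $\Z$ gives the claimed $\fv_{\alpha,\tilde{\rho}}$, and \Cref{prop-mutation-elements} evaluates it on a basis element as $\fv_{\alpha,\tilde{\rho}}(\bb_m \cdot t^k) = (k, \pi_\alpha(m))$ with the lex order placing $k$ first. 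By \Cref{eq: level k support in kP} together with \Cref{claim_support}, the basis elements of $\Aa_\MM^\PP$ are precisely those $\bb_m \cdot t^k$ with $m \in k\PP$, and since $\pi_\alpha$ is a bijection and commutes with scaling by $k$, their $\fv_{\alpha,\tilde{\rho}}$-images form precisely $S$. Part (2) then follows from \Cref{corollary: v alpha rho has one dim leaves} applied to $\Aa_\MM[t,t^{-1}]$: the one-dimensional-leaves property passes to the subalgebra $\Aa_\MM^\PP$ (each leaf there is a subspace of a one-dimensional leaf of the ambient), and the argument of that corollary identifies the associated graded with the semigroup algebra on the value semigroup, which is $\K[S]$ by (1).

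For (3) the plan is to construct $\bar{\fv}$ via a one-parameter degeneration extracted from the rank-$(r+1)$ Gr\"obner-type degeneration of $\Aa_\MM^\PP$ onto $\K[S]$ encoded by $\fv_{\alpha,\tilde{\rho}}$. Concretely, the full-rank valuation with one-dimensional leaves from (1)--(2) yields a Rees-algebra flat family of $\K$-algebras over a torus whose central fiber is $\K[S]$. I would then select a generic integer weight vector $\vec{w} \in \Z^{r+1}$ lying in the interior of an appropriate cone dictated by the finitely generated semigroup $S$, restrict the Rees family along the corresponding one-parameter subgroup, and define $\bar{\fv}$ to be the $\overline{\Z}$-valued order-of-vanishing valuation associated to this one-parameter family. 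A generic $\vec{w}$ should keep the special fiber equal to $\K[S]$, giving $\mathrm{gr}_{\bar{\fv}}(\Aa_\MM^\PP) \cong \K[S]$ as $\K$-algebras (though not in general as graded $\K$-algebras, which is enough for the stated conclusion).

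The main obstacle in (3) is the verification that $\bar{\fv}$ defined this way really is a $\overline{\Z}$-valued valuation satisfying the min-inequality on all of $\Aa_\MM^\PP$. The naive attempt $\bar{\fv} := \vec{w} \cdot \fv_{\alpha,\tilde{\rho}}$ as a composition with a linear functional fails: no $\Z$-linear functional on $\Z^{r+1}$ can preserve the lex order on the unbounded semigroup $S$, since in the $k$-direction the lower-priority coordinates of elements of $S$ grow linearly in $k$ and eventually overwhelm any fixed weight. The verification should instead exploit the finite generation of $S$ (guaranteed by $\PP$ being a PL polytope over $\Z$) together with the flatness of the Rees family, reducing the valuation axioms to a finite condition on a set of semigroup generators of $S$. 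This technical step — promoting the Rees-algebraic one-parameter family to a genuine $\overline{\Z}$-valued valuation on $\Aa_\MM^\PP$ whose associated graded is $\K[S]$ — is the heart of (3).
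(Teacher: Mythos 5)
Your treatment of parts (1) and (2) is essentially what the paper does: identify $\Aa_\MM[t,t^{-1}]$ as a detropicalization of $\MM \times \Z$, use the strict duality to put $\fv_{\alpha,\tilde{\rho}}$ in the setting of \Cref{lem-canonicaltototal} and \Cref{prop-mutation-elements}, read off the value semigroup $S$, and invoke \Cref{corollary: v alpha rho has one dim leaves} for the identification $\mathrm{gr}_{\fv_{\alpha,\tilde{\rho}}}(\Aa_\MM^\PP) \cong \K[S]$. That part is fine.

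Part (3) is where you have a genuine gap, and you say so yourself. You correctly observe that no linear functional on $\Z^{r+1}$ can refine the lex order on an unbounded cone such as $S$, but then you propose a ``Rees-family plus generic weight'' construction that you do not actually carry out, explicitly calling the needed verification ``the heart of (3).'' So as written, (3) is unproved. More importantly, the lex-order worry turns out to be a red herring for the route the paper actually takes. The paper invokes \Cref{prop: appendix connection} to write $\fv_{\alpha,\tilde{\rho}} = \fv_{\alpha,\tilde{\rho},1} \circledast \cdots \circledast \fv_{\alpha,\tilde{\rho},r+1}$ and then \emph{defines} $\bar{\fv} := \fv_{\alpha,\tilde{\rho},1} \boxplus \cdots \boxplus \fv_{\alpha,\tilde{\rho},r+1}$, which on a convex adapted basis element is precisely the all-ones weight applied to $\fv_{\alpha,\tilde{\rho}}$, i.e.\ exactly the kind of linear functional you dismissed. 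What makes this work is \Cref{lemma: grF and grG iso} (proved via the intermediate filtration $\mathcal{E}$ indexed by the componentwise partial order on $\Z^{r+1}$): the conclusion there is that $\mathrm{gr}_{\mathcal{F}}(\Aa) \cong \mathrm{gr}_{\mathcal{G}}(\Aa)$ as $\K$-algebras (not as graded algebras), with the ``simultaneous minimizer'' needed for $\boxplus$ to be a genuine valuation supplied precisely by the fact that $\circledast$ is a valuation with one-dimensional leaves and the basis $\B$ is simultaneously adapted. In short, the paper never asks $\bar{\fv}$ to preserve the lex order; it compares filtrations directly. If you want to salvage your approach, you should abandon the ``generic weight vector'' framing and instead verify directly that the sum of the component rank-one valuations is multiplicative on $\Aa_\MM^\PP$ using the existence of a common lex-minimizing basis element, which is what $\boxplus$ and \Cref{lemma: grF and grG iso} package up for you.
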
 

\begin{proof} 
Let $S(\Aa_\MM^\PP, \fv_{\alpha,\tilde{\rho}})$ denote the value semigroup of $\fv_{\alpha,\tilde{\rho}}$.  Since $(\Aa_\MM,\fv)$ is a detropicalization, $\fv$ induces a bijection from $\mathbb{B}$ to $\MM$. It follows from the definition of $\Aa_\MM^\PP$ that $S(\Aa_\MM^\PP, \fv_{\alpha,\tilde{\rho}})$ is precisely the semigroup $S$ defined above. This proves (1). The claim (2) follows from \Cref{corollary: v alpha rho has one dim leaves}. 
To see claim (3), recall from \Cref{prop: appendix connection} that 
$\fv_{\alpha,\tilde{\rho}}$ can be interpreted as $\fv_{\alpha,\tilde{\rho},1} \circledast \fv_{\alpha,\tilde{\rho},2} \circledast \cdots \circledast \fv_{\alpha,\tilde{\rho},r+1}$. 
Then setting $\overline{\fv} := 
\fv_{\alpha,\tilde{\rho},1} \boxplus \fv_{\alpha,\tilde{\rho},2} \boxplus \cdots \boxplus \fv_{\alpha,\tilde{\rho},r+1}$ 
and using \Cref{lemma: grF and grG iso} completes the proof.  
\end{proof}

We conclude this section with some observations about a distinguished collection of subalgebras of $\Aa_\MM$ (which leads naturally to subalgebras of $\Aa_\MM^\PP, \Aa_\MM^{n_i,a_i}$, etc).  Recall that the standing hypotheses of this section include that $\MM$ is a finite polyptych lattice equipped with a strict dual $\NN$, and we have fixed a choice of a convex adapted basis $\B$ for the detropicalization $\fv: \Aa_\MM \to \P_{\NN}$. 

Let $\mathcal{C}$ be a cone in $\Sigma(\MM)$. We know from \Cref{lemma: addition well def on cones} that addition of elements of $\mathcal{C} \subset \MM$ is well-defined. Since $\mathcal{C}$ is a cone, the zero element $0_\MM$ is also contained in $\mathcal{C} \cap \MM$. Thus we may view $\mathcal{C}\cap \MM$ as a semigroup.  Following \Cref{remark: equivalent support desc} we denote by $\bb_m$ the unique element in $\B$ corresponding to  $m \in \MM$. We also define $\Aa_\MM^{\mathcal{C}} := \mathrm{span}_\K\{\bb_m \, \mid \, m \in \mathcal{C}\} \subset \Aa_\MM$. We have the following. 

\begin{lemma}\label{lemma: cone subset of AaMM is subalg}
With assumptions and notation as above, the $\K$-vector subspace $\Aa_\MM^{\mathcal{C}}$ is: 
\begin{enumerate} 
\item closed under multiplication, and hence is a subalgebra of $\Aa_\MM$, and 
\item isomorphic to the semigroup algebra $\K[\mathcal{C} \cap \MM]$, and 
\item finitely generated. 
\end{enumerate}
\end{lemma}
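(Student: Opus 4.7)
For part (1), the key observation is that for $m, m' \in \mathcal{C} \cap \MM$, the sum $m +_\alpha m'$ reduces to a single element $m + m' \in \mathcal{C} \cap \MM$ independent of $\alpha$, by \Cref{lemma: addition well def on cones}. Combining this with \Cref{lemma: support of product of CAB elements} and \Cref{lemma: pt convex hulls lattice points finite}(2), we get $\supp(\bb_m \cdot \bb_{m'}) = \ptconv_\R(\{m+m'\}) = \{m+m'\}$; then \Cref{remark: equivalent support desc} forces $\bb_m \cdot \bb_{m'} = c_{m,m'}\bb_{m+m'}$ for some $c_{m,m'} \in \K$, and since $\Aa_\MM$ is an integral domain, $c_{m,m'} \in \K^*$.

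For part (3), the image $\pi_\alpha(\mathcal{C}) \subset M_\alpha \otimes \R$ is a rational polyhedral cone for any chart $\alpha$, so by Gordan's lemma $\pi_\alpha(\mathcal{C}) \cap M_\alpha$ is a finitely generated semigroup. Transferring via the semigroup isomorphism $\pi_\alpha|_{\mathcal{C} \cap \MM}$ (well-defined by \Cref{lemma: addition well def on cones}), we obtain a finite generating set $m_1, \ldots, m_s$ for $\mathcal{C} \cap \MM$. Iterating part (1) shows that every $\bb_m$ is a scalar multiple of a monomial in the $\bb_{m_i}$, so $\Aa_\MM^{\mathcal{C}} = \K[\bb_{m_1}, \ldots, \bb_{m_s}]$.

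For part (2), parts (1) and (3) equip $\Aa_\MM^{\mathcal{C}}$ with the structure of a commutative $(\mathcal{C} \cap \MM)$-graded algebra with $1$-dimensional pieces $\K\bb_m$ and nonzero products. The surjection $\K[x_1, \ldots, x_s] \twoheadrightarrow \Aa_\MM^{\mathcal{C}}$ sending $x_i \mapsto \bb_{m_i}$ has kernel $I$ generated by binomials $\prod_i x_i^{a_i} - c_{a,b} \prod_i x_i^{b_i}$ indexed by semigroup relations $\sum a_i m_i = \sum b_i m_i$, with $c_{a,b} \in \K^*$ recorded by the multiplication in $\Aa_\MM$. The plan is to rescale $x_i \mapsto \lambda_i x_i$ with $\lambda_i \in \K^*$ so that every $c_{a,b}$ becomes $1$, turning $I$ into the standard toric ideal of $\mathcal{C} \cap \MM$ and producing the desired isomorphism $\Aa_\MM^{\mathcal{C}} \cong \K[\mathcal{C} \cap \MM]$. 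The main obstacle is showing that such $\lambda_i$ exist. I would first verify that $c_{a,b}$ depends only on $v = a - b \in K := \ker(\Z^s \to G)$, where $G$ is the groupification of $\mathcal{C} \cap \MM$ (a free $\Z$-lattice, as it embeds via $\pi_\alpha$ in $M_\alpha$), and that the resulting map $c \colon K \to \K^*$ is a group homomorphism. This character then extends to a homomorphism $\Z^s \to \K^*$ because $\K^*$ is divisible (using that $\K$ is algebraically closed) and hence injective in the category of abelian groups, so $\mathrm{Ext}^1(\Z^s/K, \K^*) = 0$ and the restriction map $\Hom(\Z^s, \K^*) \to \Hom(K, \K^*)$ is surjective. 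Alternatively, this cocycle-trivialization is essentially the same fact invoked in the proof of \Cref{proposition: full rank one dim leaves} (cf.\ \cite[Remark 4.13]{BrunsGubeladze}): a commutative domain graded by an affine semigroup with $1$-dimensional homogeneous pieces is isomorphic to the corresponding semigroup algebra.
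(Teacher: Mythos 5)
Your proof is correct and follows the paper's argument for parts (1) and (3): part (1) via \Cref{lemma: addition well def on cones} and \Cref{lemma: support of product of CAB elements}, part (3) via Gordan's lemma. You are slightly more careful in (1), explicitly invoking \Cref{lemma: pt convex hulls lattice points finite}(2) to pass from $\supp(\bb_m\bb_{m'})=\ptconv_\R(\{m+m'\})$ to the conclusion that $\bb_m\bb_{m'}$ is a nonzero scalar times $\bb_{m+m'}$, a step the paper leaves implicit. For part (2), the paper simply cites \cite[Remark 4.13]{BrunsGubeladze}; you instead unpack that citation into an explicit cocycle-trivialization argument (scaled binomial ideal, extending a character from the relation lattice $K$ to $\Z^s$ using divisibility of $\K^*$), falling back on the citation only at the end. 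Your explicit argument does go through: the claim that $c_{a,b}=\gamma(a)/\gamma(b)$ depends only on $a-b$, which might look delicate, holds precisely because commutativity of $\Aa_\MM$ forces the structure constants $c(m,m')$ in $\bb_m\bb_{m'}=c(m,m')\bb_{m+m'}$ to be symmetric, so that $a+b'=a'+b$ with $\deg a=\deg b$ and $\deg a'=\deg b'$ gives $\gamma(a)\gamma(b')c(\deg a,\deg b')=\gamma(a')\gamma(b)c(\deg a',\deg b)$ with equal $c$-factors, hence $\gamma(a)/\gamma(b)=\gamma(a')/\gamma(b')$; multiplicativity of the resulting character on $K$ follows by a similar computation. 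The net effect is the same result via the same essential fact, with the paper's route shorter and yours more self-contained for a reader unfamiliar with the Bruns--Gubeladze remark.
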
 

\begin{proof} 
Let $m,m' \in \mathcal{C} \cap \MM$ and consider their corresponding basis elements $\bb_m, \bb_{m'}$. Since $\B$ is a basis, we know that $\bb_m \cdot \bb_{m'} = \sum_{m'' \in \MM} c_{m''} \bb_{m''}$ for some $c_{m''} \in \K$. By \Cref{lemma: support of product of CAB elements}, we know $c_{m''} \neq 0$ only if $m'' \in \ptconv_R(\{m +_\alpha m' \, \mid \, \alpha \in\pi(\MM)\})$. Since $m, m'$ lie in a common cone of $\Sigma(\MM)$, by \Cref{lemma: addition well def on cones} we know that $m +_\alpha m'$ is independent of the choice of $\alpha$, i.e. the set $\{m +_\alpha m'\, \mid \, \alpha \in \pi(\MM)\}$ is a singleton set and we may denote it by $m+m'$. In particular, $\bb_m \cdot \bb_{m'}$ is a non-zero scalar multiple of the single basis element $\bb_{m+m'}$. We also know that $m+m' \in \mathcal{C}$. This proves (1).  Moreover, this proves that $\Aa_\MM^{\mathcal{C}}$ is a $\Z^r$-graded algebra with non-zero graded components corresponding to elements of $\mathcal{C} \cap \MM$, and the $\K$-dimension of each graded piece is $\leq 1$. We also know $\Aa_\MM^{\mathcal{C}}$ is an integral domain so by \cite[Remark 4.13]{BrunsGubeladze} we conclude $\Aa_\MM^{\mathcal{C}}$ is isomorphic to the semigroup algebra of $\mathcal{C} \cap \MM$.  This proves (2). The last claim (3) follows by Gordan's lemma, since $\mathcal{C}$ is a rational polyhedral cone. 
\end{proof}

\subsection{The compactification associated to a PL polytope of a detropicalization $\Aa_\MM$}\label{subsec:compactification}

In \Cref{definition: M polytope algebra} we introduced the $\Z_{\geq 0}$-graded PL polytope algebra $\Aa^\PP_\MM$. In this section we define and begin to analyze its geometric counterpart. 
Specifically, we define the \textbf{compactification of $\Spec(\Aa_\MM)$ with respect to $\PP$} as
\begin{equation*}\label{eq: def compactification wrt P}
X_{\Aa_\MM}(\PP) := \Proj(\Aa_\MM^\PP).
\end{equation*}
This terminology is justified by the next result.

\begin{theorem}\label{compactifications}
Let $\MM$ be a finite polyptych lattice of rank $r$ over $\Z$ with a fixed choice of strict dual $\Z$-pair $(\MM,\NN, \v, \w)$. Let $(\Aa_\MM,\fv)$ be a detropicalization of $\MM$ with convex adapted basis $\B$. 
Let $\PP = \cap_{i=1}^{\ell} \HH_{\w(n_i), a_i} \subset \MM_\R$ be a full-dimensional PL polytope. Suppose $n_i \in \NN, n_i \neq 0$, the $n_i$ are pairwise distinct, and $a_i \in \Z_{< 0}$ for all $i \in [\ell]$. Suppose also that for each $n_i$ there exists a coordinate chart $\alpha_i \in \pi(\MM)$ on which $n_i$ is linear, and, the intersection of the boundary of $\pi_{\alpha_i}(\HH_{\w(n_i),a_i})$ with $\pi_{\alpha_i}(\PP)$ is a facet of $\pi_{\alpha_i}(\PP)$. Then the affine scheme (over $\K$) $U_\MM := \Spec(\Aa_\MM)$ can be realized as a dense, open subscheme of $X_{\Aa_\MM}(\PP)$, where the complement of $U_\MM$ in $X_{\Aa_\MM}(\PP)$ is a union of divisors $D_i$ which are in bijection with $n_i$, $i \in [\ell]$.  
\end{theorem}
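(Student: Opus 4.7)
My plan is to mimic the classical proof that the projective toric variety of a full-dimensional lattice polytope compactifies the open torus, with boundary decomposing into invariant divisors indexed by the facets.

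First I would identify $U_\MM$ as the principal open $D_+(t) \subset X_{\Aa_\MM}(\PP)$. By \Cref{corollary: CAB bijective with MM}, the basis element corresponding to $0_\MM \in \MM$ is $1 \in \Aa_\MM$ (since $0_\MM$ is the $\star$-identity of $S_\MM$ by \Cref{remark: zero is star product identity}); consequently $\fv(1) = 0 \ge \psi_\PP$, with the inequality holding because $0_\MM \in \PP$ forces $\psi_\PP \le 0$. Therefore $t = 1 \cdot t$ is a legitimate degree-$1$ homogeneous element of $\Aa_\MM^\PP$. The degree-zero part of the localization $\Aa_\MM^\PP[t^{-1}]$ is the filtered union $\bigcup_{k \ge 0} \Gamma(\Aa_\MM, k\PP)$. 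For any $f = \sum_{m \in S} c_m \bb_m \in \Aa_\MM$ with $S \subset \MM$ finite, choose $k$ large enough so that $S \subseteq k\PP$ (possible since $0_\MM$ lies in the interior of $\PP$ and $k\PP$ grows unboundedly as $k \to \infty$); by chart-convexity (\Cref{lemma: PL polytope is point convex}) together with \Cref{lemma: S subset T point convex}, this forces $\supp(f) = \ptconv_\R(S) \subseteq k\PP$, so $f \in \Gamma(\Aa_\MM, k\PP)$. Hence the inclusion is an equality and $D_+(t) \cong U_\MM$. Density follows because $\Aa_\MM^\PP$ is an integral domain (a subring of $\Aa_\MM[t]$) and $t$ is a nonzero element of positive degree.

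Next, for each $i \in [\ell]$, \Cref{lemma: rank 1 valuation} supplies a valuation $\tilde{\fv}_{n_i,a_i}$ on $\Aa_\MM[t]$ with $\tilde{\fv}_{n_i,a_i}(t) = \fv(1)(n_i) - a_i = -a_i > 0$, so $t \in Q_i$, and I would set $D_i := V_+(Q_i) \subset V_+(t)$. The pairwise distinctness hypothesis on the $n_i$ ensures that the rank-$1$ valuations $\tilde{\fv}_{n_i,a_i}$ are distinct, hence so are the primes $Q_i$, producing the asserted bijection. To show each $Q_i$ has height one, I would work locally in the chart $\alpha_i$ on which $\w(n_i)$ is linear: pick an ordered basis $\tilde{\rho} \subset C_{\alpha_i}$ with $n_i$ as its first vector, and apply \Cref{lem-canonicaltototal} and \Cref{lemma: rank 1 modify full rank} to degenerate $\Aa_\MM^\PP$ to the semigroup algebra $\K[S]$ on the saturated cone over $\pi_{\alpha_i}(\PP)$. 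The facet hypothesis guarantees that $\pi_{\alpha_i}(\HH_{\w(n_i),a_i})$ cuts out a classical facet of $\pi_{\alpha_i}(\PP)$, which in the classical toric dictionary gives a height-$1$ torus-invariant prime of $\K[S]$; this prime lifts through the flat degeneration to $Q_i$, so $Q_i$ has height one and $D_i$ is a prime Weil divisor.

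It remains to show $V_+(t) \subseteq \bigcup_i D_i$. By Krull's principal ideal theorem, every minimal prime of $(t)$ in the Noetherian domain $\Aa_\MM^\PP$ has height one, so it suffices to argue that each such minimal prime $\mathfrak{p}$ coincides with some $Q_i$. To this end, I would examine the discrete valuation of $\mathcal{K}(\Aa_\MM^\PP) = \mathcal{K}(\Aa_\MM[t])$ associated to $\mathfrak{p}$: such a valuation must be strictly positive on $t$ and non-negative on every $f \cdot t^k \in \Aa_\MM^\PP$, which translates combinatorially into a linear inequality defining a PL half-space containing $\PP$. Minimality of the facet presentation $\PP = \bigcap_i \HH_{\w(n_i),a_i}$ (guaranteed by the facet hypothesis on each $n_i$) then forces the inequality to come from one of the defining $\HH_{\w(n_i),a_i}$, identifying the valuation with $\tilde{\fv}_{n_i,a_i}$ up to a positive scalar and hence the prime with $Q_i$.

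The main obstacle will be this last step, namely that an \emph{a priori} divisorial valuation on $\Aa_\MM^\PP$ need not be linear on any single chart of $\MM$, so one cannot immediately realize it as a dual lattice element $n \in \NN$. Overcoming this requires chart-by-chart analysis using the full-rank valuations $\fv_{\alpha_i,\tilde{\rho}}$ and the one-dimensional-leaves property (\Cref{corollary: v alpha rho has one dim leaves}) to conclude that the two valuations agree, rather than merely being proportional on a dense subset. I expect the bulk of the technical work to concentrate here.
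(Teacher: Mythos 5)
Your identification of $U_\MM$ with $D_+(t)$ and the density argument are correct and essentially equivalent to the paper's (the paper phrases it as a birational map $\Psi$ becoming an open immersion, but the content is the same). Your treatment of $\operatorname{height}(Q_i)=1$ is likewise parallel to the paper's, though ``this prime lifts through the flat degeneration to $Q_i$'' conceals the actual argument: the paper uses flatness of the Rees algebra to control dimensions, showing $\dim(\Aa_\MM^\PP)=\operatorname{rank}(\MM)+1$ and then exhibiting $\Aa_\MM^\PP/Q_i$ itself as degenerating to a semigroup algebra of one dimension less. You would need to do the same; primes don't ``lift'' through degenerations in any useful way by themselves.

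The genuine gap is in your last step, and you correctly flag it but do not close it. Your plan is to apply Krull's principal ideal theorem, take an arbitrary minimal prime $\mathfrak{p}$ over $(t)$, pass to its DVR, and argue that the resulting non-negativity condition on $\Aa_\MM^\PP$ ``translates combinatorially into a linear inequality defining a PL half-space containing $\PP$.'' There is no reason for this: an arbitrary divisorial valuation of $\mathcal{K}(\Aa_\MM^\PP)$ that is non-negative on $\Aa_\MM^\PP$ and positive on $t$ need not send different basis elements $\bb_m$ in a way that is piecewise-linear on any chart of $\MM$, nor need its restriction to $\Aa_\MM$ arise from any $n\in\NN$ via $\tilde{\fv}_{n,a}$. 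Without that, ``minimality of the facet presentation'' has nothing to grab onto. Chart-by-chart analysis via $\fv_{\alpha_i,\tilde{\rho}}$ gives you information only about the specific valuations you already built, not about an abstractly produced minimal prime. The paper avoids this entirely by proving the equality $\sqrt{\langle t\rangle}=Q_1\cap\cdots\cap Q_\ell$ directly and combinatorially: it first shows $\sqrt{\langle t\rangle}$ is spanned by monomials $\bb_m t^k$ (by a ``leading term at a vertex of $\supp(f)$'' argument applied to $f^N\in\langle t\rangle$), and then shows that for such a monomial $\bb_m t^k$ lying in all $Q_i$, i.e.\ $\v(m)(n_i)>ka_i$ for all $i$, one can choose $c\gg 0$ with $c(\v(m)(n_i)-ka_i)\ge -a_i$ so that $(\bb_m t^k)^c=(\bb_m^c t^{ck-1})\cdot t$ is divisible by $t$. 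I recommend replacing your minimal-prime argument with this direct computation; your current sketch would, at best, reduce you to reproving the same identity anyway, since one needs $\sqrt{(t)}=\cap_i Q_i$ to conclude that every minimal prime of $(t)$ is some $Q_i$.
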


\begin{proof}
Note that $\Proj(\Aa_\MM[t]) \cong \Spec(\Aa_\MM) = U_\MM$ as schemes.
 Hence the inclusion of graded rings $\Aa_\MM^\PP \hookrightarrow \Aa_\MM[t]$ induces a birational map $\Psi: U_\MM \dashrightarrow X^{\PP}_{\Aa_\MM}$. We wish to show that $\Psi$ is in fact an inclusion of $U_\MM$ into $X^{\PP}_{\Aa_\MM}$ as an open dense subscheme. To prove this, 
we first claim that $\Psi$ is defined everwhere on $U_\MM$. Indeed, note that $\Psi$ is defined on any basic open set of the form  $D(f \cdot t^{k}) \subset \Proj(\Aa_\MM[t])$ where $f \cdot t^{k} \in \Aa_\MM^\PP$ and $k>0$. This is because for any $\mathfrak{p} \in D(f\cdot t^k)$, by definition $f\cdot t^k \not \in \mathfrak{p}$ and since $f \cdot t^k \in \Aa_\MM^\PP$, the prime $\mathfrak{p}$ cannot contain all of $(\Aa_\MM^\PP)_+$. Hence $\Psi(\mathfrak{p}) = \mathfrak{p} \cap \Aa_\MM^\PP$ is a well-defined element of $\Proj(\Aa_\MM^\PP)$. 
From this argument we see that $\Psi$ is defined on the union of all such basic opens. Next we claim that this union is all of $U_\MM$. To see this, let $\mathfrak{p} \in U_\MM$. By definition of $\Proj$, there exists some $f = \sum c_{m,k} \bb_m t^k \in \Aa_\MM[t]$ not contained in $\mathfrak{p}$. (Here $\B = \{\bb_m\}_{m \in \MM}$ is the convex adapted basis of $(\Aa_\MM,\fv)$, and $c_{m,k} \in \K$.) From this it follows that there exists $m \in \MM, k \in \Z_{\geq 0}$ with $\bb_m t^k \not \in \mathfrak{p}$. Next, note that since $0 \in \PP$, $m$ must be contained in the dilated polytope ${k'} \cdot \PP$ for $k' \in \Z$ sufficiently large, and without loss of generality we may assume $k'>k$. Since $\mathfrak{p}$ is in $\Proj(\Aa_\MM[t])$, it cannot contain $t$, and since $\mathfrak{p}$ is prime, we conclude $\bb_m t^{k'} \not \in \mathfrak{p}$. On the other hand, $k'$ is chosen so that $\bb_m t^{k'} \in \Aa_\MM^\PP$. Thus $\mathfrak{p} \in D(\bb_m t^{k'})$, and these distinguished open sets cover all of $U_\MM$. Thus $\Psi$ is defined on all of $U_\MM$.  A similar argument shows that $\Psi$ is an injection, and thus is an inclusion of $U_\MM$ as a subscheme of $X^\PP_{\Aa_\MM}$. 

Next we claim that the complement $X^\PP_{\Aa_\MM} \setminus U_\MM$ is a union of divisors; in particular, $U_\MM$ is an open and dense subscheme of $X^\PP_{\Aa_\MM}$.  Consider the degree-$1$ element $t := 1\cdot t \in \Aa_\MM[t]$.  As was seen in the previous paragraph, for any $f \in \Aa_\MM$ there exists $k>0$ such that $f \cdot t^k \in \Aa_\MM^\PP$. Hence $\left( \frac{1}{t} \Aa_\MM^\PP \right)_0 \cong \Aa_\MM$. Thus the basic open set $D(t)$ is $U_\MM$, and the complement $X^\PP_{\Aa_\MM} \setminus U_\MM$ is the hypersurface $V(t)$; in particular, $U_\MM$ is open and dense in $X^\PP_{\Aa_\MM}$. 

We now claim $V(t)$ is a union of divisors $D_i$ corresponding to each $i \in [\ell]$. To see this, fix $i \in [\ell]$. 
Recall from \Cref{lemma: rank 1 valuation} that the pair $(n_i, a_i)$ defines a corresponding valuation $\tilde{\fv}_{n_i,a_i}$ and that, by definition, $\Aa_{\MM}^{n_i, a_i}$ is the intersection $\Aa_\MM[t] \cap \O_{n_i,a_i}$ where $\O_{n_i,a_i}$ is defined in \Cref{eq: def O ni ai}. Let $Q_i := P_i \cap \Aa_\MM^\PP$ where $P_i$ is defined in \Cref{eq: def Pi}. By construction, $Q_i \subset \Aa_\MM^\PP$ is a prime ideal. We claim that $\mathrm{height}(Q_i)=1$. 
To see this, it would suffice to show $\dim(\Aa_\MM^{\PP}/Q_i) = \dim(\Aa_\MM^\PP)-1$ (by the general formula $\mathrm{height}(\mathfrak{p})+\dim(R/\mathfrak{p}) = \dim(R)$ for prime ideals $\mathfrak{p} \subset R$). By hypothesis, there exists a coordinate chart $\alpha_i \in \pi(\MM)$ on which $n_i$ is linear, and, the intersection of the boundary of $\pi_{\alpha_i}(\HH_{\w(n_i),a_i})$ with $\pi_\alpha(\PP)$ is a facet of $\pi_{\alpha_i}(\PP)$. 
Let $\alpha_i$ be as above and fix a basis $\tilde{\rho}$ of $C_i :=\w^{-1}(\Sp(\MM,\alpha_i))$. 
By \Cref{lemma: rank 1 modify full rank} we know there exists a $\Z$-valued valuation $\overline{\fv}$ such that its associated graded algebra $\mathrm{gr}_{\overline{\fv}}(\Aa_\MM^\PP)$ is $\K[S(\Aa^\PP_\MM, \fv_{\alpha,\tilde{\rho}})]$, a semigroup algebra where $S(\Aa^\PP_\MM, \fv_{\alpha,\tilde\rho})$ is the saturated affine semigroup of lattice points in the full-dimensional cone over $\pi_{\alpha_i}(\PP)$ in $M_{\alpha_i}\times \Z\cong\Z^{\mathrm{rank}(\MM)+1}$. Let $\mathcal{R}=\bigoplus_{j \geq 0} F_{\overline{\fv}\geq j} \tau^i$ denote the usual Rees algebra associated to the filtration corresponding to $\overline{\fv}$.
It is straightforward from its construction that $\mathcal{R}$ is torsion-free as a $\K[\tau]$-module, hence flat \cite[Corollary 6.3]{Eisenbud}. Thus the dimension $\dim(\Aa_\MM^\PP)$ of the general fiber is equal to the dimension of the special fiber, i.e., $\dim(\Aa_\MM^\PP) = \mathrm{rank}(\MM)+1$. 
Next, since $\Aa_\MM^\PP$ is, by construction, spanned by $\bb_m \cdot t^k$ with $\tilde{\fv}_{n_i,a_i}(\bb_m t^k) \geq 0$ and since $Q_i$ is the subspace spanned by $\bb_m t^k$ with $\tilde{\fv}_{n_i,a_i}(\bb_m t^k) > 0$, the equivalence classes $[\bb_m t^k]$ for $\tilde{\fv}_{n_i,a_i}(\bb_m t^k)=0$ form an additive basis for $\Aa_\MM^\PP/Q_i$. Thus we may define a valuation $\fw$ on $\Aa_\MM^\PP/Q_i$ by defining $\fw([\bb_m t^k]) := \fv(\bb_m t^k)$ for $\bb_m t^k$ with $\tilde{\fv}_{n_i,a_i}(\bb_m t^k)=0$. The image of $\fw$ is now a saturated affine semigroup $S'$ of rank $1$ less, and an argument similar to that given for $\Aa_\MM^\PP$ shows that the quotient ring flatly degenerates to a semigroup algebra $\K[S']$. This proves $\dim(\Aa_\MM^\PP/Q_i)=\dim(\Aa_\MM^\PP)-1$ and completes the proof that $\mathrm{height}(Q_i)=1$.

Finally, we claim that 
\begin{equation}\label{eq: V(tau) is a union of divisors}
\sqrt{\langle t \rangle} = Q_1 \cap \cdots \cap Q_\ell.
\end{equation} 
From this it will follow that $V(t) = D_1 \cup D_2 \cup \cdots \cup D_\ell$ where $D_i := V(Q_i)$.
The previous paragraph shows that $Q_i$ is prime of height $1$, so each $D_i$ is a divisor. Thus it remains to prove~\Cref{eq: V(tau) is a union of divisors}. We first prove that $\sqrt{\langle t \rangle} \subseteq Q_1 \cap \cdots \cap Q_\ell$;  for this, it clearly suffices to show $\sqrt{\langle t \rangle} \subseteq Q_i$ for all $i$. To see this, note that $t \in Q_i$ for all $i$ since $\tilde{\fv}_{n_i,a_i}(t) = - a_i > 0$ by assumption on $a_i$. Then $\sqrt{\langle t \rangle} \subset Q_i$ follows since $Q_i$ is prime. 
To prove the other containment, observe that the intersection of the $Q_i$ can be described as the span of certain $\bb_m \cdot t^k$'s. We claim $\sqrt{\langle t \rangle}$ is also spanned by elements of the form $\bb_m \cdot t^k$. 
Indeed, suppose $f = \sum c_{m,k} \bb_m t^k \in \sqrt{\langle t \rangle}$, where each $\bb_m$ is such that $\fv(\bb_m) = \v(m)$ Then for some $N \in \Z$ sufficiently large, we have $f^N \in \langle t \rangle = \bigoplus_{k \geq 1} \Gamma(\Aa_\MM, (k-1)\PP) \cdot t^k$. 
By taking a vertex of $\supp(f)$, we can find $m_0$ such that $c_{m_0,k} \neq 0$ and $c_{m_0,k}^N \bb_m^N t^{kN}$ does not cancel with any other monomial occurring in $f^N$. Then $\bb_{m_0}^{N} \in \Gamma(\Aa_\MM, (kN-1)\PP)$ and it follows that $c_{m_0,k} \bb_{m_0} t^k \in \sqrt{\langle t \rangle}$. Then $f-c_{m_0,k}\bb_{m_0} t^k \in \sqrt{\langle t \rangle}$, which has one fewer term in the convex adapted basis, and by repeating the argument we see that $\sqrt{\langle t \rangle}$ is spanned by $\bb_m t^k$'s as desired.
To complete the argument, it now suffices to show that if $\bb_m t^k \in Q_1 \cap \cdots \cap Q_\ell$ then $\bb_m t^k \in \sqrt{\langle t \rangle}$. 
So suppose $\bb_m t^k \in Q_1 \cap \cdots \cap Q_\ell$, i.e., $\v(m)(n_i) > k a_i$ for $i\in[ \ell]$. 
Let $c$ be a positive integer, sufficiently large so that $c\left( \v(m)(n_i) - k a_i\right) \geq -a_i > 0$ for $i\in[ \ell]$. Such an integer $c$ exists since the $-a_i$ for $i \in [\ell]$ is a finite list of positive integers. 
Then $c\v(m)(n_i) - (ck-1)a_i > 0$, which means $\bb_m^c \in \Gamma(\Aa_\MM, \mathcal{H}_{n_i, (ck-1)a_i})$, hence $\bb_m^c t^{ck-1} \in \Aa^{\PP}_{\MM}$, so $(\bb_m t^k)^c = (\bb_m^c t^{ck-1}) t = (\bb_m^c t^{ck-1})t$ is divisible by $t$. Hence $(\bb_m t^k)^c \in \langle t \rangle$ and $\bb_m t^k \in \sqrt{\langle t \rangle}$ as desired.  This completes the proof that  $\sqrt{\langle t \rangle} = Q_1 \cap \cdots \cap Q_\ell$. 
\end{proof}

\begin{remark}
    We described a detropicalization $(\Aa,\fv)$ of our running example in \Cref{example: detrop running example}. In \cite{CookEscobarHaradaManon2024} we give a compactification of $\Spec(\Aa)$ associated to a PL polytope. 
    We now briefly describe another method for obtaining compactifications of $\Spec(\Aa)$. The type $A_1$ cluster variety $\Spec(\Aa)$ is also known to be the braid variety associated to the braid word $\beta=\sigma_1\sigma_1\sigma_1$, and braid varieties have compactifications called brick manifolds, as introduced by the first author in \cite{Escobar}. See also \cite[\textsection 4.4]{CasalsGorskyGorskySimental}. The brick manifold corresponding to $\beta$ is $\mathbb{P}^1\times\mathbb{P}^1$.
    \exampleqed
\end{remark}

The next two results record some straightforward consequences of the construction of the compactification $X_{\Aa_\MM}^{\PP}$.

In the classical setting, a (classical) lattice polytope $\mathcal{Q}$ in a vector space $V \cong M \otimes \R$ (for $M$ a classical lattice $\cong \Z^r$) is \textbf{normal} if $(k\mathcal{Q}) \cap M + (\ell \mathcal{Q}) = ((k+\ell)\mathcal{Q}) \cap M$ for all $k,\ell \in \Z_{\geq 0}$ \cite[Definition 2.2.9]{Cox_Little_Schenck}. It is also shown in \cite[Section 2.2]{Cox_Little_Schenck} that a (classical) lattice polytope $\mathcal{Q}$ is normal if and only if 
\begin{equation}\label{eq: normal}
\mathcal{Q} \cap M + \cdots + \mathcal{Q} \cap M = (k\mathcal{Q}) \cap M
\end{equation}
for all integers $k \geq 1$. In other words, for classical lattice polytopes, normality is equivalent to the condition that $\mathcal{Q}$ has enough lattice points to generate the lattice points in all integer multiples of $\mathcal{Q}$. 

In our setting, we make the following definition. 
\begin{definition}
Let $\MM$ be a polyptych lattice. 
    We say that an integral PL polytope $\PP$ is \textit{normal} if $\pi_{\alpha}(\PP)$ is normal for all $\alpha \in \mathcal{I}$.
\end{definition}

\begin{lemma}\label{lemma: AMP normal if P normal}
    Following the notation and assumptions of this section, let $\PP$ be an integral PL polytope in $\MM_\R$. If $\PP$ is normal, then the PL polytope algebra $\mathcal{A}_\MM^{\PP}$ is generated in degree 1.
\end{lemma}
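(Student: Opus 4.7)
The plan is to reduce the assertion to a Noetherian normal form argument against the full-rank valuation $\fv_{\alpha,\tilde\rho}$ of \Cref{lem-canonicaltototal}. First I would observe that since each $k\PP$ is point-convex (\Cref{lemma: PL polytope is point convex}), equation \Cref{eq: level k support in kP} together with the definition of support and the fact that $\{m : c_m\neq 0\}\subseteq \ptconv_\R(\{m : c_m\neq 0\})$ imply that $\Gamma(\Aa_\MM, k\PP)$ has as a $\K$-basis precisely $\{\bb_m : m \in k\PP \cap \MM\}$. It therefore suffices to show, for every $k \geq 1$ and every $m \in k\PP \cap \MM$, that $\bb_m \cdot t^k$ lies in the subalgebra of $\Aa_\MM^\PP$ generated by the degree-$1$ elements $\{\bb_{m'} \cdot t : m' \in \PP \cap \MM\}$.

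To produce a candidate product, fix any chart $\alpha \in \pi(\MM)$. By hypothesis $\pi_\alpha(\PP)$ is a normal classical polytope, so there is a decomposition $\pi_\alpha(m) = u_1 + \cdots + u_k$ with each $u_i \in \pi_\alpha(\PP) \cap M_\alpha$. Setting $m_i := \pi_\alpha^{-1}(u_i) \in \PP \cap \MM$, each $\bb_{m_i}\cdot t$ is a degree-$1$ generator, and by \Cref{prop: sections algebra}(3) the product $\bb_{m_1}\cdots \bb_{m_k}\cdot t^k$ lies in $\Gamma(\Aa_\MM, k\PP)\cdot t^k$. I would then compare leading terms using $\fv_{\alpha,\tilde\rho}$ for any ordered basis $\tilde\rho$ of $C_\alpha := \w^{-1}(\Sp_\R(\MM,\alpha))$. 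By \Cref{prop-mutation-elements}, $\fv_{\alpha,\tilde\rho}(\bb_{m_i}) = u_i$ and $\fv_{\alpha,\tilde\rho}(\bb_m) = \pi_\alpha(m)$, so multiplicativity of the valuation gives $\fv_{\alpha,\tilde\rho}(\bb_{m_1}\cdots \bb_{m_k}) = u_1+\cdots+u_k = \pi_\alpha(m) = \fv_{\alpha,\tilde\rho}(\bb_m)$. Since $\fv_{\alpha,\tilde\rho}$ has one-dimensional leaves (\Cref{corollary: v alpha rho has one dim leaves}), the classes of $\bb_{m_1}\cdots \bb_{m_k}$ and of $\bb_m$ agree up to a nonzero scalar $c\in\K^*$ in the graded piece of $\mathrm{gr}_{\fv_{\alpha,\tilde\rho}}(\Aa_\MM)$ at level $\pi_\alpha(m)$; hence
\[
\bb_{m_1}\cdots \bb_{m_k} \;=\; c\,\bb_m + R,
\]
where $R \in \Gamma(\Aa_\MM, k\PP)$ satisfies $\fv_{\alpha,\tilde\rho}(R) >_{lex} \pi_\alpha(m)$. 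In particular, when expanded in the convex adapted basis, $R$ is a $\K$-linear combination of basis elements $\bb_{m'}$ with $m' \in k\PP\cap \MM$ and $\pi_\alpha(m') >_{lex} \pi_\alpha(m)$.

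I would then close the argument by downward induction on the lex order on the finite set $\pi_\alpha(k\PP\cap \MM)$. In the base case $m$ is lex-maximal, $R=0$, and $\bb_m\, t^k = c^{-1}(\bb_{m_1}t)\cdots(\bb_{m_k}t)$ is manifestly a product of degree-$1$ generators. For the inductive step, the identity $\bb_m t^k = c^{-1}(\bb_{m_1}t)\cdots(\bb_{m_k}t) - c^{-1}R\,t^k$ together with the inductive hypothesis applied to each $\bb_{m'}\,t^k$ appearing in $R\cdot t^k$ expresses $\bb_m\,t^k$ as a polynomial in the degree-$1$ generators. The main obstacle, aside from bookkeeping, is the verification that the remainder $R$ remains inside $\Gamma(\Aa_\MM, k\PP)$ and is supported only on strictly lex-larger lattice points; both facts are immediate from the construction of $\fv_{\alpha,\tilde\rho}$ in \Cref{lem-canonicaltototal} and from the fact that $\bb_{m_1}\cdots \bb_{m_k}$ and $\bb_m$ both lie in the graded piece $\Gamma(\Aa_\MM, k\PP)$, so these obstacles should be manageable.
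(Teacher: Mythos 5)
Your proof is correct and takes essentially the same route as the paper's: both pick a chart $\alpha$, invoke classical normality of $\pi_\alpha(\PP)$ to decompose a lattice point into a sum of $k$ lattice points of $\pi_\alpha(\PP)$, apply the full-rank valuation $\fv_{\alpha,\tilde\rho}$ together with one-dimensional leaves to match leading terms, and terminate by finiteness of the lattice points at each level. The only cosmetic differences are that you first reduce explicitly to the basis elements $\bb_m\cdot t^k$ (using point-convexity of $k\PP$) and then run a clean downward induction on the $\tilde\rho$-induced total order, whereas the paper argues by a ``subduction'' loop on a general homogeneous $f$; your version is arguably slightly cleaner and more self-contained in its termination argument. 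One small imprecision worth noting: the total order on $M_\alpha$ that governs the induction is the pullback of lex on $F^r$ through the map $\mathrm{ev}_{\bar\varepsilon}\circ\phi_{\tilde\rho}\circ\iota_{\tilde\rho}\circ\v_\alpha$ from \Cref{lem-canonicaltototal} (so it depends on $\tilde\rho$), not the literal lex order on the raw coordinates of $M_\alpha$; this does not affect the validity of the downward induction, since all that matters is that it is a total order on the finite set $\pi_\alpha(k\PP\cap\MM)$.
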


\begin{proof} 
Suppose that $\PP$ is normal. Choose $\alpha,\tilde{\rho}$ as in \Cref{lem-canonicaltototal} and consider the associated $\Z^r$-valued valuation $\fv_{\alpha,\tilde{\rho}}$. We first claim that the associated graded algebra $\mathrm{gr}_{\fv_{\alpha,\tilde{\rho}}}(\Aa_\MM^\PP)$ is generated in degree $1$. This follows because $\mathrm{gr}_{\fv_{\alpha,\tilde{\rho}}}(\Aa_\MM^\PP)$ is, by construction, the semigroup algebra of a cone over the lattice points contained in $\pi_\alpha(\PP)$. Since we are assuming $\pi_\alpha(\PP)$ is normal, then by the above characterization~\eqref{eq: normal} of normality it follows that the semigroup algebra of the cone over $\PP$ is generated in degree $1$. 

Now we claim that if $\mathrm{gr}(\Aa_\MM^\PP)$ is generated in degree $1$ then so is $\Aa_\MM^\PP$. Note that $\Aa_\MM^\PP$ is graded by $\Z_{\geq 0}$ (by the degree of the variable $t$) so we may work degree by degree. Let $\Aa^\PP_{\MM,k}$ denote the homogeneous degree $k$ piece of $\Aa^\PP_\MM$. Recall we are trying to show $\Aa_\MM^\PP$ is generated in degree $1$, i.e. by elements in $\Aa_{\MM,1}^\PP$. Let $f \in \Aa^\PP_{\MM,k}$ for $k>1$. We wish to show it can be written as a polynomial in $\mathbb{b} \in \B \cap \Aa^\PP_{\MM,1}$.  Consider $\fv_{\alpha,\tilde{\rho}}(f) =(\overline{a},k) \in \Z^r \times \Z$. From the definition of $\fv_{\alpha,\tilde{\rho}}$ and because the value semigroup $\Gamma(\Aa^\PP_\MM,\fv)$ is generated in degree $1$ by assumption, it follows that there exists a monomial $\prod_i \bb_i$, with $\bb_i \in \B \cap \Aa^\PP_{\MM,1}$, such that $\fv_{\alpha,\tilde{\rho}}(f) = \fv_{\alpha,\tilde{\rho}}(\prod_i \bb_i)$. WLOG we may assume $\prod_i \bb_i$ has the same homogeneous degree as $f$. The valuation $\fv_{\alpha,\tilde{\rho}}$ has one-dimensional leaves by \Cref{corollary: v alpha rho has one dim leaves} so it follows that there exists a constant $C \in \C^*$ such that $\fv(f - C\cdot \prod_i \bb_i) > \fv(f)$. But $f - C \cdot \prod_i \bb_i$ is homogeneous of degree $k$ and there are only finitely many elements in the value semigroup at level $k$, so repeating this process, we are guarateed that the algorithm terminates in finitely many steps. The claim follows. 
\end{proof}

 Even when the PL polytope $\PP$ is not integral or normal, we can still obtain results about finite generation, as follows.

\begin{proposition}\label{prop: APM finitely generated} 
Let the notation and assumptions be as in \Cref{compactifications}. 
The algebras $\Aa_\MM^\PP$ and $\Aa_\MM^{n_i,a_i}$ are finitely generated. 
\end{proposition}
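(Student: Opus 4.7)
Plan: The strategy is to exhibit, for each of $\Aa_\MM^\PP$ and $\Aa_\MM^{n_i,a_i}$, a full-rank valuation with one-dimensional leaves whose value semigroup is a finitely generated affine semigroup (by Gordan's lemma applied to a rational polyhedral cone); then finite generation of the algebra follows by a Khovanskii-basis-type argument, namely, that lifts of a generating set of the value semigroup form a finite Khovanskii basis and hence generate the algebra.

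For $\Aa_\MM^\PP$: I fix any $\alpha \in \pi(\MM)$ and a basis $\tilde{\rho}$ of $C_\alpha$, and consider the valuation $\fv_{\alpha,\tilde{\rho}}$ of \Cref{lem-canonicaltototal}, extended to $\Aa_\MM[t,t^{-1}]$ via the direct product polyptych lattice $\MM \times \Z$ as in \Cref{lemma: AaMMt is detrop}, and restricted to $\Aa_\MM^\PP$. By \Cref{corollary: v alpha rho has one dim leaves} this valuation has one-dimensional leaves, and by \Cref{lemma: rank 1 modify full rank}(1) its value semigroup on $\Aa_\MM^\PP$ equals the saturated affine semigroup of lattice points in the rational polyhedral cone over $\pi_\alpha(\PP)$ in $M_\alpha \times \Z$, which is finitely generated by Gordan's lemma. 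Lifting a finite generating set of this semigroup to elements $g_1,\ldots,g_n \in \Aa_\MM^\PP$, a direct subduction argument then shows that the $g_j$'s generate $\Aa_\MM^\PP$: the $\Z_{\geq 0}$-grading of $\Aa_\MM^\PP$ by $t$-degree combined with the compactness of $k\PP$ (which bounds the possible $\fv_{\alpha,\tilde{\rho}}$-values within each $t$-degree to a finite set) guarantees that subduction terminates after finitely many steps in each homogeneous component.

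For $\Aa_\MM^{n_i,a_i}$: I use the hypothesis that $n_i$ is linear on some chart $\alpha_i \in \pi(\MM)$. Take $\fv_{\alpha_i,\tilde{\rho}}$ restricted to $\Aa_\MM^{n_i,a_i}$; it again has one-dimensional leaves. Because $n_i$ is linear on $\alpha_i$, the value semigroup (using \Cref{prop-mutation-elements} together with the definition of $\Aa_\MM^{n_i,a_i}$ in~\eqref{eq: def A_niai}) is
\[
S_i \;=\; \{(m,k) \in M_{\alpha_i} \times \Z_{\geq 0} \,\mid\, \langle n_i, m\rangle \geq ka_i\},
\]
the set of lattice points of the rational polyhedral cone in $(M_{\alpha_i})_\R \times \R$ cut out by the two inequalities $\langle n_i, m\rangle - ka_i \geq 0$ and $k \geq 0$. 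By Gordan's lemma, $S_i$ is finitely generated. To conclude finite generation of $\Aa_\MM^{n_i,a_i}$, I would invoke the general Khovanskii basis lifting theorem for valuations with one-dimensional leaves and finitely generated value semigroups (which is proved, for instance, via the flat degeneration given by the extended Rees algebra, whose special fiber is the finitely generated semigroup algebra $\K[S_i]$ and which is $\K[\tau]$-flat, so is itself finitely generated over $\K[\tau]$; specialization at $\tau=1$ then yields finite generation of $\Aa_\MM^{n_i,a_i}$).

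The main obstacle is the termination of the subduction argument for $\Aa_\MM^{n_i,a_i}$. For $\Aa_\MM^\PP$, each $t$-graded component admits only finitely many $\fv_{\alpha,\tilde{\rho}}$-values (because $k\PP$ is compact), so subduction within a given $t$-degree is guaranteed to terminate. For $\Aa_\MM^{n_i,a_i}$, by contrast, the value semigroup restricted to any fixed $t$-degree is infinite (the slice is an unbounded half-space of $M_{\alpha_i}$ rather than a compact polytope, and moreover the cone defining $S_i$ has a non-trivial lineality space, so there is no total order making $S_i$ well-ordered). A direct support-tracking subduction argument therefore need not terminate, and one must either invoke the general Khovanskii basis theory via the flat Rees degeneration as above, or give a more careful argument tailored to the specific structure of a finite generating set of $S_i$.
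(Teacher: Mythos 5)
Your approach is genuinely different from the paper's, and it is worth pointing out where the two diverge. The paper does not use the full-rank valuations $\fv_{\alpha,\tilde{\rho}}$ at all in this proof. Instead it decomposes $\Aa_\MM^{n_i,a_i}$ (and similarly $\Aa_\MM^\PP$) chart-locally over the complete fan $\Sigma(\MM)$: for each cone $\mathcal{C} \in \Sigma(\MM)$, it sets $\Aa_\MM^{n_i,a_i,\mathcal{C}} := \Aa_\MM^{n_i,a_i} \cap \Aa_\MM^{\mathcal{C}}[t]$, and observes — via \Cref{lemma: cone subset of AaMM is subalg}, which shows that $\bb_m \cdot \bb_{m'}$ is a nonzero scalar times $\bb_{m+m'}$ whenever $m,m'$ lie in a common cone of $\Sigma(\MM)$ — that each $\Aa_\MM^{n_i,a_i,\mathcal{C}}$ is \emph{literally} a semigroup algebra $\K[S_{n_i,a_i,\mathcal{C}}]$ for a rational polyhedral cone $S_{n_i,a_i,\mathcal{C}}$. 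Gordan's lemma then applies directly to each piece, and since $\Sigma(\MM)$ is a complete finite fan, every $f \in \Aa_\MM^{n_i,a_i}$ is a finite sum of elements from the finitely many subalgebras $\Aa_\MM^{n_i,a_i,\mathcal{C}}$, so $\Aa_\MM^{n_i,a_i}$ is generated by the (finite) union of the generating sets of the pieces. No subduction and no degeneration argument is needed, which is exactly what lets the paper handle the unbounded half-space case on equal footing with the compact polytope case.

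Your argument for $\Aa_\MM^\PP$ is correct and closely parallels the subduction argument in the proof of \Cref{lemma: AMP normal if P normal} (compactness of $k\PP$ bounds the number of semigroup elements in each $t$-degree, so subduction terminates). However, for $\Aa_\MM^{n_i,a_i}$ you correctly identify that this fails — the value semigroup $S_i$ has a nontrivial lineality space $\ker(n_i)$ and is not pointed — but the proposed repair does not hold up. The implication ``extended Rees algebra is $\K[\tau]$-flat and has finitely generated special fiber, hence is finitely generated over $\K[\tau]$'' is not a theorem: flatness is essentially automatic for a domain over $\K[\tau]$ and carries no finite-generation content, and the usual Khovanskii-basis lifting statements (e.g.\ in \cite{KavehManon-Siaga}) either assume the valuation is positive (so the value semigroup is pointed and well-ordered, which is exactly what fails here) or take finite generation of the domain as a hypothesis rather than a conclusion. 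To salvage your route you would need a genuinely new argument; the paper's cone decomposition, by contrast, reduces everything to Gordan's lemma with no well-ordering issues at all.
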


\begin{proof} 
We begin by showing $\Aa_\MM^{n_i,a_i}$ is finitely generated. Let $\mathcal{C}$ be a cone in $\Sigma(\MM)$ and define $\Aa_\MM^{n_i,a_i,\mathcal{C}} := \Aa_\MM^{n_i,a_i} \cap \Aa_\MM^{\mathcal{C}}[t]$. This is the $\K$-subspace of $\Aa_\MM[t]$ spanned by $\bb_m \cdot t^k$ where $k\geq 0, k \in \Z$, and $m \in \mathcal{H}_{\w(n_i),ka_i} \cap \mathcal{C}$. By an argument similar to the proof of \Cref{lemma: cone subset of AaMM is subalg} it can be seen that $\Aa_\MM^{n_i,a_i,\mathcal{C}}$ is isomorphic to a semigroup algebra associated to a semigroup $S_{n_i,a_i,\mathcal{C}}$ in $\Z^r \times \Z$. Here $S_{n_i,a_i,\mathcal{C}}$ is defined by the inequalities specifying $\mathcal{C}$ in $\MM$ and the inequality $\tilde{\fv}_i \geq 0$ defining $\Aa_\MM^{n_i,a_i}$ as in~\eqref{eq: def A_niai}. In particular, $S_{n_i,a_i,\mathcal{C}}$ is the set of lattice points lying in a rational polyhedral cone, so by Gordan's lemma $\Aa_\MM^{n_i,a_i,\mathcal{C}}$ is finitely generated. Since $\Sigma(\MM)$ is a complete fan, any element of $\Aa_\MM^{n_i,a_i}$ can be written as a linear combination of elements of $\Aa_\MM^{n_i,a_i,\mathcal{C}}$ as $\mathcal{C}$ varies over the finitely many cones in $\Sigma(\MM)$. Since each $\Aa_\MM^{n_i,a_i,\mathcal{C}}$ is finitely generated, it follows that $\Aa_\MM^{n_i,a_i}$ is finitely generated. A similar argument shows that $\Aa_\MM^\PP$ is finitely generated. 
\end{proof}

Next, if $\Aa_\MM$ is normal then we can also conclude normality for the compactification, as follows. 

\begin{proposition}\label{prop: normality of compactification} 
Let the notation and assumptions be as in~\Cref{compactifications}. If $\Aa_\MM$ is normal, then 
\begin{enumerate} 
\item $X_{\Aa_\MM}(\PP)$ is normal, and 
\item the valuation $\ord_{D_i}: \Aa_\MM\setminus\{0\} \to \Z$ coincides with the composition of $\fv$ with the point $n_i: \MM \to \Z$ in $\NN$. 
\end{enumerate}
\end{proposition}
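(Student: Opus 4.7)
For part (1), the plan is to show that the graded $\K$-algebra $\Aa_\MM^\PP$ is itself normal; normality of $\Proj(\Aa_\MM^\PP) = X_{\Aa_\MM}(\PP)$ then follows since normality is preserved under localization and passes to the degree-zero part of a graded ring. Since $\Aa_\MM$ is normal by hypothesis, so is the polynomial ring $\Aa_\MM[t]$. Using the decomposition
\[
\Aa_\MM^\PP \;=\; \bigcap_{i \in [\ell]} \bigl(\Aa_\MM[t] \cap \mathcal{O}_{n_i,a_i}\bigr)
\]
established just before \Cref{compactifications} (where $\mathcal{O}_{n_i,a_i}$ is the DVR of the rank-one valuation $\tilde{\fv}_{n_i,a_i}$ inside the common fraction field $\mathrm{Frac}(\Aa_\MM[t])$), I would express $\Aa_\MM^\PP$ as an intersection of normal subrings of a common field: $\Aa_\MM[t]$ is normal, and each DVR $\mathcal{O}_{n_i,a_i}$ is normal. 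Any intersection of normal subrings of a common fraction field is normal (an element integral over the intersection is integral over each factor, hence already lies in each), so $\Aa_\MM^\PP$ is normal, concluding part (1).

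For part (2), the plan is to identify, inside the function field $\mathrm{Frac}(\Aa_\MM)$ of $X_{\Aa_\MM}(\PP)$, the DVR $\mathcal{O}_{X_{\Aa_\MM}(\PP), D_i}$ at the generic point of $D_i$ with the valuation ring of the $\Z$-valued map $f \mapsto \fv(f)(n_i)$. From the proof of \Cref{compactifications}, $D_i = V(Q_i)$ with $Q_i \subset \Aa_\MM^\PP$ prime of height $1$; by part (1) the associated local ring is a DVR. By \Cref{lemma: rank 1 valuation}, the formula $\tilde{\fv}_{n_i,a_i}(\bb\cdot t^k) = \fv(\bb)(n_i) - k a_i$ specializes, on degree-zero elements $f \in \Aa_\MM$, to $f \mapsto \fv(f)(n_i)$, and this extends to a rank-one discrete valuation $\mathfrak{v}_i$ on $\mathrm{Frac}(\Aa_\MM)$.

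The key step is then to verify that the valuation ring of $\mathfrak{v}_i$ equals $\mathcal{O}_{X_{\Aa_\MM}(\PP), D_i}$. This follows because the degree-zero part of the localization $(\Aa_\MM^\PP)_{Q_i}$ consists precisely of quotients $(ft^k)/(gt^k)$ with $g t^k \notin Q_i$, which by definition of $Q_i$ and of $\tilde{\fv}_{n_i,a_i}$ translates to non-negativity of $\mathfrak{v}_i(f/g) = \fv(f)(n_i) - \fv(g)(n_i)$. Once the valuation rings are matched, $\ord_{D_i}$ and $\mathfrak{v}_i$ necessarily coincide up to a positive integer multiple. The final step is to verify this multiplier is $1$: on a chart $\alpha_i$ on which $\w(n_i)$ is linear (such a chart exists by hypothesis), the restriction $\w(n_i)|_{M_{\alpha_i}}$ is an element of the lattice dual to $M_{\alpha_i}$, and since $\fv$ surjects onto $\MM$ (by the detropicalization property), one can find $m \in M_{\alpha_i}$ with $\w(n_i)(m) = 1$, giving $\mathfrak{v}_i(\bb_m) = 1$ and hence the required normalization.

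The main obstacle I expect is the last normalization step: strictly speaking, this requires that $\w(n_i)|_{M_{\alpha_i}}$ be a \emph{primitive} lattice element, which is not explicit in the hypotheses but is essentially forced by the facet condition that $\pi_{\alpha_i}(\HH_{\w(n_i),a_i})$ meets $\pi_{\alpha_i}(\PP)$ along a codimension-one face. The careful bookkeeping of this primitivity, using the compatibility between the PL structure on $\MM$ and the $\Z$-lattice structure on each chart, is where the argument is most delicate.
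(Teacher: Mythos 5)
Your proof follows the same route as the paper's for both parts. For part (1), you use the identical decomposition $\Aa_\MM^\PP = \Aa_\MM[t] \cap \bigcap_i \mathcal{O}_{n_i,a_i}$ and the same fact that an intersection of normal subrings with common fraction field is normal; the only addition is your explicit observation that normality of the section ring $\Aa_\MM^\PP$ implies normality of its Proj, which the paper leaves implicit. For part (2), your plan of matching the DVR at the generic point of $D_i$ with the valuation ring of $\tilde{\fv}_{n_i,a_i}$ is also the paper's argument, realized there via the identity $\mathcal{O}_{P_i} = \mathcal{O}_{n_i,a_i}$ obtained from the fact that DVRs are maximal proper subrings of their fraction fields.

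One remark: the normalization issue you flag at the end is real and is in fact glossed over in the paper's own proof. The paper concludes ``$\ord_{D_i}$ may therefore be identified with $\tilde{\fv}_{n_i,a_i}$'' after showing the two valuation rings coincide, but strictly speaking this only pins down $\tilde{\fv}_{n_i,a_i}$ up to a positive integer multiple of the normalized valuation $\ord_{D_i}$; to get equality one needs $\tilde{\fv}_{n_i,a_i}$ to be surjective onto $\Z$, i.e.\ $\w(n_i)|_{M_{\alpha_i}}$ primitive. This is not forced by the stated facet hypothesis alone (one could scale $n_i$ and $a_i$ simultaneously and still cut out the same facet), though it is a harmless genericity assumption on the choice of defining data for $\PP$. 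You were right to be cautious here: your proposed fix of exhibiting an element with $\w(n_i)(m)=1$ is the correct completion, provided one first records that the $n_i$ may be taken primitive without loss of generality.
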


\begin{proof}
 First we prove (1). If $\Aa_\MM$ is normal, then $\Aa_\MM[t]$ is normal \cite[Lemma 10.37.8]{StacksProject}. We have seen in the proof of \Cref{compactifications} that the fraction fields of $\Aa_\MM[t]$ and $\Aa_\MM^\PP$ coincide, and a similar argument shows that the discrete valuation rings $\O_{n_i,a_i}$ also has fraction field $\mathcal{K}(\Aa_\MM[t])$. 
Discrete valuation rings are normal \cite[Theorem 7 in Section 16.2]{DummitFoote}, and an intersection of normal domains with the same field of fractions is normal \cite[Exercise 1.0.7]{Cox_Little_Schenck}, so $\Aa_\MM^\PP = \Aa_\MM[t] \cap (\cap_{i \in [\ell]} \O_{n_i,a_i})$  is normal. This proves (1). 

We now prove (2).  By (1), we know that $\Aa_{\MM}^{\PP}$ is normal, so it makes sense to discuss an order of vanishing along a divisor. Let $i \in [\ell]$. We claim that the localization $\O_{P_i}$ of $\Aa_\MM^{n_i, a_i}$ at the prime ideal $P_i$ coincides with $\O_{n_i,a_i}$. 
To see this, we first claim that the localization $\O_{P_i}$ is a discrete valuation ring.  Indeed, by assumption, $\Aa_\MM$ is a Noetherian integral domain which is integrally closed, and hence so is $\Aa_\MM[t]$.  By \Cref{prop: APM finitely generated} we know that $\Aa_{\MM}^{n_i,a_i}$ is finitely generated, hence Noetherian, and the argument given above to show (1) yields that $\Aa_\MM^{n_i,a_i}$ is normal. It is also an integral domain since it is a subring of $\Aa_\MM[t]$. The ideal $P_i$ is prime and in fact a minimal non-zero prime since it is height $1$, as seen in the proof of \Cref{compactifications}. The above allows us to conclude that $\O_{P_i}$ is a discrete valuation ring \cite[Theorem 7 in Section 16.2]{DummitFoote}. 
We have that $\O_{P_i}$ and $\O_{n_i,a_i}$ have the same fraction field, and both are non-zero proper discrete valuation rings. It is also straightforward to see that $\O_{P_i}$ is a subring of $\O_{n_i,a_i}$. But discrete valuation rings are maximal proper subrings in their fraction fields, so the containment cannot be strict, and we conclude $\O_{P_i} = \O_{n_i,a_i}$, as was to be shown.  From the above, the valuation $\mathrm{ord}_{D_i}$ may therefore be identified with $\tilde{\fv}_{n_i,a_i}$, and for an element $f=\sum c_m b_m$ in $\Aa_\MM$ (viewed as a regular function on $U_\MM$) its order of vanishing along $D_i$ is given by the minimum of the quantities $\langle n_i, m\rangle$. This is precisely the composition of $\fv$ with $n_i$. 
\end{proof}

\subsection{Compactifications $X_{\Aa_\MM}(\PP)$ are arithmetically Cohen-Macaulay}\label{subsec: Cohen Macaulay}

The main result of this section is to show that the compactifications $X_{\Aa_\MM}(\PP)$ constructed in the previous section are arithmetically Cohen-Macaulay (\Cref{definition: ACM}). This generalizes an analogous result for normal projective toric varieties arising from integral polytopes \cite[Exercise 9.2.8]{Cox_Little_Schenck}. 

\begin{definition}\label{definition: ACM}
Let $X = \mathrm{Proj}(S)$ be a projective variety over $k$. We say that $X$ is \textbf{arithmetically Cohen-Macaulay} if there exists an ample divisor $D$ with respect to which the section ring $\mathcal{R}_D := \oplus_{k \geq 0} \Gamma(X, kD)$ is Cohen-Macaulay. 
\exampleqed
\end{definition}

We have the following. 

\begin{theorem}\label{theorem: Cohen Macaulay}
Let the notation and assumptions be as in \Cref{compactifications}. 
In particular, let $\Aa_\MM^{\PP}$ denote the PL polytope ring associated to $\PP$ and $X_{\Aa_\MM}(\PP) := \mathrm{Proj}(\Aa_\MM^\PP)$ be the compactification of $\Spec(\Aa_\MM)$ with respect to $\PP$. Then $X_{\Aa_\MM}(\PP)$ is arithmetically Cohen-Macaulay. 
\end{theorem}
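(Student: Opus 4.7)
The plan is to show that the graded ring $\Aa_\MM^\PP$ is itself Cohen--Macaulay; this immediately yields that $X_{\Aa_\MM}(\PP) = \mathrm{Proj}(\Aa_\MM^\PP)$ is arithmetically Cohen--Macaulay with respect to the very ample divisor $D$ whose section ring is (a Veronese of) $\Aa_\MM^\PP$. The strategy is to exhibit a flat degeneration of $\Aa_\MM^\PP$ to a normal affine semigroup algebra and then transport Cohen--Macaulayness back along the degeneration.

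First, pick any $\alpha \in \pi(\MM)$ and any ordered basis $\tilde\rho$ of the cone $C_\alpha \subset \NN_\R$, and form the full-rank valuation $\fv_{\alpha,\tilde\rho}$ on $\Aa_\MM^\PP$ as described in the paragraph preceding \Cref{lemma: rank 1 modify full rank}. By \Cref{lemma: rank 1 modify full rank}, there is a $\overline{\Z}$-valued valuation $\bar\fv$ on $\Aa_\MM^\PP$ with the same associated graded algebra, namely
\[
\mathrm{gr}_{\bar\fv}(\Aa_\MM^\PP)\;\cong\;\K[S],
\]
where $S\subset M_\alpha\times\Z$ is the saturated affine semigroup of lattice points in the cone over $\pi_\alpha(\PP)$. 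Since $S$ is normal (i.e.\ saturated), Hochster's theorem tells us that the semigroup algebra $\K[S]$ is Cohen--Macaulay.

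Second, use $\bar\fv$ to build the Rees algebra
\[
\mathcal{R} \;=\; \bigoplus_{k\ge 0} F_{\bar\fv\ge -k}\cdot\tau^k\;\subseteq\;\Aa_\MM^\PP[\tau].
\]
Because $\Aa_\MM^\PP$ is finitely generated (\Cref{prop: APM finitely generated}) and the filtration $\{F_{\bar\fv\ge j}\}$ is of finite type, $\mathcal{R}$ is a finitely generated, torsion-free (hence flat) $\K[\tau]$-algebra. The special fibre is $\mathcal{R}/\tau\mathcal{R}\cong\mathrm{gr}_{\bar\fv}(\Aa_\MM^\PP)\cong\K[S]$, and for every $c\in\K^\times$ the fibre $\mathcal{R}/(\tau-c)\mathcal{R}$ is isomorphic to $\Aa_\MM^\PP$.

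Finally, invoke the standard fact that, in a flat family of finitely generated $\K$-algebras over $\mathbb{A}^1_\K$, the locus of points whose fibre is Cohen--Macaulay is open (this is openness of the CM locus for morphisms of finite type between Noetherian schemes, EGA IV$_2$, 12.1.6). Because this open locus contains the closed point $\tau=0$ where the fibre $\K[S]$ is Cohen--Macaulay, it contains all fibres in a Zariski neighbourhood; by $\G_m$-equivariance (or simply because every nonzero fibre is isomorphic to $\Aa_\MM^\PP$), every $\tau=c\ne 0$ fibre is Cohen--Macaulay. Thus $\Aa_\MM^\PP$ is Cohen--Macaulay and $X_{\Aa_\MM}(\PP)$ is arithmetically Cohen--Macaulay. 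The main technical obstacle is the verification that $\mathcal{R}$ is genuinely Noetherian and flat, and that $\bar\fv$ has one-dimensional leaves in the strong sense needed to identify the special fibre with the semigroup algebra on the nose; all of these facts are supplied by \Cref{corollary: v alpha rho has one dim leaves} and \Cref{lemma: rank 1 modify full rank}, so the deformation-theoretic step is the only delicate part, and it goes through once finite generation and flatness are in hand.
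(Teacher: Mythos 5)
Your proof follows the same overall template as the paper's: degenerate $\Aa_\MM^\PP$ via the valuation $\bar{\fv}$ of \Cref{lemma: rank 1 modify full rank} to the saturated affine semigroup algebra $\K[S]$, invoke Hochster's theorem for Cohen--Macaulayness of $\K[S]$, and then transport Cohen--Macaulayness along the Rees-algebra family. The one place where you genuinely diverge is the transport step. The paper handles it purely filtration-theoretically: it verifies the hypotheses of \cite[Proposition 4.5.4]{Bruns_Herzog} to conclude the filtration is Noetherian, applies \cite[Theorem 4.5.7]{Bruns_Herzog} to deduce that the localization at the ${}^*$-maximal ideal is Cohen--Macaulay, and then lifts to the whole graded ring via \cite[Exercise 2.1.27]{Bruns_Herzog}. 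You instead try to run a geometric semicontinuity argument through EGA IV$_2$ 12.1.6.

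That geometric argument, as written, has a gap. EGA IV$_2$ 12.1.6 gives openness of the set of points \emph{in the total space} $\Spec(\mathcal{R})$ at which the fibre is Cohen--Macaulay; it does not say the locus of points $c\in\mathbb{A}^1$ with Cohen--Macaulay fibre is open in the base. Your phrase ``contains the closed point $\tau=0$'' conflates the two. Since $\Spec(\mathcal{R})\to\mathbb{A}^1$ is affine and not proper, one cannot simply project the (closed) non-CM locus to the base and conclude it avoids a neighbourhood of $0$. Your parenthetical $\G_m$-equivariance remark is the germ of the correct fix --- the non-CM locus is $\G_m$-invariant and closed, and one wants to argue that the closure of every $\G_m$-orbit meets the central fibre so that a nonempty non-CM locus would intersect $X_0$ --- but for the extended Rees algebra this orbit-closure statement requires its own justification and isn't immediate. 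Either make that degeneration argument precise, or fall back on the Bruns--Herzog route, which is the cleaner option here and is what the paper does.
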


\begin{proof} 
We have seen in the setting of, and in the proof of, \Cref{lemma: rank 1 modify full rank} that the associated graded algebra $\mathrm{gr}_{\fv_{\alpha,\tilde{\rho}}}(\Aa_\MM^\PP)$ is isomorphic to a semigroup algebra $\K[S]$ for $S$ a saturated affine semigroup. Moreover, again by the proof of \Cref{lemma: rank 1 modify full rank}, we know there exists a $\overline{\Z}$-valued valuation $\bar{\fv}$ with $\mathrm{gr}_{\bar{\fv}}(\Aa_\MM^\PP)$ isomorphic to $\mathrm{gr}_{\fv_{\alpha,\tilde{\rho}}}(\Aa_\MM^\PP) \cong \K[S]$. 
Recall that by a theorem of Hochster, $\mathbb{K}[S]$ is Cohen-Macaulay \cite[Theorem 1]{Hochster}; this implies $\mathrm{gr}_{\fv_{\alpha,\tilde{\rho}}}(\Aa_\MM^\PP)$ is Cohen-Macaulay. 
Now we claim that if $\Aa_\MM^\PP$ has an associated graded $\mathrm{gr}_{\bar{\fv}}(\Aa_\MM^\PP)$ which is Cohen-Macaulay, then $\Aa_\MM^\PP$ is itself Cohen-Macaulay. To see this, consider the decreasing filtration $\mathcal{F}_{\bar{\fv}}$ of $\Aa_\MM^\PP$ associated to $\bar{\fv}$ and the (extended) Rees algebra $\mathcal{R} := \bigoplus_{k \in \Z} F_{\bar{\fv} \geq k} \cdot \tau^{-k}$ corresponding to $\mathcal{F}_{\bar{\fv}}$, viewed as a subalgebra of $\Aa_\MM^\PP[\tau,\tau^{-1}]$. By construction, the associated graded $\mathrm{gr}_{\mathcal{F}_{\bar{\fv}}}(\mathcal{R})$ as defined in \cite[\textsection 4.5]{Bruns_Herzog}
is isomorphic to $\mathrm{gr}_{\bar{\fv}}(\Aa_\MM^\PP)$. 
It is straightforward to see that $\mathcal{F}_{\bar{\fv}}$ has the property that it is strongly separated in the sense of \cite[\textsection 4.5]{Bruns_Herzog}, i.e., $\bigcap_{k \geq 0} \left(I + F_{\bar{\fv}\geq k}\right) = I$ for all ideals $I \subset \Aa_\MM^\PP$. 
Moreover, we have already seen that $\mathrm{gr}_{\bar{\fv}}(\Aa_\MM^\PP) \cong \mathbb{K}[S]$ and is hence finitely generated over $\mathbb{K}$. Finally, $\Aa_\MM^\PP/F_{\bar{\fv}\geq 1}$ is isomorphic to $\mathbb{K}$ by contsruction of $\bar{\fv}$. 
Thus we may apply \cite[Proposition 4.5.4]{Bruns_Herzog} to conclude that $\mathcal{F}_{\bar{\fv}}$ is a Noetherian filtration of $\Aa_\MM^\PP$. Then by \cite[Theorem 4.5.7]{Bruns_Herzog} applied to $R=\Aa_\MM^\PP$ and $F = \mathcal{F}_{\bar{\fv}}$, and taking $\mathfrak{p}$ to be the ${}^{*}$-maximal ideal $\mathfrak{m} := F_{\bar{\fv}\geq 1}$, we may conclude that the localization $(\Aa_\MM^\PP)_{\mathfrak{m}}$ is Cohen-Macaulay, since we saw  $\mathrm{gr}_{\bar{\fv}}(\Aa_\MM^\PP) \cong \mathbb{K}[S]$ is Cohen-Macaulay above. We know $\Aa_\MM^\PP$ is a Noetherian graded ${}^*$-local ring with ${}^*$-maximal ideal $\mathfrak{m}$, so by \cite[Exercise 2.1.27]{Bruns_Herzog}, we conclude that $\Aa_\MM^\PP$ is Cohen-Macaulay. 

 The ring $\Aa^\PP_\MM$ is the section ring of the divisor $D=V(t)$ as described in the proof of \Cref{compactifications}. In fact, this divisor is ample; indeed, $\Aa^\PP_\MM$ is graded and is finitely generated (by homogeneous elements) so it follows that for some large enough $K \in \Z, K>0$, the $K$-th Veronese subring of $\Aa^\PP_\MM$ is generated in degree $1$ \cite[Exercise 7.4.G]{Vakil}. Moreover, $K \cdot D$ is basepoint-free since $X_{\Aa_\MM}(\PP)$ is by definition the $\mathrm{Proj}$ of $\Aa_\MM^\PP$. This proves the claim.  
 \end{proof}

\subsection{Finite generation of Cox rings of $X_{\Aa_\MM}(\PP)$}\label{subsec: Cox rings}

We keep the assumptions and notation of Sections~\ref{subsec: compactification definition} to~\ref{subsec: Cohen Macaulay}. The purpose of this section is to show that, if a detropicalization $\Aa_\MM$ is a unique factorization domain (UFD), then for any PL polytope $\PP$, the compactification $X_{\Aa_\MM}(\PP)$ will have both a finitely generated class group and a finitely generated Cox ring. This result is motivated by the theory of Mori dream spaces, since it is a well-known theorem of Hu and Keel \cite{HuKeel} that a normal projective $\Q$-factorial variety with finitely generated class group is a Mori dream space if and only if its Cox ring is finitely generated. In general, the question of finite generation is challenging; for example, the problem of finding Mori dream spaces among toric vector bundles has been studied for many years (see e.g.\ \cite{HMP10, HS10, Gon12, GHPS12}). Our result can be viewed as a first step in a search for Mori dream spaces among our compactifications $X_{\Aa_\MM}(\PP)$. For this discussion we additionally assume that \textbf{$\K$ is of characteristic $0$}.

\begin{theorem}\label{theorem: finitely generated Cox rings} 
Let the notation and assumptions be as in \Cref{compactifications}. 
Let $\Aa_\MM^{\PP}$ denote the PL polytope ring associated to $\PP = \cap_{i=1}^{\ell} \HH_{\w(n_i), a_i} \subset \MM_\R$ and $X_{\Aa_\MM}(\PP) := \mathrm{Proj}(\Aa_\MM^\PP)$. 
If $\Aa_\MM$ is a unique factorization domain, then $X_{\Aa_\MM}(\PP)$ has a finitely generated class group and a finitely generated Cox ring. 
\end{theorem}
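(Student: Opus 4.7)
The plan has two parts: finite generation of $\Cl(X_{\Aa_\MM}(\PP))$ via the standard excision sequence for class groups, and finite generation of the Cox ring via a cone decomposition into semigroup algebras parallel to the proof of \Cref{prop: APM finitely generated}. For the class group: since $\Aa_\MM$ is a UFD it is normal, so by \Cref{prop: normality of compactification} the compactification $X := X_{\Aa_\MM}(\PP)$ is normal and the dense open $U_\MM = \Spec(\Aa_\MM)$ satisfies $\Cl(U_\MM) = 0$. By \Cref{compactifications}, $X \setminus U_\MM$ is a union of $\ell$ prime divisors $D_1, \ldots, D_\ell$, and the excision sequence
$$\bigoplus_{i=1}^{\ell} \Z [D_i] \longrightarrow \Cl(X) \longrightarrow \Cl(U_\MM) = 0$$
forces $\Cl(X)$ to be generated by $[D_1], \ldots, [D_\ell]$, hence finitely generated.

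To address the Cox ring, it suffices to prove that the $\Z^\ell$-graded ring
$$R := \bigoplus_{\mathbf{k} \in \Z^\ell} H^0\bigl(X, \O_X\bigl(\textstyle\sum_i k_i D_i\bigr)\bigr) \cdot z_1^{k_1} \cdots z_\ell^{k_\ell}$$
is finitely generated as a $\K$-algebra, since the Cox ring is naturally a quotient of $R$ under the surjection $\Z^\ell \twoheadrightarrow \Cl(X)$. Now, $f \in \mathrm{Frac}(\Aa_\MM) = \K(X)$ lies in $H^0(X, \O_X(\sum_i k_i D_i))$ exactly when $\ord_{D'}(f) \geq 0$ for every prime divisor $D' \notin \{D_1, \ldots, D_\ell\}$ and $\ord_{D_i}(f) \geq -k_i$ for every $i$. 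The first condition forces $f$ to be regular on $U_\MM$ and hence $f \in \Aa_\MM$ by normality, while by \Cref{prop: normality of compactification}(2) the second condition reads $\fv(f)(n_i) \geq -k_i$. Therefore $R_\mathbf{k} = \{f \in \Aa_\MM : \fv(f)(n_i) \geq -k_i \text{ for all } i \in [\ell]\}$.

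For each cone $\mathcal{C} \in \Sigma(\MM)$, set $R^\mathcal{C} := \bigoplus_{\mathbf{k}} (R_\mathbf{k} \cap \Aa_\MM^\mathcal{C}) \cdot z^\mathbf{k}$, a $\K$-subalgebra of $R$. By \Cref{lemma: cone subset of AaMM is subalg}, $\Aa_\MM^\mathcal{C} \cong \K[\mathcal{C} \cap \MM]$ is an affine semigroup algebra spanned by $\{\bb_m : m \in \mathcal{C} \cap \MM\}$. Fixing any chart $\alpha$, the bijection $\pi_\alpha$ identifies $\mathcal{C} \cap \MM$ with the lattice points of a rational polyhedral cone in $M_\alpha$; and since $\w(n_i) \in \Sp(\MM)$ restricts to a $\Z$-linear function on $\mathcal{C}$ by \Cref{lemma: points linear on faces of SigmaM}, the condition $\fv(\bb_m)(n_i) = \w(n_i)(m) \geq -k_i$ is a $\Z$-linear inequality in $(\pi_\alpha(m), \mathbf{k}) \in M_\alpha \times \Z^\ell$. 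Thus the support semigroup of $R^\mathcal{C}$ is the set of lattice points of a rational polyhedral cone, and Gordan's lemma gives that each $R^\mathcal{C}$ is a finitely generated affine semigroup algebra. Any $f = \sum_m c_m \bb_m \in R_\mathbf{k}$ has each basis summand in $R_\mathbf{k}$ individually, since $\fv(f) = \bigoplus_m \fv(\bb_m)$ implies $\min_m \fv(\bb_m)(n_i) = \fv(f)(n_i) \geq -k_i$; hence $R = \sum_{\mathcal{C} \in \Sigma(\MM)} R^\mathcal{C}$ as $\K$-vector spaces. Since $\Sigma(\MM)$ is finite, a finite union of generating sets for the $R^\mathcal{C}$ generates $R$ as a $\K$-algebra. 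The technically most delicate step will be the passage from finite generation of $R$ to finite generation of the Cox ring, which requires care about the definition of $\mathrm{Cox}(X)$ in the presence of possible torsion in $\Cl(X)$ and about relations coming from principal combinations $\sum_i k_i D_i = \div(f)$; the combinatorial heart of the argument (the cone decomposition) is comparatively mechanical.
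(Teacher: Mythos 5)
Your proposal is correct, and the class group argument is identical to the paper's: both use the excision sequence $\bigoplus_i \Z D_i \to \Cl(X) \to \Cl(\Spec\Aa_\MM) \to 0$ together with $\Cl(\Spec\Aa_\MM)=0$ for the UFD $\Aa_\MM$. You also identify the $\Z^\ell$-graded sections ring $R$ in the same way the paper does, using normality and \Cref{prop: normality of compactification}(2) to translate $H^0(X,\mathcal{O}(\sum k_iD_i))$ into the conditions $\fv(f)(n_i)\geq -k_i$ on $f\in\Aa_\MM$.

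Where you diverge from the paper is in the final finite-generation step for $R$. The paper realizes $\mathcal{R}$ as $\Aa_{\MM\times\Z^\ell}^{\mathcal{C}}$ for a single PL cone $\mathcal{C}$ in $(\MM\times\Z^\ell)_\R$, then degenerates via a full-rank valuation $\fv_{\alpha,\tilde\rho}$ to a semigroup algebra $\K[S]$ of lattice points in $\pi_\alpha(\mathcal{C})$, applies Gordan's lemma, and lifts finite generation by subduction, invoking \Cref{lemma: once compact always compact} so that each graded slice has finitely many lattice points and the subduction terminates. You instead decompose $R=\sum_{\mathcal{C}\in\Sigma(\MM)} R^{\mathcal{C}}$ directly over the classical cones of $\Sigma(\MM)$, observe that each $R^{\mathcal{C}}$ is itself a semigroup algebra of lattice points in a rational polyhedral cone in $M_\alpha\times\Z^\ell$ (using linearity of $\w(n_i)$ on $\mathcal{C}$, which is correct by \Cref{lemma: points linear on faces of SigmaM}), apply Gordan's lemma piecewise, and take the union of generating sets. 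This is precisely the route taken in \Cref{prop: APM finitely generated}, and it is cleaner here: you avoid the subduction step and the appeal to \Cref{lemma: once compact always compact}. Your cautionary remark at the end about the passage from finite generation of $R$ to finite generation of $\mathrm{Cox}(X)$ is addressed in the paper by citing \cite[Construction 1.4.2.1]{CoxRingsBible}, which realizes the Cox ring as a quotient of the global sections algebra $\Gamma(X,\mathcal{S})$ for $K=\bigoplus_i\Z D_i$ surjecting onto $\Cl(X)$; so this is not a real gap in either argument, only a point to be outsourced to the standard reference.
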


\begin{proof} 
We begin by showing that $X_{\Aa_\MM}(\PP)$ has a finitely generated class group. 
Since $U_\MM = \Spec(\Aa_\MM)$ is an open dense subset of $X_{\Aa_\MM}(\PP)$ and the complement $X_{\Aa_\MM}(\PP) \setminus U_\MM$ is the union of the prime divisors $D_i$ for $i \in [\ell]$, we have an exact sequence \cite[Theorem 4.0.20]{Cox_Little_Schenck} 
$$ \bigoplus_{i = 1}^\ell \Z D_i \to \Cl(X_{\Aa_\MM}(\PP)) \to \Cl(\Spec(\Aa_\MM)) \to 0.$$\\
If $\Aa_\MM$ is a UFD, then $\Cl(\Spec(\Aa_\MM))=0$ by e.g.\ \cite[Theorem 4.0.18]{Cox_Little_Schenck}, and it follows that $\Cl(X_{\Aa_\MM}(\PP))$ is finitely generated by the images of the prime divisors $D_i$. 

We now show that the Cox ring is finitely generated.  First, we know from \cite[Construction 1.4.2.1]{CoxRingsBible} that the Cox ring $\textup{Cox}(X_{\Aa_\MM}(\PP))$ may be realized as a quotient algebra of the global sections algebra $\Gamma(X_{\Aa_\MM}(\PP), \mathcal{S})$ of a sheaf $\mathcal{S} = \bigoplus_{D \in K} \mathcal{O}_{X_{\Aa_\MM}(\PP)}(D)$ of divisorial algebras. In \cite{CoxRingsBible}, the direct sum is over $K$ a finitely generated subgroup of the group of Weil divisors, which surjects onto the class group.  In our setting we choose $K = \bigoplus_{i \in [\ell]}\Z D_i$ since we just saw above that this surjects onto $\Cl(X_{\Aa_\MM}(\PP))$. Thus, in our case, the global sections algebra under consideration is 
\begin{equation*}
    \mathcal{R} := \Gamma(X_{\Aa_\MM}(\PP), \mathcal{S}) = \bigoplus_{\overline{r} \in \Z^\ell} \Gamma(X_{\Aa_\MM}(\PP), \mathcal{O}(\sum_i r_i D_i))
.\end{equation*} 

We now describe $\mathcal{R}$ in a different way. Fix $\overline{r} \in \Z^{\ell}$. The global sections $\Gamma(X_{\Aa_\MM}(\PP), \mathcal{O}(\sum_i r_i D_i))$ can be identified with a subspace of the regular functions on $\Spec(\Aa_\MM)$ with restrictions on their zeroes and poles along the $D_i$, as specified by the parameters $r_i$. Note that a UFD is normal, so by \Cref{prop: normality of compactification} we know $\mathrm{ord}_{D_i}$ can be identified with pairing with $n_i$. Moreover, since the space of regular functions on $\Spec(\Aa_\MM)$ is $\Aa_\MM$, we see that 
$\Gamma(X_{\Aa_\MM}(\PP), \mathcal{O}(\sum_i r_i D_i)) \cong \mathrm{span}_\K\{ \bb_m \, \mid \, \langle n_i, m \rangle \geq -r_i \, \textup{ for all} \, i\in [\ell] \} \subset \Aa_\MM$.
By adjoining formal parameters $t_1^{\pm}, \cdots, t_\ell^{\pm}$ to record the $\Z^\ell$-grading, we may view $\mathcal{R}$ as a subalgebra of $\Aa_\MM[t_1^{\pm}, \cdots, t_\ell^{\pm}]$.
Now from \Cref{prop: direct product of detrop} we know $\Aa_\MM[t_1^{\pm},\cdots,t_\ell^{\pm}]$ is a detropicalization of $\MM \times \Z^\ell$ with convex adapted basis $\{\bb_m t_1^{r_1} \cdots t_\ell^{r_\ell} \, \mid \, \bb_m \in \B, \overline{r} \in \Z^\ell\}$ and valuation $\fv_{\MM \times \Z^\ell}$ with the property $\fv_{\MM \times \Z^\ell}(\bb_m \overline{t}^{\overline{r}}) = (m,\overline{r}) \in \MM \times \Z^\ell$. 
Next consider the points $(n_i, \varepsilon_i) \in \Sp(\MM \times \Z^\ell) \cong \Sp(\MM) \times \Z^\ell$ where the $\varepsilon_i$ denote standard basis vectors in $\Z^\ell$ and we have used \Cref{lemma: points of product PL} for the isomorphism. 
Let $\mathcal{C} := \bigcap_{i\in[\ell]} \HH_{(\w(n_i),\varepsilon_i),0}$ denote the intersection of the PL half-spaces in $\MM_\R \times \R^\ell \cong (\MM \times \Z^\ell)_{\R}$ defined by the points $(n_i,\varepsilon_i)$ above, and parameters $a_i=0$ for all $i$. 
From the explicit descriptions of $\Gamma(X_{\Aa_\MM}(\PP), \mathcal{O}(\sum_i r_i D_i))$ given above, it then follows that $\mathcal{R}$ can be identified with the subalgebra $\Aa_{\MM \times \Z^\ell}^{\mathcal{C}} := \bigoplus_{\overline{r} \in \Z^\ell} \Gamma(\Aa_{\MM \times \Z^\ell}, \HH_{(\w(n_i),\varepsilon_i),0})\cdot \overline{t}^{\overline{r}}$ of $\Aa_{\MM}[t_1^{\pm}, \cdots, t_\ell^{\pm}]$. 

With these preliminaries in place, the rest of the argument is similar to previous proofs in this section, so we keep it brief. First, choose a $\Z^r \times \Z^\ell$-valued valuation as in \Cref{lem-canonicaltototal} corresponding to a choice of coordinate chart $M_\alpha \times \Z^\ell$. This valuation has one-dimensional leaves, and its associated graded algebra is the semigroup algebra $\K[S]$ of the semigroup $S$ of lattice points contained in the cone $\pi_\alpha(\mathcal{C})$. In particular, since $\pi_\alpha(\mathcal{C})$ is an intersection of half-spaces (over $\Z$) and hence a rational polyhedral cone, by Gordan's lemma $\K[S]$ is finitely generated. 
To see that the original algebra is finitely generated, we may use a subduction argument similar to that in the proof of \Cref{prop: APM finitely generated}, but this time using homogeneity with respect to the $\Z^\ell$-grading. We also use the fact, recounted in \Cref{lemma: once compact always compact}, that if the original PL polytope $\PP$ is compact then the slice of the cone $\mathcal{C}$ at any $\overline{r}$-level set for $\overline{r} \in \Z^\ell$ is compact (and hence contains only finitely many lattice points).  We leave details to the reader. From this we see that $\mathcal{R}$ is finitely generated, and hence the Cox ring is also finitely generated. 
\end{proof}

We will see a concrete example of a compactification $X_{\Aa_\MM}(\PP)$ with finitely generated Cox ring in \Cref{subsec: Adr compactification}.

\begin{remark} 
Note that $\Aa^C_{\MM\times \Z^\ell}$ can be interpreted as a detropicalization of the cone $C \subset \MM\times \Z^\ell$.  The Cox ring of $X_{\Aa_\MM}(\PP)$ can therefore be viewed as a detropicalization.
\end{remark}

\subsection{Families of toric degenerations and Newton-Okounkov bodies}\label{subsec: families of degenerations}

In this section we state some first results relating the constructions given in \Cref{subsec: compactification definition} to \Cref{subsec: Cox rings}, and, the theories of Newton-Okounkov bodies and toric degenerations. Experts will have already noted that both of these concepts are lurking in the background throughout the paper; here we make them explicit, for future reference. The results we state here are straightforward consequences of results already developed in the preceding sections so we keep discussion brief. 

We maintain the running hypotheses of \Cref{subsec: compactification definition} to \Cref{subsec: Cox rings}. In this situation, recall that we constructed in \Cref{subsec: compactification definition} (in the paragraph before \Cref{lemma: rank 1 modify full rank}) a valuation $\fv_{\alpha,\tilde{\rho}}: \Aa_\MM^\PP \to \Z^{\mathrm{rank}(\MM)} \times \Z$. By construction, $\fv_{\alpha,\tilde{\rho}}$ is a homogeneous valuation (see e.g.\ \cite[\textsection 2]{EscobarHarada} for a definition) on the homogeneous coordinate ring $\Aa_\MM^\PP$ of the compactification $X_{\Aa_\MM}(\PP) := \mathrm{Proj}(\Aa_\MM^\PP)$.  Thus we may compute the associated Newton-Okounkov body (see e.g.\ \cite[Definition 2.2]{EscobarHarada}) of $X_{\Aa_\MM}(\PP)$, or equivalently, of $\Aa_\MM^\PP$, with respect to this valuation. The following is an immediate consequence of the theory developed thus far (cf.~also the proof of \Cref{lemma: rank 1 modify full rank}). 

\begin{theorem}\label{theorem: NO body}
Let the assumptions and notation be as in \Cref{compactifications}. Let $\fv_{\alpha,\tilde{\rho}}: \Aa_\MM^\PP \to \Z^{\mathrm{rank}(\MM)} \times \Z$ denote the homogeneous valuation defined in \Cref{subsec: compactification definition} (paragraph before \Cref{lemma: rank 1 modify full rank}). 
Then the Newton-Okounkov body $\Delta(\Aa_\MM^\PP, \fv_{\alpha,\tilde{\rho}})$ is the (classical) polytope $\pi_\alpha(\PP) \subset (M_\alpha)_\R \times \R$. 
\end{theorem}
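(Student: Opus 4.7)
The plan is to deduce the statement directly from \Cref{lemma: rank 1 modify full rank}(1), which has already identified the value semigroup $S(\Aa_\MM^\PP, \fv_{\alpha,\tilde{\rho}})$ with the saturated affine semigroup
\[
S = \{(m,k) \in M_\alpha \times \Z \,\mid\, m \in M_\alpha \cap k\cdot \pi_\alpha(\PP),\ k \in \Z_{\geq 0}\},
\]
namely the lattice points in the cone over $\pi_\alpha(\PP) \times \{1\}$. The only remaining ingredient is the definition of the Newton-Okounkov body of a graded $\K$-algebra with respect to a homogeneous valuation.

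First, I would recall the definition of $\Delta(\Aa_\MM^\PP,\fv_{\alpha,\tilde{\rho}})$ following, e.g., \cite{EscobarHarada}. Since $\fv_{\alpha,\tilde{\rho}}$ is constructed so that its final coordinate records the homogeneous degree with respect to the grading of $\Aa_\MM^\PP$ (the variable $t$), the Newton-Okounkov body is the subset of $(M_\alpha)_\R$ given by
\[
\Delta(\Aa_\MM^\PP, \fv_{\alpha,\tilde{\rho}}) \;=\; \overline{\left\{\tfrac{1}{k}\,m \,\Big|\, (m,k) \in S(\Aa_\MM^\PP, \fv_{\alpha,\tilde{\rho}}),\ k \geq 1\right\}},
\]
where the closure is taken in the Euclidean topology on $(M_\alpha)_\R$.

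Next, I would substitute the identification of the value semigroup from \Cref{lemma: rank 1 modify full rank}(1), which yields
\[
\Delta(\Aa_\MM^\PP, \fv_{\alpha,\tilde{\rho}}) \;=\; \overline{\left\{\tfrac{1}{k}\,m \,\Big|\, m \in M_\alpha \cap k\cdot \pi_\alpha(\PP),\ k \geq 1\right\}}.
\]
Every point appearing inside the closure is a rational point of $\pi_\alpha(\PP)$, so the left-hand side is contained in $\pi_\alpha(\PP)$. For the reverse inclusion, note that $\pi_\alpha(\PP)$ is a rational polytope in $M_\alpha \otimes \R$ by \Cref{lemma: PL polytope is point convex}(1), with rationality inherited from the defining inequalities of $\PP$ (whose parameters $a_i$ lie in $\Z$ and whose normal points $\w(n_i)$ are linear on $\pi_\alpha(\mathcal{C})$ for each cone $\mathcal{C}$ of $\Sigma(\MM)$). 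A standard fact about rational polytopes is that the set of rational points $\bigcup_{k\geq 1}\tfrac{1}{k}\bigl(M_\alpha \cap k\cdot \pi_\alpha(\PP)\bigr)$ is dense in $\pi_\alpha(\PP)$, giving the opposite containment.

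The main (minor) obstacle is simply ensuring the conventions align: one must verify that the homogeneous valuation $\fv_{\alpha,\tilde{\rho}}$ used here is set up so that the last coordinate precisely tracks the grading by $t$-degree (so that the Newton-Okounkov body is obtained by slicing at ``height one''). This is immediate from the construction of $\fv_{\alpha,\tilde{\rho}}$ in \Cref{lem-canonicaltototal} applied to $\Aa_\MM[t,t^{-1}]$, with the lex order on $\Z^{\mathrm{rank}(\MM)}\times\Z$ chosen so that the $\Z$-factor (degree in $t$) is read first, as specified in the paragraph preceding \Cref{lemma: rank 1 modify full rank}. With this convention confirmed, the proof reduces to the two displayed identifications above.
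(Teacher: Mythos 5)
Your proposal is correct and follows essentially the same route the paper intends: it invokes the identification of the value semigroup $S(\Aa_\MM^\PP, \fv_{\alpha,\tilde{\rho}})$ with the lattice points in the cone over $\pi_\alpha(\PP)$ from \Cref{lemma: rank 1 modify full rank}(1), then applies the standard Newton-Okounkov body construction (normalize by degree, take the closure). The paper presents the theorem as an immediate consequence with no further argument; you have simply supplied the routine details that the authors left implicit.
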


By construction, a PL polytope $\PP$ maps to a classical polytope $\pi_\alpha(\PP)$ for each choice of coordinate chart $\alpha \in \pi(\MM)$, and these polytopes $\pi_\alpha(\PP)$ satisfy the relation $\mu_{\alpha,\beta}(\pi_\alpha(\PP)) = \pi_\beta(\PP)$. Thus \Cref{theorem: NO body} asserts that there is a family of valuations on $\Aa^\PP_\MM$ which yield Newton-Okounkov bodies which are mutation-related. In this sense, the theory of polyptych lattices, and PL polytope algebras, as developed in this paper systematizes the wall-crossing phenomena observed in \cite{EscobarHarada}, and we can think of a PL polytope as a ``global object'' encoding a family of mutation-related Newton-Okounkov bodies. 

The theorem below also follows immediately from the arguments given in \Cref{subsec: compactification definition} and \Cref{subsec:compactification}, as well as the identification given in \Cref{theorem: NO body} between $\Delta(\Aa_\MM^\PP, \fv_{\alpha,\tilde{\rho}})$ and $\pi_\alpha(\PP)$. 

\begin{theorem}\label{theorem: family of degenerations}
Let the assumptions and notation be as in \Cref{compactifications}. Assume that $\PP$ is an integral PL polytope. Then, for each choice of homogeneous valuation $\fv_{\alpha,\tilde{\rho}}$ as constructed in \Cref{subsec: compactification definition}, there exists a toric degeneration $\mathcal{X}_{\alpha,\tilde{\rho}} \to \Spec(\K[t])$ with generic fiber isomorphic to the compactification $X_{\Aa_\MM}(\PP) := \mathrm{Proj}(\Aa_\MM^\PP)$ and central fiber $X(\Delta(\Aa_\MM,\fv_{\alpha,\tilde{\rho}})) = X(\pi_\alpha(\PP))$ the toric variety associated to the (classical) integral polytope $\pi_\alpha(\PP) = \Delta(\Aa_\MM,\fv_{\alpha,\tilde{\rho}})$. 
\end{theorem}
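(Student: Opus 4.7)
The plan is to construct $\mathcal{X}_{\alpha,\tilde{\rho}}$ as the relative $\Proj$ of an extended Rees algebra associated to the filtration of $\Aa_\MM^\PP$ induced by the valuation $\fv_{\alpha,\tilde{\rho}}$. Since the Rees construction is cleanest in the rank one setting, I would first apply \Cref{lemma: rank 1 modify full rank} to replace $\fv_{\alpha,\tilde{\rho}}$ by the associated $\overline{\Z}$-valued valuation $\bar{\fv}: \Aa_\MM^\PP \to \overline{\Z}$ satisfying $\gr_{\bar{\fv}}(\Aa_\MM^\PP) \cong \gr_{\fv_{\alpha,\tilde{\rho}}}(\Aa_\MM^\PP) \cong \K[S]$, where $S \subset M_\alpha \times \Z$ is the saturated semigroup of lattice points in $\cone(\pi_\alpha(\PP) \times \{1\})$. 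Writing $F_k := \{f \in \Aa_\MM^\PP : \bar{\fv}(f) \geq k\}$ for the induced decreasing filtration, I would form the extended Rees algebra
\[ \mathcal{R} \;:=\; \bigoplus_{k \in \Z} F_k \cdot s^{-k} \;\subset\; \Aa_\MM^\PP[s, s^{-1}], \]
viewed as a $\K[s]$-algebra. Then define $\mathcal{X}_{\alpha,\tilde{\rho}} := \Proj(\mathcal{R}) \to \Spec(\K[s])$, where $\Proj$ is taken with respect to the $\Z_{\geq 0}$-grading inherited from the $t$-grading on $\Aa_\MM^\PP$ of \Cref{definition: M polytope algebra} (with $s$ placed in degree zero). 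Here $\K[s]$ plays the role of the $\K[t]$ appearing in the theorem.

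Next I would verify the three properties of a toric degeneration. For flatness, $\mathcal{R}$ is $s$-torsion-free as a submodule of the domain $\Aa_\MM^\PP[s, s^{-1}]$, hence flat over $\K[s]$ by \cite[Corollary 6.3]{Eisenbud}. For the generic fiber, the standard isomorphism $\mathcal{R}[s^{-1}] \cong \Aa_\MM^\PP[s, s^{-1}]$ (as graded $\K[s, s^{-1}]$-algebras) identifies $\mathcal{X}_{\alpha,\tilde{\rho}} \times_{\Spec(\K[s])} \Spec(\K[s,s^{-1}])$ with $X_{\Aa_\MM}(\PP) \times \Spec(\K[s,s^{-1}])$. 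For the central fiber at $s = 0$, the standard identification $\mathcal{R}/s\mathcal{R} \cong \gr_{\bar{\fv}}(\Aa_\MM^\PP)$ combined with \Cref{lemma: rank 1 modify full rank} shows that the special fiber is $\Proj(\K[S])$.

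To conclude, I must identify $\Proj(\K[S])$ with the projective toric variety $X(\pi_\alpha(\PP))$. Since $\PP$ is an integral PL polytope, $\pi_\alpha(\PP)$ is a classical integral polytope in $M_\alpha \otimes \R$, and by construction $S$ is the saturated semigroup of lattice points in $\cone(\pi_\alpha(\PP) \times \{1\}) \subset M_\alpha \times \Z$. Under $\mathcal{R}/s\mathcal{R} \cong \K[S]$, the grading used to form $\Proj$ corresponds to the natural $\Z_{\geq 0}$-grading of $\K[S]$ given by projection onto the second coordinate of $M_\alpha \times \Z$. By standard toric geometry (e.g.\ \cite[Chapter 2]{Cox_Little_Schenck}), $\Proj(\K[S])$ with this grading is exactly the projective normal toric variety $X(\pi_\alpha(\PP))$. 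The equality $\pi_\alpha(\PP) = \Delta(\Aa_\MM^\PP, \fv_{\alpha,\tilde{\rho}})$ is the content of \Cref{theorem: NO body}.

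The main technical obstacle is the careful bookkeeping of two distinct gradings present in the construction: the $t$-grading on $\Aa_\MM^\PP$ (which governs the $\Proj$ defining the total space $\mathcal{X}_{\alpha,\tilde{\rho}}$), and the $\Z$-filtration by $\bar{\fv}$ (which governs the deformation parameter $s$). One must verify that under the identification $\mathcal{R}/s\mathcal{R} \cong \K[S]$ the surviving $t$-grading matches the projection $S \to \Z$ used in the polytope-to-toric-variety dictionary. Once this compatibility is made explicit, the remainder of the argument is a direct application of the Rees algebra formalism together with the semigroup algebra description of projective toric varieties.
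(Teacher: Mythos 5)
Your proposal is correct and takes essentially the same route as the paper's proof, which cites \Cref{lemma: rank 1 modify full rank} for the identification $\gr_{\fv_{\alpha,\tilde{\rho}}}(\Aa_\MM^\PP)\cong\K[S]$ and the Rees-algebra flatness argument from the proof of \Cref{compactifications}. You have simply unpacked the extended Rees construction and the grading bookkeeping in more detail than the paper's terse two-sentence proof does.
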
 

\begin{proof} 
The associated graded algebra of $\Aa_\MM^\PP$ corresponding to $\fv_{\alpha,\tilde{\rho}}$ is the semigroup algebra $\K[S(\Aa^\PP_\MM,\fv_{\alpha,\tilde{\rho}})]$ where $S(\Aa^\PP_\MM,\fv_{\alpha,\tilde{\rho}})$ is the semigroup of lattice points in the cone over the polytope $\pi_\alpha(\PP)$, as explained in the proof of \Cref{lemma: rank 1 modify full rank}. The flatness of the associated family is shown in the proof of \Cref{compactifications}. 
\end{proof}

From \Cref{theorem: NO body} we saw that our theory gives us a combinatorial family of mutation-related Newton-Okounkov bodies. \Cref{theorem: family of degenerations} develops this further, giving us a family of toric degenerations of the single geometric object, namely $X_{\Aa_\MM}(\PP)$, where the toric varieties arising as the central fibers are associated to these Newton-Okounkov bodies. Thus we may interpret \Cref{theorem: family of degenerations} as a systematic geometric realization of the combinatorial data  obtained in \Cref{theorem: NO body}, and the compactification $X_{\Aa_\MM}(\PP)$ as a ``global object'' which geometrically interpolates between the family of toric varieties $\{X(\pi_\alpha(\PP))\}_{\alpha \in \pi(\MM)}$.

\section{Example: the polyptych lattices $\MM_{d,r}$}\label{sec-Example}

In this section, we give a concrete family, denoted $\MM_{d,r}$, of examples of polyptych lattices that serve to illustrate the theory developed in the previous sections. In \Cref{subsec: Gorenstein Fano for Mdr} we also give explicit examples of Gorenstein-Fano PL polytopes in $\MM_{d,r} \otimes \R$.

\subsection{Definition of $\MM_{d,r}$.}\label{subsec: def MMdr}

Let $d,r$ be positive integers, $d \geq 2, r \geq 2$. We now define a polyptych lattice $\MM_{d,r}$ of rank $d+r-1$ over $\Z$, associated to $(d,r)$, and describe some of its basic properties. 
Throughout this discussion, we use coordinates $\bu = (u_1,\cdots,u_d) \in \Z^d, \bw=(w_1,\cdots,w_r) \in \Z^r$. 
We begin by defining the $r$ many coordinate charts of $\MM_{d,r}$. Specifically, for $i \in [r]$ we let $M_{d,r}^{(i)}$ denote the subgroup of $\Z^{d}\times\Z^{r}$ of rank $d+r-1$ given by
\begin{equation*}
    M_{d,r}^{(i)}:=\left\{(\bu,\bw)\in\Z^{d}\times\Z^{r}\mid w_i=0\right\}.
\end{equation*}
The mutation maps between the charts $M^{(i)}_{d,r}$ are defined as follows. For each $i\in [r-1]$ we define $\mu_{i,i+1}:M_{d,r}^{(i)}\to M_{d,r}^{(i+1)}$ by 
    \begin{equation*}
        \mu_{i,i+1}(\bu,\bw):=(\bu,w_1,\ldots,w_{i-1},\min\{u_1,\ldots,u_d\}-\sum w_k,0,w_{i+2},\ldots,w_r).
    \end{equation*}
The mutation map $\mu_{i,i+1}$ is invertible, with inverse given by 
\begin{equation*}
        \mu_{i,i+1}^{-1}(\bu,\bw)=(\bu,w_1,\ldots,w_{i-1},0, \min\{u_1,\ldots,u_d\}-\sum w_k,w_{i+2},\ldots,w_r).
    \end{equation*}
Note that $\mu_{i,i+1}$ has $d$ regions of linearity, where the $k$-th region, for $k \in [d]$, is given by $u_k=\min\{u_1,\ldots,u_d\}$, and this is independent of $i$. 
By composing the maps $\mu_{i,i+1}$ and their inverses, we obtain compatible mutation maps $\mu_{i,j}: M^{(i)}_{d,r} \to M^{(j)}_{d,r}$ for all pairs $i,j \in [r]$; it is straightforward that these satisfy the conditions in \Cref{definition: polyptych lattice}. 

As noted above, the regions of linearity for all mutation maps $\mu_{i,j}$ are the same, and it follows that the maximal cones of $\Sigma(\MM_{d,r})$ are those specified by the equations $u_k=\min\{u_1,\cdots,u_d\}$, for $k \in [d]$. For concreteness we may identify $\Sigma(\MM_{d,r})$ with its image in the first coordinate chart $M^{(1)}_{d,r} \otimes \R$, i.e., the classical fan with maximal cones $C_k:=\{(\bu,\bw)\in M_{d,r}^{(1)}\otimes \R\mid u_k=\min(u_1,\ldots,u_d)\}$ for $k\in[d]$. (In fact the same equation $u_k=\min\{u_1,\cdots,u_d\}$ specifies the cones in all charts $M_{d,r}^{(i)}$ since the mutation maps leave the $\bu$ coordinates unchanged.)

We will find it useful to identify $\MM_{d,r}$ with the following subset of $\Z^{d}\times\Z^{r}$: 
\begin{equation*}\label{eq: def Mdr}
    \M_{d,r} := \{(\bu,\bw)\in \Z^{d}\times\Z^{r}\mid \min\{u_1,\ldots,u_d\}=0\} \subset \Z^d \times \Z^r. 
\end{equation*}
Note that $\MM_{d,r}$ is not a subgroup of $\Z^d \times \Z^r$. However, we can define for $k \in [d]$ the subset
\begin{equation*}\label{eq: def Mdr k}
\M_{d,r}(k) := \{(\bu,\bw) \in \M_{d,r} \, \mid \, u_k=0\}
\end{equation*}
which is closed under addition, and it is straightforward that $\M_{d,r} = \cup_{k \in [d]} \M_{d,r}(k)$. 
To see the identification of $\M_{d,r}$ with $\MM_{d,r}$, we first define the notation $\bo:=\varepsilon_1+\cdots+\varepsilon_d = (1,1,\cdots,1) \in \Z^d$ (here $\{\varepsilon_1,\cdots,\varepsilon_d\}$ denote the standard basis of $\Z^d$) and define $\pi_i: \Z^d \times \Z^r \to \Z^d \times \Z^r$ to be the projection that sets the $i$-th coordinate of $\Z^r$ to be $0$, i.e. $\pi_i(\bu, w_1, \cdots, w_r) = (\bu, w_1,\cdots,w_{i-1}, 0, w_{i+1}, \cdots, w_r)$. 
For $i \in [r]$, we then define maps $\varphi_i$ as follows: 
\begin{equation*}
    \varphi_i:\M_{d,r}\to M_{d,r}^{(i)},\qquad
    \varphi_i(\bu,\bw):=\pi_i\left(\bu + \langle \bo,\bw\rangle\bo, \bw\right). 
\end{equation*}
These maps $\varphi_i$ behave well with respect to the mutation maps $\mu_{i,i+1}$, as recorded in the following lemma.

\begin{lemma}\label{lem_Mdr_identified}
    For all $i \in [r]$, the map $\varphi_i$ is a bijection, and for all $i \in [r-1]$, $\mu_{i,i+1}\circ \varphi_i=\varphi_{i+1}$.
\end{lemma}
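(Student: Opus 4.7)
The plan is to treat the two assertions separately: bijectivity of each $\varphi_i$ is handled by writing down an explicit inverse, and the commutation relation $\mu_{i,i+1}\circ\varphi_i=\varphi_{i+1}$ is a direct computation using the key identity $\min\{u_1+s,\ldots,u_d+s\}=s$ for $(\bu,\bw)\in\M_{d,r}$, where $s:=\langle\bo,\bw\rangle$.

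For bijectivity, I would define a candidate inverse $\psi_i:M_{d,r}^{(i)}\to\M_{d,r}$ as follows. Given $(\bu',\bw')\in M_{d,r}^{(i)}$ (so that $w'_i=0$), set
\[
m:=\min\{u'_1,\ldots,u'_d\},\qquad \bu:=\bu'-m\,\bo,\qquad w_i:=m-\sum_{k\neq i}w'_k,\qquad w_k:=w'_k\ \text{for}\ k\neq i.
\]
Then $\min\{u_1,\ldots,u_d\}=0$, so $(\bu,\bw)\in\M_{d,r}$. I would then verify directly that $\varphi_i(\psi_i(\bu',\bw'))=(\bu',\bw')$ using $\langle\bo,\bw\rangle=w_i+\sum_{k\neq i}w'_k=m$, and conversely that $\psi_i(\varphi_i(\bu,\bw))=(\bu,\bw)$ by noting that $\min\{u_k+s\}=s$ when $\min u_k=0$, so that $m=s$ in the $\psi_i$ formula recovers $w_i$ correctly. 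This gives the two-sided inverse and establishes the bijection.

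For the commutation relation, I would carry out the composition explicitly. Write $s=\langle\bo,\bw\rangle$. By definition,
\[
\varphi_i(\bu,\bw)=\bigl(\bu+s\,\bo,\ w_1,\ldots,w_{i-1},0,w_{i+1},\ldots,w_r\bigr).
\]
The $\bu$-part has minimum equal to $s$, and the sum of the $\bw$-coordinates in this representative equals $s-w_i$. Applying $\mu_{i,i+1}$ replaces the $0$ in slot $i$ by $s-(s-w_i)=w_i$ and places a $0$ in slot $i+1$, while leaving the $\bu$-part unchanged. The result is exactly $\bigl(\bu+s\,\bo,\ w_1,\ldots,w_i,0,w_{i+2},\ldots,w_r\bigr)=\varphi_{i+1}(\bu,\bw)$.

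No step looks like a serious obstacle; the main subtlety is simply discovering the correct formula for $w_i$ in $\psi_i$, which is forced by the constraint $\min u_k=0$ on the preimage. Once this formula is in hand, both claims reduce to one-line verifications using $\min\{u_k+s\}=s$ and $\langle\bo,\bw\rangle=s$.
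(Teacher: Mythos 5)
Your proposal is correct and follows essentially the same route as the paper: the inverse formula you write for $\psi_i$ is exactly the paper's $\varphi_i^{-1}$ (equation \eqref{eq: varphi inverse}, specialized using $w'_i=0$), and your commutation computation fills in the step the paper describes as a "straightforward computation which we leave to the reader." The only cosmetic difference is that the paper first checks injectivity directly and then exhibits the inverse, whereas you go straight to verifying $\psi_i$ is a two-sided inverse — logically equivalent and slightly more economical.
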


\begin{proof}
    We first show that $\varphi_i$ is one-to-one. 
    Note that if $\varphi_i(\bu,\bw)=\varphi_i(\bu',\bw')$, then $w_k=w'_k$ for all $k\neq i$. 
    Moreover, since $u_j+\sum_{k\in[r]}w_k=u'_j+\sum_{k\in[r]}w'_k$ for all $j \in [d]$, and since $\min\{u_1,\cdots,u_d\} = 0$ (so at least one $u_j$ is equal to $0$) we can conclude $\sum_{k \in [r] w_k} = \sum_{k \in [r]} w'_k$. 
       From this it follows that $(\bu,\bw)=(\bu',\bw')$, so $\varphi_i$ is one-to-one. 
    In fact, $\varphi_i$ is also onto, since an inverse is given by 
\begin{equation}\label{eq: varphi inverse}
    \varphi_i^{-1}(\bu,\bw)
    =
    \left(\bu-\min(u_1,\ldots,u_d)\bo,\ \bw + (\min(u_1,\ldots,u_d)-\langle \bo,\bw\rangle)\varepsilon_i\right)
.\end{equation}
The claim $\mu_{i,j}\circ \varphi_i=\varphi_j$ is a straightforward computation which we leave to the reader. 
\end{proof}

It follows immediately from \Cref{lem_Mdr_identified} that the map
$\M_{d,r} \to \MM_{d,r}$ defined by 
\begin{equation*}\label{eq: identify Mdr MMdr}
(\bu, \bw) \mapsto ( \varphi_i(\bu, \bw))_{i \in [r]} 
\end{equation*}
is well-defined, and bijective. We may therefore make computations for $\MM_{d,r}$ in terms of $\M_{d,r}$.

\subsection{Points of $\MM_{d,r}$}\label{subsec: points Mdr}

Our next goal is to describe the set $\Sp(\MM_{d,r})$ of points of $\MM_{d,r}$. As we saw in \Cref{lem_Mdr_identified}, the elements of $\MM_{d,r}$ may be identified with $\M_{d,r}$, so a point $p \in \Sp(\MM_{d,r})$ may be thought of a function on $\M_{d,r}$. 
With this in mind, we define a subset, denoted $\Sp(\M_{d,r})$, of functions on $\M_{d,r}$ as follows: 
\begin{equation}\label{eq: def SpMdr}
    \Sp(\M_{d,r}):=
    \left\{f: \M_{d,r}\to \Z \mid f(\bu,\bw)+f(\bu',\bw')
    =\min_{i\in[r]}\{f\circ\varphi_i^{-1}(\varphi_i(\bu,\bw)+\varphi_i(\bu',\bw'))\}\right\}. 
\end{equation}
The following is immediate using the identification given in \Cref{subsec: def MMdr}. We leave details to the reader. 

\begin{lemma}\label{lem_pts_Bdr}
    The space of points $\Sp(\MM_{d,r})$ is in bijection with $\Sp(\M_{d,r})$. For $i \in [r]$, 
$\Sp(\MM_{d,r},i)$ is in bijection with
  $  \Sp(\M_{d,r},i):=
    \left\{f\in \Sp(\M_{d,r}) \mid f(\bu,\bw)+f(\bu',\bw')
    =f\circ\varphi_i^{-1}(\varphi_i(\bu,\bw)+\varphi_i(\bu',\bw'))\right\}.$
\exampleqed
\end{lemma}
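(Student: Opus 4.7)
The plan is to leverage the bijection $\M_{d,r} \leftrightarrow \MM_{d,r}$ already established in \Cref{lem_Mdr_identified} to translate the defining conditions of $\Sp(\MM_{d,r})$ directly into the defining conditions of $\Sp(\M_{d,r})$. The key observation that makes everything work is that, under the identification $(\bu,\bw) \mapsto (\varphi_i(\bu,\bw))_{i \in [r]}$, the $i$-th coordinate map $\pi_i : \MM_{d,r} \to M_{d,r}^{(i)}$ of \Cref{def: element of MM} corresponds exactly to the map $\varphi_i : \M_{d,r} \to M_{d,r}^{(i)}$. Consequently, for elements $(\bu,\bw),(\bu',\bw') \in \M_{d,r}$, the chart-dependent addition $m +_i m' = \pi_i^{-1}(\pi_i(m) + \pi_i(m'))$ from \eqref{eq: addition in chart} translates to $\varphi_i^{-1}(\varphi_i(\bu,\bw) + \varphi_i(\bu',\bw'))$ on the $\M_{d,r}$ side.

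Given this dictionary, the first step is to observe that any function $p : \MM_{d,r} \to \Z$ can be identified with a function $f : \M_{d,r} \to \Z$ via $f := p \circ (\text{identification})$, and this identification is a bijection between $\mathrm{Fun}(\MM_{d,r},\Z)$ and $\mathrm{Fun}(\M_{d,r},\Z)$. The second step is to check that the condition \eqref{eq: def point min} for $p$ to be a point of $\MM_{d,r}$ (ranging $\alpha$ over $\pi(\MM_{d,r}) = [r]$) translates, under the identification, to precisely the displayed min-equation in \eqref{eq: def SpMdr}. This is immediate once one substitutes $\pi_i \leftrightarrow \varphi_i$ in the definition. For the $F_{\geq 0}$-homogeneity condition \eqref{eq: def point F homog}, I would verify that it follows automatically from the min-condition together with the fact that $\varphi_i$ commutes with non-negative scalar multiplication on each cone of $\Sigma(\MM_{d,r})$; alternatively, it should be regarded as implicit in the definition of $\Sp(\M_{d,r})$, as is standard.

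For the second assertion, concerning $\Sp(\MM_{d,r},i)$, I will invoke \Cref{prop-conesinthepoints}(2), which characterizes $\Sp(\MM,\alpha)$ as the subset of points in $\Sp(\MM)$ for which the minimum in the RHS of \eqref{eq: def point min} is achieved at $\alpha$. Translating this characterization via the identification immediately yields the claimed description of $\Sp(\M_{d,r},i)$: namely, $f \in \Sp(\M_{d,r})$ satisfying $f(\bu,\bw) + f(\bu',\bw') = f\circ \varphi_i^{-1}(\varphi_i(\bu,\bw) + \varphi_i(\bu',\bw'))$ for all $(\bu,\bw),(\bu',\bw') \in \M_{d,r}$.

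There is no substantial obstacle in this proof; it is a matter of unwinding the definitions and applying the bijection of \Cref{lem_Mdr_identified}. The only minor point requiring care is ensuring that the chart maps $\pi_i$ and $\varphi_i$ agree under the identification, which follows directly from the definition $(\bu,\bw) \mapsto (\varphi_i(\bu,\bw))_{i\in[r]}$ of the identification and the compatibility $\mu_{i,i+1} \circ \varphi_i = \varphi_{i+1}$ proved in \Cref{lem_Mdr_identified}.
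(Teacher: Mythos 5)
Your proposal is correct and matches the approach the paper intends (the paper simply says the lemma is "immediate using the identification" and leaves all details to the reader, which is exactly what you have filled in). Your identification of $\pi_i$ with $\varphi_i$ under the bijection of \Cref{lem_Mdr_identified}, the translation of the min-condition \eqref{eq: def point min} into the defining equation of $\Sp(\M_{d,r})$, and the use of \Cref{prop-conesinthepoints}(2) for the second claim are all the right moves. One small point of precision: the $F_{\geq 0}$-homogeneity of \eqref{eq: def point F homog} is not literally "implicit" in \eqref{eq: def SpMdr} — it must be derived. Over $F=\Z$ it does follow from the min-condition alone: setting $m'=m$ gives $2p(m)=p(2m)$ (since $m+_\alpha m=2m$ for all $\alpha$, by \Cref{lemma: addition well def on cones}), and pairing $m$ with $(n-1)m$ (again both in a common cone) gives $p(nm)=p(m)+p((n-1)m)$, so induction yields $p(nm)=np(m)$ for all $n\in\Z_{\geq 0}$. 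With that spelled out, the bijection and the claim about $\Sp(\M_{d,r},i)$ are indeed immediate.
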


We now describe another set which we will identify with $\Sp(\MM_{d,r}) \cong \Sp(\M_{d,r})$. We define 
\[T_{d,r} := \{(\ba, \b)\in \Z^{d}\times\Z^{r} \mid a_1 + \cdots + a_d = \min\{b_1, \ldots, b_r) \} \subset \Z^d \times \Z^r\]
and 
$$T_{d, r}(i) := \{(\ba, \b)\in T_{d,r} \mid a_1 + \cdots +a_d = b_i\}$$
for $i \in [r]$. Note that $T_{d,r} = \cup_{i \in [r]} T_{d,r}(i)$. .
Moreover, given $(\ba,\b)\in T_{d,r}$, we define a function $f_{\ba,\b}$ as follows:
\begin{equation*}\label{eq_Mdr_fab}
    f_{\ba,\b}:\M_{d,r}\to \Z,\qquad
    f_{\ba,\b}(\bu,\bw):=\langle \ba,\bu\rangle+\langle \b,\bw\rangle
    =a_1u_1+\cdots+a_du_d+b_1w_1+\cdots+b_rw_r
.\end{equation*}
Note that $f_{\ba,b}$ is simply the restriction of the usual inner product pairing with $(\ba,\b)$, which is linear on $\Z^d \times \Z^r$, to the subset $\M_{d,r}$. However, since $\M_{d,r}$ is not an abelian subgroup of $\Z^d \times \Z^r$, it does not make sense to discuss linearity on $\M_{d,r}$. Now define the map 
$$
\Psi_{d,r}: T_{d,r} \to \{f: \M_{d,r} \to \Z\}
$$
by $\Psi_{d,r}(\ba,\b) := f_{\ba,\b}$. The following proposition shows that $\Psi$ identifies $\Sp(\MM_{d,r}) \cong \Sp(\M_{d,r})$ with $T_{d,r}$.

\begin{proposition}\label{prop_Mdr_pts}
The map $\Psi_{d,r}$ defined above 
has image $\Sp(\M_{d,r})$ and is a bijection between $T_{d,r}$ and $\Sp(\M_{d,r})$. 
Moreover, for all $i \in [r]$, the map $\Psi_{d,r}$ restricts to give a bijection between $T_{d,r}(i)$ and $\Sp(\M_{d,r},i)$. 
In particular, $\MM_{d,r}$ is a full polyptych lattice.
\end{proposition}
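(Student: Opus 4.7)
First I would check that $\Psi_{d,r}$ lands in $\Sp(\M_{d,r})$. Since $f_{\ba,\b}$ is visibly $F_{\geq 0}$-homogeneous, only the sum identity defining $\Sp(\M_{d,r})$ in~\eqref{eq: def SpMdr} needs verification. Using the formulas for $\varphi_i$ and $\varphi_i^{-1}$ (cf.~\eqref{eq: varphi inverse}), a direct symbolic computation gives
\begin{equation*}
f_{\ba,\b}\circ \varphi_i^{-1}\bigl(\varphi_i(\bu,\bw)+\varphi_i(\bu',\bw')\bigr) = f_{\ba,\b}(\bu,\bw)+f_{\ba,\b}(\bu',\bw')+m(b_i-a_1-\cdots-a_d),
\end{equation*}
where $m := \min_k(u_k+u_k')$. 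Since $d\ge 2$, the choice $\bu=\bo-\varepsilon_1$, $\bu'=\bo-\varepsilon_2$ (with $\bw=\bw'=\bze$) produces any prescribed $m\ge 1$, so~\eqref{eq: def SpMdr} reduces exactly to $\min_i b_i = a_1+\cdots+a_d$, i.e.\ to $(\ba,\b)\in T_{d,r}$. Injectivity of $\Psi_{d,r}$ is immediate, since $f_{\ba,\b}(\varepsilon_k,\bze)=a_k$ and $f_{\ba,\b}(\bze,\varepsilon_j)=b_j$ with all inputs in $\M_{d,r}$ because $d\ge 2$.

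The main step is surjectivity onto $\Sp(\M_{d,r})$. Given $f\in\Sp(\M_{d,r})$ corresponding to $p\in\Sp(\MM_{d,r})$, \Cref{lemma: points linear on faces of SigmaM} gives that $p_1:=p\circ\pi_1^{-1}$ is piecewise $\Z$-linear on $M_{d,r}^{(1)}\otimes\R$ with linear pieces on the cones $C_k=\{u_k\le u_\ell\ \forall \ell\}$, $k\in[d]$. Writing $p_1|_{C_k}(\bu,\bw)=\sum_\ell a_\ell^{(k)}u_\ell+\sum_{j\ge 2}b_j^{(k)}w_j$ and enforcing continuity of $p_1$ across each wall $\{u_k=u_{k'}\}=C_k\cap C_{k'}$ forces $b_j^{(k)}$ independent of $k$, $a_\ell^{(k)}=a_\ell^{(k')}$ for $\ell\neq k,k'$, and $a_k^{(k)}+a_{k'}^{(k)}=a_k^{(k')}+a_{k'}^{(k')}$. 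Chasing these equalities across all pairs produces constants $\alpha_\ell,\beta_j,c\in\Z$ with $a_\ell^{(k)}=\alpha_\ell$ whenever $\ell\neq k$ and $a_k^{(k)}=\alpha_k+c$, yielding the global form
\begin{equation*}
p_1(\bu,\bw)=\sum_\ell \alpha_\ell u_\ell+c\min_\ell u_\ell+\sum_{j\ge 2}\beta_j w_j.
\end{equation*}
Pulling back through $\varphi_1$ (which sets $\min_\ell u_\ell=0$ on $\M_{d,r}$) gives $f=f_{\ba,\b}$ with $a_\ell=\alpha_\ell$, $b_1=\sum_\ell \alpha_\ell+c$, and $b_j=\sum_\ell\alpha_\ell+c+\beta_j$ for $j\ge 2$. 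Since $f$ is a point, the first paragraph forces $(\ba,\b)\in T_{d,r}$, completing surjectivity.

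For the refined bijection $T_{d,r}(i)\leftrightarrow \Sp(\M_{d,r},i)$, a similar direct computation using~\eqref{eq: varphi inverse} gives
\begin{equation*}
(f_{\ba,\b}\circ\varphi_i^{-1})(\bu,\bw)=\sum_k a_k u_k+(b_i-a_1-\cdots-a_d)\min_\ell u_\ell+\sum_{j\ne i}(b_j-b_i)w_j
\end{equation*}
on $M_{d,r}^{(i)}\otimes\R$, which is $\Z$-linear precisely when the $\min$-coefficient vanishes, i.e.\ when $b_i=a_1+\cdots+a_d$; together with $(\ba,\b)\in T_{d,r}$ this is exactly $(\ba,\b)\in T_{d,r}(i)$. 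Combined with \Cref{lem_pts_Bdr} this produces the refined bijection, and fullness of $\MM_{d,r}$ then follows immediately from the chart decomposition $T_{d,r}=\bigcup_{i\in[r]}T_{d,r}(i)$ built into the definition of $T_{d,r}$. The main obstacle throughout is the continuity-and-linearity analysis in the surjectivity step, which extracts from the piecewise linear data on the cones $C_k$ the precise global form involving a single $\min_\ell u_\ell$ term; every other assertion reduces to direct symbolic manipulation with the formulas for $\varphi_i$ and $\varphi_i^{-1}$.
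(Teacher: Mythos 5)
Your proof is correct and reaches the same conclusions as the paper's. The first paragraph (both directions of the reduction of the point condition on $f_{\ba,\b}$ to $(\ba,\b)\in T_{d,r}$), the injectivity check, and the treatment of the refined bijection and fullness at the end are substantively the same computations as in the paper. The one place you take a genuinely different route is surjectivity. The paper argues intrinsically in $\M_{d,r}$: on each sub-monoid $\M_{d,r}(k)$ the $\min$ in the point axiom collapses to a singleton, so $f$ is additive there, hence determined by $f(\varepsilon_j,\bze)$ and $f(\bze,\varepsilon_i)$, which forces $f=f_{\ba,\b}$. You instead pass to the chart image $p_1=p\circ\pi_1^{-1}$, invoke \Cref{lemma: points linear on faces of SigmaM} to get linearity on the cones $C_k$, and extract the global form $p_1=\sum_\ell\alpha_\ell u_\ell + c\min_\ell u_\ell + \sum_{j\ge 2}\beta_j w_j$ by a continuity-across-walls analysis, then pull back through $\varphi_1$. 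Both are valid; the paper's route is leaner (it uses only the point axiom, not the piecewise-linearity machinery), while yours yields a more explicit structural description of piecewise linear functions on the fan $\Sigma(\MM_{d,r})$, which in turn lets you detect the linear chart directly via the vanishing of the $\min$-coefficient in the formula for $f_{\ba,\b}\circ\varphi_i^{-1}$ (the paper instead reads $T_{d,r}(i)\leftrightarrow\Sp(\M_{d,r},i)$ off the $\min$-achieved-at-$i$ description; the two criteria are equivalent by \Cref{prop-conesinthepoints}). A small expository point: when $d=2$ the constraint ``$a_\ell^{(k)}=a_\ell^{(k')}$ for $\ell\neq k,k'$'' is vacuous, so the constants $\alpha_1,\alpha_2$ should be introduced as $\alpha_\ell := a_\ell^{(k)}$ for the unique $k\neq\ell$, with the single cross-wall relation then pinning down $c$; the conclusion is unchanged.
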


\begin{proof} 
We first show that $\Psi_{d,r}(\ba,\b) = f_{\ba,\b} $ is an element of $\Sp(\M_{d,r})$, i.e., $f_{\ba,\b}$ satisfies the condition given in the RHS of \Cref{eq: def SpMdr}. 
A straightforward computation shows that for all $i \in [r]$ and all $(\bu,\bw),(\bu',\bw') \in \M_{d,r}$, we have 
\begin{equation}\label{eq_ptsMdr_eval}
    \varphi_i^{-1}(\varphi_i(\bu,\bw)+\varphi_i(\bu',\bw'))
    =
    \left(\bu+\bu'-\min_{j\in[d]}\{u_j+u'_j\}\bo,\ \bw+\bw'+\min_{j\in[d]}\{u_j+u'_j\}\varepsilon_i\right)
,\end{equation}
so that
\begin{equation*}
    f_{\ba,\b}(\varphi_i^{-1}(\varphi_i(\bu,\bw)+\varphi_i(\bu',\bw')))
    =
    \langle \ba,\bu+\bu'\rangle+\langle \b,\bw+\bw'\rangle+\left(b_i-\sum_{j\in[d]}a_j\right)\min_{j\in[d]}\{u_j+u_j'\}
.\end{equation*}
By the hypothesis that $(\ba,\b)\in T_{d,r}$, we have that $\min_{i\in[d]}\left\{b_i-\sum_{j\in[d]}a_j\right\}=0$,
so 
\begin{equation*}
    \min_{i\in[r]}\{f_{\ba,\b}(\varphi_i^{-1}(\varphi_i(\bu,\bw)+\varphi_i(\bu',\bw')))\}
    =\langle \ba,\bu+\bu'\rangle+\langle \b,\bw+\bw'\rangle\\
    =f_{\ba,\b}(\bu,\bw)+f_{\ba,\b}(\bu',\bw')
,\end{equation*}
as desired. Thus the image of $\Psi_{d,r}$ is in $\Sp(\M_{d,r})$. Next observe that the standard basis vectors $e_1,…, e_d$ of $\Z^d$ and $f_1,\cdots,f_r$ of $\Z^r$ are contained in $\M_{d, r}$. Evaluating $f_{\ba,\b}$ against these basis vectors yields the coefficients $a_1,\cdots,a_d, b_1,\cdots, b_r$. It follows that $\Psi_{d,r}$ is injective. To see that $\Psi_{d,r}$ is surjective, we wish to show that given $f\in \Sp(\M_{d,r})$ there exists $(\ba,\b)\in T_{d,r}$ such that $f=f_{\ba,\b}$.
Let $k \in [d]$. For all $(\bu,\bw)\in\M_{d,r}(k)$, we have $\min_{j\in[d]}\{u_j+u_j'\}=0$
and thus, by \Cref{eq_ptsMdr_eval}, we may conclude that for all $i\in[r]$, $k\in[d]$, and $(\bu,\bw), (\bu',\bw') \in\M_{d,r}(k)$ we have
\begin{equation*}
    f(\bu,\bw)+f(\bu',\bw')
    =
    \min_{i\in[r]}\{f\circ\varphi_i^{-1}(\varphi_i(\bu,\bw)+\varphi_i(\bu',\bw'))\}
    =
    f\left(\bu+\bu',\ \bw+\bw'\right).
\end{equation*}
From this we see that $f$ is completely determined by its values $f(\varepsilon_k,\bze)$ and $f(\bze,\varepsilon_i)$ on the standard basis vectors. Given $f \in \Sp(\M_{d,r})$ we may $a_j:=f(\varepsilon_j,\bze)$ and $b_i:=f(\bze,\varepsilon_i)$
for $i\in[r]$ and $j\in[d]$. Then by construction it follows that for all $k \in [d]$ and all $(\bu,\bw)\in\M_{d,r}(k)$, we have $f(\bu,\bw)
= \langle \ba,\bu\rangle+\langle \b,\bw\rangle$.
It then follows that $f=f_{\ba,\b}$.
To complete the argument, we must check that this vector $(\ba,\b)$ lies in $T_{d,r}$.
To see this, note that since $f|_{\M_{d,r}(1)}$ is linear, we have that 
   $ \sum_{j\in[d]}a_j
    =
    \sum_{j\in[d]}f(\varepsilon_j,\bze)
    =
    f(\varepsilon_1,\bze)+f(\varepsilon_2+\cdots+\varepsilon_d,\bze)$
and a straightforward computation shows that
    $\varphi_i^{-1}(\varphi_i(\varepsilon_1,\bze)+\varphi_i(\varepsilon_2+\cdots+\varepsilon_d,\bze))
    =
    (\bze,\varepsilon_i)$.
It follows that
\begin{align*}
    f(\varepsilon_1,\bze)+f(\varepsilon_2+\cdots+\varepsilon_d,\bze)
    &=
     \min_{i\in[r]}\{f\circ\varphi_i^{-1}(\varphi_i(\varepsilon_1,\bze)+\varphi_i(\varepsilon_2+\cdots+\varepsilon_d,\bze))\}\\
    &=
     \min_{i\in[r]}\{f(\bze,\varepsilon_i)\}
     =
     \min_{i\in[r]}\{b_i\},
\end{align*}
so $(\ba,\b)\in T_{d,r}$ as required. 
Finally we note that the computations above imply that $(\ba,\b)\in T_{d,r}(i)$ if and only if $f_{\ba,\b}\in \Sp(\M_{d,r},i)$, so $\Psi_{d,r}$ restricts to give bijections between them. Since $T_{d,r}=\cup_{i \in [r]} T_{d,r}(i)$ it follows that $\Sp(\M_{d,r}) = \cup_i \Sp(\M_{d,r},i)$, showing that $\MM_{d,r}$ is full. 
\end{proof}

\subsection{A strict dual pairing between $\MM_{d,r}$ and $\MM_{r,d}$ }

For fixed integers $d \geq 2$ and $r \geq 2$, we now show that there is a strict dual pairing of $\MM_{d,r}$ and $\MM_{r,d}$. When $r=d$, this implies that $\MM_{d,d}$ is strictly self-dual. 
More precisely, following notation in previous sections, we have the following. 

\begin{proposition}\label{prop: Mdr Mrd strict dual pair}
Let $d, r$ be integers, $d \geq 2, r \geq 2$. 
Let $\v_{d,r}:\Z^d\times \Z^r\to \Z^r\times \Z^d$ be the map defined by
\begin{equation*}\label{eq_Mdr_vdr}
    \v_{d, r}(\bu,\bw) = \left(\bw,\bu + \langle \bo,\bw\rangle\bo\right). 
\end{equation*}
Then $\v_{d,r}$ restricts to $\M_{d,r}$ to be a bijection $\v_{d,r}: \M_{d,r}\to T_{r,d}$, and, $\v_{d,r}$ induces a strict dual pairing between $\MM_{d,r}$ and $\MM_{r,d}$. 
In particular, 
$\MM_{d, r}^\vee \cong \MM_{r,d}$, and $\MM_{d,d}$ is strictly self-dual. 
 \end{proposition}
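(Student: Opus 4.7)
The plan is to prove the proposition in three stages: first verify that $\v_{d,r}$ restricts to a well-defined bijection $\M_{d,r} \to T_{r,d}$, then transport this through the identifications $\M_{d,r} \cong \MM_{d,r}$ (from Lemma~\ref{lem_Mdr_identified}) and $T_{r,d} \cong \Sp(\MM_{r,d})$ (from Proposition~\ref{prop_Mdr_pts}) to obtain the map $\v: \MM_{d,r} \to \Sp(\MM_{r,d})$, and finally verify the four axioms of Definition~\ref{def_dual} using $\w := \v_{r,d}$ (viewed as $\MM_{r,d} \to \Sp(\MM_{d,r})$) as the companion map.

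For the first stage, I would check directly that if $(\bu, \bw) \in \M_{d,r}$, so that $\min_j u_j = 0$, then setting $(\ba, \b) = (\bw, \bu + \langle \bo, \bw\rangle \bo)$ gives $\sum_i a_i = \langle \bo, \bw\rangle = \min_j b_j$, so $(\ba, \b) \in T_{r,d}$. The inverse is given by $(\ba, \b) \mapsto (\b - \langle \bo, \ba\rangle \bo, \ba)$, and the defining condition of $T_{r,d}$ guarantees the image lies in $\M_{d,r}$. This immediately yields axiom (3) of Definition~\ref{def_dual} (since applying the same argument with $r$ and $d$ swapped gives the bijectivity of $\w$).

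For axiom (2), I would compute both $\v(m)(n)$ and $\w(n)(m)$ explicitly using the formula $f_{\ba,\b}(\bu,\bw) = \langle \ba, \bu\rangle + \langle \b, \bw\rangle$. Writing $m = (\bu,\bw) \in \M_{d,r}$ and $n = (\bu',\bw') \in \M_{r,d}$, a direct substitution gives
\begin{equation*}
\v(m)(n) = \langle \bw, \bu'\rangle + \langle \bu, \bw'\rangle + \langle \bo, \bw\rangle\langle \bo, \bw'\rangle = \w(n)(m),
\end{equation*}
the symmetry being manifest. For axiom (4), Proposition~\ref{prop_Mdr_pts} identifies $\Sp(\MM_{r,d}, i)$ with $T_{r,d}(i) = \{(\ba,\b) \in T_{r,d} \mid \sum_j a_j = b_i\}$, and pulling this condition back through $\v_{d,r}$ shows that $\v^{-1}(\Sp(\MM_{r,d}, i))$ consists of those $(\bu,\bw) \in \M_{d,r}$ with $u_i + \langle \bo, \bw\rangle = \langle \bo, \bw\rangle$, i.e., $u_i = 0$. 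This is exactly $\M_{d,r}(i)$, which corresponds under the identification to the maximal cone $C_i \in \Sigma(\MM_{d,r})$; as $i$ ranges over $[d]$ we recover all $d$ maximal cones, matching $\pi(\MM_{r,d}) = [d]$.

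The only real subtlety is axiom (1), that $\v$ takes values in $\Sp(\MM_{r,d})$ rather than merely in some space of functions — but this is already handled by Proposition~\ref{prop_Mdr_pts}, which shows $\Psi_{r,d}(T_{r,d}) = \Sp(\M_{r,d})$. Thus no step involves a genuine obstruction; the main bookkeeping task is keeping track of the indexing swap between $(d,r)$ and $(r,d)$, and verifying that the various identifications compose correctly. The self-duality $\MM_{d,d} \cong \MM_{d,d}^\vee$ is an immediate specialization with $d = r$.
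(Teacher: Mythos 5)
Your proof is correct and takes essentially the same route as the paper's: verify the bijection $\v_{d,r}\colon \M_{d,r}\to T_{r,d}$ by the same direct computation, define $\w:=\v_{r,d}$, establish axiom (2) by expanding $f_{\v_{d,r}(\bu,\bw)}$ and $f_{\v_{r,d}(\cdot)}$ and noting the symmetric formula, and verify axiom (4) by pulling back $T_{r,d}(i)$ to $\{(\bu,\bw)\in\M_{d,r}\mid u_i=0\}=\M_{d,r}(i)$, which is the chart image of the $i$-th maximal cone of $\Sigma(\MM_{d,r})$. The organization into three stages and the explicit inverse formula are good bookkeeping, but the underlying computations and the appeal to Lemma~\ref{lem_Mdr_identified} and Proposition~\ref{prop_Mdr_pts} match the paper.
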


\begin{proof}
We first show that the restriction of $\v_{d,r}$ to $\M_{d,r}$ takes values in $T_{r,d}$. Indeed, 
    given $(\bu,\bw)\in\M_{d,r}$, note that 
        $\min\left\{u_1+\sum_{i\in[r]}w_i,\ldots, u_d+\sum_{i\in[r]}w_i\right\}
        =
        \sum_{i\in[r]}w_i$.
    This shows that $\v_{d, r}(\bu,\bw)\in T_{r,d}$, by definition of $T_{r,d}$. The proof that $\v_{d,r}$ is a bijection is straightforward, and similar to the proof that $\varphi_i$ is a bijection in \Cref{lem_Mdr_identified}.
Define $\v_{r,d}$ in the same way as $\v_{d,r}$ with the roles of $d$ and $r$ reversed. Throughout, we identify $\MM_{d,r}$ with $\M_{d,r}$ and $\Sp(\MM_{d,r})$ 
 with $T_{d,r}$ (and similarly for $\MM_{r,d}, \M_{r,d}$, and $T_{r,d}$). We wish to show that $(\MM_{d,r}, \MM_{r,d}, \v_{d,r}, \v_{r,d}$) form a strict dual pair.  To check that $\v_{d,r},\v_{r,d}$ give a pairing in the sense of \Cref{def_dual}, first let $(\bu,\bw)\in\M_{d,r}$ and $(\mathbf{y}, \mathbf{z})\in \M_{r,d}$.  The axiom (1) holds by construction. For axiom (2) we need to check that 
    \begin{equation}\label{eq: vdr vrd dual pair req}
    \v_{d,r}(\bu, \bw)(\mathbf{y},\mathbf{z}) = \v_{r,d}(\mathbf{y}, \mathbf{z})(\bu,\bw)
    \end{equation}
    where the pairing between $\M_{r,d}$ and $T_{r,d}$ (respectively $\M_{d,r}$ and $T_{d,r}$) is given via $\Psi$ of \Cref{prop_Mdr_pts}. 
We prove \Cref{eq: vdr vrd dual pair req} by computing both sides. We have that the LHS is 
    \begin{equation*}\label{eq: lhs vdr}
         \v_{d,r}(\bu, \bw)(\mathbf{y},\mathbf{z})  = f_{\v_{d,r}(\bu,\bw)}(\mathbf{w}, \mathbf{z}) = f_{\left(\bw,\bu + \langle \bo,\bw\rangle\bo\right)}(\mathbf{w}, \mathbf{z})
        = \langle \bw,\mathbf{y} \rangle+\langle \bu,\mathbf{z} \rangle+\langle \bo,\mathbf{z} \rangle\langle \bo,\bw\rangle.
    \end{equation*}
    Similarly we compute the RHS to be
    \begin{equation*}\label{eq: rhs vrd}
         \v_{r,d}(\mathbf{y}, \mathbf{z})(\bu,\bw)  = f_{\v_{rd}(\mathbf{y}, \mathbf{z})}(\bu,\bw) 
       = f_{(\mathbf{z}, \mathbf{y}+\langle \mathbf{1}, \mathbf{z} \rangle \mathbf{1})}(\bu,\bw) 
       = \langle \mathbf{z}, \bu \rangle + \langle \mathbf{y}, \bw \rangle + \langle \mathbf{1}, \mathbf{z} \rangle \langle \mathbf{1}, \bw \rangle. 
       \end{equation*}
   Thus by symmetry of the usual inner product the LHS and RHS agree and 
    so \Cref{eq: vdr vrd dual pair req} holds as claimed. For axiom (3), we already noted above that $\v_{d,r}$ and $\v_{r,d}$ are bijections. 
   Finally, we prove axiom (4).  It suffices to show that for all $i \in [d]$, the preimage $\v_{d,r}^{-1}(T_{r,d}(i))$ is one of the maximal-dimensional cones of linearity in $\Sigma(\MM_{d,r})$, which we saw above can be identified with the cones $C_k := \{u_k = \min\{u_1,\cdots,u_d\}\}$ in $M^{(1)}_{d,r}$ for $k \in [d]$. A straightforward computation using \Cref{eq: varphi inverse} shows that $\varphi_1^{-1}(C_k) \subset \M_{d,r}$ can be expressed as 
$C'_k := \varphi_1^{-1}(C_k) = \{(\bu,\bw) \in \M_{d,r} \, \mid \, u_k=0\}$.
Together with the discussion above, what we need to show is that $\v_{d,r}^{-1}(T_{r,d}(i))$ is precisely $C'_k$ for some $k$. To see this, it is useful to note that an inverse to $\v_{d,r}$ is given by 
$\v_{d,r}^{-1}(\ba, \b) = (\b - \min\{b_1,\cdots, b_d\} \mathbf{1}, \ba)$.
From this formula it is straightforward to compute that $\v_{d,r}^{-1}(T_{r,d}(k)) = C'_k$, and as $k$ varies, we see that we obtain precisely all of the maximal-dimensional cones of $\Sigma(\MM_{r,d})$, as desired. 
\end{proof}

When $d=r$, we immediately obtain the following. 

\begin{corollary}
Let $d$ be an integer, $d \geq 2$. 
Then $\MM_{d,d}$ is strictly self-dual. 
\end{corollary}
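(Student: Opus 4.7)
The plan is to obtain this statement as an immediate specialization of \Cref{prop: Mdr Mrd strict dual pair}. That proposition asserts that for any integers $d \geq 2$ and $r \geq 2$, the maps $\v_{d,r}$ and $\v_{r,d}$ give a strict dual pairing between $\MM_{d,r}$ and $\MM_{r,d}$, witnessing that $\MM_{d,r}^\vee \cong \MM_{r,d}$.

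My plan is to set $r = d$ in \Cref{prop: Mdr Mrd strict dual pair}. The two polyptych lattices in the pairing then become $\MM_{d,d}$ and $\MM_{d,d}$, and the two pairing maps $\v_{d,d}$ and $\v_{d,d}$ literally coincide. By the definition of strict self-duality given after \Cref{def_dual}, a polyptych lattice is strictly self-dual if it admits a strict dual pairing with itself, which is precisely what we obtain. Thus there is essentially no work to do beyond invoking the previous proposition at $r = d$.

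There is no substantive obstacle here; the corollary is genuinely just the diagonal case of the previously established proposition. The only small bookkeeping point is to note that the strict dual pair axioms (1)--(4) of \Cref{def_dual}, as verified in the proof of \Cref{prop: Mdr Mrd strict dual pair}, remain verified verbatim when $r=d$, since that proof makes no use of $r \neq d$ at any stage.
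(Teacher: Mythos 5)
Your proposal is correct and matches the paper exactly: the paper presents this corollary as an immediate consequence of \Cref{prop: Mdr Mrd strict dual pair} at $r=d$, with no further argument. Your additional remark that the axiom checks in that proposition's proof nowhere use $r \neq d$ is a reasonable sanity check but not needed.
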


\subsection{A chart-Gorenstein-Fano PL polytope in $\MM_{d,r} \otimes \R$}\label{subsec: Gorenstein Fano for Mdr}

We now give an example of a PL polytope in $\MM_{d,r} \otimes \R$ and show that it is chart-Gorenstein-Fano in the sense of \Cref{definition: PL Gorenstein Fano}.  Throughout, we use the identifications $\MM_{r,d} \cong \M_{r,d}$ and $\Sp(\MM_{d,r}) \cong T_{d,r}$; we also use that $\MM_{d,r}$ and $\MM_{r,d}$ are a strict dual pair, so $\Sp(\MM_{d,r}) \cong T_{d,r}$ is identified with $\M_{r,d}$.

A PL polytope in $\MM_{d,r} \otimes \R$ is defined as an intersection of PL half-spaces, specified by a pair of data: a point in $\Sp(\MM_{d,r}) \cong T_{d,r}$, and a scalar parameter. We begin our construction by specifying a set $S$ of elements of $\M_{r,d}$, which -- by strict duality and the map $\v_{r,d}$ of \Cref{prop: Mdr Mrd strict dual pair} --- we can think of as a subset of $T_{d,r} \cong \Sp(\MM_{d,r})$.  Consider the following subset of $M_{r,d}^{(1)}$
    $$S^{(1)}=\{(\varepsilon_1,\bze),\ldots, (\varepsilon_r,\bze),\pm(\bo,\bze),\pm(\bo,\varepsilon_2),\ldots,\pm(\bo,\varepsilon_{d})\}.
    $$
and let $S:=\varphi_1^{-1}(S)\subset \M_{r,d}$. 
Then we can define a PL polytope $\PP$ in $\MM_{d,r} \otimes \R$ as 
\begin{equation}\label{eq: def Mdr GF polytope}
\PP = \bigcap_{n\in S} \HH_{\v_{r,d}(n), -1} \subset \MM_{d,r} \otimes \R.
\end{equation}
We claim that $\PP$ is indeed a PL polytope, and that it is integral. 

\begin{proposition} \label{prop_Mdr_lattice}
The subset $\PP$ of $\MM_{d,r} \otimes \R$ in \Cref{eq: def Mdr GF polytope} is an integral PL polytope.  In particular, $\PP$ is chart-Gorenstein-Fano. 
\end{proposition}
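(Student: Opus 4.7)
The plan is to verify, in turn, that $\PP$ is compact (hence a genuine PL polytope), that $\PP$ is integral, and then to deduce chart-Gorenstein-Fano. The last step is immediate from \Cref{definition: PL Gorenstein Fano} once the first two are established, since every defining PL half-space of $\PP$ in \eqref{eq: def Mdr GF polytope} has threshold $-1$ by construction.

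For compactness, I would translate the defining PL half-spaces into explicit linear inequalities on $\M_{d,r}$, using the formula for $\v_{r,d}$ from \Cref{prop: Mdr Mrd strict dual pair} and the pairing $\Psi_{d,r}$ of \Cref{prop_Mdr_pts}. A direct computation shows that each element of $S$ contributes an inequality of one of two simple forms: either $w_i \geq -1$ (for the generators $(\varepsilon_i, \bze)$) or $\pm (u_j + \sum_i w_i) \geq -1$ (for the generators $(\bze, \pm \varepsilon_j)$ obtained by applying $\varphi_1^{-1}$ to $\pm (\bo,\bze)$ and $\pm (\bo, \varepsilon_j)$). Combining these with the defining constraint $\min_k u_k = 0$ on $\M_{d,r}$ forces each $u_j$ into $[0,2]$ and each $w_i$ into a bounded interval, so $\PP$ is compact.

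For integrality, I would apply \Cref{lem_lattice_intercrit}: it suffices to show that $\pi_\alpha(\PP) \cap \pi_\alpha(\mathcal{C}_k)$ is a classical integral polytope in $M_{d,r}^{(\alpha)} \otimes \R$ for every chart $\alpha \in [r]$ and every maximal cone $\mathcal{C}_k = \{u_k = \min_j u_j\}$ of $\Sigma(\MM_{d,r})$. On each such cone, the PL points $\v_{r,d}(n)$ restrict to classical $\Z$-linear functionals, so this reduces to vertex enumeration for an explicit classical polyhedron cut out by integer linear inequalities with threshold $-1$. The high symmetry of $S$ under the evident $S_r \times S_d$-action permuting the $\bx$- and $\by$-indices, together with the compatibility of $\v_{r,d}$ with the mutations $\mu_{\alpha, \alpha'}$, reduces the verification to a small number of essentially distinct cases, each checkable by elementary linear algebra over $\Q$.

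The main obstacle I anticipate is the bookkeeping in the vertex enumeration step: one must carefully track which defining inequalities are active at each candidate vertex and check that the resulting intersection point has integer coordinates. The interaction of the cone condition $u_k = \min_j u_j$ with the mixed inequality $|u_j + \sum_i w_i| \leq 1$ makes this combinatorially nontrivial, but the elementary nature of each individual computation, combined with the ample symmetry of the construction, should allow the integrality verification to be completed by hand in a finite number of cases.
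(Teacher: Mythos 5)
Your high-level structure is correct and matches the paper: verify compactness, verify integrality via \Cref{lem_lattice_intercrit} by reducing to the slices $\pi_i(\PP)\cap C_k$, and conclude chart-Gorenstein-Fano from \Cref{definition: PL Gorenstein Fano} since all thresholds are $-1$. The compactness argument via explicit linear inequalities is also the paper's approach.

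The genuine divergence is in the integrality step. You propose vertex enumeration for the classical polyhedra $\pi_i(\PP)\cap C_k$, accelerated by an $S_r\times S_d$ symmetry reduction, and you yourself flag this as a source of nontrivial bookkeeping. The paper avoids enumeration entirely: it writes the inequalities cutting out $\pi_1(\PP)\cap C_k$ in matrix form $A\bv\ge\mathbf{c}$, shows the matrix $A$ is \emph{totally unimodular} (building it up from a $1\times r$ seed by successively inserting standard basis rows and columns via \Cref{lemma: TUM preserved}), and then invokes the classical fact that a bounded polyhedron cut out by a totally unimodular system with integer right-hand side is an integral polytope (\Cref{lem_TUM}). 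This is both shorter and more robust than enumeration, because it certifies integrality of \emph{all} vertices at once without locating any of them. It also sidesteps a subtlety in your symmetry argument: while $\v_{r,d}(S)\subset T_{d,r}$ does carry an $S_d\times S_r$ action, that symmetry is broken once you fix a chart $\pi_\alpha$ and a cone $C_k$, leaving only $S_{d-1}\times S_{r-1}$, so the number of "essentially distinct" vertex configurations is larger than your outline suggests. If you do want to finish your argument as stated, I would encourage you to replace the enumeration with a totally unimodular matrix check. Also note that your explicit form of the inequalities $\pm(u_j+\sum_i w_i)\ge -1$ does not visibly account for all $r+2d$ generators in $S^{(1)}$ (the images of $(\varepsilon_i,\bze)$, $\pm(\bo,\bze)$, and $\pm(\bo,\varepsilon_j)$ give three types of constraint, not two), so you should recheck the translation through $\varphi_1^{-1}$ and $\v_{r,d}$ before relying on it for the bound on $u_j$.
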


To prove \Cref{prop_Mdr_lattice} we need to verify that $\PP$ is compact, and, that $\pi_i(\PP)$ is an integral polytope for all $i\in[r]$. 
To do so, we will utilize \textbf{totally unimodular matrices}, which are integer matrices such that all of its minors equal $0$, $1$, or $-1$. 
The following result is well known in the context of combinatorial optimization, see e.g.\ \cite[Theorem 19.1]{Sch86}. In the statement of the lemma below, for two vectors $\bv_1, \bv_2 \in \R^p$, we say $\bv_1 \geq \bv_2$ if each component of $\bv_1$ is greater than or equal to the corresponding component of $\bv_2$. 

\begin{lemma}\label{lem_TUM}
Let $A$ be a $q \times p$ matrix which is totally unimodular. Let $\bc \in \Z^q$. If the set $\{\bv \in \R^p \, \mid \, A\bv \geq \bc\}$ is bounded, then it is an integral polytope. 
  \exampleqed
\end{lemma}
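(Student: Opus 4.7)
The plan is to prove this via the classical Hoffman-Kruskal style argument: show that every vertex of the (bounded, hence polytopal) feasible set has integer coordinates, by using Cramer's rule together with the fact that all nonzero square minors of a totally unimodular matrix equal $\pm 1$. First I would set $P := \{\bv \in \R^p \mid A\bv \geq \bc\}$ and observe that $P$ is a rational polyhedron defined by a finite list of affine inequalities. Since $P$ is bounded by hypothesis, it is a polytope, and thus equals the convex hull of its (finitely many) vertices. To conclude that $P$ is integral it therefore suffices to show that each vertex of $P$ lies in $\Z^p$.

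Next I would recall the standard characterization: a point $\bv^* \in P$ is a vertex of $P$ if and only if the subfamily of constraints that are tight at $\bv^*$ contains $p$ linearly independent rows. Let $B \subseteq [q]$ be a choice of $p$ such row indices, let $A_B$ be the corresponding $p \times p$ submatrix of $A$, and let $\bc_B$ be the corresponding subvector of $\bc$; then $A_B \bv^* = \bc_B$ and $A_B$ is invertible. Because $A$ is totally unimodular, every square submatrix of $A$ has determinant in $\{-1,0,1\}$; in particular $\det(A_B) \neq 0$ forces $\det(A_B) = \pm 1$.

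Now I would finish by Cramer's rule. For each coordinate $i \in [p]$, we have
\begin{equation*}
v^*_i \;=\; \frac{\det(A_B^{(i)})}{\det(A_B)},
\end{equation*}
where $A_B^{(i)}$ denotes the matrix obtained from $A_B$ by replacing its $i$-th column with $\bc_B$. The numerator $\det(A_B^{(i)})$ is an integer, because both $A_B$ and $\bc_B$ have integer entries (here we use $\bc \in \Z^q$), and the denominator is $\pm 1$. Hence $v^*_i \in \Z$, and since $i$ was arbitrary, $\bv^* \in \Z^p$. Since every vertex is integral, $P$ is an integral polytope.

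There is no real obstacle here; the result is classical and the only care required is in the bookkeeping of the vertex characterization (one must verify that when $P$ is nonempty and bounded it has at least one vertex, but this is automatic for bounded rational polyhedra). No further properties of $\MM_{d,r}$ or of the ambient setup are needed; the statement is purely linear-algebraic over $\Z$.
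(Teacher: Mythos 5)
Your proof is correct. The paper's own ``proof'' is simply a one-line citation to Schrijver's \emph{Theory of Linear and Integer Programming} (Theorem 19.1 / \S 8.2); you have instead written out the underlying Hoffman--Kruskal argument in full: bounded rational polyhedron equals the convex hull of its vertices, a vertex is cut out by $p$ linearly independent tight constraints, the corresponding $p\times p$ submatrix $A_B$ of $A$ has determinant $\pm 1$ by total unimodularity, and Cramer's rule then puts the vertex in $\Z^p$. This is precisely the argument Schrijver's cited result rests on, so the two are the same in substance; yours just makes the citation self-contained. One small point you flag yourself and handle correctly: you need the polytope to actually have vertices, which holds because a nonempty bounded polyhedron is pointed (and the empty case is vacuous). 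No gaps.
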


The following statement is also useful; the proof is straightforward. 

\begin{lemma}\label{lemma: TUM preserved}
Suppose $A$ is a totally unimodular matrix. Then a matrix $A'$ obtained from $A$ by inserting either a row or a column of the form $\varepsilon_j$ or $\varepsilon_j^t$ (i.e. a standard basis vector of all $0$'s except for a single $1$, or, its transpose) is also totally unimodular. 
\exampleqed
\end{lemma}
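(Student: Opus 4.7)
The plan is to prove the lemma by the direct definition: a matrix is totally unimodular if and only if every square submatrix has determinant in $\{-1,0,1\}$. Since the statement is symmetric under transpose, and since inserting a column $\varepsilon_j$ into $A$ is the same as inserting a row $\varepsilon_j^t$ into $A^t$ and transposing, it suffices to handle the case of inserting a row $\varepsilon_j^t$; the column case then follows by transposing and using that $A$ is totally unimodular if and only if $A^t$ is totally unimodular (clear from the definition since $\det(B) = \det(B^t)$).

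So first I would fix $A$ totally unimodular and let $A'$ be obtained by inserting the row $\varepsilon_j^t$ at some position, and then let $B$ be an arbitrary square submatrix of $A'$. The proof splits into two cases according to whether $B$ uses the inserted row or not. In the first case, where $B$ consists entirely of rows of the original $A$, the submatrix $B$ is also a submatrix of $A$, so $\det(B) \in \{-1,0,1\}$ immediately by the total unimodularity of $A$.

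In the second case, where one row of $B$ is the restriction of $\varepsilon_j^t$ to the columns selected for $B$, we further subdivide: if column $j$ is not among the selected columns, then this row of $B$ is identically zero, so $\det(B) = 0$. If column $j$ is among the selected columns, the inserted row contributes exactly one nonzero entry (namely $1$) within $B$, and Laplace expansion of $\det(B)$ along this row yields $\det(B) = \pm \det(B')$, where $B'$ is the square submatrix of $B$ obtained by deleting the inserted row and the column of $B$ corresponding to column $j$ of $A'$. Crucially, $B'$ involves only rows of the original $A$, so $B'$ is a submatrix of $A$ and hence $\det(B') \in \{-1,0,1\}$ by hypothesis. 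Thus $\det(B) \in \{-1,0,1\}$ in every case, so $A'$ is totally unimodular.

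There is no real obstacle here; the main point to be careful about is book-keeping in the Laplace expansion (ensuring the minor that appears really is a submatrix of the original $A$). The entire argument is a couple of lines once the case analysis is set up, so I would present it compactly without laboring the Laplace expansion sign.
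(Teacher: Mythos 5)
Your proof is correct and is the standard argument; the paper itself gives no proof, remarking only that it is ``straightforward'' and leaving exactly this case analysis (submatrix avoids the new row; new row selected but column $j$ not; Laplace expansion when both are selected) to the reader, together with the transpose reduction you use. Nothing is missing.
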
 

We can now prove \Cref{prop_Mdr_lattice}. 

\begin{proof}[Proof of \Cref{prop_Mdr_lattice}]
To prove the compactness, it suffices to prove that $\pi_1(\PP)$ is compact, and to show that $\PP$ is integral, it suffices by \Cref{lem_lattice_intercrit} to prove that $\pi_i(\PP)\cap C_k\subseteq M_{d,r}^{(1)}$ is an integral polytope for all $k\in[d]$ and $i \in [r]$. For concreteness we first take $i=1$ in the discussion that follows. 
We begin by analyzing the inequalities that specify $\pi_1(\PP)$. Given $n\in\M_{r,d}$, and using strict duality, we have 
\begin{equation}\label{eq_Mdr_halfspace}
    \pi_1(\HH_{\v_{r,d}(n),-1})=
    \{(\bu,\bw)\in M_{d,r}^{(1)} \otimes \R \mid f_{\v_{r,d}(n)}(\varphi_1^{-1}(\bu,\bw))\ge -1\} \subset M_{d,r}^{(1)} \otimes \R. 
,\end{equation}
where $(\ba,\b)=\v_{r,d}(\varphi_1^{-1}(\phi_1(n)))$.
Fix $k\in[d]$. For $(\bu,\bw)\in C_k$, we have $u_k = \min\{u_1,\cdots,u_d\}$, so \Cref{eq: varphi inverse} simplifies and we have
    $ \varphi_1^{-1}(\bu,\bw)
    =
    (\bu-u_k\bo,\ \bw+(u_k-\langle \bo,\bw\rangle)\varepsilon_1)$.
We can also compute that $ \v_{r,d}(S)
    =
    \{ (\bze,\varepsilon_1),\ldots,(\bze,\varepsilon_r), \pm(\varepsilon_1,\bo),\ldots, \pm(\varepsilon_d,\bo) \}$
so that the inequalities on the RHS of \Cref{eq_Mdr_halfspace} as we vary $n \in S$ can be computed to be 
\begin{align*}
    w_i&\ge -1,\qquad  i=2,\ldots,r,
    \\
    u_k-w_2-\cdots-w_r&\ge -1,
    \\
    \pm u_j&\ge -1,\qquad j\in[d]
.\end{align*}
Since we are considering $(\bu,\bw)\in M_{d,r}^{(1)}$, we need also to impose the condition $w_1=0$.
Lastly, the inequalities defining $C_k$ are $u_k\le u_i$ for all $i\in[d]$. Putting this together, we have that $ \pi_1(\PP)\cap C_k$ is the set of $(\bu,\bw)\in \R^d\times\R^r$ such that
\begin{equation*}\label{eq_TUM_poly}
    \underbrace{\begin{bmatrix}
        I_{d} & 0 & 0\\
        -I_{d} & 0 & 0\\
        \varepsilon_k & 0 & -\bo\\
        0 & 0 & I_{r-1}\\
        \varepsilon_1-\varepsilon_k & 0 & 0\\
        \vdots & \vdots & \vdots\\
        \varepsilon_d-\varepsilon_k & 0 & 0\\
        0 & 1 & 0 \\
        0 & -1 & 0 \\ 
    \end{bmatrix}}_{A}
    \begin{bmatrix}
        \bu^t \\ \bw^t
    \end{bmatrix}
    \ge
    \begin{bmatrix}
        -\bo^t\\-\bo^t\\ -1 \\-\bo^t\\ 0 \\ \vdots \\ 0 \\ 0
    \end{bmatrix}
,\end{equation*}
where for $s \in \Z_{>0}$, the notation $I_s$ denotes the $s\times s$ identity matrix. 

We now claim that $\PP$ is compact. To see this, it suffices to show that each $\pi_1(\PP) \cap C_k$ is compact, since there are finitely many cones $C_k$. It suffices in turn to show that the $(\bu,\bw)$ satisfying the inequalities above lie in a bounded set within $M^{(1)}_{d,r} \otimes \R \cong \R^{d+r-1}$. To see this, first observe that the inequalities imply that each $u_i \in [-1,1]$, a bounded interval. From this it is straightforward to see that each $w_j$ lies in $[-1,r]$ for all $2 \leq j \leq r$. We already know $w_1=0$. Thus $(\bu,\bw)$ lies in a bounded region and $\pi_1(\PP) \cap C_k$ is compact, as required. Next we claim that $A$ is totally unimodular, which by \Cref{lem_TUM} and the fact that $\pi_1(\PP)$ is bounded then implies $\pi_1(\PP)$ is integral. Note that the $1 \times r$ matrix $[1 \, - \mathbf{1}]$, where $\mathbf{1}$ is a $1 \times (r-1)$ vector of all $-1$'s, is totally unimodular. Now a series of applications of \Cref{lemma: TUM preserved} yields the matrix $A$ above, which implies $A$ is totally unimodular as required. By \Cref{definition: PL Gorenstein Fano} it follows that $\PP$ is chart-Gorenstein-Fano. 
\end{proof}

\subsection{Detropicalization of $\MM_{d,r}$}\label{subsec: detrop MMdr}

In this section, we construct a detropicalization $\Aa_{d,r}$ of the polyptych lattice $\MM_{d,r}$. We then construct compactifications $X_{\Aa_{d,r}}(\PP)$ of $\mathrm{Spec}(\Aa_{d,r})$ in \Cref{subsec: Adr compactification}. For concreteness we take $\K=\C$.
First, we define the algebra $\Aa_{d,r}$ as follows: 
\begin{equation*}\label{eq: def Adr detrop}
\Aa_{d, r} := \C[x_1, \ldots, x_d, t_1, \ldots, t_r, t_1^{-1}, \cdots, t_r^{-1}]/\langle \left( \prod_{i=1}^d x_i \right) - t_1 - \cdots - t_r\rangle.
\end{equation*}
It is straightforward to see that $\Aa_{d,r}$ is a Noetherian $\C$-algebra which is an integral domain. We briefly remark that for $d=r=2$ we have that $\Aa_{d,r}$ is a type $A_1$ cluster algebra with two frozen variables.

We specify an additive basis of $\Aa_{d,r}$. Since there is a monomial ordering $<$ such that the initial term of the single defining relation of $\Aa_{d,r}$ is $\prod_{i=1}^d x_i$, the following lemma is immediate from standard results of Gr\"obner bases, i.e., the monomials not in the monomial ideal $in_{<}(I)$ form a vector space basis of $R/I$ for $R$ a polynomial ring and $I$ an ideal (see \cite[Proposition 1.1]{Sturmfels-Grobner}).  

\begin{lemma} 
The set of monomials 
\begin{equation*}\label{eq: def Bdr}
\B_{d, r} := \{x_1^{u_1}\cdots x_d^{u_d}t_1^{w_1}\cdots t_r^{w_r} \mid (\bu,\bw)\in \M_{d,r}\}
\end{equation*}
maps under the projection $\C[x_1, \ldots, x_d, t_1, \ldots, t_r, t_1^{-1}, \cdots, t_r^{-1}] \to \Aa_{d,r}$ to an additive basis of $\Aa_{d,r}$. In particular, there is a one-to-one correspondence between $\B_{d,r}$ and $\M_{d,r}$. 
\end{lemma}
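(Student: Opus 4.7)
The plan is to reduce the Laurent polynomial setting to the ordinary polynomial setting and then invoke standard Gröbner basis theory. Set $R := \C[x_1,\ldots,x_d,t_1,\ldots,t_r]$ and $I := \langle \prod_{i=1}^d x_i - (t_1+\cdots+t_r)\rangle \subset R$, and let $B := R/I$. The first step is to observe that $\Aa_{d,r}$ is the localization of $B$ at the multiplicative set $\{t_1^{a_1}\cdots t_r^{a_r} : a_j \in \Z_{\geq 0}\}$. It will then suffice to establish two claims: (i) the images in $B$ of the monomials $x^{\bu}t^{\bw}$ with $u_i \in \Z_{\geq 0}$, $w_j \in \Z_{\geq 0}$, and $\min_i u_i = 0$ form a $\C$-basis of $B$, and (ii) the localization map $B \to \Aa_{d,r}$ is injective.

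For (i), choose any monomial order $<$ on $R$ for which the leading term of the defining generator $\prod_{i=1}^d x_i - (t_1+\cdots+t_r)$ is $\prod_{i=1}^d x_i$ (for instance, any lex order with each $x_i$ larger than each $t_j$). Since $I$ is principal, the defining generator is automatically a Gröbner basis for $I$, so by Macaulay's theorem the standard monomials, namely the monomials in $R$ not divisible by $\mathrm{in}_<(I) = \langle x_1 x_2 \cdots x_d\rangle$, descend to a $\C$-basis of $B$. A monomial $x^{\bu}t^{\bw}$ (with $\bu,\bw$ having nonnegative entries) fails to be divisible by $x_1 \cdots x_d$ precisely when $\min_i u_i = 0$, which gives (i). For (ii), note that the polynomial $\prod x_i - \sum t_j$ is irreducible in $R$ (it is linear in each $t_j$ with unit coefficient), so $B$ is an integral domain and hence injects into any of its localizations.

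With (i) and (ii) in hand, the basis statement for $\Aa_{d,r}$ follows quickly. Spanning: any element of $\Aa_{d,r}$ has the form $\overline{b}/t^{\mathbf{a}}$ for some $b\in R$ and $\mathbf{a}\in \Z_{\geq 0}^r$; expanding $\overline{b}$ in the basis of (i) and dividing through by $t^{\mathbf{a}}$ shifts each $w_j$ by $-a_j$ while leaving $\bu$ (and hence the condition $\min_i u_i = 0$) unchanged, producing a $\C$-linear combination of elements of $\B_{d,r}$. Linear independence: given a finite relation $\sum c_{\bu,\bw} x^{\bu}t^{\bw} = 0$ in $\Aa_{d,r}$ with $(\bu,\bw)\in\M_{d,r}$, multiplying by $t_1^N\cdots t_r^N$ for $N$ sufficiently large yields a relation with all exponents nonnegative, which then lies in $B$ and vanishes there by (ii); by (i) every $c_{\bu,\bw}$ must vanish. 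The final ``in particular'' statement is then immediate from the definition of $\B_{d,r}$, which is indexed by $\M_{d,r}$.

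The only genuine content beyond routine verification is the interplay between Gröbner bases and localization — specifically, confirming that $B$ is a domain so that the clearing-of-denominators trick is valid. Everything else is a direct application of the Macaulay basis theorem together with the standard substitution $\prod x_i = t_1 + \cdots + t_r$.
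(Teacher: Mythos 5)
Your proof is correct and takes essentially the same route as the paper: choose a monomial order making $\prod x_i$ the leading term, observe that the single defining generator is automatically a Gr\"obner basis, and read off the standard monomials. The paper simply cites Sturmfels' Proposition 1.1 and declares the lemma immediate; you have supplied the one detail the paper elides, namely that $\Aa_{d,r}$ is a localization of $R/I$ at the $t_j$'s, and that passage to the localization preserves the basis (via injectivity of $B \hookrightarrow \Aa_{d,r}$, which holds because $B$ is a domain, together with clearing denominators). That small amount of extra care is worthwhile since the Gr\"obner result as cited applies to polynomial quotients, not Laurent polynomial quotients, but it does not change the substance of the argument.
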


We saw above that $\MM_{d,r}$ and $\MM_{r,d}$ are a strict dual pair. To construct a detropicalization of $\MM_{d,r}$, we wish to construct a valuation $\fv: \Aa_{d,r} \to \P_{\MM_{r,d}}$ (note the switch of indices); we do so using the basis $\B_{d,r}$. 
By slight abuse of notation we will denote also by $\Psi_{d,r}$ the composition of the identifications $\Psi_{d,r}: T_{d,r} \to \Sp(\M_{d,r})$ and the bijection $\Sp(\M_{d,r}) \cong \Sp(\MM_{d,r})$ of \Cref{lem_pts_Bdr}.
Thus we have $\Psi: T_{d,r} \stackrel{\cong}{\longrightarrow}  \Sp(\MM_{d,r})$. We define $\fv$ on elements of $\B_{d,r}$ as the composition of $\v_{d,r}$ from \eqref{eq_Mdr_vdr} with $\Psi: T_{r,d} \to \Sp(\MM_{r,d})$. Concretely, we define $\fv_{d,r}:\B_{d,r}\to \Sp(\MM_{r, d})$ as 
\begin{equation}\label{eq: def val on Bdr}
    \fv_{d,r}(\bx^\bu\bt^\bw)
    :=\Psi_{r,d} \circ \v_{d,r}(\bu, \bw) = \Psi_{r,d}(\bw,\bu+\langle \bo,\bw\rangle\bo)
.\end{equation}
We then extend this definition to all of $\Aa_{d,r}$ as follows: 
\begin{equation}\label{eq: def val on Mdr}
\fv_{d,r}:\Aa_{d,r} \to \P_{\MM_{r, d}},\qquad \fv_{d,r}\left(\sum c_m \bb_m \right) := \bigoplus \fv_{d,r}(\bb_m)
\end{equation}
where $c_m \in \C, \bb_m \in \B_{d,r}$. 

\begin{lemma}\label{lem_Mdr_val}
    The function $\fv_{d,r}:\Aa_{d,r} \to \P_{\MM_{r, d}}$ defined in \Cref{eq: def val on Bdr} and \Cref{eq: def val on Mdr} is a valuation with values in the idempotent semialgebra $\P_{\MM_{r,d}}$. Moreover, $\B_{d,r}$ is a convex adapted basis for $\fv_{d,r}$.
\end{lemma}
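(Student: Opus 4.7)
The plan is to verify the axioms of \Cref{def_valuation} and \Cref{def_CAB} for $\fv_{d,r}$ directly from its definition. Axioms (2), (3), and (4) of \Cref{def_valuation}, together with axiom (1) of \Cref{def_CAB}, are all immediate from the definition of $\fv_{d,r}$ on sums via the basis $\B_{d,r}$: scaling by $c\in\C^*$ does not change the $\oplus$-combination, the zero element is the only one with empty basis expansion, and the subadditivity $\fv_{d,r}(f+g)\ge\fv_{d,r}(f)\oplus\fv_{d,r}(g)$ follows from comparing basis expansions. Axiom (2) of \Cref{def_CAB} follows because, by construction in \Cref{eq: def val on Bdr}, for each $\bb = \bx^\bu\bt^\bw\in\B_{d,r}$ the value $\fv_{d,r}(\bb) = \Psi_{r,d}(\v_{d,r}(\bu,\bw))$ lies in $\Sp(\MM_{r,d})$ by \Cref{prop_Mdr_pts} and \Cref{prop: Mdr Mrd strict dual pair}. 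The substantive content is the multiplicativity axiom (1) of \Cref{def_valuation}.

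For multiplicativity on basis elements $\bb = \bx^\bu\bt^\bw$ and $\bb' = \bx^{\bu'}\bt^{\bw'}$, let $c := \min_\ell(u_\ell+u'_\ell)$. Repeated application of the defining relation $\prod x_i = \sum t_j$ yields the basis expansion
\[
\bb\,\bb' \;=\; \sum_{|\bk|=c}\binom{c}{\bk}\,\bx^{\bu+\bu'-c\bo}\bt^{\bw+\bw'+\bk},
\]
where each summand lies in $\B_{d,r}$ since $\min_\ell(u_\ell+u'_\ell-c) = 0$. Combining \Cref{eq: def val on Bdr} with the explicit point formula in \Cref{prop_Mdr_pts} and using that $\langle\bo,\bw+\bw'+\bk\rangle = \langle\bo,\bw+\bw'\rangle+c$, the $\fv_{d,r}$-values of these summands share a common contribution with only the term $\langle\bk,\by\rangle$ varying with $\bk$. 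Taking the $\oplus$-combination (which is pointwise min), the desired equality $\fv_{d,r}(\bb\,\bb') = \fv_{d,r}(\bb)+\fv_{d,r}(\bb')$ in $\P_{\MM_{r,d}}$ reduces to the identity $\min_{|\bk|=c}\langle\bk,\by\rangle = c\cdot\min_i y_i = 0$, which holds for every $(\by,\bz)\in\M_{r,d}$ because $\min_i y_i = 0$ by definition of $\M_{r,d}$.

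The remaining step is to extend multiplicativity from basis elements to arbitrary products. A naive $\oplus$-expansion only yields $\fv_{d,r}(fg)\ge\fv_{d,r}(f)\odot\fv_{d,r}(g)$, since basis elements arising from different pairs $\bb_i\bb'_j$ may coincide and their coefficients may cancel in the final expansion of $fg$. To establish equality, the plan is to argue pointwise: for each $(\by,\bz)\in\M_{r,d}$, identify $\fv_{d,r}(\cdot)(\by,\bz)$ with the valuation on $\Aa_{d,r}=R/I$ induced by the weight valuation $\fw_{(\by,\bz)}$ on $R := \C[x_1,\ldots,x_d,t_1^\pm,\ldots,t_r^\pm]$ with weights $\fw_{(\by,\bz)}(x_j) = z_j$ and $\fw_{(\by,\bz)}(t_i) = y_i+\langle\bo,\bz\rangle$. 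A direct computation shows $\fw_{(\by,\bz)}(\bb) = \fv_{d,r}(\bb)(\by,\bz)$ for every $\bb\in\B_{d,r}$. Since $\min_i y_i = 0$, the initial form of the defining relation under $\fw_{(\by,\bz)}$ is $\prod x_i - \sum_{i: y_i=0} t_i$, which is irreducible in $R$, so the initial ideal is prime and $\fw_{(\by,\bz)}$ descends to a valuation $\bar\fw_{(\by,\bz)}$ on $\Aa_{d,r}$. Multiplicativity of $\fv_{d,r}$ then follows from the pointwise equality $\bar\fw_{(\by,\bz)} = \fv_{d,r}(\cdot)(\by,\bz)$.

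The main obstacle is establishing this last pointwise equality. It suffices to show that the basis lift $\bar f := \sum c_i\bb_i\in R$ of any $f\in\Aa_{d,r}$ achieves the maximum $\fw_{(\by,\bz)}$-weight among all lifts; equivalently, that no $\C$-linear combination of basis elements $\sum_{i\in S^*} c_i\bb_i$ (appearing as the leading form of $\bar f$) equals a nonzero multiple of $\prod x_i - \sum_{i: y_i=0}t_i$ in $\gr_{\fw_{(\by,\bz)}}R \cong R$. The plan to obtain this is via a monomial-order argument: fix a monomial order refining $\fw_{(\by,\bz)}$ in which $\prod x_i$ is the leading term of the defining relation, so that $\B_{d,r}$ consists precisely of the standard monomials modulo the Gröbner basis $\{\prod x_i - \sum t_j\}$; then the leading monomial of any nonzero multiple of $\prod x_i - \sum_{i: y_i=0}t_i$ is divisible by $\prod x_i$, while no basis element of $\B_{d,r}$ has this property, yielding the required contradiction.
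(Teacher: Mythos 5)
Your proposal is correct, but the route to multiplicativity on general products is genuinely different from the paper's.

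For multiplicativity on basis elements (first paragraph of your proposal) you and the paper do essentially the same thing: rewrite $\bb\bb'$ via the relation $\prod x_i = \sum t_j$ and reduce to the observation that $\min_{|\bk|=c}\langle\bk,\by\rangle = 0$ because $\min_i y_i = 0$ on $\M_{r,d}$.

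The divergence is in the general case. The paper argues entirely inside the piecewise-linear framework: it refines $\Sigma(\MM_{r,d})$ so that $\fv_{d,r}(f)$, $\fv_{d,r}(g)$, $\fv_{d,r}(fg)$ are all linear on each maximal cone $C$, uses injectivity of the restriction map $L_C$ (\Cref{lem-convex-11}) to find the unique basis elements $\bb_{m_0}$, $\bb_{m'_0}$ achieving the minimum on $C$, and then checks directly that their product's minimal term cannot be cancelled because every other term has strictly larger valuation almost everywhere on $C$. Your argument instead realizes the evaluation $\fv_{d,r}(\cdot)(\by,\bz)$ at a fixed $(\by,\bz)\in\M_{r,d}$ as the quasivaluation on $\Aa_{d,r}=R/I$ induced by a weight $\fw_{(\by,\bz)}$ on $R$, observes that $\min_i y_i=0$ forces the initial form $\prod x_i - \sum_{j:y_j=0}t_j$ of the single relation to be irreducible (so $\mathrm{in}_\fw(I)$ is prime and $\bar\fw$ is a genuine valuation), and then identifies $\bar\fw$ with $\fv_{d,r}(\cdot)(\by,\bz)$ by showing the basis lift maximizes the weight, via a monomial-order argument (standard monomials = $\B_{d,r}$, any nonzero element of $\langle\mathrm{in}_\fw(p)\rangle$ has leading monomial divisible by $\prod x_i$). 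Both routes are sound. Yours has the advantage of making the tropical-geometric content explicit — the hypothesis $(\by,\bz)\in\M_{r,d}$ is precisely what puts the weight in the prime-initial-ideal locus of $\mathrm{trop}(I)$, which ties directly to the remark preceding \Cref{subsec: def MMdr} that $\MM_{d,r}$ sits inside the tropical variety of $I$ — and it immediately recycles the Gröbner facts already used to identify $\B_{d,r}$ with the standard monomials. The paper's route stays self-contained in the PL machinery and avoids Gröbner-over-Laurent-ring technicalities (term orders are not well-orders there, though for a principal ideal this causes no real difficulty). One small remark: once your pointwise identification $\bar\fw_{(\by,\bz)} = \fv_{d,r}(\cdot)(\by,\bz)$ is in hand, multiplicativity on basis elements follows for free, so your first paragraph becomes redundant rather than a needed first step — but this is harmless.
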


\begin{proof}
We must check the conditions (1)-(4) in \Cref{def_valuation}. 
We begin with (1), i.e., $\fv_{d,r}(fg)=\fv_{d,r}(f)\odot\fv_{d,r}(g)$ for all $f, g \in \Aa_{d,r}$. As a first step,  
we first claim that 
\begin{equation}\label{eq_Mdr_val_basis}
    \fv_{d,r}(\bb \bb')=\fv_{d,r}(\bb)\odot\fv_{d,r}(\bb')
\end{equation} 
for all $\bb,\bb'\in\B_{d,r}$. To prove \Cref{eq_Mdr_val_basis}, we take cases. First suppose that $\bb=\bx^\bu\bt^\bw$ and  $\bb'=\bx^{\bu'}\bt^{\bw'}$, and that the pair $(\bu,\bw),(\bu',\bw')\in\M_{d,r}$ satisfies $(\bu+\bu',\bw+\bw')\in\M_{d,r}$, i.e., $\bb \bb' \in \M_{d,r}$.
Under this assumption, it follows from the definitions of $\v_{d,r}$ and $\Psi_{r,d}$ that 
\begin{equation*}
    \v_{d,r}(\bu+\bu',\bw+\bw')=\v_{d,r}(\bu,\bw)+\v_{d,r}(\bu',\bw')
\end{equation*}
and
\begin{equation*}
    \Psi_{r,d}(\v_{d,r}(\bu+\bu',\bw+\bw') = \Psi_{r,d}(\v_{d,r}(\bu,\bw))+\Psi_{r,d}(\v_{d,r}(\bu',\bw'))
\end{equation*}
so \Cref{eq_Mdr_val_basis} holds.
Next, we consider the case that $\bb \bb'\notin\B_{d,r}$, i.e., $\widetilde{u}:=\min_{i\in [d]}\{u_i+u_i'\}>0$.
Using the relation $x_1 x_2 \cdots x_d = =t_1+\cdots+t_r$ in $\Aa_{d,r}$, we see that
\begin{equation}\label{eq: bbprime rewrite in Adr}
    \bx^{\bu+\bu'}\bt^{\bw+\bw'}
    =
    \bx^{\bu+\bu'-\widetilde{u}\bo}\bt^{\bw+\bw'}(t_1+\cdots+t_r)^{\widetilde{u}}.
\end{equation}
Each monomial in this expression is an element of $\B_{d,r}$, so by \Cref{eq: def val on Mdr} 
it follows that
\begin{align*}
    \fv_{d,r}(\bb \bb')
    &=
    \min\{\fv_{d,r}(\bx^{\bu+\bu'-\widetilde{u}\bo}\bt^{\bw+\bw'+\bv}) \mid \bv\ge0,\ v_1+\cdots+v_r=\widetilde{u}\}
    \\&=
    \min\{\Psi_{r,d}({\bw+\bw'+\bv},{\bu+\bu'-\widetilde{u}\bo+\langle \bo,\bw+\bw'+\bv\rangle\bo}) \mid \bv\ge0,\ v_1+\cdots+v_r=\widetilde{u}\}    
    \\&=
    \min\{\Psi_{r,d}({\bw+\bw'+\bv},{\bu+\bu'+\langle \bo,\bw+\bw'\rangle\bo}) \mid \bv\ge0,\ v_1+\cdots+v_r=\widetilde{u}\}  
.\end{align*}
Then by definition of $\Psi_{r,d}$ it suffices to show that 
\begin{equation}\label{eq_Mdr_val_fexp}
    \min\{f_{{\bw+\bw'+\bv},{\bu+\bu'+\langle \bo,\bw+\bw'\rangle\bo}} \mid \bv\ge0,\ v_1+\cdots+v_r=\widetilde{u}\}
    =
    f_{\bw,\bu+\langle \bo,\bw\rangle\bo}
    +
    f_{\bw',\bu'+\langle \bo,\bw'\rangle\bo}
\end{equation}
as functions on $\M_{r,d}$.
Let $g_\bv:\M_{r,d}\to \Z$ denote the function $g_\bv(\ba,\b) :=\langle \bv,\ba\rangle$. From the definition of $f_{\ba,\b}$ for $(\ba,\b) \in T_{r,d}$ it can be computed that 
\begin{equation}\label{eq: 1}
    f_{{\bw+\bw'+\bv},{\bu+\bu'+\langle \bo,\bw+\bw'\rangle\bo}}
    =
    g_\bv
    +
    f_{\bw,\bu+\langle \bo,\bw\rangle\bo}
    +
    f_{\bw',\bu'+\langle \bo,\bw'\rangle\bo}
.\end{equation}
For any $(\ba,\b)\in\M_{r,d}$, we have that 
\begin{equation*}
    \min\{g_{\bv}(\ba,\b) = \langle \bv,\ba\rangle \mid \bv\ge0,\ v_1+\cdots+v_r=\widetilde{u}\}=0
\end{equation*}
since $\min\{a_1,\ldots,a_r\}=0$. 
Hence the minimum of the RHS of \Cref{eq: 1} as $\bv$ ranges over $\bv \geq 0, v_1 + \cdots + v_r = \tilde{u}$ is equal to $f_{\bw,\bu+\langle \bo,\bw\rangle\bo}
    +
    f_{\bw',\bu'+\langle \bo,\bw'\rangle\bo}$.
    Since this last expression is independent of $\bv$, it follows that \Cref{eq_Mdr_val_fexp} and hence \eqref{eq_Mdr_val_basis} holds for this case, as desired.

 Now we claim that (1) holds in general, for any $f = \sum_m c_m \bb_m, g = \sum_{m'} c_{m'} \bb_{m'} \in \Aa_{d,r}$, where $\bb_m. \bb_{m'} \in \B$ and we may assume $c_m \neq 0, c_{m'} \neq 0$. We wish to prove $\fv_{d,r}(fg)=\fv_{d,r}(f)+\fv_{d,r}(g)$ as functions on $\MM_{r,d}$. We know by definition of $\fv_{d,r}$ that $\fv_{d,r}(fg),\fv_{d,r}(f),\fv_{d,r}(g) \in \PP_{\MM_{r,d}}$ are piecewise-linear, so there exists a (finite) complete fan $\Sigma$ in $\MM_{r,d} \otimes \R$ such that they are linear on any full-dimensional cone $C$ in $\Sigma$. Moreover, we may without loss of generality assume that $\Sigma$ is a refinement of $\Sigma(\MM_{r,d})$ since elements of $\PP_{\MM_{r,d}}$ are min-combinations of points in $\Sp(\MM_{r,d})$, which are each linear on any cone in $\Sigma(\MM_{r,d})$. With this in mind, it suffices to show $\fv_{d,r}(fg)=\fv_{d,r}(f)+\fv_{d,r}(g)$, as functions, on each full-dimensional cone $C \in \Sigma$. Thus for the remainder of the argument we restrict to such a cone $C$. 

 To see the equality, we compute both sides and compare. For the RHS, by definition $\fv_{d,r}(f)+\fv_{d,r}(g)=\min\{\fv_{d,r}(\bb_m)\} + \min\{\fv_{d,r}(\bb_{m'})\}$. Since $\fv_{d,r}(\bb_m)$ is a point, it is linear on $C$ and also uniquely determined by its values on $C$, so it follows that there exists a unique $m_0$ with $\fv_{d,r}(\bb_{m_0})$ achieving the minimum in $\fv_{d,r}(f)$, and similarly for $\fv_{d,r}(g)$. Thus (as functions on $C$) the RHS is $\fv_{d,r}(f)+\fv_{d,r}(g)=\fv_{d,r}(\bb_{m_0}) + \fv_{d,r}(\bb_{m'_0})$. Next we compute the LHS. We have $\fv_{d,r}(fg)=\fv_{d,r}\left(\sum_{m,m'} c_m c_{m'} \bb_m \bb_{m'}\right)$. To compute further, we must expand each $\bb_m \bb_{m'}$ as a linear combination of elements of $\B$ and then take the sum. This process could result in cancellations. However, we now show that the basis element achieving the minimum cannot be cancelled. More precisely, we have the following. We saw already that $\fv_{d,r}(\bb_{m_0}\bb_{m'_0}) = \fv_{d,r}(\bb_{m_0})+\fv_{d,r}(\bb_{m'_0})$. Expand $\bb_{m_0}\bb_{m'_0} = \sum_k d_k \bb_{m_k}$ as a linear combination of elements of $\B$. In the arguments in the previous paragraphs we already saw that each $\bb_{m_k}$ appearing in the sum must satisfy $\fv_{d,r}(\bb_{m_k}) \geq \fv_{d,r}(\bb_{m_0})+\fv_{d,r}(\bb_{m'_0})$ and that there exists a unique $\bb_{m_{k_0}}$ achieving the minimum, i.e. $\fv_{d,r}(\bb_{m_0}) + \fv_{d,r}(\bb_{m'_0}) = \fv_{d,r}(\bb_{m_{k_0}})$. In particular, for all other $k \neq k_0$ we have $\fv_{d,r}(\bb_{m_k}) > \fv_{d,r}(\bb_{m_{k_0}})$ almost everywhere on $C$. Moreover, for all $m \neq m_0$ and $m' \neq m'_0$ we also have $\fv_{d,r}(\bb_m) > \fv_{d,r}(\bb_{m_0}), \fv_{d,r}(\bb_{m'}) > \fv_{d,r}(\bb_{m'_0})$ almost everywhere, so by the same argument, any basis element $\bb$ appearing in the expansion of $\bb_{m}\bb_{m'}$ for $(m,m') \neq (m_0,m'_0)$ must satisfy $\fv_{d,r}(\bb) > \fv_{d,r}(\bb_{m_0})+\fv_{d,r}(\bb_{m'_0}) = \fv_{d,r}(\bb_{m_{k_0}})$. This proves that nothing can cancel the $\bb_{m_{k_0}}$ term, and the minimum of $\fv_{d,r}(fg)$ is achieved by $\bb_{m_{k_0}}$. Thus $\fv_{d,r}(fg)=\fv_{d,r}(f)+\fv_{d,r}(g)$ on $C$, as claimed, and this argument is valid for all full-dimensional cones $C$. 

 The properties (2)-(4) are straightforward, and $\B$ is a convex adapted basis by construction. 
 \end{proof}

\begin{proposition}
    With notation as established above, the pair $(\Aa_{d,r},\fv_{d,r})$ is a detropicalization of $\MM_{d,r}$.
\end{proposition}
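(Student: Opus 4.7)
The goal is to verify the three requirements of Definition~\ref{def_detrop}: (i) $\Aa_{d,r}$ is a Noetherian $\C$-algebra which is an integral domain, (ii) $\fv_{d,r}$ is a valuation from $\Aa_{d,r}$ to the canonical semialgebra $S_{\MM_{d,r}}$ whose image contains every element of $\MM_{d,r}$, and (iii) the Krull dimension of $\Aa_{d,r}$ equals the rank $d+r-1$ of $\MM_{d,r}$. My plan is to dispatch each in turn, borrowing the hard analytic content from \Cref{lem_Mdr_val} and from the strict dual pairing established in \Cref{prop: Mdr Mrd strict dual pair}.

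First, for (i), note that $\Aa_{d,r}$ is the quotient of the finitely generated $\C$-algebra $R := \C[x_1,\ldots,x_d, t_1^{\pm}, \ldots, t_r^{\pm}]$ by the principal ideal generated by $h := x_1\cdots x_d - t_1 - \cdots - t_r$. Since $R$ is a Noetherian integral domain and $h$ is linear (hence irreducible) as a polynomial in $x_1$ over the UFD $\C[x_2,\ldots,x_d,t_1^{\pm},\ldots,t_r^{\pm}]$, the element $h$ generates a prime ideal, so $\Aa_{d,r}$ is a Noetherian integral domain.

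Next, for (ii), the valuation property of $\fv_{d,r}$ with values in $\P_{\MM_{r,d}}$ was already established in \Cref{lem_Mdr_val}. Since $(\MM_{d,r}, \MM_{r,d}, \v_{d,r}, \v_{r,d})$ is a strict dual $\Z$-pair by \Cref{prop: Mdr Mrd strict dual pair}, \Cref{prop-salgebras} gives a canonical $\Z_{\geq 0}$-semialgebra isomorphism $\v: S_{\MM_{d,r}} \to \P_{\MM_{r,d}}$ sending $m \mapsto \v_{d,r}(m)$, so we may interpret $\fv_{d,r}$ as a valuation into $S_{\MM_{d,r}}$. To check that every element of $\MM_{d,r}$ lies in the image, let $m \in \MM_{d,r}$ correspond to $(\bu,\bw)\in \M_{d,r}$ under the identification of \Cref{lem_Mdr_identified}, and consider the basis element $\bx^{\bu}\bt^{\bw} \in \B_{d,r}$. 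By the defining formula \eqref{eq: def val on Bdr}, $\fv_{d,r}(\bx^{\bu}\bt^{\bw}) = \Psi_{r,d}(\v_{d,r}(\bu,\bw))$, which under the identification $\Psi_{r,d}:T_{r,d}\to \Sp(\MM_{r,d})$ and the isomorphism $\v$ above corresponds precisely to $m\in S_{\MM_{d,r}}$.

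Finally, for (iii), the ambient ring $R$ has Krull dimension $d+r$ (polynomial in $d$ variables tensored with a Laurent polynomial ring in $r$ variables). Since $R$ is a Noetherian integral domain and $h \neq 0$, Krull's Hauptidealsatz implies that the ideal $(h)$ has height exactly one, whence $\dim(\Aa_{d,r}) = \dim(R) - 1 = d + r - 1$, matching the rank of $\MM_{d,r}$ as required. I do not anticipate a significant obstacle here: the substantive work (verifying axiom (1) of \Cref{def_valuation} in the presence of the relation $x_1\cdots x_d = t_1+\cdots+t_r$, which requires rewriting products outside $\B_{d,r}$ using \eqref{eq: bbprime rewrite in Adr}) was already done in \Cref{lem_Mdr_val}; the remaining verifications are routine applications of standard commutative algebra combined with the explicit bijections $\varphi_i$, $\Psi_{r,d}$, and $\v_{d,r}$ constructed earlier in this section.
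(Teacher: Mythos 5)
Your proof is correct and takes essentially the same approach as the paper: verify that $\Aa_{d,r}$ is a Noetherian domain of the right Krull dimension, invoke \Cref{lem_Mdr_val} for the valuation property, and use the bijection $\fv_{d,r}\vert_{\B_{d,r}}$ onto $\Sp(\MM_{r,d})$ for surjectivity. The only small slip is the parenthetical ``linear (hence irreducible)'' in part (i): linearity in $x_1$ alone does not give irreducibility over a UFD --- one also needs that the coefficient $x_2\cdots x_d$ of $x_1$ and the constant term $-(t_1+\cdots+t_r)$ have no common factor (which they do not, so the conclusion stands).
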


\begin{proof} 
Since the ideal $\langle x_1\cdots x_d - t_1 - \cdots - t_r\rangle$ is prime and principal, $\Aa_{d,r}$ is an integral domain of Krull dimension $d+r-1$.
We saw above that  $\fv_{d,r}|_{\B_{d,r}}$ is a bijection onto $\Sp(\MM_{r,d})$.
The result follows.
\end{proof}

\subsection{Compactifications of $\Aa_{d,r}$}\label{subsec: Adr compactification}

In \Cref{subsec: Gorenstein Fano for Mdr} we constructed an explicit example of a chart-Gorenstein-Fano PL polytope $\PP$ in $\MM_{d,r} \times \R$ and in \Cref{subsec: detrop MMdr} we built a detropicalization $(\Aa_{d,r}, \fv_{d,r})$ of $\MM_{d,r}$. Using this data we can now compactify $\mathrm{Spec}(\Aa_{d,r})$. Concretely, let $\PP = \cap_{n \in \mathcal{S}} \HH_{\v_{r,d}(n),-1}$ be as defined in \Cref{eq: def Mdr GF polytope}. Following the construction in \Cref{subsec:compactification}, we define 
$$
X_{\Aa_{d,r}}(\PP) = \mathrm{Proj}(\Aa_{d,r}^{\PP}). 
$$
The theory developed in this manuscript allow us to conclude several results about the geometry of $X_{\Aa_{d,r}}(\PP)$. For instance, it has a finitely generated Cox ring. 

\begin{proposition}
Let $d,r$ be positive integers, $d \geq 2, r \geq 2$. Then $\Aa_{r,d}$ is a unique factorization domain, and $X_{\Aa_{r,d}}(\PP)$ has a finitely generated Cox ring. 
\end{proposition}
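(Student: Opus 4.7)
The plan is to reduce both assertions to standard facts. First, for the UFD property of $\Aa_{d,r}$: observe that the defining relation $x_1 \cdots x_d - t_1 - \cdots - t_r$ is linear in $t_r$, so we may solve
\[
t_r \;=\; x_1 \cdots x_d - t_1 - \cdots - t_{r-1}.
\]
This exhibits
\[
B_{d,r} \;:=\; \C[x_1, \ldots, x_d, t_1, \ldots, t_r] \big/ \langle x_1 \cdots x_d - t_1 - \cdots - t_r \rangle
\]
as isomorphic (via the quotient map) to the polynomial ring $\C[x_1, \ldots, x_d, t_1, \ldots, t_{r-1}]$, which is a UFD. Now $\Aa_{d,r}$ is obtained from $B_{d,r}$ by localizing at the multiplicative set generated by $t_1, \ldots, t_r$ (each of which is a nonzero, hence non-zero-divisor, element of the integral domain $B_{d,r}$, since $t_r$ corresponds under the isomorphism to the nonzero polynomial $x_1\cdots x_d - t_1 - \cdots - t_{r-1}$). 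Since a localization of a UFD is a UFD, we conclude that $\Aa_{d,r}$ is a UFD, and by symmetry of the argument in $d$ and $r$, so is $\Aa_{r,d}$.

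For the Cox ring statement, the goal is simply to invoke \Cref{theorem: finitely generated Cox rings}, whose hypotheses (inherited from \Cref{compactifications}) have all been verified in the preceding subsections: the pair $(\MM_{d,r}, \MM_{r,d}, \v_{d,r}, \v_{r,d})$ is a strict dual $\Z$-pair by \Cref{prop: Mdr Mrd strict dual pair}; the pair $(\Aa_{d,r}, \fv_{d,r})$ is a detropicalization of $\MM_{d,r}$ with convex adapted basis $\B_{d,r}$ by \Cref{lem_Mdr_val}; and the PL polytope $\PP$ of \Cref{eq: def Mdr GF polytope} is full-dimensional and chart-Gorenstein-Fano by \Cref{prop_Mdr_lattice}. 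The defining PL half-spaces of $\PP$ have threshold $-1 < 0$ and correspond to pairwise distinct nonzero elements $\v_{r,d}(n) \in \Sp(\MM_{d,r})$ for $n \in S$; the linearity condition holds by the fullness of $\MM_{d,r}$ (\Cref{prop_Mdr_pts}), and the facet condition is transparent from the totally-unimodular half-space description in the proof of \Cref{prop_Mdr_lattice}, where each listed inequality cuts out a genuine facet of $\pi_1(\PP) \cap C_k$ for a suitable $k$. Combining these verifications with the UFD property of $\Aa_{d,r}$ established above, \Cref{theorem: finitely generated Cox rings} immediately yields that the Cox ring of $X_{\Aa_{d,r}}(\PP)$ is finitely generated.

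I do not anticipate any substantive obstacle. The only real content is the elementary elimination of $t_r$ that identifies $B_{d,r}$ as a polynomial ring; everything else is a direct bookkeeping exercise in applying results already proven in this section together with the general compactification and Cox-ring machinery of \Cref{section: compactifications}. The one place where a bit of care is warranted is in checking the per-facet linearity and facet conditions of \Cref{compactifications}, but since the proof of \Cref{prop_Mdr_lattice} already exhibits, cone by cone $C_k$, the explicit defining inequalities of $\pi_1(\PP)\cap C_k$, this verification is essentially read off from the matrix $A$ appearing there.
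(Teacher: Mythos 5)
Your proof is correct and follows essentially the same route as the paper's: identify the affine ring $\C[x_1,\ldots,x_d,t_1,\ldots,t_r]/\langle x_1\cdots x_d - t_1 -\cdots- t_r\rangle$ with a polynomial ring (you make the elimination of $t_r$ explicit, where the paper merely asserts the isomorphism), note that $\Aa_{d,r}$ is a localization of a UFD and hence a UFD, and then invoke \Cref{theorem: finitely generated Cox rings}. Your added verifications of the hypotheses of \Cref{compactifications} (pairwise distinct $n_i$, linearity in some chart, the facet condition from the totally-unimodular description) are a useful expansion of what the paper leaves implicit in its one-line citation.
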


\begin{proof}
Note that the ring 
$\C[x_1, \ldots, x_d, t_1, \ldots, t_r]/\langle \left( \prod_{i=1}^d x_i \right) - t_1 - \cdots - t_r\rangle$ is isomorphic to a polynomial ring, and is therefore a unique factorization domain. The ring $\Aa_{r,d}$ is obtained as a localization of this ring, and since localization preserves UFD-ness, $\Aa_{r,d}$ is also a UFD. The second statement follows from \Cref{theorem: finitely generated Cox rings}. 
\end{proof}

Since $\PP$ is a chart-Gorenstein-Fano PL polytope, the projection of $\PP$ to each coordinate chart $M_{d,r}^{(i)}$ for $i \in [r]$ is a classical Gorenstein-Fano polytope. Thus from \Cref{theorem: family of degenerations} it follows that there are $r$ many toric degenerations of $X_{\Aa_{r,d}}(\PP)$ to the Gorenstein-Fano toric varieties associated to these $r$ many Gorenstein-Fano polytopes. We leave further exploration of these ideas to future work.

\appendix

\section{Constructions with valuations and filtrations}\label{sec: appendix valuations}

In the course of our arguments, particularly in \Cref{sec_detrop} and \Cref{section: compactifications}, we will at times need to establish connections between different types of valuations or filtrations, and their associated graded algebras.  Specifically, in \Cref{sec_detrop} we want to construct a $\Z^r$-valued valuation starting with a collection of $r$ many $\Z$-valued valuations, and in \Cref{section: compactifications}, we wish to compare the associated graded algebra with respect to a $\Z^r$-valuation with the associated graded of a closely related $\Z$-valued valuation. It turns out that these issues can be handled in a systematic manner in a general setting. We are not aware of an exposition of these results in the literature (though we suspect they are well-known to experts) so we record them here. The proofs are not difficult so for the sake of brevity we provide sketches only. 

Let $\Aa$ be a $\K$-algebra. Let $\fv_1,\cdots,\fv_r: \Aa \to \Z$ be a collection of $\Z$-valued valuations on $\Aa$. We assume that there exists a $\K$-vector space basis $\B$ of $\Aa$ which is simultaneously an adapted basis for $\fv_i$ for all $i \in [r]$ in the sense of \cite[Definition 2.27]{KavehManon-Siaga}, i.e., $\B \cap F_{\fv_i \geq a}$ is a basis of $F_{\fv_i \geq a}$ for all $a \in \Z$ and for all $i \in [r]$. We now define two valuations on $\Aa$, denoted $\fv_1 \boxplus \fv_2 \boxplus \cdots \boxplus \fv_r: \Aa \to \Z$ and $\fv_1 \circledast \fv_2 \circledast \cdots \circledast \fv_r: \Aa \to \Z^r$ respectively. Here we view $\Z^r$ as a totally ordered group with respect to the usual lex order. 

It is well-known that a quasivaluation can be constructed from a decreasing multiplicative filtration $\mathcal{F}$ of $\Aa$ with the property that for all $f \in \Aa \setminus \{0\}$, there exists a parameter $a$ such that $f \in F_{\geq a} \setminus \cup_{a' > a} F_{\geq a'}$ (cf.\ for example \cite[\textsection 2.4]{KavehManon-Siaga}). In particular, the quasivaluation $\nu$ corresponding to such a filtration $\mathcal{F}$ is defined in such a way that $\mathrm{gr}_{\fv}(\Aa) = \mathrm{gr}_{\mathcal{F}}(\Aa)$. The quasivaluation is a valuation exactly when the associated graded algebra $\mathrm{gr}_{\mathcal{F}}(\Aa)$ is a domain. We will define both $\fv_1 \boxplus \fv_2 \boxplus \cdots \boxplus \fv_r$ and $\fv_1 \circledast \fv_2 \circledast \cdots \circledast \fv_r$ by constructing the corresponding decreasing filtrations of $\Aa$, which we denote $\mathcal{F}$ and $\mathcal{G}$ respectively. In fact, we will also use a third filtration $\mathcal{E}$ which allows us to interpolate between the associated graded algebras $\mathrm{gr}_{\mathcal{F}}(\Aa) = \mathrm{gr}_{\fv_1 \boxplus \fv_2 \boxplus \cdots \boxplus \fv_r}(\Aa)$ and $\mathrm{gr}_{\mathcal{G}}(\Aa) = \mathrm{gr}_{\fv_1 \circledast \fv_2 \circledast \cdots \circledast \fv_r}(\Aa)$. We begin by defining $\mathcal{F}, \mathcal{G}$, and $\mathcal{E}$. For what follows, we use the notation $F^i_{\geq a} := F_{\fv_i \geq a}$. The filtration $\mathcal{F}$ is indexed by $\Z$ and we define 
\begin{equation*}\label{eq: definition calF filtration} 
F_{\geq a} := \bigoplus_{s_1 + \cdots+ s_r=a} F^1_{\geq s_1} \cap F^2_{\geq s_2} \cap \cdots \cap F^r_{\geq s_r}.
\end{equation*} 
Since each $\{F^i_{\geq s}\}$ is a decreasing multiplicative filtration, and for any $f \in \Aa \setminus \{0\}$ there exists $a \in \Z$ with $f \in F^i_{\geq a} \setminus \cup_{a' > a} F^i_{\geq a'}$, it is clear that $\mathcal{F}$ has the same properties. 

The definition of the filtration $\mathcal{G}$ requires an inductive process. We begin by considering $\mathrm{gr}_{\fv_1}(\Aa) = \oplus_{a \in \Z} F^1_{\geq a}/F^1_{>a}$. If $r=1$, we stop; otherwise, we claim that the filtration $\mathcal{F}_{\fv_2} := \{F^2_{\geq b}\}_{b \in \Z}$ induces a filtration $\overline{\mathcal{F}}_2$ on $\mathrm{gr}_{\fv_1}(\Aa)$, by defining $(\overline{\mathcal{F}}_2)_{\geq b} := \bigoplus_{a \in \Z} (F^2_{\geq b} \cap F^1_{\geq a})/F^1_{> a}$. It is straightforward to verify that this is a decreasing multiplicative filtration. Thus we may consider the associated graded $\mathrm{gr}_{\overline{\mathcal{F}}}(\mathrm{gr}_{\fv_1}(\Aa))$, now equipped with a grading indexed by $\Z \times \Z$. We may proceed to consider the filtration induced on this algebra by $\mathcal{F}_{\fv_3}$, defined in an analogous manner, and so on. It is again easy to check that, at each step, the result is a decreasing multiplicative filtration yielding a corresponding associated graded algebra with a $\Z^3$ grading. Continuing in this manner we obtain a graded algebra -- which by slight abuse of notation we denote $\gr_{\fv_r}\gr_{\fv_{r-1}}\cdots \gr_{\fv_1}(\Aa)$ -- with a grading indexed by $\Z^r$. Moreover, using successively the fact that for any algebra $\mathcal{R}$ with decreasing multiplicative filtration $\mathcal{F}$ there is a map $\mathcal{R} \to \mathrm{gr}_{\mathcal{F}}(\mathcal{R})$, there is a natural map $\pi: \Aa \to \gr_{\fv_r}\gr_{\fv_{r-1}}\cdots \gr_{\fv_1}(\Aa)$. We may then ask for the preimage under $\pi$ of the $(a_1,a_2,\cdots,a_r)$-th graded piece of $\gr_{\fv_r}\gr_{\fv_{r-1}}\cdots \gr_{\fv_1}(\Aa)$, which we denote $\mathcal{G}_{(a_1,\cdots,a_r)}$. A calculation then shows that 
\begin{equation*}\label{eq: definition calG filtration}
\mathcal{G}_{(a_1,\cdots,a_r)} = F^1_{> a_1} + F^1_{\geq a_1}\cap F^2_{> a_2} + F^1_{\geq a_1} \cap F^2_{\geq a_2} \cap F^3_{> a_3} + \cdots + F^1_{\geq a_1} \cap F^2_{\geq a_2} \cap \cdots \cap F^r_{\geq a_r}.
\end{equation*}
It is again straightforward to check that $\mathcal{G}$ is a decreasing multiplicative filtration indexed by $\Z^r$ with respect to the standard lex order on $\Z^r$. 

The filtration $\mathcal{E}$ differs from the two others in that its indexing set is partially ordered, not totally ordered. Specifically, equip $\Z^r$ with the partial order $(a_1,a_2,\cdots,a_r) \succeq (b_1,b_2,\cdots,b_r)$ if $a_i \geq b_i$ (with respect to the standard order on $\Z$) for all $i \in [r]$. We define 
\begin{equation*}\label{eq: definition calE filtration}
\mathcal{E}_{(a_1,\cdots,a_r)} := F^1_{\geq a_1} \cap F^2_{\geq a_2} \cap \cdots \cap F^r_{\geq a_r}
\end{equation*}
and it is straightforward to see that this filtration is decreasing with respect to the partial order and is multiplicative. 

We may now consider the associated graded algebras of each of the filtrations, denoted $\gr_{\mathcal{F}}(\Aa)$, $\gr_{\mathcal{G}}(\Aa)$, and $\gr_{\mathcal{E}}(\Aa)$ respectively. These are graded with respect to $(\Z,>), (\Z^r,>_{lex})$ and $(\Z^r, \succ)$ respectively. Our goal in this appendix is to show that all three are isomorphic as $\K$-algebras. (Note that we are not claiming isomorphisms as graded algebras, since the gradings are clearly different.) The following lemma is simple linear algebra. 

\begin{lemma} 
Let $(a_1,\cdots,a_r) \in \Z^r$. Then $\mathcal{G}_{(a_1,\cdots,a_r)}$ and $\mathcal{E}_{(a_1,\cdots,a_r)}$ are isomorphic as $\K$-vector spaces. 
\end{lemma}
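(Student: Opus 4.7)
The plan is to use the joint adapted basis $\B$ to exhibit explicit $\K$-vector space bases for $\mathcal{E}_{(a_1,\ldots,a_r)}$ and $\mathcal{G}_{(a_1,\ldots,a_r)}$ as subsets of $\B$, and then to produce a $\K$-linear isomorphism via a bijection between these bases.

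First I would verify that $\B_\mathcal{E} := \{b \in \B : \fv_i(b) \geq a_i \text{ for all } i \in [r]\}$ is a vector space basis of $\mathcal{E}_{(a_1,\ldots,a_r)}$. Linear independence is inherited from $\B$. For spanning I would invoke adaptedness of $\B$ to each $\fv_i$: since $f = \sum_b c_b b$ lies in $F^i_{\geq a_i}$ if and only if every $b$ with $c_b \neq 0$ satisfies $\fv_i(b) \geq a_i$, intersecting these conditions over $i \in [r]$ forces every such $b$ to lie in $\B_\mathcal{E}$.

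Next I would apply the same adaptedness argument to each summand $F^1_{\geq a_1} \cap \cdots \cap F^{j-1}_{\geq a_{j-1}} \cap F^j_{>a_j}$ in the definition of $\mathcal{G}_{(a_1,\ldots,a_r)}$; each summand has basis the set of $b \in \B$ with $\fv_i(b) \geq a_i$ for $i < j$ and $\fv_j(b) > a_j$. Since a sum of subspaces each generated by a subset of $\B$ is generated by the union of those subsets, I would conclude that $\mathcal{G}_{(a_1,\ldots,a_r)}$ has basis $\B_\mathcal{G} := \{b \in \B : (\fv_1(b), \ldots, \fv_r(b)) \geq_{lex} (a_1, \ldots, a_r)\}$, obtained by unpacking the union over $j$.

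Finally I would produce the isomorphism $\mathcal{E}_{(a_1,\ldots,a_r)} \cong \mathcal{G}_{(a_1,\ldots,a_r)}$ by exhibiting a bijection $\B_\mathcal{E} \leftrightarrow \B_\mathcal{G}$ and extending $\K$-linearly. The hard part will be this bijection, since $\B_\mathcal{E}$ and $\B_\mathcal{G}$ are genuinely different subsets of $\B$ (the former cut out by componentwise inequalities, the latter by a lex condition) and no canonical choice of bijection is evident from the definitions. I would argue instead by cardinality: both sets contain $\{b \in \B : \fv_i(b) \geq a_i + N \text{ for all } i\}$ for every integer $N \geq 0$, which is infinite in the Noetherian integral domain setting of the appendix where $\Aa$ has positive Krull dimension, and both sit inside the countable set $\B$; hence $|\B_\mathcal{E}| = |\B_\mathcal{G}| = \aleph_0$. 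Any choice of bijection between these countably infinite sets, extended $\K$-linearly, then yields the desired isomorphism of $\K$-vector spaces.
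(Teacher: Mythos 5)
Your adapted-basis identifications of $\B_{\mathcal{E}} = \{\bb\in\B : \fv_i(\bb)\ge a_i \text{ for all } i\}$ and $\B_{\mathcal{G}} = \{\bb\in\B : (\fv_1(\bb),\ldots,\fv_r(\bb)) \ge_{lex} (a_1,\ldots,a_r)\}$ as bases of the filtration subspaces $\mathcal{E}_{(a_1,\ldots,a_r)}$ and $\mathcal{G}_{(a_1,\ldots,a_r)}$ are correct and useful, but the final cardinality step does not hold and, read literally about the filtration terms, the statement you are trying to prove is actually false. The set $\{\bb\in\B : \fv_i(\bb)\ge a_i+N \text{ for all } i\}$ can be empty: the $\fv_i$ may pull in opposite directions. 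Concretely, take $\Aa = \K[x]$ with $\B = \{x^n : n\ge 0\}$, $\fv_1(f)$ the order of vanishing at $0$, and $\fv_2(f) = -\deg(f)$; both are $\overline{\Z}$-valued valuations with $\B$ as joint adapted basis, and $\fv_1(x^n)=n$, $\fv_2(x^n)=-n$. For $(a_1,a_2)=(5,-3)$ the condition $\fv_1(\bb)\ge 5$ and $\fv_2(\bb)\ge -3$ is $n\ge 5$ and $n\le 3$, so $\B_\mathcal{E}=\emptyset$ and $\mathcal{E}_{(5,-3)}=0$, while $\B_\mathcal{G}=\{x^n : n\ge 6\}$ is infinite and $\mathcal{G}_{(5,-3)}$ is infinite-dimensional. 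So the filtration subspaces are not isomorphic in general, and no bijection $\B_\mathcal{E}\leftrightarrow\B_\mathcal{G}$ exists.

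The resolution is that the lemma, despite the slightly overloaded notation, concerns the $(a_1,\ldots,a_r)$-th \emph{graded pieces} of $\gr_{\mathcal{E}}(\Aa)$ and $\gr_{\mathcal{G}}(\Aa)$, not the filtration terms themselves. This is what the sentence following the lemma means by ``the direct sum of the $\Psi_{(a_1,\ldots,a_r)}$ over the graded pieces of both algebras,'' and it is the only reading under which an isomorphism ``induced by inclusion'' can make sense: the literal inclusion $\mathcal{E}_{(a_1,\ldots,a_r)}\hookrightarrow\mathcal{G}_{(a_1,\ldots,a_r)}$ is in general proper (as the example shows), but it descends to the quotients because $\sum_{c\succ a}\mathcal{E}_c \subseteq \sum_{c>_{lex}a}\mathcal{G}_c$. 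With the graded-piece reading your adapted-basis computation finishes the proof canonically, with no cardinality argument or arbitrary bijection: the same reasoning you applied to the numerators identifies a basis of $\sum_{c\succ a}\mathcal{E}_c$ (resp.\ $\sum_{c>_{lex}a}\mathcal{G}_c$) as $\{\bb\in\B : \fv(\bb)\succ a\}$ (resp.\ $\{\bb\in\B : \fv(\bb)>_{lex}a\}$), and in both cases the quotient therefore has basis the images of the \emph{same} set $\{\bb\in\B : \fv_i(\bb)=a_i \text{ for all } i\}$, giving the desired isomorphism.
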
 

In fact there is a natural isomorphism $\Psi_{(a_1,\cdots,a_r)}: \mathcal{E}_{(a_1,\cdots,a_r)} \to \mathcal{G}_{(a_1,\cdots,a_r)}$ induced by inclusion of vector spaces. Let $\Psi: \gr_{\mathcal{E}}(\Aa) \to \gr_{\mathcal{G}}(\Aa)$ be defined by $\bigoplus_{(a_1,\cdots,a_r) \in \Z^r} \Psi_{(a_1,\cdots,a_r)}$, the direct sum of the $\Psi_{(a_1,\cdots,a_r)}$ over the graded pieces of both algebras. It is clear this is a $\K$-vector space isomorphism.  The following lemma asserts that $\Psi$ also respects multiplication, which can be seen easily by computing what happens to adapted basis elements. 

\begin{lemma}\label{lemma: grE and grG iso}
The map $\Psi: \gr_{\mathcal{E}}(\Aa) \to \gr_{\mathcal{G}}(\Aa)$ defined above is a $\K$-algebra isomorphism. 
\end{lemma}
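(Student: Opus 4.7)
The plan is to exploit the common adapted basis $\B$ and verify that both graded algebras have matching bases on which the structure constants of multiplication are computed by the same expansions in $\Aa$. Since the $\K$-vector space isomorphism $\Psi$ is already in hand, only the multiplicativity of $\Psi$ needs to be checked.

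First, I will observe that $\B$ is a simultaneously adapted basis for the filtrations $\mathcal{E}$ and $\mathcal{G}$. For $\mathcal{E}$, the point is that if $f \in \mathcal{E}_{(a_1,\dots,a_r)} = F^1_{\geq a_1} \cap \cdots \cap F^r_{\geq a_r}$, then the unique expansion $f = \sum_k c_k \bb_k$ in $\B$ must (by the adaptedness of $\B$ for each $\fv_i$ separately, combined with uniqueness of the expansion) use only those $\bb_k$ with $\fv_i(\bb_k) \geq a_i$ for every $i$. The analogous statement for $\mathcal{G}$ is immediate from the description of $\mathcal{G}_{(a_1,\dots,a_r)}$ as the span of those $\bb \in \B$ with $(\fv_1(\bb),\dots,\fv_r(\bb)) \geq_{lex} (a_1,\dots,a_r)$.

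Next, for fixed $(a_1,\dots,a_r) \in \Z^r$, I will identify explicit $\K$-bases of the two graded pieces $(\gr_{\mathcal{E}}\Aa)_{(a_1,\dots,a_r)}$ and $(\gr_{\mathcal{G}}\Aa)_{(a_1,\dots,a_r)}$. A direct computation using the description above shows that both are spanned by
\[
\B_{(a_1,\dots,a_r)} := \{ \bb \in \B \,\mid\, \fv_i(\bb) = a_i \text{ for all } i \in [r] \},
\]
via the classes $[\bb]_{\mathcal{E}}$, respectively $[\bb]_{\mathcal{G}}$. For $\mathcal{E}$, one quotients by $\sum_{j \in [r]} \mathcal{E}_{(a_1,\dots,a_j+1,\dots,a_r)}$, leaving exactly those $\bb$ whose valuation tuple is componentwise $(a_1,\dots,a_r)$. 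For $\mathcal{G}$, one quotients by $\mathcal{G}_{>_{lex}(a_1,\dots,a_r)}$, which by the filtration formula is spanned by $\{\bb : \fv(\bb) >_{lex} (a_1,\dots,a_r)\}$, again leaving precisely $\B_{(a_1,\dots,a_r)}$. From \Cref{lemma: grE and grG iso}'s predecessor we see that $\Psi_{(a_1,\dots,a_r)}$ is the map induced by inclusion, and on basis vectors it sends $[\bb]_{\mathcal{E}} \mapsto [\bb]_{\mathcal{G}}$.

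Finally, I will verify multiplicativity on basis elements. Let $\bb, \bb' \in \B$ with $\fv(\bb) = (a_1,\dots,a_r)$, $\fv(\bb') = (a'_1,\dots,a'_r)$, and expand the product in the basis as $\bb \bb' = \sum_k c_k \bb_k$. Since each $\fv_i$ is a valuation, $\bb \bb' \in \mathcal{E}_{(a_1+a'_1,\dots,a_r+a'_r)}$, and by the adaptedness of $\B$ for $\mathcal{E}$ just established, every $\bb_k$ with $c_k \neq 0$ satisfies $\fv_i(\bb_k) \geq a_i + a'_i$ for all $i$. The class $[\bb]_{\mathcal{E}} \cdot [\bb']_{\mathcal{E}}$ is then obtained by discarding all $\bb_k$ whose valuation tuple is strictly greater than $(a_1+a'_1,\dots,a_r+a'_r)$ in the componentwise order, i.e.~keeping precisely those $\bb_k$ with $\fv_i(\bb_k) = a_i + a'_i$ for every $i$. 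On the $\mathcal{G}$-side, the class $[\bb]_{\mathcal{G}} \cdot [\bb']_{\mathcal{G}}$ is obtained by discarding all $\bb_k$ whose valuation tuple is strictly greater in lex order; but since we already know $\fv_i(\bb_k) \geq a_i + a'_i$ componentwise, lex-strict-greater coincides with componentwise-strict-greater for these terms, and again the surviving indices are exactly the $\bb_k$ with $\fv_i(\bb_k) = a_i + a'_i$ for all $i$. Thus the two products have identical expansions in $\B_{(a_1+a'_1,\dots,a_r+a'_r)}$, and $\Psi(\,[\bb]_{\mathcal{E}} \cdot [\bb']_{\mathcal{E}}\,) = [\bb]_{\mathcal{G}} \cdot [\bb']_{\mathcal{G}} = \Psi([\bb]_{\mathcal{E}}) \cdot \Psi([\bb']_{\mathcal{E}})$. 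Extending linearly gives the claim. The main subtlety to be careful with is the coincidence of componentwise-strict and lex-strict orderings on the relevant $\bb_k$, but this is forced by the a priori lower bound $\fv_i(\bb_k) \geq a_i + a'_i$ coming from multiplicativity of each $\fv_i$.
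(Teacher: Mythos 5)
Your proof is correct and fills in precisely the details the paper elides: the paper's ``proof'' is the single remark that multiplicativity of $\Psi$ ``can be seen easily by computing what happens to adapted basis elements,'' and your verification is exactly that computation. In particular, the key observation that componentwise-strict and lex-strict orderings agree once one already has the componentwise lower bound $\fv_i(\bb_k)\geq a_i+a_i'$ is the substance of the paper's ``easily.''
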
 

Now we wish to make a connection with $\gr_{\mathcal{F}}(\Aa)$. The following is simple linear algebra. 

\begin{lemma} 
Let $(a_1,\cdots,a_r) \in \Z^r$ and let $s:=a_1+a_2+\cdots+a_r \in \Z$. There is a natural $\K$-linear map $\Phi_{(a_1,\cdots,a_r)}: \mathcal{E}_{(a_1,\cdots,a_r)} \to \mathcal{F}_{s}$ which is injective. For $(a_1,\cdots,a_r) \neq (b_1,\cdots,b_r)$ with $\sum_i a_i = \sum_i b_i=s$, the corresponding images satisfy $\Phi_{(a_1,\cdots,a_r)}(\mathcal{E}_{(a_1,\cdots,a_r)}) \cap \Phi_{(b_1,\cdots,b_r)}(\mathcal{E}_{(b_1,\cdots,b_r)}) = \{0\}$. 
Moreover, $\bigoplus_{a_1+a_2+\cdots+a_r=s} \Phi_{(a_1,\cdots,a_r)}(\mathcal{E}_{(a_1,\cdots,a_r)}) = \mathcal{F}_s$. 
\end{lemma}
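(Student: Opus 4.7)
My plan is to read the direct-sum notation in $\mathcal{F}_s = \bigoplus_{s_1+\cdots+s_r = s} \mathcal{E}_{(s_1,\ldots,s_r)}$ as a formal (external) direct sum of $\K$-vector spaces, rather than as a sum of subspaces internal to $\Aa$. Under this reading, $\Phi_{(a_1,\ldots,a_r)}$ is simply the canonical inclusion of $\mathcal{E}_{(a_1,\ldots,a_r)}$ as the $(a_1,\ldots,a_r)$-th summand of the external direct sum $\mathcal{F}_s$.

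With this interpretation, each of the three assertions reduces to a standard fact about direct sums of vector spaces. Injectivity of $\Phi_{(a_1,\ldots,a_r)}$ is immediate, since an inclusion into a direct summand is always injective. For $(a_1,\ldots,a_r) \neq (b_1,\ldots,b_r)$ with $\sum a_i = \sum b_i = s$, the images land in distinct summands, and hence $\Phi_{(a_1,\ldots,a_r)}(\mathcal{E}_{(a_1,\ldots,a_r)}) \cap \Phi_{(b_1,\ldots,b_r)}(\mathcal{E}_{(b_1,\ldots,b_r)}) = \{0\}$ by the defining property of external direct sums. The final identity $\bigoplus_{\sum a_i = s} \Phi_{(a_1,\ldots,a_r)}(\mathcal{E}_{(a_1,\ldots,a_r)}) = \mathcal{F}_s$ is then nothing more than the definition of $\mathcal{F}_s$.

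The main obstacle I anticipate is justifying this reading of the direct sum so that it is consistent with the surrounding discussion, which treats $\mathcal{F}$ as a decreasing multiplicative filtration of $\Aa$ with a sensible associated graded algebra. I would use the simultaneous adapted basis $\B$ here: each $\mathcal{E}_{(s_1,\ldots,s_r)}$ has the explicit $\K$-basis $\{\bb \in \B : \fv_i(\bb) \geq s_i \text{ for all } i\}$, and collapsing the external direct sum via the canonical summation map yields the subspace $\mathrm{span}_\K\{\bb \in \B : \fv_1(\bb) + \cdots + \fv_r(\bb) \geq s\} \subseteq \Aa$, which inherits the claimed filtration properties from those of the $\fv_i$. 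The subtlety that forces the external interpretation is that, viewed inside $\Aa$, the internal sum $\sum_{\sum s_i = s} \mathcal{E}_{(s_1,\ldots,s_r)}$ is not a direct sum: any basis element $\bb$ with $\sum_i \fv_i(\bb) > s$ lies in many of the constituent subspaces simultaneously. Thus the content of the lemma is really the unpacking of an external direct sum, together with the verification that the natural collapse to $\Aa$ is compatible with the filtration and associated graded structures used elsewhere in the appendix.
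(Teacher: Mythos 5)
You have correctly located the tension in the statement: if $\mathcal{F}_s$ is read as a subspace of $\Aa$ and $\Phi_{(a_1,\ldots,a_r)}$ as the inclusion of $\mathcal{E}_{(a_1,\ldots,a_r)}$, the disjointness fails because distinct $\mathcal{E}$'s overlap (any $\bb\in\B$ with $\sum_i \fv_i(\bb)>s$ lies in many of them; e.g.\ with $r=2$, $\mathcal{E}_{(1,0)}\cap\mathcal{E}_{(0,1)}=\mathcal{E}_{(1,1)}$, which is generally nonzero). Your fix — read $\bigoplus$ as an external direct sum — does make the statement a tautology, but it does not resolve the underlying problem, and there is a gap in the last step of your argument: the canonical summation map from the external direct sum $\bigoplus_{\sum s_i=s}\mathcal{E}_{(s_1,\ldots,s_r)}$ to $\Aa$ is surjective onto $\mathrm{span}_\K\{\bb:\sum_i\fv_i(\bb)\ge s\}$ but is emphatically \emph{not} injective (its kernel contains, e.g., tuples $(\ldots,\bb,\ldots,-\bb,\ldots)$ coming from the overlaps you identified). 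So the external sum and the subspace inside $\Aa$ are not isomorphic, and "compatible with the filtration" cannot be cashed out as "equal to the filtration piece." In short, the external reading lets you verify the lemma but severs the connection to $\mathcal{F}$ as a filtration of $\Aa$, which is what the rest of the appendix actually uses.

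The consistent resolution is to recognize that the intended statement, and the one that both holds and is actually used (to conclude $\gr_{\mathcal{E}}(\Aa)\cong\gr_{\mathcal{F}}(\Aa)$), lives at the level of graded pieces rather than filtration pieces. Take $\mathcal{F}_s\subset\Aa$ to be the \emph{non-direct} internal sum, i.e.\ $\mathrm{span}_\K\{\bb\in\B:\sum_i\fv_i(\bb)\ge s\}$, and consider $\overline{\mathcal{E}}_{(a_1,\ldots,a_r)}:=\mathcal{E}_{(a_1,\ldots,a_r)}/\mathcal{E}_{\succ(a_1,\ldots,a_r)}$ (basis $\{[\bb]:\fv_i(\bb)=a_i\ \forall i\}$) and $\overline{\mathcal{F}}_s:=\mathcal{F}_s/\mathcal{F}_{>s}$ (basis $\{[\bb]:\sum_i\fv_i(\bb)=s\}$). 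The inclusion $\mathcal{E}_{(a)}\hookrightarrow\mathcal{F}_s$ descends to a well-defined, injective map $\overline{\Phi}_{(a)}:\overline{\mathcal{E}}_{(a)}\to\overline{\mathcal{F}}_s$, the images for distinct $(a)$ with sum $s$ are disjoint (an adapted basis element $\bb$ with $\sum\fv_i(\bb)=s$ has a unique exact valuation tuple $(\fv_1(\bb),\ldots,\fv_r(\bb))$), and their direct sum exhausts $\overline{\mathcal{F}}_s$. Those are the assertions that are both true and needed; the lemma as printed appears to have slipped a quotient.
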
 

It follows immediately from the above lemma that, as $\K$-vector spaces, we have an isomorphism 
$$
\mathcal{F}_s \cong \bigoplus_{a_1+\cdots+a_r=s} \mathcal{E}_{(a_1,\cdots,a_r)}
$$
for $s \in \Z$. Putting together the individual $\Phi_{(a_1,\cdots,a_r)}$ as we did for $\Psi$ above we immediately obtain a $\K$-vector space isomorphism $\Phi: \gr_{\mathcal{E}}(\Aa) \to \gr_{\mathcal{F}}(\Aa)$. As above, computing with respect to adapted basis elements yields the following. 

\begin{lemma}\label{lemma: grE and grF iso}
The map $\Phi: \gr_{\mathcal{E}}(\Aa) \to \gr_{\mathcal{F}}(\Aa)$ is a $\K$-algebra isomorphism. 
\end{lemma}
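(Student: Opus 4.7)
\medskip

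\noindent\textbf{Proof proposal.} The plan is to verify compatibility with multiplication by evaluating on classes of adapted basis elements, which is the approach the paper explicitly suggests. Since $\Phi$ is already a $\K$-vector space isomorphism, only multiplicativity needs to be checked, and once that is done the result will follow.

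First I would set up bases for the graded pieces on both sides using the simultaneous adapted basis $\B$. For each $(a_1,\ldots,a_r) \in \Z^r$, set
$$\B_{(a_1,\ldots,a_r)} := \{\bb \in \B : \fv_j(\bb) = a_j \text{ for all } j \in [r]\}.$$
Using the adapted basis property for each $\fv_j$ in turn, one checks that the image of $\B_{(a_1,\ldots,a_r)}$ is a $\K$-basis of the graded piece
$\mathcal{E}_{(a_1,\ldots,a_r)} / \mathcal{E}_{>(a_1,\ldots,a_r)}$, where $\mathcal{E}_{>(a_1,\ldots,a_r)} = \sum_{j=1}^r \mathcal{E}_{(a_1,\ldots,a_j+1,\ldots,a_r)}$. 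Similarly $\bigsqcup_{a_1+\cdots+a_r=s} \B_{(a_1,\ldots,a_r)}$ maps to a $\K$-basis of $\mathcal{F}_s/\mathcal{F}_{>s}$. Under these identifications, the map $\Phi$ induced by the inclusions $\Phi_{(a_1,\ldots,a_r)}: \mathcal{E}_{(a_1,\ldots,a_r)} \hookrightarrow \mathcal{F}_{\sum a_i}$ sends the class $[\bb]_{\mathcal{E}}$ to $[\bb]_{\mathcal{F}}$ for $\bb \in \B_{(a_1,\ldots,a_r)}$.

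Next, for multiplicativity, take $\bb \in \B_{(a_1,\ldots,a_r)}$ and $\bb' \in \B_{(a_1',\ldots,a_r')}$ and expand the product in the adapted basis, $\bb\bb' = \sum_i c_i \bb_i$ with $c_i \in \K^*$ and $\bb_i \in \B$. Because each $\fv_j$ is a genuine valuation, $\fv_j(\bb\bb') = a_j + a_j'$, and because $\B$ is adapted for $\fv_j$, we have $\fv_j(\bb_i) \geq a_j + a_j'$ for every $i$ and every $j$. Summing over $j$, this gives $\sum_j \fv_j(\bb_i) \geq \sum_j(a_j + a_j')$, with equality if and only if $\bb_i \in \B_{(a_1+a_1',\ldots,a_r+a_r')}$. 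It follows that both products
$$[\bb]_{\mathcal{E}} \cdot [\bb']_{\mathcal{E}} = \sum_{\bb_i \in \B_{(a_j+a_j')_j}} c_i [\bb_i]_{\mathcal{E}} \quad \text{and} \quad [\bb]_{\mathcal{F}} \cdot [\bb']_{\mathcal{F}} = \sum_{\bb_i \in \B_{(a_j+a_j')_j}} c_i [\bb_i]_{\mathcal{F}}$$
are indexed by the same subset of $\{\bb_i\}$ with the same coefficients, since in the $\mathcal{F}$-graded piece exactly those $\bb_i$ with total valuation equal to $\sum_j(a_j+a_j')$ survive, and these coincide with those of multidegree $(a_1+a_1',\ldots,a_r+a_r')$ by the observation above. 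Thus $\Phi([\bb]_{\mathcal{E}} \cdot [\bb']_{\mathcal{E}}) = \Phi([\bb]_{\mathcal{E}}) \cdot \Phi([\bb']_{\mathcal{E}})$ on basis classes, and bilinearity extends this to all of $\gr_{\mathcal{E}}(\Aa)$.

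The only potentially subtle step is the bookkeeping in showing that "equality in the sum forces equality in each coordinate" for the surviving terms in the $\mathcal{F}$-product, which is where the valuation axiom (rather than merely a quasi-valuation axiom) for each $\fv_j$ is used; otherwise the argument is essentially a bookkeeping exercise parallel to that sketched for $\Psi$ in the preceding lemma.
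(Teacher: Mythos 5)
Your proof is correct and follows precisely the approach the paper intends: the paper's own justification is simply the one-line remark that ``computing with respect to adapted basis elements yields the following,'' and you have carried out exactly that computation. The key observations---that $\fv_j(\bb\bb')=a_j+a_j'$ for each $j$ by the valuation axiom, that the adapted-basis property forces $\fv_j(\bb_i)\geq a_j+a_j'$ for every basis term in the expansion of $\bb\bb'$, and that the surviving terms in the $\mathcal{F}$-graded piece (total valuation $=s$) therefore coincide with those in the $\mathcal{E}$-graded piece (multidegree exactly $(a_j+a_j')_j$)---are precisely what makes the basis-level identification of the two products go through.
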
 

It is also not difficult to see by an explicit computation with $\B$ that both $\gr_{\mathcal{F}}(\Aa)$ and $\gr_{\mathcal{G}}(\Aa)$ are integral domains, implying that the quasivaluations associated to $\mathcal{F}$ and $\mathcal{G}$, which we denote $\fv_1 \boxplus \fv_2 \boxplus \cdots \boxplus \fv_r$ and $\fv_1 \circledast \fv_2 \circledast \cdots \circledast \fv_r$ respectively, are actually valuations. Putting together \Cref{lemma: grE and grF iso} and \Cref{lemma: grE and grG iso} then yields the desired result. 

\begin{lemma}\label{lemma: grF and grG iso}
The valuations $\fv_1 \boxplus \fv_2 \boxplus \cdots \boxplus \fv_r: \Aa \to \Z$ and $\fv_1 \circledast \fv_2 \circledast \cdots \circledast \fv_r: \Aa \to \Z^r$ have the property that $\gr_{\fv_1 \boxplus \fv_2 \boxplus \cdots \boxplus \fv_r}(\Aa) \cong \gr_{\fv_1 \circledast \fv_2 \circledast \cdots \circledast \fv_r}(\Aa)$ as $\K$-algebras. 
\end{lemma}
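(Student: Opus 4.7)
The plan is to chain together the two $\K$-algebra isomorphisms that were already established in the preceding lemmas for the auxiliary filtration $\mathcal{E}$. The final statement is essentially a formal consequence of these, provided one correctly identifies $\gr_{\fv_1 \boxplus \cdots \boxplus \fv_r}(\Aa)$ with $\gr_\mathcal{F}(\Aa)$ and $\gr_{\fv_1 \circledast \cdots \circledast \fv_r}(\Aa)$ with $\gr_\mathcal{G}(\Aa)$.

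First, I would invoke the general principle recalled at the start of the appendix: a decreasing multiplicative filtration satisfying the stated separation property determines a quasivaluation whose associated graded algebra coincides with the graded algebra of the filtration, and this quasivaluation is a valuation exactly when the associated graded is an integral domain. Applied to $\mathcal{F}$ and $\mathcal{G}$, this yields by construction the $\K$-algebra equalities
\begin{equation*}
\gr_{\fv_1 \boxplus \cdots \boxplus \fv_r}(\Aa) = \gr_\mathcal{F}(\Aa), \qquad \gr_{\fv_1 \circledast \cdots \circledast \fv_r}(\Aa) = \gr_\mathcal{G}(\Aa).
\end{equation*}
The domain hypothesis needed here was already noted in the paragraph preceding the statement, where it was observed via an explicit computation against the common adapted basis $\B$ that both $\gr_\mathcal{F}(\Aa)$ and $\gr_\mathcal{G}(\Aa)$ are integral domains.

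Next, I would combine \Cref{lemma: grE and grF iso}, which furnishes a $\K$-algebra isomorphism $\Phi: \gr_\mathcal{E}(\Aa) \to \gr_\mathcal{F}(\Aa)$, with \Cref{lemma: grE and grG iso}, which furnishes a $\K$-algebra isomorphism $\Psi: \gr_\mathcal{E}(\Aa) \to \gr_\mathcal{G}(\Aa)$. The composition $\Phi \circ \Psi^{-1}: \gr_\mathcal{G}(\Aa) \to \gr_\mathcal{F}(\Aa)$ is then, via the identifications above, the desired $\K$-algebra isomorphism $\gr_{\fv_1 \circledast \cdots \circledast \fv_r}(\Aa) \cong \gr_{\fv_1 \boxplus \cdots \boxplus \fv_r}(\Aa)$.

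There is no substantive obstacle at this final step, since the real work (checking that the respective bijections on graded pieces, assembled from the adapted basis $\B$, respect multiplication) has already been absorbed into the proofs of the two preceding lemmas. The only point I would want to double-check carefully in writing is that the identification of $\gr_{\fv_1 \boxplus \cdots \boxplus \fv_r}(\Aa)$ with $\gr_\mathcal{F}(\Aa)$ (and similarly for $\circledast$) is literal rather than merely canonical up to isomorphism, so that no additional comparison map needs to be inserted into the final composition.
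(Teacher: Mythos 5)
Your proposal is correct and follows essentially the same route as the paper: the paper also defines the valuations $\boxplus$ and $\circledast$ precisely as the quasivaluations associated to $\mathcal{F}$ and $\mathcal{G}$ (so those associated-graded identifications are literal), notes the integral-domain check against $\B$, and then concludes by composing the two isomorphisms $\Phi$ and $\Psi$ through $\gr_{\mathcal{E}}(\Aa)$.
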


\section{Results on classical convex geometry}

In this appendix we collect some lemmas and well-known facts from classical convex geometry which we need in our arguments.

\begin{lemma}\label{lemma: once compact always compact}
    Suppose that the polyhedron $P = \cap_{i=1}^{n} H_{n_i, a_i}$ is bounded and nonempty. Then  $Q(b_1,...,b_n) = \cap_{i=1}^{n} H_{n_i, b_i}$ is bounded for any choice of $b_i$'s.
\end{lemma}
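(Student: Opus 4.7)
The plan is to observe that boundedness of a polyhedron defined by a system of half-space inequalities depends only on the \emph{directions} of the defining normals, not on the constants. The appropriate tool here is the classical notion of the recession cone.

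First, I would recall the standard fact (see, e.g., \cite[Theorem 8.4]{Rockafellar}) that for a nonempty polyhedron $R = \bigcap_{i=1}^n \{v \in V \mid n_i(v) \geq c_i\}$, the recession cone of $R$ is
\begin{equation*}
\mathrm{rec}(R) = \{v \in V \mid n_i(v) \geq 0 \text{ for all } i \in [n]\},
\end{equation*}
and that a nonempty polyhedron is bounded if and only if its recession cone is $\{0\}$. The key observation is that $\mathrm{rec}(R)$ depends only on the normals $n_i$, and not on the constants $c_i$. In particular, setting
\begin{equation*}
C := \{v \in V \mid n_i(v) \geq 0 \text{ for all } i \in [n]\},
\end{equation*}
we have $\mathrm{rec}(P) = C$, and if $Q(b_1,\ldots,b_n)$ is nonempty then $\mathrm{rec}(Q(b_1,\ldots,b_n)) = C$ as well.

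Next, since $P$ is nonempty and bounded by hypothesis, the fact cited above forces $C = \{0\}$. Now consider $Q(b_1,\ldots,b_n)$ for arbitrary $b_1,\ldots,b_n$. If $Q(b_1,\ldots,b_n)$ is empty, then it is trivially bounded. Otherwise, its recession cone equals $C = \{0\}$, and so by the same classical criterion $Q(b_1,\ldots,b_n)$ is bounded.

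There is no real obstacle here; the statement is a direct consequence of the standard theory of recession cones. The only care needed is to separate the empty and nonempty cases for $Q(b_1,\ldots,b_n)$, since the recession cone characterization of boundedness is typically stated for nonempty polyhedra.
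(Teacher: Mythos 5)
Your argument via recession cones is correct and is essentially the same route the paper takes: the paper's one-line proof cites the characteristic-cone decomposition in Schrijver \cite[\textsection 8.2]{Sch86}, which is precisely the fact that the recession cone of $\{v \mid n_i(v) \geq c_i\}$ is $\{v \mid n_i(v) \geq 0\}$ (independent of the $c_i$) and that a nonempty polyhedron is bounded iff this cone is trivial. You have simply spelled out the details and cited Rockafellar instead.
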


\begin{proof}
    This is a straightforward consequence of (5) part (iii) in \cite[\textsection 8.2]{Sch86}.
\end{proof}

We also recall the following classical fact about convex piecewise linear functions. 

\begin{lemma}\label{lemma: PL is a min of linears}
Let $V$ be a finite-dimensional real vector space and $p: V \to \R$ a convex piecewise linear function in the sense given in Section~\ref{sec: PL}. 
Then: 
\begin{enumerate} 
\item[(1)] there is a unique minimal representation of $p$ as $p = \min\{f_1,\cdots,f_\ell\}$ where $f_k: V \to \R$ is a linear function for all $k$, and, 
\item[(2)] for $C$ a maximal-dimensional cone of linearity of $p$ in $V$, if $f: V \to \R$ is a linear function such that the restrictions of $p$ and $f$ agree on $C$, i.e. $p \vert_C = f \vert_C$, then $f$ must appear as one of the $f_k$ in (1) above. 
\end{enumerate} 
\end{lemma}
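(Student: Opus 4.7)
Recall that the paper's convention (Definition~\ref{definition: convex}) reverses the usual sign, so a ``convex'' piecewise linear function here is what is classically called concave; the natural representation is therefore as a minimum of linear functions. By the definition of piecewise linearity used in the paper there is a complete fan $\Sigma$ in $V$ such that $p$ is linear on each cone. Let $C_1,\dots,C_N$ be the maximal-dimensional cones of $\Sigma$, and for each $k$ let $\ell_k\colon V\to\R$ be the unique linear function agreeing with $p$ on $C_k$ (unique because $C_k$ has nonempty interior). After deleting repetitions, we obtain a finite collection $\{\ell_1,\dots,\ell_\ell\}$ of pairwise distinct linear functions, and for each $k$ the set $D_k:=\{x\in V\mid p(x)=\ell_k(x)\}$ has nonempty interior.

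The main step will be to prove the identity $p=\min_{k}\ell_k$. The inequality $p\le \min_k\ell_k$ is the substantive direction: I plan to fix $k$ and $x\in V$, pick a point $y$ in the interior of $C_k$, and for $t\in[0,1)$ close to $1$ consider the segment $ty+(1-t)x$. Because $y$ lies in the interior of $C_k$, the point $ty+(1-t)x$ lies in $C_k$ for $t$ sufficiently close to $1$, where $p$ agrees with the linear function $\ell_k$. Concavity (\Cref{eq: definition convex}) then gives
\[
\ell_k(ty+(1-t)x)=p(ty+(1-t)x)\ge t\,p(y)+(1-t)\,p(x)=t\,\ell_k(y)+(1-t)\,p(x).
\]
Since $\ell_k$ is linear, the left-hand side equals $t\,\ell_k(y)+(1-t)\,\ell_k(x)$; cancelling and dividing by $1-t>0$ yields $\ell_k(x)\ge p(x)$. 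The reverse inequality $p\ge\min_k\ell_k$ is immediate because every $x\in V$ lies in some $C_k$, where $p(x)=\ell_k(x)\ge\min_j\ell_j(x)$. Hence $p=\min_k\ell_k$, giving a representation.

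Next I will show this representation is minimal and unique. For minimality, on the interior $C_k^{\circ}$ we have $p=\ell_k$; if some $\ell_j$ with $j\ne k$ satisfied $\ell_j\le\ell_k$ on an open subset of $C_k^{\circ}$, then the two linear functions would agree on an open set, forcing $\ell_j=\ell_k$ globally, contradicting the distinctness arranged above. Therefore $\ell_k(x)<\ell_j(x)$ on a dense open subset of $C_k^{\circ}$ for each $j\ne k$, which shows no $\ell_k$ can be removed. For uniqueness, suppose $p=\min\{f_1,\dots,f_m\}$ is another minimal representation. Minimality forces each $f_i$ to strictly achieve the minimum on some nonempty open set $U_i$, on which $p=f_i$. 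Any such $U_i$ meets some $C_{k}^{\circ}$ in an open set, so $f_i$ and $\ell_k$ are linear functions agreeing on a nonempty open set, forcing $f_i=\ell_k$ as linear maps on all of $V$. Thus $\{f_1,\dots,f_m\}\subseteq\{\ell_1,\dots,\ell_\ell\}$, and by symmetry the two sets coincide. This establishes (1).

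Part (2) is then essentially a consequence of the above setup. If $C$ is a maximal-dimensional cone of linearity of $p$ and $f\colon V\to\R$ is a linear function with $p|_C=f|_C$, then in particular $f$ agrees on $C$ with the linear piece $\ell_k$ corresponding to $C$; since $C$ has nonempty interior, $f=\ell_k$ as linear functions on all of $V$, so $f$ appears in the minimal representation $\{\ell_1,\dots,\ell_\ell\}$. The main obstacle is the global concavity argument in the second paragraph, which requires some care in choosing the interior point $y\in C_k$ and checking that the segment enters $C_k$; everything else is routine linear algebra and finite case-checking afforded by the finiteness of $\Sigma$.
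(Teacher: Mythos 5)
The paper states this lemma as a ``recalled classical fact'' and supplies no proof, so there is no in-paper argument to compare your proof against. Your argument is correct and is the standard one: identifying the linear pieces $\ell_k$ from the cones of a defining fan, proving $p\le\ell_k$ for each $k$ by sliding a point $ty+(1-t)x$ into the interior of $C_k$ and invoking the concavity inequality (the paper's ``convex'' convention), and then deducing minimality and uniqueness from the fact that two linear functions agreeing on a nonempty open set must coincide. The only presentational nit is that after you deduplicate the $\ell_k$ the index $k$ no longer labels a single fan cone, so in the minimality paragraph ``$C_k^{\circ}$'' should read ``the interior of some cone whose linear piece is $\ell_k$'' (equivalently, the interior of your set $D_k$); this does not affect the argument. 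Part~(2) is also handled correctly, since any full-dimensional cone of linearity of $p$ meets the interior of some fan cone in a full-dimensional set, forcing $f$ to equal the corresponding $\ell_k$.
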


We will use the following straightforward consequence of \Cref{lemma: PL is a min of linears}.

\begin{lemma}\label{lemma: PL half space in classical half space}
Let $\MM$ be a finite polyptych lattice of rank $r$ over $F$. Let $p \in \Sp(\MM)$ and $\alpha \in \pi(\MM)$ and $\mathcal{C}$ a full-dimensional cone in $\Sigma(\MM)$. Let $f: M_\alpha \to F$ be an $F$-linear function such that $f \vert_{\pi_\alpha(\mathcal{C})} = p_\alpha \vert_{\pi_\alpha(\mathcal{C})}$. Then for all $u \in M_\alpha$, we have $f(u) \geq p_\alpha(u)$. 
\end{lemma}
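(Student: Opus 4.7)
The plan is to reduce the claim to the classical statement in Lemma~\ref{lemma: PL is a min of linears} about convex piecewise linear functions on a real vector space. First, I would observe that the extension $p_\alpha := p \circ \pi_\alpha^{-1} : M_\alpha \otimes_F \R \to \R$ is piecewise $\R$-linear (by \Cref{lemma: points linear on faces of SigmaM}, since the cones of $\Sigma(\MM)$ are the domains of linearity) and convex (by \Cref{lemma: point PL concave}). Thus \Cref{lemma: PL is a min of linears}(1) applies to $p_\alpha$, yielding a unique minimal representation $p_\alpha = \min\{f_1,\ldots,f_\ell\}$ as a pointwise minimum of finitely many $\R$-linear functions on $M_\alpha \otimes_F \R$.

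Next I would locate $f$ among the $f_k$. Since $\pi_\alpha(\mathcal{C})$ is a full-dimensional cone (because $\mathcal{C}$ is a full-dimensional cone in $\Sigma(\MM)$ and $\pi_\alpha$ is a bijection on each chart) and $p_\alpha$ restricts to an $F$-linear map on $\pi_\alpha(\mathcal{C})$, there is a (unique) maximal-dimensional cone of linearity $C^*$ of $p_\alpha$ containing $\pi_\alpha(\mathcal{C})$. On $C^*$, the function $p_\alpha$ coincides with some $f_k$ from the minimal representation. By hypothesis, $f$ is $F$-linear and agrees with $p_\alpha$ on $\pi_\alpha(\mathcal{C})$, which is a full-dimensional subset of $M_\alpha \otimes_F \R$; since two $\R$-linear functions agreeing on a full-dimensional set must be equal, we conclude $f = f_k$ (after extending $f$ $\R$-linearly to $M_\alpha \otimes_F \R$, which is possible since $f$ is $F$-linear on $M_\alpha$). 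Here I would invoke \Cref{lemma: PL is a min of linears}(2) to justify that such an $f_k$ indeed appears in the minimal representation.

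Finally, for any $u \in M_\alpha$, we have
\begin{equation*}
p_\alpha(u) \;=\; \min\{f_1(u),\ldots,f_\ell(u)\} \;\leq\; f_k(u) \;=\; f(u),
\end{equation*}
which is the desired inequality. There is no substantive obstacle; the only minor point to be careful about is that $\pi_\alpha(\mathcal{C})$ need not itself be a \emph{maximal} cone of linearity of $p_\alpha$ (adjacent cones of $\Sigma(\MM)$ may merge when projected by $\pi_\alpha$, since $p_\alpha$ might happen to be given by the same linear function across a wall). This is handled by passing to the maximal cone of linearity $C^*$ containing $\pi_\alpha(\mathcal{C})$ as above, which is legitimate precisely because $\pi_\alpha(\mathcal{C})$ is full-dimensional.
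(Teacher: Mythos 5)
Your proof is correct and follows essentially the same route as the paper's own argument: establish that $p_\alpha$ is convex piecewise $\R$-linear via \Cref{lemma: points linear on faces of SigmaM} and \Cref{lemma: point PL concave}, then apply both parts of \Cref{lemma: PL is a min of linears} to place $f$ in the unique minimal min-representation of $p_\alpha$, and read off the inequality. Your treatment is in fact slightly more careful than the paper's terse write-up on one point: you explicitly note that $\pi_\alpha(\mathcal{C})$ need not itself be a maximal cone of linearity of $p_\alpha$ (walls between cones of $\Sigma(\MM)$ can merge under $\pi_\alpha$), and you correctly handle this by passing to the maximal cone $C^*$ containing it and using full-dimensionality to conclude $f$ equals the linear piece of $p_\alpha$ there — a detail the paper's proof elides when it directly writes $p_\alpha = \min\{f, f_2, \ldots, f_\ell\}$.
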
 

\begin{proof} 
By \Cref{lemma: points linear on faces of SigmaM} we know $p$ is piecewise linear, and more specifically, $p$ is linear on each full-dimensional cone $\mathcal{C}$ of $\Sigma(\MM)$. Moreover, by \Cref{lemma: point PL concave} we know the natural extension $p: \MM_\R \to \R$ is convex. It follows that $p_\alpha: M_\alpha \otimes \R \to \R$ is also piecewise linear, with cones of linearity $\pi_\alpha(\mathcal{C})$ for $\mathcal{C}$ in $\Sigma(\MM)$, and also convex. Then by \Cref{lemma: PL is a min of linears}  we know $p_\alpha = \min\{f, f_2, \cdots, f_\ell\}$ for $f$ as given in the hypothesis of the lemma and $f_2,\cdots,f_\ell$ some (finite) list of linear functions. In particular, $f \geq p_\alpha$ as functions on $M_\alpha$, as desired. 
\end{proof}

The following is immediate from \Cref{lemma: PL half space in classical half space}. 

\begin{corollary}\label{corollary: PL half space in classical half space}
Under the hypotheses of \Cref{lemma: PL half space in classical half space}, and for any $a \in F$, we have $\pi_\alpha(\HH_{p,a}) \subseteq \{u \in M_\alpha \, \mid \, f(u) \geq a\}$. 
\end{corollary}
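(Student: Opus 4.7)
The plan is to observe that this corollary is essentially a direct translation of the pointwise inequality $f \geq p_\alpha$ established in the preceding lemma, combined with the definitions of the PL half-space and of the chart image. So the proof should be short: I would just unwind definitions.

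Concretely, I would start by taking an arbitrary $u \in \pi_\alpha(\HH_{p,a})$. By definition of the chart image, there exists $m \in \HH_{p,a} \subseteq \MM_\R$ with $\pi_\alpha(m) = u$. By the definition of the PL half-space $\HH_{p,a}$ (cf.\ \Cref{definition: M half space}), the condition $m \in \HH_{p,a}$ means $p(m) \geq a$. Rewriting using $p_\alpha := p \circ \pi_\alpha^{-1}$, this is precisely $p_\alpha(u) \geq a$.

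Next, I would invoke \Cref{lemma: PL half space in classical half space} directly: under the standing hypotheses on $f$ (namely $F$-linear on $M_\alpha$ agreeing with $p_\alpha$ on $\pi_\alpha(\mathcal{C})$), we have $f(v) \geq p_\alpha(v)$ for all $v \in M_\alpha$. Applying this at $v = u$ gives $f(u) \geq p_\alpha(u) \geq a$, so $u \in \{v \in M_\alpha \mid f(v) \geq a\}$, completing the inclusion.

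There is no substantive obstacle here — the corollary is really just a repackaging of the lemma into half-space language, so no additional convex-geometric input is required beyond what is already provided by \Cref{lemma: PL half space in classical half space}.
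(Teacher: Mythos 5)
Your proof is correct and is exactly the routine unwinding of definitions that the paper has in mind — the paper itself simply states the corollary is immediate from \Cref{lemma: PL half space in classical half space} and gives no further argument.
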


The following lemma is used to construct full rank valuations.

\begin{lemma}\label{lemma: min of functions and lex min}
Let $\Z \subseteq F \subseteq \R$. Let 
    $S,S'\subset\Hom(F^r_{\ge 0},F)$ be finite subsets of $\Hom(F^r_{\geq 0},F)$. Let $g :=\min\{f\mid f\in S\}$ and $h := \min\{f\mid f\in S'\}$, where the minimum is taken as functions. 
    Let $S'' \subseteq S \cup S'$ be such that the unique minimal expression for $\min\{g,h\}$ as a min-combination is $\min\{g,h\}=\min\{f\mid f\in S''\}$.
    Denote by $\varepsilon_i$ the $i$-th standard basis vector in $F^r_{\geq 0}$.
    If $f_* \in S \cup S'$  has the property 
    \begin{equation*}
        (f_*(\varepsilon_1),\ldots,f_*(\varepsilon_r))=\min\{(f(\varepsilon_1),\ldots,f(\varepsilon_r))\mid f\in S\cup S'\}\in F^r,
    \end{equation*}
    where the minimum on the RHS is taken with respect to the standard lex order, then $f_*\in S''$.
\end{lemma}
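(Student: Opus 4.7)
The plan is to exhibit a full-dimensional cone of linearity of $p:=\min\{g,h\}=\min\{f\mid f\in S\cup S'\}$ on which $p$ coincides with $f_*$, and then to invoke \Cref{lemma: PL is a min of linears}(2) to conclude $f_*\in S''$.

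First I would note that each $f\in S\cup S'$ extends uniquely from $\Hom(F^r_{\geq 0},F)$ to an $\R$-linear functional on $\R^r$, so we may regard $p$ as a convex (in this paper's sense) $\R$-valued PL function on $\R^r$; by \Cref{lemma: PL is a min of linears} it has a unique minimal representation $p=\min\{f\mid f\in S''\}$ with $S''\subseteq S\cup S'$, and $f\in S''$ if and only if there is a maximal-dimensional cone $C$ of linearity of $p$ on which $p|_C=f|_C$. Also, if $f,f'\in S\cup S'$ have $(f(\varepsilon_1),\ldots,f(\varepsilon_r))=(f'(\varepsilon_1),\ldots,f'(\varepsilon_r))$ then $f=f'$ (an $F$-linear map on $F^r_{\geq 0}$ is determined by its values on the generators $\varepsilon_1,\ldots,\varepsilon_r$), so there is a \emph{unique} element of $S\cup S'$ realizing the lex-minimum, namely $f_*$.

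The key step is to select a test point where $f_*$ strictly wins. For $\epsilon>0$ small, consider
\[
v(\epsilon):=\sum_{i=1}^r \epsilon^{i-1}\varepsilon_i \in \R^r_{\geq 0},
\]
so that $f(v(\epsilon))=\sum_{i=1}^r \epsilon^{i-1}f(\varepsilon_i)$ for each $f\in S\cup S'$. For any $f\in(S\cup S')\setminus\{f_*\}$, the polynomial $f(v(\epsilon))-f_*(v(\epsilon))=\sum_{i=1}^r \epsilon^{i-1}\bigl(f(\varepsilon_i)-f_*(\varepsilon_i)\bigr)$ is nonzero, and its leading (lowest-degree) coefficient is $f(\varepsilon_j)-f_*(\varepsilon_j)$ for the smallest $j$ at which the two tuples differ; by the lex-minimality of $f_*$ this coefficient is strictly positive. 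Hence for all sufficiently small $\epsilon>0$ we have $f(v(\epsilon))>f_*(v(\epsilon))$. Since $S\cup S'$ is finite, we may choose a single $\epsilon_0>0$ such that $f_*(v(\epsilon_0))<f(v(\epsilon_0))$ for every $f\in(S\cup S')\setminus\{f_*\}$.

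Continuity of the finitely many linear functions $f\in S\cup S'$ then gives an open neighborhood $U$ of $v(\epsilon_0)$ in $\R^r$ on which $f_*(u)<f(u)$ for every other $f\in S\cup S'$. Consequently $p|_U=f_*|_U$, so the unique maximal-dimensional cone of linearity $C$ of $p$ that contains $v(\epsilon_0)$ in its interior satisfies $p|_C=f_*|_C$; by \Cref{lemma: PL is a min of linears}(2) this forces $f_*\in S''$. The main (minor) subtlety to watch is just confirming uniqueness of the lex-minimum in $S\cup S'$ and that the $f$'s may be compared as real-valued linear functionals; both reduce to the observation that a homomorphism in $\Hom(F^r_{\geq 0},F)$ is determined by its values on $\varepsilon_1,\ldots,\varepsilon_r$ and extends uniquely $\R$-linearly to $\R^r$.
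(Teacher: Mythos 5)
Your proof is correct and takes essentially the same approach as the paper's: both find an open region where $x_1 \gg x_2 \gg \cdots \gg x_r$ (you via the test vector $v(\epsilon)=(1,\epsilon,\ldots,\epsilon^{r-1})$ with $\epsilon$ small, the paper via a product of intervals with decaying scales), show that $f_*$ strictly dominates there, and conclude via the uniqueness of the minimal min-representation. Your version is somewhat more explicit in spelling out the uniqueness of the lex-minimizer and in citing \Cref{lemma: PL is a min of linears}(2) for the final step.
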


\begin{proof}
    We start by showing that there exists a nonempty open set $U\subset \R^r_{\ge 0}$ such that $f_*(x)< f(x)$ for all $x\in U$ and all $f\in S\cup S', f \neq f_*$. Indeed, since $f_*$ and $f \in S \cup S'$ are linear on $F^r_{\geq 0}$ we have $f_*(x_1,\cdots,x_r)=\sum_i x_i f(\varepsilon_i)$ and similarly for $f(x_1,\cdots,x_r)$. By assumption on $f$ we know that for any $f \in S \cup S'$ with $f_* \neq f$ we must have $f(\varepsilon_i)=f_*(\varepsilon_i)$ for $1 \leq i<k$ and $f(\varepsilon_k) > f_*(\varepsilon_i)$ for some $k \in [r]$. By choosing $U \subset \R^r_{\geq 0}$ to be a product of intervals with the property that the range of possible $x_j$ is always (sufficiently) large in comparison to $x_\ell$ for $\ell > j$, it can be seen that on $U$, we have $f(x) > f_*(x)$ for any $f \in S \cup S', f \neq f_*$. (Such a $U$ can be found since $S,S'$ are finite.) 
     Next, since the fan with cones the linear regions of $g\oplus h$ is complete, then one of its cones must intersect $U$.
    Together with the equation $\min\{{g,h}\} = \min\{f\mid f\in S\cup S'\}$, this implies that $f_*\in S''$ as desired. 
\end{proof}

\end{document}